\date{\today}
\newtheorem{theorem}{Theorem}[section]
\newtheorem{lemma}[theorem]{Lemma}
\newtheorem{proposition}[theorem]{Proposition}
\newtheorem{corollary}[theorem]{Corollary}
\theoremstyle{definition}
\newtheorem{definition}[theorem]{Definition}
\newtheorem{example}[theorem]{Example}
\newtheorem{remark}[theorem]{Remark}
\newcommand{\ot}{\otimes}
\newcommand{\co}{\circ}
\begin{document}

\begin{center}

{\huge{\bf Multiplication alteration by two-cocycles. The non-associative version}}

\end{center}

\ \\
\begin{center}
{\bf J.N. Alonso \'Alvarez$^{1}$, J.M. Fern\'andez Vilaboa$^{2}$, R.
Gonz\'{a}lez Rodr\'{\i}guez$^{3}$}
\end{center}

\ \\
\hspace{-0,5cm}$^{1}$ Departamento de Matem\'{a}ticas, Universidad
de Vigo, Campus Universitario Lagoas-Marcosende, E-36280 Vigo, Spain
(e-mail: jnalonso@ uvigo.es)
\ \\
\hspace{-0,5cm}$^{2}$ Departamento de \'Alxebra, Universidad de
Santiago de Compostela.  E-15771 Santiago de Compostela, Spain
(e-mail: josemanuel.fernandez@usc.es)
\ \\
\hspace{-0,5cm}$^{3}$ Departamento de Matem\'{a}tica Aplicada II,
Universidad de Vigo, Campus Universitario Lagoas-Mar\-co\-sen\-de, E-36310
Vigo, Spain (e-mail: rgon@dma.uvigo.es)
\ \\

{\bf Abstract} In this paper  we introduce the theory of multiplication alteration by two-cocycles for non-associative structures like non-associative bimonoids with left (right) division.  Also we explore the connections between Yetter-Drinfeld modules for Hopf quasigroups, projections of Hopf quasigroups, skew pairings, and quasitriangular structures, obtaining the non-associative version of the main results proved by Doi and Takeuchi for Hopf algebras.

\vspace{0.5cm}

{\bf Keywords.} non-associative bimonoid, Hopf quasigroup, two-cocycle, skew pairing, double cross product, strong projection, quasitriangular Hopf quasigroup. 

{\bf MSC 2010:} 18D10, 17D99, 20N05, 16T05.

\section{Introduction and preliminaries}

Let $R$ be a commutative ring  with a unit and denote the tensor product over $R$ by $\ot$. In \cite{SW} we can find one of the first interesting examples of multiplication alteration by  a two-cocycle for $R$-algebras. In this case Sweedler proved that, if $U$ is an associative unitary $R$-algebra with a commutative subalgebra $A$ and $\sigma=\sum a_{i}\ot b_{i}\ot c_{i}\in A \ot A\ot A$ is an Amistur two-cocycle, then $U$ admits a new associative and unitary product defined by $u\bullet v=\sum a_{i}\;ub_{i}\;vc_{i}$ for all $u,v\in U$. Moreover, if $U$ is central separable, $U$ with the new product is still central separable  and  is isomorphic to the Rosenberg-Zelinsky central separable algebra obtained from the two-cocycle $\sigma^{-1}$ (see \cite{RZ}). Later,  Doi discovered in \cite{Doi1} a new contruction to modify the algebra structure of a bialgebra $A$ over a field ${\Bbb F}$ using an invertible two-cocycle $\sigma$ in $A$. In this case if $\sigma: A\ot A\rightarrow {\Bbb F}$ is the two-cocycle, the new product on $A$ is defined by 
$$a\ast b=\sum \sigma(a_{1}\ot b_{1})a_{2}b_{2}\sigma^{-1}(a_{3}\ot b_{3})$$
for  $a,b\in A$.  With the new algebra structure and the original coalgebra structure, $A$ is a new bialgebra denoted by $A^{\sigma},$ and if $A$ is a Hopf algebra with antipode $\lambda_{A}$, so is $A^{\sigma}$ with   antipode  given by 
$$\lambda_{A^{\sigma}}(a)=\sum \sigma(a_{1}\ot \lambda_{A}(a_{2}))\lambda_{A}(a_{3})\sigma^{-1}( \lambda_{A}(a_{4})\ot a_{5}).$$
for $a\in A$. One of the main remarkable examples of this construction is the Drinfeld double of a Hopf algebra $H$. If $H^{\ast}$ is the dual of $H$ and $A=H^{\ast cop}\ot H$, the Drinfeld double $D(H)$ can be obtained as $A^{\sigma}$ where $\sigma$ is defined by  $\sigma((x\ot g)\ot (y\ot h))=x(1_{H})y(g)\varepsilon_{H}(h)$ for $x,y\in H^{\ast}$ and $g,h\in H$. As was pointed by Doi and Takeuchi in \cite{D-T} "this will be the the shortest description of the multiplication  of $D(H)$". 

A particular case of alterations of products by two-cocycles are provided by invertible skew pairings on bialgebras. If $A$ and $H$ are bialgebras and $\tau:A\ot H\rightarrow {\Bbb F}$ is an invertible skew pairing, Doi and Takeuchi defined in \cite{D-T} a new biagebra $A\bowtie_{\tau} H$ in the following way: The morphism $\omega:A\ot H\ot A\ot H\rightarrow {\Bbb F}$ defined by $\omega((a\ot g)\ot (b\ot h))=\varepsilon_{A}(a)\varepsilon_{H}(h)\tau(b\ot g)$, for $a,b\in A$ and $g,h\in H$, is a  two-cocycle  in $A\ot H$ and $A\bowtie_{\tau} H=(A\ot H)^{\omega}$. The construction of $A\bowtie_{\tau} H$ also generalizes the Drinfeld double because $H^{\ast cop}$ and $H$ are skew-paired. Moreover, $A\bowtie_{\tau} H$ is an example of  Majid's double crossproduct $A\bowtie H$ (see \cite{MajidPA}, \cite{M}) where the left $H$-module structure of $A$, denoted by $\varphi_{A}$,  and the right $A$-module structure of $H$, denoted by $\phi_{H}$,  are defined by 
$$\varphi_{A}(h\ot a)=\sum \tau(a_{1}\ot h_{1})a_{2}\tau^{-1}(a_{3}\ot h_{2}), \;\;\; \phi_{H}(h\ot a)=\sum \tau(a_{1}\ot h_{1})h_{2}\tau^{-1}(a_{2}\ot h_{3})$$
for $a\in A$ and $h\in H$. 

On the other hand, a relevant class of Hopf algebras are quasitriangular Hopf algebras. This kind of Hopf algebraic objects were introduced by Drinfeld \cite{Drin} and provide solutions of the quantum Yang-Baxter equation: If $H$ is quasitriangular with morphism $R:{\Bbb F}\rightarrow H\ot H$ and $M$ is a left $H$-module with action $\varphi_{M}$, the endomorphism $T:M\ot M\rightarrow M\ot M$ defined by $T(m\ot m^{\prime})=\sum \varphi_{M}(R^{1}\ot m)\ot \varphi_{M}(R^{2}\ot m^{\prime})$ is a solution of the quantum Yang-Baxter equation. If moreover, for a Hopf algebra $A$ there exists an invertible  skew paring  $\tau:A\ot H\rightarrow {\Bbb F}$, by Proposition 2.5 of \cite{D-T}, we have that $g:A\bowtie_{\tau} H\rightarrow H$, defined by $g(a\ot h)=\sum \tau(a\ot R^{1})R^{2}h$ for $a\in A$, $h\in H$,  is a Hopf algebra projection. Therefore skew pairings and quasitriangular structures give special cases of Hopf algebra projections. These kind of projections were  completely described  by Radford in \cite{RAD}, who gave equivalent conditions for the tensor product of two
Hopf algebras $A$ and $H$ (equipped with smash product algebra and
smash coproduct coalgebra) to be a Hopf algebra, and characterized
such objects via bialgebra projections. Later, Majid in \cite{MAJ2}
interpreted this result in the modern context of braided categories
and, using the bosonization process, stated that there is a one
to one correspondence between Hopf algebras in the category of
left-left Yetter-Drinfeld modules, denoted by $\;^{H}_{H}{\mathcal
Y} {\mathcal D}$, and Hopf algebras $B$ with a projection,
i.e., with Hopf algebra morphisms $f:H\rightarrow B$,
$g:B\rightarrow H$ such that $g\co f=id_{H}$.  Therefore, if we came back to the Hopf algebra projection induced by two Hopf algebras $A$ and $H$, such that $H$ is quasitriangular, and a skew pairing $\tau:A\ot H\rightarrow {\Bbb F}$, we obtain by the Majid's bijection a Hopf algebra in $\;^{H}_{H}{\mathcal
Y} {\mathcal D}$. As was proved in \cite{P-M}, this Hopf algebra (or braided Hopf algebra) is $A$ with a modified product and antipode. If $A\rtimes H$ denotes the bosonization of $A$,  $A\rtimes H$ and $A\bowtie_{\tau} H$ are isomorphic as Hopf algebras.

An interesting generalization of Hopf algebras are non-associative Hopf
algebras. This notion in a category of vector spaces was introduced by P\'erez-
Izquierdo in \cite{PI07} in order to construct the universal enveloping algebra for Sabinin algebras, prove a Poincar\'e-Birkhoff-Witt Theorem for Sabinin algebras and give
a non-associative version of the Milnor-Moore theorem. Later, Klim and Majid
in \cite{Majidesfera} considered the case where in addition an antipode is defined in order
to understand the structure and relevant properties of the algebraic 7-sphere.
Moreover, non-associative Hopf algebras arise naturally related with other structures
in various non-associative contexts (\cite{TV}, \cite{BMP-I12}, \cite{our1}, \cite{our2}), and in recent years, interesting research
about its specific structure has been developed (see for example \cite{PIS}). As in the
quasi-Hopf setting, non-associative Hopf algebras are not associative, but the lack
of this property is compensated by some axioms involving the division operation.
It includes the example of an enveloping algebra $U(L)$ of a Malcev algebra (see \cite{Majidesfera},
\cite{TV}) and, more generally, of an arbitrary Sabinin algebra (see \cite{PI07}) as well as the
notion of the loop algebra RL of a loop L (see \cite{BMP-I12}, \cite{MPIS14}). Then, non-associative Hopf
algebras unify Moufang loops and Malcev algebras, and, more generally, formal
loops and Sabinin algebras, in the same way that Hopf algebras unify groups and
Lie algebras.

The main motivation of this paper is to introduce the theory of alteration multiplication, in the sense of Doi, for non-associative algebraic structures in monoidal categories. An outline of the paper is as follows.  In Section 2 we recall some definitions and we prove some useful results for the next sections. In the third section  we prove that for a non-associative bimonoid $A$ with a left (right) division, if there exists an invertible two-cocycle $\sigma$, it is possible to define a new non-associative bimonoid $A^{\sigma}$ with a left (right) division. Then, if $A$ is a Hopf quasigroup, in the sense of Klim and Majid, $A^{\sigma}$ is a Hopf quasigroup, and if $A$ is a Hopf algebra we recover  the Doi's construction. In Section 4 we introduce the notion of skew pairing and  prove that, as in the associative Hopf algebra setting, if there exists a skew pairing, for two non-associative bimonoids $A$, $H$ with a left (right) division,  we can define a new non-associative bialgebra $A\bowtie_{\tau} H$ with a left (right) division such that $A\bowtie_{\tau} H=(A\ot H)^{\omega}$ for some two-cocycle $\omega$ induced by $\tau$. This implies a similar result for Hopf quasigroups and, as in the Hopf world,  we prove in Section 5 that $A\bowtie_{\tau} H$  can be described in terms of double crossproducts. Finally, using the theory of Hopf quasigroup projections developed in \cite{our1}, we show that for a Hopf quasigroup $A$ and a quasitriangular Hopf quasigroup $H$, if there exists an invertible  skew paring  $\tau$, it is possible to obtain a strong Hopf quasigroup  projection and as a consequence of the results proved in \cite{our1}, we obtain that $A$ admits a structure of Hopf quasigroup in the category $\;^{H}_{H}{\mathcal
Y} {\mathcal D}$ introduced in \cite{our1}. This last result provides a way to prove that there exist examples of "true" braided Hopf quasigroups (see Example \ref{thelastone}).

In this paper we will work in a monoidal setting. Following \cite{Mac}, recall that a monoidal category is a category ${\mathcal C}$ together with a functor $\ot :{\mathcal C}\times {\mathcal C}\rightarrow {\mathcal C}$, called tensor product, an object $K$ of ${\mathcal C}$, called the unit object, and  families of natural isomorphisms 
$$a_{M,N,P}:(M\ot N)\ot P\rightarrow M\ot (N\ot P),$$
$$r_{M}:M\ot K\rightarrow M, \;\;\; l_{M}:K\ot M\rightarrow M,$$
in ${\mathcal C}$, called  associativity, right unit and left unit constraints, respectively, satisfying the Pentagon Axiom and the Triangle Axiom, i.e.,
$$a_{M,N, P\ot Q}\co a_{M\ot N,P,Q}= (id_{M}\ot a_{N,P,Q})\co a_{M,N\ot P,Q}\co (a_{M,N,P}\ot id_{Q}),$$
$$(id_{M}\ot l_{N})\co a_{M,K,N}=r_{M}\ot id_{N},$$
where for each object $X$ in ${\mathcal 
C}$, $id_{X}$ denotes the identity morphism of $X$. A monoidal category is called strict if the associativity, right unit and left unit constraints are identities. It is a wellknown  fact (see for example \cite{Christian}) that every non-strict monoidal category is monoidal equivalent to a strict one. Then, in general, when we work with monoidal categories, we can assume without loss of generality, that the category is strict. This lets us to treat monoidal categories as if they were strict and, as a consequence, the results proved in this paper hold for every non-strict symmetric monoidal category, for example the category of vector spaces over a field ${\Bbb F}$, or the category of left modules over a commutative ring $R$.
For
simplicity of notation, given objects $M$, $N$, $P$ in ${\mathcal
C}$ and a morphism $f:M\rightarrow N$, we write $P\ot f$ for
$id_{P}\ot f$ and $f \ot P$ for $f\ot id_{P}$.

A braiding for a strict monoidal category ${\mathcal C}$ is a natural family of isomorphisms $t_{M,N}:M\ot N\rightarrow N\ot M$ subject to the conditions 
$$
t_{M,N\ot P}= (N\ot t_{M,P})\co (t_{M,N}\ot P),\;\;
t_{M\ot N, P}= (t_{M,P}\ot N)\co (M\ot t_{N,P}).
$$

A strict braided monoidal  category ${\mathcal C}$ is a strict monoidal category with a braiding. These categories were introduced by Joyal and Street in \cite{JS1} (see also  \cite{JS2}) motivated by the theory of braids and links in topology. Note that, as a consequence of the definition, the equalities $t_{M,K}=t_{K,M}=id_{M}$ hold, for all object  $M$ of ${\mathcal C}$.  
 
If the braiding satisfies that  $t_{N,M}\co t_{M,N}=id_{M\ot N},$
 for all $M$, $N$ in ${\mathcal C}$, we will say that ${\mathcal C}$  is symmetric. In this case, we call the braiding $t$ a symmetry for the category ${\mathcal C}$.

Throughout this paper $\mathcal C$ denotes a strict symmetric monoidal category with tensor product $\ot$, unit object $K$ and symmetry $c$. Following \cite{Bespa}, we also assume that in ${\mathcal C}$ every idempotent morphism splits, i.e., for any morphism $q:X\rightarrow X$ such that $q\co q=q$ there exist an object $Z$, called the image of $q$, and morphisms $i:Z\rightarrow X$, $p:X\rightarrow Z$ such that $q=i\co p$ and $p\co i=id_Z$. The morphisms $p$ and $i$ will be called a factorization of $q$. Note that $Z$, $p$ and $i$ are unique up to isomorphism. The categories satisfying this property constitute a broad class that includes, among others, the categories with epi-monic decomposition for morphisms and categories with equalizers or with coequalizers.  For example, complete bornological spaces is a symmetric monoidal closed category that is not  abelian, but it does have coequalizers (see \cite{Meyer}). On the other hand, let {\bf Hilb} be the category whose objects are complex Hilbert spaces and whose morphisms are the continuous linear maps. Then {\bf Hilb} is not an abelian and closed category but it is a symmetric monoidal category (see \cite{Kad}) with coequalizers.

A magma in ${\mathcal C}$ is a pair $A=(A,  
\mu_{A})$ where  $A$ is an object in ${\mathcal C}$ and $\mu_{A}:A\otimes A\rightarrow A$ (product) is a morphism in ${\mathcal C}$.
A unital magma  in ${\mathcal C}$ is a triple $A=(A, \eta_A, 
\mu_{A})$ where $(A,  
\mu_{A})$ is a magma in ${\mathcal C}$ and
 $\eta_{A}: K\rightarrow A$ (unit) is a morphism in ${\mathcal C}$ such that 
$\mu_{A}\circ (A\otimes \eta_{A})=id_{A}=\mu_{A}\circ
(\eta_{A}\otimes A)$. A monoid in ${\mathcal C}$ is a unital magma $A=(A, \eta_A, \mu_{A})$ in ${\mathcal C}$ satisfying   $\mu_{A}\circ (A\otimes
\mu_{A})=\mu_{A}\circ (\mu_{A}\otimes A)$, i.e., the product $\mu_{A}$ is associative. Given two unital magmas (monoids) $A$ and $B$,
$f:A\rightarrow B$ is a morphism of unital magmas (monoids) if  $f\circ \eta_{A}= \eta_{B}$ and $\mu_{B}\circ (f\otimes f)=f\circ \mu_{A}$. 

Also,
if $A$, $B$ are unital magmas (monoids) in ${\mathcal C}$, the object $A\otimes
B$ is a unital magma (monoid) in
 ${\mathcal C}$ where $\eta_{A\otimes B}=\eta_{A}\otimes \eta_{B}$
and $\mu_{A\otimes B}=(\mu_{A}\otimes \mu_{B})\circ (A\otimes c_{B,A}\otimes B)$. If $A=(A, \eta_A,  \mu_{A})$ is a unital magma also is $A^{op}=(A, \eta_A,  \mu_{A}\co c_{A,A})$.

A comagma in ${\mathcal C}$ is a pair ${D} = (D, \delta_{D})$ where $D$ is an object in ${\mathcal C}$ and $\delta_{D}:D\rightarrow D\otimes D$ (coproduct) is a morphism in
${\mathcal C}$. A counital comagma in ${\mathcal C}$ is a triple ${D} = (D,
\varepsilon_{D}, \delta_{D})$ where $(D, \delta_{D})$ is a comagma in ${\mathcal
C}$ and $\varepsilon_{D}: D\rightarrow K$ (counit) is a  morphism in
${\mathcal C}$ such that $(\varepsilon_{D}\otimes D)\circ
\delta_{D}= id_{D}=(D\otimes \varepsilon_{D})\circ \delta_{D}$.  A comonoid  in ${\mathcal C}$ is a counital comagma in ${\mathcal C}$ satisfying  $(\delta_{D}\otimes D)\circ \delta_{D}= (D\otimes \delta_{D})\circ \delta_{D}$, i.e., the coproduct $\delta_{D}$ is coassociative. If $D$ and
$E$ are counital comagmas (comonoids) in  ${\mathcal C}$,
$f:D\rightarrow E$ is a  morphism of counital comagmas (comonoids) if $\varepsilon_{E}\circ f
=\varepsilon_{D}$,  and $(f\otimes f)\circ
\delta_{D} =\delta_{E}\circ f$.  

Moreover, if $D$, $E$ are counital comagmas (comonoids) in ${\mathcal C}$,
the object $D\otimes E$ is a counital comagma (comonoid) in ${\mathcal C}$ where
$\varepsilon_{D\otimes E}=\varepsilon_{D}\otimes \varepsilon_{E}$
and $\delta_{D\otimes E}=(D\otimes c_{D,E}\otimes E)\circ(
\delta_{D}\otimes  \delta_{E})$. If $D=(D, \varepsilon_{D}, \delta_{D})$ is a counital comagma also is $D^{cop}=(D, \varepsilon_{D}, c_{D,D}\co\delta_{D})$.

If $A$ is a magma, $B$ a comagma and $f:B\rightarrow A$, $g:B\rightarrow A$ are morphisms, we define the convolution product by $f\ast g=\mu_{A}\circ
(f\otimes g)\circ \delta_{B}$. If $A$ is unital and $B$ counital, the morphism $f:B\rightarrow A$ is convolution invertible if there exists $f^{-1}:B\to A$ such that $f \ast f^{-1}=f^{-1}\ast f=\varepsilon_B\ot \eta_A$.

\section{non-associative bimonoids}

In this section we introduce the definition of non-associative bimonoid with left (right) division. We give some properties and establish the relation with left (right) Hopf quasigroups.

\begin{definition}
\label{naHadef}
{\rm A non-associative bimonoid in the category $\mathcal C$ is a unital magma $(H,\eta_H,\mu_H)$ and a comonoid $(H,\varepsilon_H,\delta_H)$ such that $\varepsilon_H$ and $\delta_H$ are morphisms of unital magmas (equivalently, $\eta_H$ and $\mu_H$ are morphisms of counital comagmas). Then the following identities hold:
\begin{equation}
\label{eta-eps}
\varepsilon_{H}\co \eta_{H}=id_{K},
\end{equation}
\begin{equation}
\label{mu-eps}
\varepsilon_{H}\co \mu_{H}=\varepsilon_{H}\ot \varepsilon_{H},
\end{equation}
\begin{equation}
\label{delta-eta}
\delta_{H}\co \eta_{H}=\eta_{H}\ot \eta_{H},
\end{equation}
\begin{equation}
\label{delta-mu}
\delta_{H}\co \mu_{H}=(\mu_{H}\ot \mu_{H})\co \delta_{H\ot H}.
\end{equation}

We say that $H$ has a left division if moreover there exists a morphism $l_{H}:H\ot H\to H$ in $\mathcal C$ (called the left division of $H$) such that
\begin{equation}
\label{leftdivision}
l_{H}\co(H\ot\mu_H)\co(\delta_H\ot H)=\varepsilon_H\ot H=\mu_H\co(H\ot l_{H})\co(\delta_H\ot H).
\end{equation}

A morphism $f:H\rightarrow B$ between non-associative bimonoids  $H$ and $B$ is a morphism of  unital magmas and comonoids.

We say that a non-associative bimonoid $H$ in the category $\mathcal C$  is cocommutative if $\delta_{H}=c_{H,H}\co \delta_{H}$.
}
\end{definition}

\begin{remark}
\label{rightnaHadef}
{\rm
We have the corresponding notion of  non-associative bimonoid with right division, replacing the left division $l_{H}$ by a right division $r_H:H\ot H\to H$ that, instead of (\ref{leftdivision}), satisfies:
\begin{equation}
\label{rightdivision}
r_H\co(\mu_H\ot H)\co(H\ot\delta_H)=H\ot\varepsilon_H=\mu_H\co(r_H\ot H)\co(H\ot\delta_H).
\end{equation}

Note that, if ${\mathcal C}$ is the category of vector spaces over a field ${\Bbb F}$, the notion of non-associative bimonoid with left and a right division is the one introduced by P\'erez-Izquierdo in \cite{PI07} with the name of  unital $H$-bialgebra.
}
\end{remark}

Now we give some properties about non-associative bimonoids with left division.

\begin{proposition}
\label{divisionunica}
Let $H$ be a non-associative bimonoid. There exists  a left division $l_{H}$ if and only if the morphism 
$h:H\ot H\rightarrow H\ot H$ defined as $h=(H\ot \mu_H)\co (\delta_H\ot H)$ is an isomorphism. As a consequence, a left division $l_{H}$ is uniquely determined.

Similarly, there exists  a right division $r_{H}$ if and only if the morphism 
$d:H\ot H\rightarrow H\ot H$ defined as $d=(\mu_H\ot H)\co (H\ot \delta_H)$ is an isomorphism. As a consequence, a right division $r_{H}$ is uniquely determined.

\end{proposition}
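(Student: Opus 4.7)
The plan is to prove the equivalence by producing an explicit candidate inverse of $h$ out of $l_H$, and conversely extracting $l_H$ from $h^{-1}$ by projecting onto the second tensor factor. The bridge between the two viewpoints is the elementary identity $(\varepsilon_H\ot H)\co h = \mu_H$, a consequence of counitality, which will be invoked at the critical step of the converse direction.

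For the implication ``$l_H$ exists $\Rightarrow$ $h$ is invertible'', set $\tilde h := (H\ot l_H)\co(\delta_H\ot H)$. A single application of coassociativity yields the identity $(\delta_H\ot H)\co(H\ot\mu_H)\co(\delta_H\ot H) = (H\ot h)\co(\delta_H\ot H)$, from which $\tilde h\co h = (H\ot(l_H\co h))\co(\delta_H\ot H) = (H\ot\varepsilon_H\ot H)\co(\delta_H\ot H) = id_{H\ot H}$ by the first half of (\ref{leftdivision}) and counitality. A symmetric reduction, based on the dual identity $(\delta_H\ot H)\co(H\ot l_H)\co(\delta_H\ot H) = (H\ot\tilde h)\co(\delta_H\ot H)$ together with the second half of (\ref{leftdivision}), yields $h\co\tilde h = id_{H\ot H}$. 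Hence $h$ is invertible with $h^{-1} = \tilde h$.

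For the converse, assume $h$ is invertible and define $l_H := (\varepsilon_H\ot H)\co h^{-1}$. The first identity of (\ref{leftdivision}) is immediate from $l_H\co h = (\varepsilon_H\ot H)\co h^{-1}\co h = \varepsilon_H\ot H$. The second identity is the only substantive obstacle, since the non-associativity of $\mu_H$ blocks any direct manipulation of $\mu_H\co(H\ot l_H)\co(\delta_H\ot H)$. I would sidestep this by a bootstrap: the derivation of $\tilde h\co h = id_{H\ot H}$ in the previous paragraph uses \emph{only} the first half of (\ref{leftdivision}), which is now in hand, so $\tilde h := (H\ot l_H)\co(\delta_H\ot H)$ is a left inverse of the isomorphism $h$ and therefore equal to $h^{-1}$. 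Consequently $h\co\tilde h = id_{H\ot H}$, and post-composing with $\varepsilon_H\ot H$ and substituting the bridge identity $(\varepsilon_H\ot H)\co h = \mu_H$ produces exactly the required equation $\mu_H\co(H\ot l_H)\co(\delta_H\ot H) = \varepsilon_H\ot H$.

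Uniqueness is extracted from the same formulas, since any $l_H$ satisfying (\ref{leftdivision}) forces $\tilde h = h^{-1}$ by the forward direction, and then a short counitality computation gives $l_H = (\varepsilon_H\ot H)\co\tilde h = (\varepsilon_H\ot H)\co h^{-1}$. The right-division statement is proved by the mirror argument, interchanging the roles of the two tensor factors throughout and replacing the bridge identity by $(H\ot\varepsilon_H)\co d = \mu_H$, again a direct consequence of counitality.
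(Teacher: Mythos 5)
Your proof is correct and follows essentially the same route as the paper: the same candidate inverse $(H\ot l_{H})\co(\delta_H\ot H)$, the same formula $l_{H}=(\varepsilon_H\ot H)\co h^{-1}$, and coassociativity doing the work in both directions. The only difference is cosmetic and lies in the converse, where you recover the second identity of (\ref{leftdivision}) by bootstrapping $\tilde h=h^{-1}$ from the already-established first identity and then composing $h\co\tilde h=id_{H\ot H}$ with $\varepsilon_H\ot H$, whereas the paper instead derives the commutation relation $(\delta_{H}\ot H)\co h^{-1}=(H\ot h^{-1})\co (\delta_{H}\ot H)$ and substitutes directly; both are valid.
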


\begin{proof}
Let $l_{H}:H\ot H\rightarrow H$ be a left division. Define $h^{\prime}=(H\ot l_{H})\co (\delta_{H}\ot H)$. Then, by (\ref{leftdivision}) and the coassociativity of $\delta_{H}$, we obtain that $h^{\prime}$ is the inverse of $h$. 

On the other hand, if $h$ is an isomorphism, using the coassociativity of $\delta_{H}$, we obtain that 
$$(\delta_{H}\ot H)\co h^{-1}\co h=\delta_{H}\ot H=(H\ot (h^{-1}\co h))\co (\delta_{H}\ot H)=(H\ot h^{-1})\co (\delta_{H}\ot H)\co h$$ 
and the equality
\begin{equation}
\label{h-1tdelta}
(\delta_{H}\ot H)\co h^{-1}=(H\ot h^{-1})\co (\delta_{H}\ot H)
\end{equation}
holds. 

Then the morphism $l_{H}=(\varepsilon_H\ot H)\co h^{-1}$ is a left division for $H$. Indeed, trivially $l_{H}\co h=\varepsilon_H\ot H$ and, by (\ref{h-1tdelta}), we have that
$$\mu_H\co (H\ot l_{H})\co (\delta_H\ot H)=\mu_H\co (H\ot \varepsilon_H\ot H)\co (\delta_H\ot H)\co h^{-1}=(\varepsilon_H\ot H)\co h\co h^{-1}=\varepsilon_H\ot H.$$

The proof for the right side is similar and we leave the details to the reader. Note that in this case $d^{-1}=(r_{H}\ot H)\co (H\ot \delta_{H})$ and $r_{H}=(H\ot \varepsilon_{H})\co d^{-1}.$

\end{proof}

\begin{remark}
{\rm In the conditions of the previous proposition, if $h$ is an isomorphism, we obtain 
\begin{equation}
\label{h-1delta}
 h^{-1}\co \delta_{H}=H\ot \eta_{H},
\end{equation}
\begin{equation}
\label{h-1mu}
\mu_{H}\co h^{-1}=\varepsilon_{H}\ot H.
\end{equation}

In a similar way if $d$ is an isomorphism,
\begin{equation}
\label{h-1delta-r}
 d^{-1}\co \delta_{H}=\eta_{H}\ot H,
\end{equation}
\begin{equation}
\label{h-1mu-r}
\mu_{H}\co d^{-1}=H\ot \varepsilon_{H}.
\end{equation}

Also, composing with $\varepsilon_{H}\ot  H$ in (\ref{h-1delta}), 
\begin{equation}
\label{new-h-1delta}
 l_{H}\co \delta_{H}=\varepsilon_{H}\ot \eta_{H}.
\end{equation}

Composing with $H\otimes \eta_{H}$ in (\ref{leftdivision}), 
\begin{equation}
\label{new-h-2delta}
id_{H}\ast \lambda_{H}=\varepsilon_{H}\ot \eta_{H}
\end{equation}
for $\lambda_{H}=l_{H}\co (H\ot \eta_{H})$.  Similarly, composing with $\eta_{H}\ot H$ in (\ref{rightdivision}) we have 
\begin{equation}
\label{new-h-2delta-r}
\varrho_{H}\ast id_{H}=\varepsilon_{H}\ot \eta_{H}
\end{equation}
for $\varrho_{H}=r_{H}\co (\eta_{H}\ot H)$. 

On the other hand, by (\ref{eta-eps}), (\ref{delta-eta}), and (\ref{leftdivision})
\begin{equation}
\label{new-lh-eta}
 l_{H}\co (\eta_{H}\ot H)=id_{H}.
\end{equation}

Also, for right divisions we have 
\begin{equation}
\label{new-lh-eta-r}
 r_{H}\co (H\ot \eta_{H})=id_{H}.
\end{equation}

Finally, by (\ref{mu-eps}) and  (\ref{leftdivision})
\begin{equation}
\label{lh-vareps}
\varepsilon_{H}\co l_{H}=\varepsilon_{H}\ot \varepsilon_{H}.
\end{equation}

Therefore,  
\begin{equation}
\label{lambda-vareps}
\varepsilon_{H}\co \lambda_{H}=\varepsilon_{H},
\end{equation}
and 
\begin{equation}
\label{lambda-eta}
\lambda_{H}\co \eta_{H}=\eta_{H}.
\end{equation}

Of course, for a right division we have 
\begin{equation}
\label{lh-vareps-r}
\varepsilon_{H}\co r_{H}=\varepsilon_{H}\ot \varepsilon_{H},
\end{equation}
\begin{equation}
\label{lambda-vareps-r}
\varepsilon_{H}\co \varrho_{H}=\varepsilon_{H},
\end{equation}
\begin{equation}
\label{lambda-eta-r}
\varrho_{H}\co \eta_{H}=\eta_{H}.
\end{equation}
}
\end{remark}

The following result was proved in (\cite{PI07}, Proposition 6) for unital $H$-bialgebras. In this paper we give an alternative proof based in Proposition \ref{divisionunica}.

\begin{proposition}
\label{PI-adv-Prop 6}
Let $H$ be a non-associative bimonoid  with left division $l_{H}$. It holds that
\begin{equation}\label{alphadelta}
\delta_H\co l_{H}=(l_{H}\ot l_{H})\co(H\ot c_{H,H}\ot H)\co((c_{H,H}\co\delta_H)\ot\delta_H).
\end{equation}
As a consequence, if $\lambda_{H}=l_{H}\co (H\ot \eta_{H})$ we have  that $\lambda_{H}$ is anticomultiplicative, i.e.,
\begin{equation}\label{anticomul}
\delta_H\co \lambda_{H}=(\lambda_{H}\ot \lambda_{H})\co c_{H,H} \co \delta_H.
\end{equation}

If $r_{H}$ is a right division for $H$, the equality 
\begin{equation}\label{alphadelta-r}
\delta_H\co r_{H}=(r_{H}\ot r_{H})\co(H\ot c_{H,H}\ot H)\co(\delta_{H}\ot (c_{H,H}\co\delta_H))
\end{equation}
holds. Then, if $\varrho_{H}=r_{H}\co (\eta_{H}\ot H)$, we have that 
\begin{equation}\label{anticomul-r}
\delta_H\co \varrho_{H}=(\varrho_{H}\ot \varrho_{H})\co c_{H,H} \co \delta_H.
\end{equation}

\end{proposition}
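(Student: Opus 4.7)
The plan is to exploit the uniqueness of the left division established in Proposition \ref{divisionunica}. Since $h=(H\ot\mu_H)\co(\delta_H\ot H)$ is an isomorphism with $l_H=(\varepsilon_H\ot H)\co h^{-1}$, the identity (\ref{alphadelta}) is equivalent to the same identity composed on the right with $h$. The left-hand side then becomes
\[
\delta_H\co l_H\co h=\delta_H\co(\varepsilon_H\ot H)=\varepsilon_H\ot\delta_H,
\]
so it suffices to show that the right-hand side $R$ of (\ref{alphadelta}), composed on the right with $h$, also equals $\varepsilon_H\ot\delta_H$.

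To compute $R\co h$ I would proceed in three stages. First, apply the bialgebra identity (\ref{delta-mu}) to expand $\delta_H\co\mu_H$, which appears when the second $\delta_H$-factor in $R$ meets the $\mu_H$ of $h$. Second, use the coassociativity of $\delta_H$ to regroup the resulting four-fold coproduct of $x$ so that two consecutive Sweedler components of $\delta_H^{3}(x)$ are recognized as $\delta_H$ applied to a single middle component of $\delta_H^{2}(x)$. Third, invoke the defining identity (\ref{leftdivision}) twice: the first application, on the inner pair produced by the braiding, collapses that pair of components into an $\varepsilon_H$ acting on a middle factor; the counit axiom $(\varepsilon_H\ot H)\co\delta_H=id_H$ then reduces $\delta_H^{2}$ to $\delta_H$, and a second use of (\ref{leftdivision}) contracts the remaining $l_H$ against $\delta_H(x)$, leaving $\varepsilon_H(x)$ on the first tensor factor and $\delta_H(y)$ on the other two. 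This is precisely $\varepsilon_H\ot\delta_H$.

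For the anticomultiplicativity (\ref{anticomul}), I compose (\ref{alphadelta}) on the right with $H\ot\eta_H$. The left-hand side becomes $\delta_H\co\lambda_H$ by definition of $\lambda_H$. On the right-hand side, (\ref{delta-eta}) gives $\delta_H\co\eta_H=\eta_H\ot\eta_H$, so each copy of $l_H$ in $R$ receives $\eta_H$ in its second input and reduces to $\lambda_H$ by its very definition; the initial $c_{H,H}\co\delta_H$ then delivers $(\lambda_H\ot\lambda_H)\co c_{H,H}\co\delta_H$ as required. The right-division statements (\ref{alphadelta-r}) and (\ref{anticomul-r}) follow by a strictly parallel argument: apply Proposition \ref{divisionunica} to the isomorphism $d=(\mu_H\ot H)\co(H\ot\delta_H)$ with $r_H=(H\ot\varepsilon_H)\co d^{-1}$, repeat the computation with the roles of the two tensor factors exchanged, and specialize to $\eta_H\ot H$ to obtain the anticomultiplicativity of $\varrho_H$.

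The main obstacle will be the coassociative bookkeeping in $R\co h$: one has to recognize which consecutive pairs of components of $\delta_H^{3}(x)$ arise naturally as $\delta_H$ applied to a single factor of $\delta_H(x)$, so that the two successive applications of the left-division axiom (\ref{leftdivision}) match exactly the input shape required by that axiom. It is worth emphasizing that associativity of $\mu_H$ is never invoked in the argument, which is essential because $H$ is only assumed to be a unital magma.
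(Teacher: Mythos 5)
Your proposal is correct and follows essentially the same route as the paper: compose both sides of (\ref{alphadelta}) with the isomorphism $h=(H\ot\mu_H)\co(\delta_H\ot H)$, reduce the left side to $\varepsilon_H\ot\delta_H$ via the division axiom, and reduce the right side to the same expression using (\ref{delta-mu}), coassociativity, naturality of $c$, two applications of (\ref{leftdivision}) and the counit properties, then obtain (\ref{anticomul}) by composing with $H\ot\eta_H$ and using (\ref{delta-eta}). The right-division case is handled symmetrically in both arguments.
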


\begin{proof} Indeed, if we compose in the first term of (\ref{alphadelta}) with the isomorphism $h=(H\ot \mu_H)\co (\delta_H\ot H)$, we obtain
$$\delta_{H}\co l_{H}\co h=\varepsilon_{H}\ot \delta_{H}$$
and, on the other hand, composing in the second term, 
\begin{itemize}
\item[ ]$\hspace{0.38cm}(l_{H}\ot l_{H})\co(H\ot c_{H,H}\ot H)\co((c_{H,H}\co\delta_H)\ot\delta_H)\co h $

\item[ ]$=(l_{H}\ot l_{H})\co(H\ot c_{H,H}\ot H)\co((c_{H,H}\co\delta_H)\ot((\mu_{H}\ot \mu_{H})\co \delta_{H\ot H}))\co (\delta_{H}\ot H) $
{\scriptsize ({\blue by (\ref{delta-mu})})}

\item[ ]$=((l_{H}\co h)\ot l_{H})\co(H\ot c_{H,H}\ot \mu_{H})\co (c_{H,H}\ot c_{H,H}\ot H)\co (((H\ot \delta_{H})\co \delta_{H})\ot \delta_{H})$ {\scriptsize ({\blue by  naturality of $c$ }}
\item[ ]\hspace{0.38cm} {\scriptsize {\blue  and  coassociativity})}

\item[ ]$=(H\ot l_{H})\co(c_{H,H}\ot \mu_{H})\co \delta_{H\ot H}$   {\scriptsize ({\blue by  naturality of $c$ and  properties of the counit})}

\item[ ]$=(H\ot (l_{H}\co h))\co(c_{H,H}\ot H)\co (H\ot \delta_{H})$   {\scriptsize ({\blue by  naturality of $c$})}

\item[ ]$=\varepsilon_{H}\ot \delta_{H}$ {\scriptsize ({\blue by  naturality of $c$})}

\end{itemize}

Therefore, (\ref{alphadelta})  holds. Finally, the equality (\ref{anticomul}) follows  by (\ref{alphadelta}) and  (\ref{delta-eta}). Similarly we can prove the identities (\ref{alphadelta-r}) and (\ref{anticomul-r}).

\end{proof}

\begin{example}
\label{exsabinin}
{\rm
An essential example of a non-associative bimonoid arises from Sabinin algebras. We take one of its possible definitions  directly from \cite{PI07}. A vector space $V$ over a field of characteristic zero is called a Sabinin algebra if it is endowed with multilinear operations
$$
\langle  x_1,x_2,\ldots, x_m ; y, z      \rangle, \hspace{0.1cm} m\geq 0,
$$
$$
\Phi(x_1,x_2,\ldots, x_m ; y_1, y_2,\ldots, y_n), \hspace{0.1cm} m\geq 1, n\geq 2,
$$
which satisfy the identities
$$
\langle  x_1,x_2,\ldots, x_m ; y, z      \rangle=
-\langle x_1,x_2,\ldots, x_m ; z, y      \rangle,
$$
\begin{itemize}
\item[]
$\langle x_1,x_2,\ldots,x_r, a, b, x_{r+1},\ldots, x_m ; y, z      \rangle
-
\langle x_1,x_2,\ldots,x_r, b, a, x_{r+1},\ldots, x_m ; y, z      \rangle$\\
\item[]
$
+
\sum_{k=0}^r \sum_{\omega}  \langle x_{\omega_1},\ldots, x_{\omega_k}; \langle x_{\omega_{k+1}}, \ldots x_{\omega_r}; a, b  \rangle,\ldots,x_m; y, z        \rangle=0,
$
\end{itemize}
$$
\sigma_{x,y,z}
\langle x_1,x_2,\ldots,x_r, x; y,z\rangle
+
\sum_{k=0}^r \sum_{\omega}  \langle x_{\omega_1},\ldots, x_{\omega_k}; \langle x_{\omega_{k+1}}, \ldots x_{\omega_r}; y, z  \rangle, x\rangle =0
$$
and
$$
\Phi( x_1,\ldots, x_m; y_1,\ldots, y_n)=\Phi (x_{\tau(1)},\ldots,x_{\tau(m)}; y_{\upsilon(1)},\ldots,y_{\upsilon(n)} ),
$$
where $\omega$ runs the set of all bijections of the type $\omega:\{1,2,\ldots,r\}\rightarrow \{1,2,\ldots,r\}$, $i\mapsto \omega_i$, $\omega_1<\omega_2<\ldots<\omega_k$, $\omega_{k+1}<\ldots <\omega_r$, $k=0,1,\ldots, r$, $r\geq0$, $\sigma_{x,y,z}$ denotes the cyclic sum by $x,y,z$; $\tau\in S_m$, $\upsilon\in S_n$ and $S_l$ is the symmetric group.

Given any Sabinin algebra $V$, it was explicitly constructed in \cite{PI07}  its universal enveloping algebra $U(V)$, and moreover, it was also proved that it can be provided with a cocommutative non-associative bimonoid structure with left and right division.

This is an interesting example because, as it is pointed out in \cite{PI07} and \cite{MPIS14}, the definition of Sabinin algebra includes an infinite set of independent operations, but when we take a finite set we recover many other common structures. For example, it includes, as  particular instances with $\Phi=0$, Lie, Malcev and Bol algebras. Specifically, Bol algebras have one binary and one ternary operation, Malcev algebras one binary operation, and Lie algebras are just a further particularization of Malcev algebras.

}

\end{example}

Now we introduce the notion of  left Hopf quasigroup.

\begin{definition}
\label{leftHopfqg} {\rm A left Hopf quasigroup $H$ in
${\mathcal C}$ is a non-associative bimonoid such that there exists a morphism $\lambda_{H}:H\rightarrow H$ in ${\mathcal C}$ (called the left antipode of $H$) satisfying:
\begin{equation}
\label{leftHqg}
\mu_H\circ (\lambda_H\ot \mu_H)\circ (\delta_H\ot H)=
\varepsilon_H\ot H=
\mu_H\circ (H\ot \mu_H)\circ (H\ot \lambda_H\ot H)\circ (\delta_H\ot H).
\end{equation}

Note that composing with $H\ot \eta_{H}$ in (\ref{leftHqg}) we obtain 
\begin{equation}
\label{primeradealpha}
\lambda_{H}\ast id_{H}=\varepsilon_H\ot \eta_H. 
\end{equation}

Obviously, there is a similar definition for the right side, i.e., $H$ is a right Hopf quasigroup if  there is a morphism $\varrho_H:H\rightarrow H$
in ${\mathcal C}$ (called the right antipode of $H$) such that
\begin{equation}
\label{rightHqg}
\mu_H\circ (\mu_H\ot H)\circ (H\ot \varrho_H\ot H)\circ (H\ot \delta_H)=
H\ot \varepsilon_H=
\mu_H\circ(\mu_H\ot \varrho_H)\circ (H\ot \delta_H). 
\end{equation}

Then, composing with $ \eta_{H}\ot H$ in (\ref{rightHqg}) we obtain 
\begin{equation}
\label{primeradealpha2}
id_{H}\ast \varrho_{H}=\varepsilon_H\ot \eta_H. 
\end{equation}

 }
\end{definition}

The above definition is a generalization of the notion of Hopf quasigroup (also called non-associative Hopf algebra with the inverse property, or non-associative IP Hopf algebra) introduced in \cite{Majidesfera} (in this case ${\mathcal C}$ is the category of vector spaces over a field ${\Bbb F}$). We recall this definition in a monoidal setting (see \cite{our1}, \cite{our2}).  Note that a  Hopf quasigroup is associative if an only if  is a Hopf algebra.

\begin{definition}
\label{Hqg}  A Hopf quasigroup $H$ in
${\mathcal C}$ is a non-associative bimonoid such that there exists a morphism $\lambda_{H}:H\rightarrow H$
in ${\mathcal C}$ (called the antipode of $H$) satisfying (\ref{leftHqg}) and (\ref{rightHqg}). If $H$ is a Hopf quasigroup in ${\mathcal C}$, the antipode
$\lambda_{H}$ is unique and antimultiplicative, i.e.,
\begin{equation} 
\label{lambda-anti}
\lambda_{H}\co\mu_{H}=\mu_{H}\circ
(\lambda_{H}\ot\lambda_{H})\circ
c_{H,H},
\end{equation}
(\cite{Majidesfera}, Proposition 4.2). A morphism between Hopf quasigroups $H$ and $B$ is a morphism $f:H\rightarrow B$ of unital magmas and comonoids. Then (see Lemma 1.4 of \cite{our1}) the equality
\begin{equation}
\label{antipode-morphism}
\lambda_B\co f=f\co \lambda_H,
\end{equation}
holds.

\end{definition}

\begin{remark}\label{antipodounico}
{\rm Note that if $H$ is both left and right Hopf quasigroup, the left and right antipodes are the same. Indeed, denote by $\lambda_H$ and $\varrho_H$ the left and right antipodes, respectively. Then, taking into account (\ref{primeradealpha}), the coassociativity of $\delta_{H}$ and condition (\ref{rightHqg}), 
$$\varrho_H= (\lambda_H\ast id_H)\ast \varrho_H=\mu_H\co (\mu_H\ot \varrho_H)\co (\lambda_H\ot \delta_H)\co \delta_H=\lambda_H.$$

As a consequence, $H$ is a Hopf quasigroup if and only if $H$ is a left and right Hopf quasigroup. 
}
\end{remark}

\begin{example}
\label{exloop}
{\rm  A loop $(L, \cdot,\diagup, \diagdown)$ is a set $L$
equipped with three binary operations of multiplication $\cdot$, right division $\diagup$ and
left division $\diagdown$, satisfying the identities:
\begin{equation}\label{IL}
 v\diagdown(v \cdot u) = u,
 \end{equation}
\begin{equation}\label{IR}
 u = (u \cdot v)\diagup v,
 \end{equation}
\begin{equation}\label{SL}
v\cdot  (v\diagdown u) = u,
 \end{equation}
\begin{equation}\label{SR}
u = (u\diagup v) \cdot  v,
 \end{equation}
and such that in addition it contains an  identity element $e_L$  satisfying that the equations $e_L\cdot x=x=x\cdot e_L$ hold for all $x$ in $L$. From now on, for brevity of notations, multiplications on $L$ will be expressed by juxtaposition.

Let $L$ be a loop and let $N$ be a non-empty subset of $L$.
Then we say that $N$ is a subloop of $L$ if it is closed under the three binary operations. Notice that in this case $e_L=e_N$.

A map
$h:L_{1}\rightarrow L_{2}$ is called a loop homomorphism if
$h(uv)=h(u)h(v)$, $h(u\diagup v)=h(u)\diagup h(v)$ and $h(u\diagdown v)=h(u)\diagdown h(v)$ for all $u,v\in L_{1}$.  A  bijective loop homomorphism  is called a
 loop isomorphism. It is a well known fact that, if $h:L_1\to L_2$ is a loop homomorphism we have $h(e_{L_{1}})=e_{L_{2}}$.

Let $R$ be a commutative ring and $L$ a loop. Then, the loop algebra 
$$RL=\bigoplus_{u\in L}Ru$$
is a cocommutative non-associative bimonoid with  product left and right division defined by
linear extensions of those defined in $L$ and
$$\delta_{RL}(u)=u\ot u, \;\; \varepsilon_{RL}(u)=1_{R}$$
 on the basis elements (see \cite{PI07}).

}
\end{example}

Now we give the relation between non-associative bimonoids with left division and left Hopf quasigroups.
\begin{proposition}
\label{igualdadesalpha}
The following assertions are equivalent:
\begin{itemize}
\item [(i)] $H$ is a non-associative bimonoid with left division $l_{H}$ such that
\begin{equation}
\label{etaalphamu}
l_{H}=\mu_H\co (\lambda_{H}\ot H), 
\end{equation}
where $\lambda_{H}=l_{H}\co (H\ot \eta_{H})$.
\item [(ii)] $H$ is a non-associative bimonoid with left division $l_{H}$ such that
\begin{equation}
\label{quasiasociativity}
\mu_H\co (\lambda_{H}\ot \mu_H)\co (\delta_H\ot H)=\varepsilon_H\ot H, 
\end{equation}
where $\lambda_{H}=l_{H}\co (H\ot \eta_{H})$.

\item [(iii)] $H$ is a left Hopf quasigroup.

\end{itemize}

\end{proposition}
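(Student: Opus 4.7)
\vspace{0.2cm}
\noindent\textbf{Proof plan.} I would establish the equivalence by the cycle $(iii)\Rightarrow (i)\Rightarrow (ii)\Rightarrow (iii)$. The key observation I would exploit throughout is the uniqueness of the left division stated in Proposition \ref{divisionunica}: any morphism $l:H\ot H\to H$ satisfying $l\co h=\varepsilon_H\ot H$ (where $h=(H\ot\mu_H)\co(\delta_H\ot H)$) must coincide with $l_H$. This lets me replace the apparent need to manipulate $l_H$ directly with the much simpler task of checking a single cancellation equation.

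For $(iii)\Rightarrow (i)$, I assume $H$ is a left Hopf quasigroup with antipode $\lambda_H$ and simply define $l_H:=\mu_H\co(\lambda_H\ot H)$. The two halves of (\ref{leftdivision}) are nothing but the two equalities in (\ref{leftHqg}) rewritten, so $l_H$ is a left division. Then, using the unit axiom $\mu_H\co(H\ot\eta_H)=id_H$, I verify
\begin{equation*}
l_H\co(H\ot\eta_H)=\mu_H\co(\lambda_H\ot\eta_H)=\lambda_H,
\end{equation*}
which gives (\ref{etaalphamu}) at once.

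The implication $(i)\Rightarrow (ii)$ is immediate: substituting (\ref{etaalphamu}) into the first half of (\ref{leftdivision}) yields $\mu_H\co(\lambda_H\ot\mu_H)\co(\delta_H\ot H)=\varepsilon_H\ot H$, which is (\ref{quasiasociativity}).

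For $(ii)\Rightarrow (iii)$, the point is to upgrade (\ref{quasiasociativity}) to both identities of (\ref{leftHqg}). The first identity of (\ref{leftHqg}) is already (\ref{quasiasociativity}). For the second identity, I would first show that $\lambda_H$ encodes the left division via (\ref{etaalphamu}). Setting $l':=\mu_H\co(\lambda_H\ot H)$, the hypothesis (\ref{quasiasociativity}) rewrites exactly as $l'\co h=\varepsilon_H\ot H$, so by the uniqueness part of Proposition \ref{divisionunica} one concludes $l_H=l'=\mu_H\co(\lambda_H\ot H)$. Substituting this equality into the second half of (\ref{leftdivision}) produces the second identity of (\ref{leftHqg}), finishing the cycle.

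The only step that requires any care is $(ii)\Rightarrow (iii)$, and the main obstacle there is resisting the temptation to try to prove the second half of (\ref{leftHqg}) directly from (\ref{quasiasociativity}); instead, the efficient route is to invoke the uniqueness of $l_H$ from Proposition \ref{divisionunica} to identify $l_H$ with $\mu_H\co(\lambda_H\ot H)$, after which the second Hopf quasigroup identity falls out of the second half of the definition of left division at no additional cost.
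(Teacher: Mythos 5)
Your proposal is correct and follows essentially the same route as the paper: the implications $(iii)\Rightarrow(i)$ and $(i)\Rightarrow(ii)$ are the same direct substitutions, and your appeal to the uniqueness of the left division in Proposition \ref{divisionunica} for $(ii)\Rightarrow(iii)$ is just a repackaging of the paper's explicit composition of (\ref{quasiasociativity}) with $(H\ot l_{H})\co(\delta_H\ot H)$ (i.e., with $h^{-1}$), after which the second identity of (\ref{leftHqg}) is read off from the second half of (\ref{leftdivision}) exactly as in the paper. The only cosmetic difference is that you organize the argument as a single cycle rather than proving $(i)\Leftrightarrow(ii)$ and $(i)\Leftrightarrow(iii)$ separately.
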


\begin{proof} By (\ref{leftdivision}), (i) implies (ii). Moreover, composing in (\ref{quasiasociativity}) with  $(H\ot l_{H})\co (\delta_H\ot H)$ and using coassociativity, we get that (ii) implies (i). Now assume (i). Then, by (\ref{etaalphamu}) and (\ref{leftdivision}),
$$\mu_H\co (\lambda_H\ot \mu_H)\co (\delta_H\ot H)=l_{H}\co (H\ot \mu_H)\co (\delta_H\ot H)=\varepsilon_H\ot H,$$
and in a similar way $\mu_H\co (H\ot \mu_H)\co (H\ot \lambda_H\ot H)\co (\delta_H\ot H)=\varepsilon_H\ot H$. Finally, if $H$ is a left Hopf quasigroup, the morphism $l_{H}=\mu_H\co (\lambda_H\ot H)$ is a left division and satisfies (\ref{etaalphamu}).

\end{proof}

The relation between non-associative bimonoids with right division and right  Hopf quasigroups is the following (the proof is similar to the one used for left divisions):

\begin{proposition}
\label{igualdadesalpha-r}
The following assertions are equivalent:
\begin{itemize}
\item [(i)] $H$ is a non-associative bimonoid with right division $r_{H}$ such that
\begin{equation}
\label{etaalphamu-r}
r_{H}=\mu_H\co (H\ot \varrho_{H}), 
\end{equation}
where $\varrho_{H}=r_{H}\co (\eta_{H}\ot H)$.
\item [(ii)] $H$ is a non-associative bimonoid with right division $r_{H}$ such that
\begin{equation}
\label{quasiasociativity-r}
\mu_H\co (\mu_H\ot \varrho_{H})\co (H\ot \delta_H)=H\ot \varepsilon_H, 
\end{equation}
where $\varrho_{H}=r_{H}\co (\eta_{H}\ot H)$.

\item [(iii)] $H$ is a right Hopf quasigroup.

\end{itemize}

\end{proposition}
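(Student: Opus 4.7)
The plan is to mirror the argument used for Proposition \ref{igualdadesalpha}, replacing the left-sided data $(h,h^{-1},l_{H},\lambda_{H})$ by their right-sided counterparts $(d,d^{-1},r_{H},\varrho_{H})$ already singled out in Proposition \ref{divisionunica} and in the identities following it. The key facts I will use are that $d=(\mu_{H}\ot H)\co(H\ot\delta_{H})$ is invertible with $d^{-1}=(r_{H}\ot H)\co(H\ot\delta_{H})$, that $r_{H}=(H\ot\varepsilon_{H})\co d^{-1}$, and that $\mu_{H}\co d^{-1}=H\ot\varepsilon_{H}$ by (\ref{h-1mu-r}).

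For (i) implies (ii) I would substitute $r_{H}=\mu_{H}\co(H\ot\varrho_{H})$ into the first equality of (\ref{rightdivision}); after a single reassociation of tensor factors the left hand side turns into $\mu_{H}\co(\mu_{H}\ot\varrho_{H})\co(H\ot\delta_{H})$, which is precisely (\ref{quasiasociativity-r}).

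The converse (ii) implies (i) is the only step that does genuine work, and it is the analogue of the $h^{-1}$-composition trick of Proposition \ref{igualdadesalpha}. Precomposing (\ref{quasiasociativity-r}) with $d^{-1}$ converts the right hand side to $r_{H}$. The left hand side is handled by first invoking the right-sided analogue of (\ref{h-1tdelta}), namely $(H\ot\delta_{H})\co d^{-1}=(d^{-1}\ot H)\co(H\ot\delta_{H})$ (which is proved from coassociativity by exactly the same computation used in Proposition \ref{divisionunica}), and then absorbing $\mu_{H}\co d^{-1}$ into $H\ot\varepsilon_{H}$ and applying the counit property; the upshot is $\mu_{H}\co(H\ot\varrho_{H})=r_{H}$, which is (i).

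The equivalence with (iii) is then formal. Starting from (i) and (ii), condition (\ref{quasiasociativity-r}) is one of the two halves of (\ref{rightHqg}), and the other half is obtained by inserting $r_{H}=\mu_{H}\co(H\ot\varrho_{H})$ into the second equality of (\ref{rightdivision}). Conversely, if $H$ is a right Hopf quasigroup with right antipode $\varrho_{H}$, setting $r_{H}:=\mu_{H}\co(H\ot\varrho_{H})$ produces a right division: both halves of (\ref{rightdivision}) follow directly from the two halves of (\ref{rightHqg}), and $r_{H}\co(\eta_{H}\ot H)=\varrho_{H}$ by unitality. The only point that calls for any care is the derivation of the mirror of (\ref{h-1tdelta}), and that is what I would write out explicitly; everything else is routine.
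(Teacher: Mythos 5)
Your proposal is correct and follows essentially the same route as the paper, which proves the right-sided statement by mirroring Proposition \ref{igualdadesalpha}: substitution of $r_{H}=\mu_{H}\co(H\ot\varrho_{H})$ into (\ref{rightdivision}) for the easy implications, precomposition with $d^{-1}=(r_{H}\ot H)\co(H\ot\delta_{H})$ together with the coassociativity identity $(H\ot\delta_{H})\co d^{-1}=(d^{-1}\ot H)\co(H\ot\delta_{H})$ and (\ref{h-1mu-r}) for (ii)$\Rightarrow$(i), and the definition $r_{H}:=\mu_{H}\co(H\ot\varrho_{H})$ for (iii)$\Rightarrow$(i). The one step you flag as needing care (the mirror of (\ref{h-1tdelta})) is exactly the step the paper leaves implicit under ``using coassociativity,'' and your verification of it is sound.
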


\begin{example}\label{IPloops}
{\rm

A loop $L$ is said to be a loop with the
inverse property  (for brevity an IP loop) if 
to every element $u\in L$, there corresponds an element $u^{-1}\in
L$ such that the equations
\begin{equation}
\label{IP-loop-1} u^{-1}(uv)=v=(vu)u^{-1},
\end{equation}
hold for  every $v\in L$.

If $L$ is an IP loop, it is easy to show that
for all $u\in L$ the element $u^{-1}$ is unique and
\begin{equation}
\label{IP-loop-2} u^{-1}u=e_{L}=uu^{-1}.
\end{equation}
Moreover, the mapping $u\rightarrow u^{-1}$ is an anti-automorphism
of the IP loop $L$:
\begin{equation}
\label{IP-loop-3} (uv)^{-1}=v^{-1}u^{-1}.
\end{equation}
Now let $R$ be a commutative ring and let $L$ be and IP loop. Then, by
Proposition 4.7 of \cite{Majidesfera}, the non-associative bimonoid 
$$RL=\bigoplus_{u\in L}Ru,$$
defined in Example \ref{exloop}, is a cocommutative Hopf quasigroup 
with $\lambda_{RL}(u)=u^{-1}$.

Relevant examples of IP-loops are provided by Moufang loops, closely related with groups with triality. With respect to its linearization, loop algebras of Moufang loops correspond to Moufang-Hopf algebras, a fact that can be interpreted as the correspondence between groups with triality and Hopf algebras with triality (see \cite{BMP-I12}).
}
\end{example}

\begin{example}\label{envelopeMalcev}
{\rm
Consider a commutative ring $R$ with $\frac{1}{2}$ and $\frac{1}{3}$ in $R$. A Malcev algebra  $(M,[,])$ over $R$ is a free module over $R$ with a bilinear anticommutative
operation [ , ] on $M$ satisfying that:
\[[J(a, b, c), a] = J(a, b, [a, c]),\]
where $J(a, b, c) = [[a, b], c] - [[a, c], b] - [a, [b, c]]$ is the Jacobian in $a$, $b$, $c$ (see \cite{PIS}). Notice that every Lie algebra is a Malcev algebra with $J=0$. As a particularization of the construction alluded in Example \ref{exsabinin}, the universal enveloping algebra $U(M)$ can be provided with a Hopf quasigroup structure.

}
\end{example}

\begin{remark}\label{naHxeraliza}
{\rm
Notice that a Hopf quasigroup is a particular instance of a non-associative bimonoid with left and right division. It suffices to take
\[
 l_{H}:=\mu_H\co(\lambda_H\ot H),
 \]
 \[
 r_{H}:=\mu_H\co(H\ot \lambda_H).
 \]

But the notion of a non-associative bimonoid is wider. For example, the loop algebra $RL$ of a  loop $L$ and the universal algebra $U(V)$ of a Sabinin algebra $V$
falls under its definition (see \cite{MPIS14}, \cite{PI07}).

}
\end{remark}

\section{Product alterations by two cocycles for non-associative bimonoids}

In this section we prove that  two-cocycles provide a deformation way of altering the product of a non-associative bimonoid to produce other non-associative bimonoid. These kind of cocycle deformations  were introduced in the Hopf algebra setting by Doi in \cite{Doi1}.

\begin{definition}
\label{defcociclo}

{\rm Let $H$ be a non-associative bimonoid, and let $\sigma:H\ot H\rightarrow K$ be a convolution invertible morphism. We say that $\sigma$ is a two-cocycle 
if the equality
\begin{equation}
\label{two-cocycle}
\partial^1(\sigma)\ast \partial^3(\sigma)=\partial^4(\sigma)\ast\partial^2(\sigma)
\end{equation}
holds, where $\partial^1(\sigma)=\varepsilon_H\ot \sigma$, $\partial^2(\sigma)=\sigma\co (\mu_H\ot H)$, $\partial^3(\sigma)=\sigma\co (H\ot\mu_H)$ and
 $\partial^4(\sigma)=\sigma\ot \varepsilon_H$. Equivalently, $\sigma$ is a two-cocycle  if 
 \begin{equation}
\label{two-cocycle-1}
\sigma\co (H\ot ((\sigma\ot \mu_{H})\co \delta_{H\ot H}))=\sigma\co (((\sigma\ot \mu_{H})\co \delta_{H\ot H})\ot H).
\end{equation}

Note that, if we compose in (\ref{two-cocycle-1}) with $\eta_{H}\ot 	\eta_{H}\ot H$ we obtain 
\begin{equation}
\label{2n-cocycle-1}
((\sigma\co (\eta_{H}\ot H))\ot (\sigma\co (\eta_{H}\ot H)))\co \delta_{H}=(\sigma\co (\eta_{H}\ot \eta_{H}))\ot (\sigma\co (\eta_{H}\ot H)),
\end{equation} 
and, if we compose with $H\ot \eta_{H}\ot 	\eta_{H}$, we get that, 
\begin{equation}
\label{2n-cocycle-2}
((\sigma\co (H\ot \eta_{H}))\ot (\sigma\co (H\ot \eta_{H})))\co \delta_{H}=(\sigma\co (H\ot \eta_{H}))\ot (\sigma\co (\eta_{H}\ot \eta_{H})).
\end{equation}

The a two-cocycle $\sigma$ is called normal if further
\begin{equation}
\label{normaltwo-cocycle}
\sigma\co (\eta_H\ot H)=\varepsilon_H=\sigma\co (H\ot \varepsilon_H),
\end{equation}
and it is easy to see that  if $\sigma$ is normal so is $\sigma^{-1}$ because
$$\sigma^{-1}\co (\eta_{H}\ot H)=\varepsilon_{H}\ast (\sigma^{-1}\co (\eta_{H}\ot H))=
 (\sigma\co (\eta_{H}\ot H))\ast  (\sigma^{-1}\co (\eta_{H}\ot H))=(\sigma\ast \sigma^{-1})\co (\eta_{H}\ot H)=\varepsilon_{H},$$
 and similarly $\sigma^{-1}\co (H\ot \eta_{H})=\varepsilon_{H}$. Analogously,  if $\sigma^{-1}$ is normal so is $\sigma$.
}
\end{definition}

\begin{remark}\label{formulas2cociclos}
{\rm It is not difficult to show that, if $\sigma$ is a two-cocycle,  $\tau =(\sigma^{-1}\co (\eta_H\ot \eta_H))\ot \sigma$ is a normal two-cocycle (see \cite{Take}). The inverse of $\tau$ is $\tau^{-1}=(\sigma\co (\eta_H\ot \eta_H))\ot \sigma^{-1}$ and the normal condition for $\tau$ follows from the identities $(\tau\co ( \eta_{H}\ot H))\ast (\tau\co ( \eta_{H}\ot H))=\tau\co ( \eta_{H}\ot H)$ and $(\tau\co (H\ot \eta_{H}))\ast (\tau\co (H\ot \eta_{H}))=\tau\co (H\ot \eta_{H})$ (this identities are consequence of (\ref{2n-cocycle-1}) and (\ref{2n-cocycle-2}) respectively). As a consequence, in the following we assume all two-cocycles are normal.  

On the other hand, the morphisms $\partial^i(\sigma)$, $i\in \{1,2,3,4\}$ are convolution invertible with inverses $\partial^i(\sigma^{-1})$, $i\in \{1,2,3,4\}$, respectively. Then the equalities
\begin{equation}
\label{3inversacon2}
\partial^3(\sigma)\ast \partial^2(\sigma^{-1})=\partial^1(\sigma^{-1})\ast\partial^4(\sigma),
\end{equation}
\begin{equation}
\label{inversa4con1}
\partial^4(\sigma^{-1})\ast \partial^1(\sigma)=\partial^2(\sigma)\ast\partial^3(\sigma^{-1}),
\end{equation}
\begin{equation}
\label{inversa3coninversa1}
\partial^3(\sigma^{-1})\ast \partial^1(\sigma^{-1})=\partial^2(\sigma^{-1})\ast\partial^4(\sigma^{-1}),
\end{equation}
hold. Moreover (\ref{3inversacon2}), (\ref{inversa4con1}) and (\ref{inversa3coninversa1}) are equivalent to 
\begin{equation}
\label{3inversacon2-new}
(\sigma\ot \sigma^{-1})\co (H\ot \mu_{H}\ot H)\co (H\ot c_{H,H}\ot c_{H,H}\ot H)\co (\delta_{H}\ot \delta_{H}\ot \delta_{H})=(\sigma\ot \sigma^{-1})\co (H\ot (c_{H,H}\co \delta_{H})\ot H),
\end{equation}
\begin{equation}
\label{inversa4con1-new}
((\sigma\co (\mu_{H}\ot H))\ot (\sigma^{-1}\co (H\ot \mu_{H})))\co  (H\ot c_{H,H}\ot c_{H,H}\ot H)\co (\delta_{H}\ot \delta_{H}\ot \delta_{H})=(\sigma^{-1}\ot \sigma)\co (H\ot \delta_{H}\ot H),
\end{equation}
and 
\begin{equation}
\label{inversa3coninversa1-new}
\sigma^{-1}\co (H\ot ((\mu_{H}\ot \sigma^{-1})\co \delta_{H\ot H}))=\sigma^{-1}\co (((\mu_{H}\ot \sigma^{-1})\co \delta_{H\ot H})\ot H),
\end{equation}
respectively.
}
\end{remark}

\begin{proposition}
\label{algebradeformada}
Let $H$ be a non-associative bimonoid. Let $\sigma$ be a two-cocycle. Define the product $\mu_{H^{\sigma}}$ as
$$\mu_{H^{\sigma}}=(\sigma\ot\mu_H\ot \sigma^{-1})\co (H\ot H\ot \delta_{H\ot H})\co \delta_{H\ot H}.$$
Then  $H^{\sigma}=(H, \eta_{H^{\sigma}}=\eta_H, \mu_{H^{\sigma}}, \varepsilon_{H^{\sigma}}=\varepsilon_H, \delta_{H^{\sigma}}=\delta_H)$ is a non-associative bimonoid. 

\end{proposition}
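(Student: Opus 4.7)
The plan is to verify that $(H, \eta_H, \mu_{H^\sigma}, \varepsilon_H, \delta_H)$ satisfies Definition \ref{naHadef}. Since the comonoid structure is left untouched, the axioms (\ref{eta-eps}) and (\ref{delta-eta}) are inherited from $H$. Three things remain to be checked: the unit laws $\mu_{H^\sigma}\co(H\ot\eta_H)=id_H=\mu_{H^\sigma}\co(\eta_H\ot H)$, the magma-morphism property of the counit (\ref{mu-eps}) for the new product, and the magma-morphism property of the coproduct (\ref{delta-mu}) for the new product.

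For the unit laws, I would exploit the normality recalled in Remark \ref{formulas2cociclos}: by (\ref{normaltwo-cocycle}) applied to both $\sigma$ and $\sigma^{-1}$, composing $\mu_{H^\sigma}$ with $H\ot\eta_H$ or $\eta_H\ot H$ replaces each scalar $\sigma$-factor by the counit $\varepsilon_H$, and the counit axiom for $\delta_H$ then collapses the resulting expression to $id_H$. For (\ref{mu-eps}), the identity $\varepsilon_H\co\mu_H=\varepsilon_H\ot\varepsilon_H$ combined with the counit reduces $\varepsilon_H\co\mu_{H^\sigma}$ to the convolution product $\sigma\ast\sigma^{-1}$, which is $\varepsilon_H\ot\varepsilon_H$ by convolution invertibility.

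The central step is (\ref{delta-mu}). The key observation is that $\sigma$ and $\sigma^{-1}$ take values in the unit object $K$, so they commute freely past any tensor factor and in particular past $\delta_H$. Combining this with $\delta_H\co\mu_H=(\mu_H\ot\mu_H)\co\delta_{H\ot H}$ (valid in any non-associative bimonoid) and coassociativity, one rewrites $\delta_H\co\mu_{H^\sigma}$ as an expression in which a single $\sigma$ and a single $\sigma^{-1}$ surround the ``doubled'' product $(\mu_H\ot\mu_H)\co\delta_{H\ot H}$, extracted from the iterated coproduct $\delta_H^{(4)}$. Expanding $(\mu_{H^\sigma}\ot\mu_{H^\sigma})\co\delta_{H\ot H}$ instead produces an analogous expression built from $\delta_H^{(6)}$, carrying two copies of $\sigma$ and two of $\sigma^{-1}$. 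The middle pair—one $\sigma^{-1}$ and one $\sigma$ on adjacent positions of the iterated coproduct—equals, via coassociativity, the convolution $\sigma^{-1}\ast\sigma=\varepsilon_H\ot\varepsilon_H$; applying the counit collapses the two ``merged'' positions and yields precisely the expression obtained from $\delta_H\co\mu_{H^\sigma}$.

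The main obstacle is the bookkeeping of iterated coproducts: one must move scalar factors past tensor factors, reorder via the naturality of $c$ (which enters through the definition of $\delta_{H\ot H}$), and use coassociativity to identify the two positions on which $\sigma^{-1}\ast\sigma$ can be applied. It is worth stressing that neither the two-cocycle condition (\ref{two-cocycle}) nor any form of associativity of $\mu_H$ enters here; only the convolution invertibility and normalization of $\sigma$ are required. The cocycle condition will become essential later, when one constructs a left (resp.\ right) division on $H^\sigma$ and shows that $H^\sigma$ inherits a Hopf quasigroup structure.
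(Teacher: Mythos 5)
Your proposal is correct and follows essentially the same route as the paper: the unit laws and (\ref{eta-eps}), (\ref{delta-eta}) are immediate from normality of $\sigma$ and $\sigma^{-1}$, the counit compatibility reduces to $\sigma\ast\sigma^{-1}=\varepsilon_H\ot\varepsilon_H$, and the comultiplicativity of $\mu_{H^{\sigma}}$ is obtained exactly as you describe, by expanding $(\mu_{H^{\sigma}}\ot\mu_{H^{\sigma}})\co\delta_{H\ot H}$ and collapsing the inner adjacent pair to the convolution $\sigma^{-1}\ast\sigma$ via coassociativity and naturality of $c$. Your observation that only normality and convolution invertibility (not the cocycle identity or associativity) are used here is also consistent with the paper's argument.
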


\begin{proof} The equalities (\ref{eta-eps}) and (\ref{delta-eta}) hold trivially. Using that  $H$ is a non-associative bimonoid and (\ref{normaltwo-cocycle}), we get that $\mu_{H^{\sigma}}\co (\eta_H\ot H)=id_H=\mu_{H^{\sigma}}\co (H\ot \eta_H)$. Moreover, by (\ref{mu-eps}), 
$$\varepsilon_H\co \mu_{H^{\sigma}}=\sigma\ast\sigma^{-1}=\varepsilon_H\ot \varepsilon_H.$$
Finally, by the naturality of $c$, the coassociativity of $\delta_{H}$ and the properties of the counit 
$$(\mu_{H^{\sigma}}\ot \mu_{H^{\sigma}})\co \delta_{H\ot H}$$
$$=(((\sigma\ot \mu_{H})\co \delta_{H\ot H})\ot (\sigma^{-1}\ast \sigma)\ot ((\mu_{H}\ot \sigma^{-1})\co \delta_{H\ot H}))\co (H\ot ((c_{H,H}\ot c_{H,H})\co \delta_{H\ot H})\ot H)\co (\delta_{H}\ot \delta_{H})$$
$$= \delta_H\co \mu_{H^{\sigma}}.$$

\end{proof}

\begin{proposition}
\label{morfismosauxiliares}
Let $H$ be a non-associative bimonoid with left division $l_{H}$, put $\lambda_{H}=l_{H}\co (H\ot \eta_{H})$, and let $\sigma$ be a two-cocycle. Define the morphism $f:H\rightarrow K$ as $f=\sigma\co ( H\ot \lambda_{H})\co \delta_H$. If the equality (\ref{primeradealpha}) holds, then $f$ is convolution invertible with inverse $f^{-1}=\sigma^{-1}\co (\lambda_{H}\ot H)\co \delta_H$. Moreover, the following identities hold:
\begin{equation}
\label{f-eta}
f\co \eta_{H}=f^{-1}\co \eta_{H}=id_{K}.
\end{equation}

If $r_{H}$ is a right division for $H$, put $\varrho_{H}=r_{H}\co (\eta_{H}\ot H)$. Let $\sigma$ be a two-cocycle. Define the morphism $g:H\rightarrow K$ as $g= \sigma^{-1}\co (\varrho_{H}\ot H)\co \delta_H$. If the equality (\ref{primeradealpha2}) holds, then $g$ is convolution invertible with inverse $g^{-1}=\sigma\co (H\ot \varrho_{H})\co \delta_H$. Moreover, 
\begin{equation}
\label{f-eta-r}
g\co \eta_{H}=g^{-1}\co \eta_{H}=id_{K}.
\end{equation}

\end{proposition}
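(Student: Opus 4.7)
The plan is to verify the four identities $f\ast f^{-1}=\varepsilon_{H}$, $f^{-1}\ast f=\varepsilon_{H}$, $f\co\eta_H=id_K$, $f^{-1}\co\eta_H=id_K$ (and then the analogous ones for $g$). The convolution in $\mathrm{Hom}(H,K)$ is associative regardless of whether $\mu_H$ is — because the target is $K$ and $\delta_H$ is coassociative — so at the level of convolutions I can manipulate freely.

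For $f\ast f^{-1}$ I would first unfold, using coassociativity, to the Sweedler expression $\sigma(h_{(1)}\ot\lambda_H(h_{(2)}))\,\sigma^{-1}(\lambda_H(h_{(3)})\ot h_{(4)})$. The central step is to apply the equivalent cocycle form (\ref{inversa4con1-new}) of (\ref{two-cocycle}), chosen so that the two $\sigma$-type factors combine and a product $\mu_H(\lambda_H\ot H)\co\delta_H$ (applied to an iterated coproduct piece of $h$) is produced inside a single $\sigma$-argument. By hypothesis (\ref{primeradealpha}), this inner piece collapses to $\varepsilon_H\ot\eta_H$; normality of $\sigma$ and $\sigma^{-1}$ — together with (\ref{lambda-vareps}) and (\ref{lambda-eta}) — then removes the remaining $\sigma$-values and leaves $\varepsilon_H(h)$.

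For $f^{-1}\ast f$ I would run the same scheme using the companion cocycle form (\ref{3inversacon2-new}) instead. Here I also need to re-express the coproducts of $\lambda_H(h_{(i)})$ using the anticomultiplicativity furnished by Proposition \ref{PI-adv-Prop 6} (i.e.\ (\ref{anticomul})), so that the right index pattern appears; then the same mechanism — cocycle to produce $\mu_H\co(\lambda_H\ot H)\co\delta_H$ inside, collapse via (\ref{primeradealpha}), kill residues via normality — yields $\varepsilon_H(h)$ again.

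The two boundary identities are immediate: using $\delta_H\co\eta_H=\eta_H\ot\eta_H$ from (\ref{delta-eta}), $\lambda_H\co\eta_H=\eta_H$ from (\ref{lambda-eta}), and normality, both $f\co\eta_H$ and $f^{-1}\co\eta_H$ reduce to $\sigma(\eta_H\ot\eta_H)=\sigma^{-1}(\eta_H\ot\eta_H)=id_K$. The statements for $g$ follow by a strictly symmetric argument, replacing (\ref{primeradealpha}) by (\ref{primeradealpha2}), using the anticomultiplicativity of $\varrho_H$ given by (\ref{anticomul-r}), and invoking (\ref{lambda-vareps-r})--(\ref{lambda-eta-r}).

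The main obstacle is purely combinatorial: selecting the right equivalent form of the cocycle identity at each step and organizing the several iterated coproducts so that $\lambda_H$'s anticomultiplicativity and coassociativity together produce the exact pattern $\mu_H\co(\lambda_H\ot H)\co\delta_H$ needed for (\ref{primeradealpha}) to fire. Non-associativity of $\mu_H$ plays no role, because every cocycle value is a scalar in $K$.
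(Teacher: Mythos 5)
Your strategy coincides with the paper's: both convolution products are recognized, after applying the anticomultiplicativity (\ref{anticomul}) of $\lambda_{H}$, as instances of the derived cocycle identities of Remark \ref{formulas2cociclos}; these manufacture the patterns $\lambda_{H}\ast id_{H}$ and $id_{H}\ast\lambda_{H}$ inside a single $\sigma$-argument, which collapse by (\ref{primeradealpha}) and (\ref{new-h-2delta}), and the residue is removed by normality together with (\ref{lambda-vareps}) and (\ref{lambda-eta}). The boundary identities (\ref{f-eta}) and the right-division case are handled exactly as in the paper.

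The one concrete slip is that you have the two derived cocycle identities interchanged. After using (\ref{anticomul}), $f\ast f^{-1}$ takes the form $(\partial^1(\sigma^{-1})\ast \partial^4(\sigma))\co (((H\ot \lambda_{H})\co \delta_H)\ot H)\co \delta_H$, so the identity that applies to it is (\ref{3inversacon2}) (equivalently (\ref{3inversacon2-new})), not (\ref{inversa4con1-new}): the latter rewrites $\partial^4(\sigma^{-1})\ast \partial^1(\sigma)$, whose index pattern $\sigma^{-1}(x\ot y_{(1)})\,\sigma(y_{(2)}\ot z)$ cannot be matched to $\sigma(h_{(1)}\ot \lambda_{H}(h_{(2)}))\,\sigma^{-1}(\lambda_{H}(h_{(3)})\ot h_{(4)})$, since that would force $y_{(1)}\ot y_{(2)}$ to equal the reversed, non-adjacent pair $h_{(4)}\ot h_{(1)}$. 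Dually, $f^{-1}\ast f$ has the shape $\partial^4(\sigma^{-1})\ast \partial^1(\sigma)$ and is the one that requires (\ref{inversa4con1}). With the two identities swapped back, your outline reproduces the paper's proof.
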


\begin{proof} Indeed,

\begin{itemize}
\item[ ]$\hspace{0.38cm}f\ast f^{-1} $

\item[ ]$=(\sigma\ot \sigma^{-1})\co (H\ot (c_{H,H}\co (\lambda_{H}\ot \lambda_{H})\co  c_{H,H}\co \delta_H)\ot H)\co (\delta_H\ot H)\co \delta_H $
\item[ ]$\hspace{0.38cm}$ {\scriptsize ({\blue by (\ref{delta-eta}) and  naturality of $c$})}

\item[ ]$=(\sigma\ot \sigma^{-1})\co (H\ot (c_{H,H}\co \delta_{H}\co \lambda_{H})\ot H)\co (\delta_H\ot H)\co \delta_H $
{\scriptsize ({\blue by (\ref{anticomul})})}

\item[ ]$=(\partial^1(\sigma^{-1})\ast\partial^4(\sigma))\co  (((H\ot \lambda_{H})\co \delta_H)\ot H)\co \delta_H$
{\scriptsize ({\blue by  (\ref{alphadelta}), naturality of $c$, and  counit properties)}}

\item[ ]$=(\partial^3(\sigma)\ast \partial^2(\sigma^{-1}))\co (((H\ot \lambda_{H})\co \delta_H)\ot H)\co \delta_H$
{\scriptsize ({\blue by (\ref{3inversacon2}}))}

\item[ ]$=(\sigma\ot \sigma^{-1})\co (H\ot \mu_{H\ot H}\ot H)\co (H\ot c_{H,H}\ot c_{H,H}\ot H)\co (((\delta_{H}\ot ((\lambda_{H}\ot \lambda_{H})\co c_{H,H}\co \delta_{H}))\co \delta_{H})\ot\delta_{H})$
\item[ ]$\hspace{0.38cm}\co \delta_{H}$
{\scriptsize  ({\blue by (\ref{delta-eta}), (\ref{alphadelta}) and naturality of $c$})}

\item[ ]$=(\sigma\ot \sigma^{-1})\co (H\ot ((\mu_{H}\ot (id_{H}\ast \lambda_{H}))\co (\lambda_{H}\ot c_{H,H})\co (\delta_{H}\ot H)\co \delta_{H})\ot H)\co (\delta_{H}\ot H)\co \delta_{H}$ {\scriptsize  ({\blue by}}
\item[ ]$\hspace{0.38cm}$ {\scriptsize  {\blue  naturality of $c$, and  coassociativity})}

\item[ ]$=(((\sigma\co (H\ot  (\lambda_{H}\ast id_{H}))\co \delta_{H}))\ot (\sigma^{-1}\co (\eta_{H}\ot H)))\co \delta_{H}$  {\scriptsize  ({\blue by (\ref{new-h-2delta}),  and  counit  properties)})}

\item[ ]$= \sigma\co (H\ot \eta_{H})$ {\scriptsize  ({\blue  by (\ref{primeradealpha}), the normal condition for $\sigma^{-1}$ and counit properties})}

\item[ ]$=\varepsilon_H$
{\scriptsize ({\blue by (\ref{normaltwo-cocycle})}).}

\end{itemize}

On the other hand,

\begin{itemize}
\item[ ]$\hspace{0.38cm}f^{-1}\ast f $

\item[ ]$=(\partial^4(\sigma^{-1})\ast \partial^1(\sigma))\co (\lambda_{H}\ot H\ot \lambda_{H})\co (\delta_H\ot H)\co \delta_H$
{\scriptsize ({\blue by coassociativity, naturality of $c$, and counit }}\item[ ]$\hspace{0.38cm}$ {\scriptsize {\blue properties})}

\item[ ]$=(\partial^2(\sigma)\ast\partial^3(\sigma^{-1}))\co (\lambda_{H}\ot H\ot \lambda_{H})\co (\delta_H\ot H)\co \delta_H$
{\scriptsize ({\blue by (\ref{inversa4con1})})}

\item[ ]$=(\sigma\ot \sigma^{-1})\co (\mu_{H}\ot c_{H,H}\ot\mu_H)\co (H\ot c_{H,H}\ot c_{H,H}\ot H)$
\item[ ]$\hspace{0.38cm}  
\co (((((\lambda_{H}\ot \lambda_{H})\co  c_{H,H}\co \delta_H)\ot \delta_{H})\co \delta_{H})\ot (((\lambda_{H}\ot \lambda_{H})\co  c_{H,H}\co \delta_H))) \co \delta_{H}$
 {\scriptsize ({\blue by (\ref{delta-eta}) and (\ref{anticomul})})}

\item[ ]$=\sigma^{-1}\co (H\ot \sigma\ot (id_{H}\ast \lambda_{H}))\co(((\lambda_{H}\ot (\lambda_{H}\ast id_{H}))\co \delta_{H})\ot ((c_{H,H}\co (H\ot \lambda_{H})\co \delta_{H})))\co \delta_{H}$ {\scriptsize ({\blue by }}
\item[ ]$\hspace{0.38cm} ${\scriptsize {\blue coassociativity and naturality of $c$})}

\item[ ]$= \varepsilon_{H}$  {\scriptsize ({\blue by (\ref{primeradealpha}), (\ref{new-h-2delta}), the normal condition for $\sigma$ and $\sigma^{-1}$, naturality of $c$, counit properties, and (\ref{lambda-vareps})})}.

\end{itemize}

 Finally,  (\ref{f-eta}) follows from (\ref{delta-eta}), (\ref{new-lh-eta}), the normal condition for $\sigma$ and $\sigma^{-1}$, and from (\ref{eta-eps}). The proof for the right division is similar and we leave the details to the reader.

\end{proof}

\begin{remark}\label{cond-1}
{\rm Note that the equalities (\ref{primeradealpha}) and (\ref{primeradealpha2}) hold for every left Hopf quasigroup. Also, they hold for loop algebras associated to right or left Bol loops.  The so-called right Bol identity was introduced by  G. Bol  in \cite{Bol} and was also mentioned by Bruck in \cite{Bruck}. Let  $(L, \cdot,\diagup, \diagdown)$  be a loop. $L$ is called a right Bol loop if the right Bol identity 
\begin{equation}
\label{bol}
((x\cdot y)\cdot z)\cdot y=x\cdot ((y\cdot z)\cdot y)
\end{equation}
holds  for all $x,y,z\in L$. If the equality (left Bol identity) 
\begin{equation}
\label{left-bol}
y\cdot (z\cdot (y\cdot x))=(y\cdot (z\cdot y))\cdot x
\end{equation}
holds for all $x,y,z\in L$, we say that L is a left Bol loop. As was pointed in \cite{Rob}, Bol loops are more general than Moufang loops because $L$ is Moufang if and only if it satisfies (\ref{bol}) and (\ref{left-bol}). Also, Bol loops with the automorphic inverse property are Bruck loops. 

An interesting example of right Bol loops comes from matrix theory. The set of $n\times n$  positive definite symmetric matrices is a right Bol loop with the operation 
$$P\cdot Q=\sqrt{QP^{2}Q}.$$

Moreover, in the literature we can find other examples of right Bol loops obtained by modifying the operation in a direct product of groups. 

The cocommutative non-associative bimonoid $RL$ defined in Example \ref{exloop} satisfies the equality (\ref{primeradealpha}) if and only if the loop $L$ satisfies 
\begin{equation}
\label{Bol1-equ}
(a\diagdown e_{L})\cdot a=e_{L}.
\end{equation}

If $L$ is a right Bol loop the equality (\ref{Bol1-equ}) always holds. Indeed, first note that 
$$((a\cdot (a\diagdown e_{L}))\cdot a)\cdot (a\diagdown e_{L})=(e_{L}\cdot a)\cdot (a\diagdown e_{L})=a\cdot (a\diagdown e_{L})=e_{L}.$$

Then 
$$a\cdot (((a\diagdown e_{L})\cdot a)\cdot (a\diagdown e_{L}))=e_{L}=a\cdot (a\diagdown e_{L}).$$

As a consequence, 
$$((a\diagdown e_{L})\cdot a)\cdot (a\diagdown e_{L})=a\diagdown e_{L}=e_{L}\cdot (a\diagdown e_{L}).$$

Therefore, (\ref{Bol1-equ}) holds. In a similar way, it is easy to see (\ref{Bol1-equ}) for a left Bol loop.
}
\end{remark}

\begin{proposition}
\label{alphasigmacondelta}
Let $H$ be a non-associative bimonoid with left division $l_{H}$, put $\lambda_{H}=l_{H}\co (H\ot \eta_{H})$ and assume that (\ref{primeradealpha}) holds. Let $\sigma$ be a two-cocycle and let $f$, $f^{-1}$ be the morphisms introduced in Proposition (\ref{morfismosauxiliares}).  Define the morphism $l_{H^{\sigma}}:H\ot H\rightarrow H$ as 
$$l_{H^{\sigma}}=\mu_{H^{\sigma}}\co (f\ot \lambda_{H}\ot f^{-1}\ot H)\co (H\ot \delta_H\ot H)\co (\delta_H\ot H).$$

Then the equality
\begin{equation}
\label{deltaalphasigma}
\delta_H\co l_{H^{\sigma}}=(l_{H^{\sigma}}\ot l_{H^{\sigma}})\co (H\ot c_{H,H}\ot H)\co((c_{H,H}\co \delta_H)\ot \delta_H)
\end{equation}
holds. Moreover, 
\begin{equation}
\label{lh-sigma-eta-1}
l_{H^{\sigma}}\co (\eta_{H}\ot H)=id_{H}, 
\end{equation}
and 
\begin{equation}
\label{lh-sigma-eta-2}
l_{H^{\sigma}}\co (H\ot \eta_{H})=(f\ot \lambda_{H}\ot f^{-1})\co (\delta_{H}\ot H)\co \delta_{H}. 
\end{equation}

Therefore, we have 
\begin{equation}
\label{lh-sigma-eta-2-1}
l_{H^{\sigma}}=\mu_{H^{\sigma}}\co ((l_{H^{\sigma}}\co (H\ot \eta_{H}))\ot H),  
\end{equation}
\begin{equation}
\label{lh-sigma-eta-2-2}
(f^{-1}\ot l_{H^{\sigma}})\co (\delta_{H}\ot H)=\mu_{H^{\sigma}}\co (((\lambda_{H}\ot f^{-1})\co \delta_{H})\ot H). 
\end{equation}

Finally, if $H$ is cocommutative,  
\begin{equation}
\label{lh-sigma-eta-3}
l_{H^{\sigma}}\co (H\ot \eta_{H})=\lambda_{H}. 
\end{equation}
\end{proposition}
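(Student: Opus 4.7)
The plan is to verify each of the six identities by unpacking the definition of $l_{H^{\sigma}}$ and reducing to three ingredients: (i) $H^{\sigma}$ is a non-associative bimonoid, so $\delta_H\co\mu_{H^{\sigma}}=(\mu_{H^{\sigma}}\ot\mu_{H^{\sigma}})\co\delta_{H\ot H}$ (Proposition \ref{algebradeformada}); (ii) the anticomultiplicativity (\ref{anticomul}) of $\lambda_H$ together with $\lambda_H\co\eta_H=\eta_H$ and $\varepsilon_H\co\lambda_H=\varepsilon_H$; (iii) the convolution and unit identities $f\ast f^{-1}=f^{-1}\ast f=\varepsilon_H$ and $f\co\eta_H=f^{-1}\co\eta_H=id_K$ supplied by Proposition \ref{morfismosauxiliares}.

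The four unit-type identities come out routinely. For (\ref{lh-sigma-eta-1}) I precompose with $\eta_H\ot H$; applying $\delta_H\co\eta_H=\eta_H\ot\eta_H$ twice, evaluating the three scalars on $\eta_H$ (each becomes $id_K$), using $\lambda_H\co\eta_H=\eta_H$, and the unital magma axiom for $\mu_{H^{\sigma}}$ deliver $id_H$. For (\ref{lh-sigma-eta-2}) I precompose with $H\ot\eta_H$; the trailing $\eta_H$ disappears through $\mu_{H^{\sigma}}\co(H\ot\eta_H)=id_H$, leaving the displayed formula. Identity (\ref{lh-sigma-eta-2-1}) is then immediate: since $f$ and $f^{-1}$ are $K$-valued scalars they factor transparently through $\mu_{H^{\sigma}}$, so substituting (\ref{lh-sigma-eta-2}) on the right reproduces the definition of $l_{H^{\sigma}}$. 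For (\ref{lh-sigma-eta-3}), cocommutativity permits swapping the second and third Sweedler factors in $(f\ot\lambda_H\ot f^{-1})\co(\delta_H\ot H)\co\delta_H$, rewriting its value as $f(a_{(1)})f^{-1}(a_{(2)})\lambda_H(a_{(3)})$; then $f\ast f^{-1}=\varepsilon_H$ and the counit axiom collapse the first two factors and leave $\lambda_H$. Identity (\ref{lh-sigma-eta-2-2}) follows in the same vein: expanding $(f^{-1}\ot l_{H^{\sigma}})\co(\delta_H\ot H)$ via (\ref{lh-sigma-eta-2-1}) and coassociativity produces a three-fold coproduct of $a$ with adjacent scalar factors $f^{-1}(a_{(1)})f(a_{(2)})$, and $f^{-1}\ast f=\varepsilon_H$ together with the counit reduces this to the right-hand side $\mu_{H^{\sigma}}\co(((\lambda_H\ot f^{-1})\co\delta_H)\ot H)$.

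The main obstacle is the coproduct formula (\ref{deltaalphasigma}). Starting from $l_{H^{\sigma}}(a\ot b)=f(a_{(1)})f^{-1}(a_{(3)})\,\mu_{H^{\sigma}}(\lambda_H(a_{(2)})\ot b)$, I apply $\delta_H$ and invoke (i) and (ii) to rewrite the left-hand side as
$$f(a_{(1)})f^{-1}(a_{(4)})\,\mu_{H^{\sigma}}(\lambda_H(a_{(3)})\ot b_{(1)})\ot\mu_{H^{\sigma}}(\lambda_H(a_{(2)})\ot b_{(2)}),$$
a four-fold coproduct of $a$ tensored with the two halves of $\delta_H(b)$. Expanding the right-hand side of (\ref{deltaalphasigma}) directly yields a six-fold coproduct of $a$ containing adjacent scalar factors $f^{-1}(a_{(3)})f(a_{(4)})$ (after commuting the $K$-valued maps $f,f^{-1}$ past the symmetry $c_{H,H}$ freely); applying $f^{-1}\ast f=\varepsilon_H$ collapses these and, after relabeling by coassociativity, reproduces the four-fold expression above. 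The genuine work is the coassociativity bookkeeping needed to align the two sides so that the single convolution identity $f^{-1}\ast f=\varepsilon_H$ finishes the proof; no new non-associative subtlety intervenes, because $\lambda_H$ enters only through its anticomultiplicativity, which is already established in Proposition \ref{PI-adv-Prop 6}.
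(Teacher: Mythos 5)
Your proposal is correct and follows essentially the same route as the paper: both reduce (\ref{deltaalphasigma}) to collapsing a single adjacent convolution pair of $f$ and $f^{-1}$ (via $f^{-1}\ast f=\varepsilon_H$, equivalently $f\ast f^{-1}=\varepsilon_H$ in the paper's grouping) after invoking the anticomultiplicativity (\ref{anticomul}) of $\lambda_{H}$ and the comultiplicativity (\ref{delta-mu}) of $\mu_{H^{\sigma}}$, and both dispatch the remaining identities through the unit and counit properties of $H^{\sigma}$, (\ref{f-eta}), and the invertibility of $f$. The only quibble is the phrase ``three scalars'' in your treatment of (\ref{lh-sigma-eta-1}) --- there are two, $f$ and $f^{-1}$ --- but this does not affect the argument.
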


\begin{proof}
Indeed, the equality (\ref{deltaalphasigma}) holds because:
\begin{itemize}
\item[ ]$\hspace{0.38cm} (l_{H^{\sigma}}\ot l_{H^{\sigma}})\co (H\ot c_{H,H}\ot H)\co((c_{H,H}\co \delta_H)\ot \delta_H)$

\item[ ]$=\mu_{H^{\sigma}\ot H^{\sigma}}\co (((f\ot \lambda_{H}\ot \lambda_{H}\ot f^{-1})\co \delta_{H\ot H}\co (H\ot (f\ast f^{-1})\ot H)\co (\delta_{H}\ot H)\co\delta_{H})\ot \delta_{H})$ {\scriptsize ({\blue by }}
\item[ ]$\hspace{0.38cm} ${\scriptsize {\blue coassociativity  and naturality of $c$})}

\item[ ]$=\mu_{H^{\sigma}\ot H^{\sigma}}\co (((f\ot ((\lambda_{H}\ot \lambda_{H})\co 
 c_{H,H} \co  \delta_{H})\ot f^{-1}) \co (H\ot \delta_{H})\co \delta_{H})\ot \delta_{H})$ {\scriptsize ({\blue by the invertibility of $f$,}}
\item[ ]$\hspace{0.38cm}${\scriptsize {\blue coassociativity and counit properties})}

\item[ ]$=(\mu_{H^{\sigma}}\ot \mu_{H^{\sigma}})\co \delta_{H\ot H}\co (f\ot\lambda_{H}\ot f^{-1}\ot H)\co (H\ot \delta_H\ot H)\co (\delta_H\ot H)$
{\scriptsize ({\blue by (\ref{anticomul})})}

\item[ ]$=\delta_H\co l_{H^{\sigma}}$
{\scriptsize ({\blue by (\ref{delta-mu}) for $H^{\sigma}$}).}
\end{itemize}

The identity (\ref{lh-sigma-eta-1}) follows trivially because $\eta_{H}$ is the unit of $H^{\sigma}$ and by (\ref{delta-eta}), (\ref{f-eta}) and (\ref{new-lh-eta}). Also, using that $\eta_{H}$ is the unit of $H^{\sigma}$ we obtain (\ref{lh-sigma-eta-2}). The equality (\ref{lh-sigma-eta-2-1}) folllows directly from (\ref{lh-sigma-eta-2}), and (\ref{lh-sigma-eta-2-2}) is a consequence of the coassociativity of $\delta_{H}$, the invertibility of $f$ and the counit properties.

Finally, if $H$ is cocommutative,

\begin{itemize}
\item[ ]$\hspace{0.38cm}l_{H^{\sigma}}\co (H\ot \eta_{H}) $

\item[ ]$=(f\ot \lambda_{H}\ot f^{-1})\co (\delta_{H}\ot H)\co \delta_{H}$ 
{\scriptsize ({\blue by (\ref{lh-sigma-eta-2})})}

\item[ ]$=(f\ot f^{-1}\ot \lambda_{H})\co (H\ot (c_{H,H}\co \delta_{H}))\co \delta_{H}$
{\scriptsize ({\blue by coassociativity and naturality of $c$})}

\item[ ]$=((f\ast f^{-1})\ot \lambda_{H})\co \delta_{H}$
{\scriptsize ({\blue by coassociativity and cocommutativity of $H$})}

\item[ ]$=\lambda_{H}$
{\scriptsize ({\blue by the invertibility of $f$ and counit properties}).}
\end{itemize}

\end{proof}

The right division version of Proposition \ref{alphasigmacondelta} is the following:

\begin{proposition}
\label{alphasigmacondelta-r}
Let $H$ be a non-associative  bimonoid with right division $r_{H}$. Put $\varrho_{H}=r_{H}\co (\eta_{H}\ot H)$ and assume that (\ref{primeradealpha2}) holds. Let $\sigma$ be a two-cocycle and let $g$, $g^{-1}$ be the morphisms introduced in Proposition (\ref{morfismosauxiliares}).  Define the morphism $r_{H^{\sigma}}:H\ot H\rightarrow H$ as 
$$r_{H^{\sigma}}=\mu_{H^{\sigma}}\co (H\ot g^{-1}\ot \varrho_{H}\ot g)\co (H\ot \delta_H\ot H)\co (H\ot \delta_H).$$

Then the equality
\begin{equation}
\label{deltaalphasigma-r}
\delta_H\co r_{H^{\sigma}}=(r_{H^{\sigma}}\ot r_{H^{\sigma}})\co (H\ot c_{H,H}\ot H)\co(\delta_{H}\ot (c_{H,H}\co \delta_H))
\end{equation}
holds. Moreover, 
\begin{equation}
\label{lh-sigma-eta-1-r}
r_{H^{\sigma}}\co (H\ot \eta_{H})=id_{H}, 
\end{equation}
and 
\begin{equation}
\label{lh-sigma-eta-2-r}
r_{H^{\sigma}}\co (\eta_{H}\ot H)=(g^{-1}\ot \varrho_{H}\ot g)\co (\delta_{H}\ot H)\co \delta_{H}. 
\end{equation}

Therefore, we have 
\begin{equation}
\label{lh-sigma-eta-2-1-r}
r_{H^{\sigma}}=\mu_{H^{\sigma}}\co (H\ot (r_{H^{\sigma}}\co (\eta_{H}\ot H))),  
\end{equation}
\begin{equation}
\label{lh-sigma-eta-2-2-r}
(r_{H^{\sigma}}\ot g^{-1})\co (H\ot \delta_{H})=\mu_{H^{\sigma}}\co (H\ot 
((g^{-1}\ot \varrho_{H})\co \delta_{H})). 
\end{equation}

Finally, if $H$ is cocommutative,  
\begin{equation}
\label{lh-sigma-eta-3-r}
r_{H^{\sigma}}\co (\eta_{H}\ot H)=\varrho_{H}. 
\end{equation}
\end{proposition}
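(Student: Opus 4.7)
The plan is to mirror the proof of Proposition \ref{alphasigmacondelta} step by step, using the right-sided analogues of the ingredients: replace $l_{H}$, $\lambda_{H}$, $f$, $f^{-1}$ by $r_{H}$, $\varrho_{H}$, $g^{-1}$, $g$, and invoke (\ref{alphadelta-r}), (\ref{anticomul-r}), (\ref{new-h-2delta-r}), (\ref{new-lh-eta-r}), (\ref{lambda-vareps-r}), (\ref{lambda-eta-r}), and (\ref{f-eta-r}) in place of their left-division counterparts. The overall strategy is identical: exploit that $\delta_{H}$ is a morphism of unital magmas for $H^{\sigma}$ (established in Proposition \ref{algebradeformada}) to pass $\delta_{H}$ through $\mu_{H^{\sigma}}$, then use convolution invertibility of $g$ together with the anticomultiplicativity of $\varrho_{H}$ to recover $\delta_{H}\co r_{H^{\sigma}}$.

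For (\ref{deltaalphasigma-r}), I would start from the right-hand side and expand both copies of $r_{H^{\sigma}}$. Using naturality of $c$ and coassociativity, the two middle factors $g$ and $g^{-1}$ coming from distinct tensor positions can be brought together so that, after a rearrangement via $c$, they form a convolution product $g\ast g^{-1}=\varepsilon_{H}\ot \eta_{H}$ which collapses the expression. What remains contains the tensor $(\varrho_{H}\ot \varrho_{H})\co c_{H,H}\co \delta_{H}$, which by (\ref{anticomul-r}) equals $\delta_{H}\co \varrho_{H}$; combining this with the comultiplicativity of $\mu_{H^{\sigma}}$ collapses the expression to $\delta_{H}\co r_{H^{\sigma}}$.

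For (\ref{lh-sigma-eta-1-r}), composing with $H\ot \eta_{H}$ and applying (\ref{delta-eta}), (\ref{f-eta-r}) together with $\varrho_{H}\co \eta_{H}=\eta_{H}$ and the unitality of $\mu_{H^{\sigma}}$, the morphism reduces to $id_{H}$. Equation (\ref{lh-sigma-eta-2-r}) is immediate from $\mu_{H^{\sigma}}\co (\eta_{H}\ot -)=id$ applied to the defining formula. The identity (\ref{lh-sigma-eta-2-1-r}) is then a direct substitution of (\ref{lh-sigma-eta-2-r}) into the definition of $r_{H^{\sigma}}$, while (\ref{lh-sigma-eta-2-2-r}) follows by using coassociativity of $\delta_{H}$ and the counit property of the convolution inverse $g$. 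Finally, for cocommutative $H$, formula (\ref{lh-sigma-eta-2-r}) can be rewritten by swapping the two leftmost factors of $\delta_{H}\ot H$ via $c_{H,H}\co \delta_{H}=\delta_{H}$; the resulting expression contains $g\ast g^{-1}$, which equals $\varepsilon_{H}\ot \eta_{H}$, and yields $\varrho_{H}$ after applying counit properties.

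The main obstacle is the bookkeeping in (\ref{deltaalphasigma-r}): because $\mu_{H^{\sigma}}$ already carries a three-fold twist involving $\sigma$ and $\sigma^{-1}$, one has to keep careful track of the symmetries used to bring $g$ and $g^{-1}$ next to each other for cancellation, and subsequently to align the factors so that (\ref{anticomul-r}) can be applied. Once this is managed, the rest of the argument is a direct right-sided transcription of the computations used in Proposition \ref{alphasigmacondelta}.
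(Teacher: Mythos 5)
Your proposal is correct and is essentially the paper's own argument: the paper states this proposition without proof precisely because it is the mirror-image transcription of Proposition \ref{alphasigmacondelta}, which is exactly what you carry out (cancel the inner $g\ast g^{-1}$ by coassociativity, apply (\ref{anticomul-r}) to reassemble $\delta_H\co\varrho_H$, and finish with (\ref{delta-mu}) for $H^{\sigma}$). The only blemish is notational: since $g:H\rightarrow K$, the convolution unit is $\varepsilon_H$ (i.e. $\varepsilon_H\ot\eta_K$), not $\varepsilon_H\ot\eta_H$.
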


The following Lemmas give two equalities will be useful to get the main result of this section.

\begin{lemma}
\label{propiedadesauxiliares}
Let $H$ be a non-associative bimonoid with left division $l_{H}$, put $\lambda_{H}=l_{H}\co (H\ot \eta_{H})$ and assume that (\ref{primeradealpha}) holds. Let $\sigma$ be a two-cocycle and let $f$, $f^{-1}$ be the morphisms introduced in Proposition (\ref{morfismosauxiliares}). Then,  the equalities
\begin{equation}
\label{primeradealphasigma}
\sigma\co ((l_{H^{\sigma}}\co (H\ot \eta_{H}))\ot H)\co (H\ot \mu_{H^{\sigma}})\co (\delta_H\ot H)=(f\ot \sigma^{-1})\co (\delta_H\ot H),
\end{equation}
\begin{equation}
\label{segundadealphasigma}
\sigma^{-1}\co (H\ot (\mu_{H^{\sigma}}\co ((l_{H^{\sigma}}\co (H\ot \eta_{H}))\ot H)))\co (\delta_{H}\ot H)=\sigma\co (\lambda_{H}\ot f^{-1}\ot H)\co(\delta_H\ot H),
\end{equation}
hold.
\end{lemma}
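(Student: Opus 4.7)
The plan is to establish both identities by direct expansion followed by repeated use of the two-cocycle condition together with the available antipode-type relations. In both cases, the starting tools are Proposition \ref{alphasigmacondelta}, which supplies the explicit formula $l_{H^{\sigma}}\co(H\ot\eta_H)=(f\ot\lambda_H\ot f^{-1})\co(\delta_H\ot H)\co\delta_H$, together with the defining formula of $\mu_{H^{\sigma}}$.

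For (\ref{primeradealphasigma}), I would first substitute the expression for $l_{H^{\sigma}}\co(H\ot\eta_H)$ and expand $\mu_{H^{\sigma}}$ by its definition. After rearranging the resulting iterated coproducts using the coassociativity of $\delta_H$ and the naturality of $c$, the outer $\sigma$ composed with the inner $\sigma$ and $\sigma^{-1}$ coming from $\mu_{H^{\sigma}}$ assembles into a pattern of the form $\partial^4(\sigma^{-1})\ast\partial^1(\sigma)$ applied at certain positions of the iterated coproduct of the left argument. Invoking the two-cocycle identity in its equivalent form (\ref{inversa4con1}), namely $\partial^4(\sigma^{-1})\ast\partial^1(\sigma)=\partial^2(\sigma)\ast\partial^3(\sigma^{-1})$, rewrites this as an expression in which $\lambda_H\ast id_H$ appears as a subfactor. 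By hypothesis (\ref{primeradealpha}) this convolution equals $\varepsilon_H\ot\eta_H$, which, combined with the counit axioms, the normalization (\ref{normaltwo-cocycle}) of $\sigma$, and the invertibility of $f$, collapses the superfluous coproduct factors. The surviving expression then factors as $(f\ot\sigma^{-1})\co(\delta_H\ot H)$ via the defining formula of $f$.

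For (\ref{segundadealphasigma}) the strategy is symmetric, carried out on the $\sigma^{-1}$ side. After expanding $\mu_{H^{\sigma}}\co((l_{H^{\sigma}}\co(H\ot\eta_H))\ot H)$ and rearranging by coassociativity and naturality of $c$, the outer $\sigma^{-1}$ combines with an inner $\sigma^{-1}$ coming from $\mu_{H^{\sigma}}$ to yield a pattern matching an instance of the identity (\ref{inversa3coninversa1}), $\partial^3(\sigma^{-1})\ast\partial^1(\sigma^{-1})=\partial^2(\sigma^{-1})\ast\partial^4(\sigma^{-1})$. After this rewrite, the antipode relation $id_H\ast\lambda_H=\varepsilon_H\ot\eta_H$ from (\ref{new-h-2delta}) collapses a pair of factors, and the remaining terms reorganize into $\sigma\co(\lambda_H\ot f^{-1}\ot H)\co(\delta_H\ot H)$ via the defining formula of $f^{-1}$.

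The hard part will be the diagrammatic bookkeeping: both sides involve five- or six-fold iterated coproducts once every morphism is fully expanded, and identifying the exact two-cocycle identity to invoke at each intermediate stage (as well as moving factors into adjacency by judicious applications of $c$) requires care. Supporting simplifications come from the normality condition (\ref{normaltwo-cocycle}) on $\sigma$ and $\sigma^{-1}$, together with the boundary identities $\varepsilon_H\co\lambda_H=\varepsilon_H$ and $\lambda_H\co\eta_H=\eta_H$ from (\ref{lambda-vareps}) and (\ref{lambda-eta}).
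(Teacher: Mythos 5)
Your plan for (\ref{primeradealphasigma}) is essentially the paper's proof: substitute (\ref{lh-sigma-eta-2}), recognize the pattern $\partial^4(\sigma^{-1})\ast\partial^1(\sigma)$, rewrite it via (\ref{inversa4con1}) as $\partial^{2}(\sigma)\ast\partial^{3}(\sigma^{-1})$, and then collapse using (\ref{primeradealpha}), the normality of $\sigma$ and $\sigma^{-1}$, the counit axioms and the invertibility of $\sigma$ and $f$. No objection there.

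For (\ref{segundadealphasigma}), however, your route as described has a gap. You propose to pair the outer $\sigma^{-1}$ with the inner $\sigma^{-1}$ of $\mu_{H^{\sigma}}$ and apply (\ref{inversa3coninversa1}), then collapse once with $id_{H}\ast\lambda_{H}=\varepsilon_{H}\ot\eta_{H}$. But (\ref{inversa3coninversa1}) only permutes $\sigma^{-1}$'s and cannot produce the outer $\sigma$ appearing on the right-hand side of (\ref{segundadealphasigma}); moreover, a count of convolution factors shows the left-hand side carries two copies of $\sigma$ and three of $\sigma^{-1}$ (outer $\sigma^{-1}$, the pair inside $\mu_{H^{\sigma}}$, and one each inside $f$ and $f^{-1}$), while the target carries one of each, so you must eliminate one full $\sigma\ast\sigma^{-1}$ pair \emph{and} one additional $\sigma^{-1}$ --- a single collapsing event is not enough. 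The paper supplies the missing device as the auxiliary identity (\ref{terceradealphasigma}),
$(f\ot\sigma^{-1})\co (H\ot \lambda_{H}\ot H)\co (\delta_H\ot H)=\sigma\co (H\ot\mu_H)\co (H\ot \lambda_{H}\ot H)\co(\delta_H\ot H)$,
whose proof is where the two-cocycle condition actually enters, in the form (\ref{3inversacon2}) (i.e.\ $\partial^{3}(\sigma)\ast \partial^{2}(\sigma^{-1})=\partial^1(\sigma^{-1})\ast\partial^4(\sigma)$) rather than (\ref{inversa3coninversa1}), followed by (\ref{new-h-2delta}) and normality to remove the extra $\sigma^{-1}$. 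After substituting (\ref{terceradealphasigma}) into the expanded left-hand side, the remaining surplus is removed by the cancellation $\partial^{3}(\sigma^{-1})\ast\partial^{3}(\sigma)=\varepsilon_{H\ot H}$, which your sketch never invokes. So either you need to prove an intermediate identity of the type (\ref{terceradealphasigma}) (with the correct cocycle identity behind it) or otherwise account concretely for both eliminations; as written, the bookkeeping does not close.
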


\begin{proof}
We begin by showing (\ref{primeradealphasigma}):

\begin{itemize}
\item[ ]$\hspace{0.38cm} \sigma\co ((l_{H^{\sigma}}\co (H\ot \eta_H))\ot H)\co (H\ot \mu_{H^{\sigma}})\co (\delta_H\ot H)$

\item[ ]$= \sigma\co (((((f\ot \lambda_{H})\co \delta_{H})\ot f^{-1})\co \delta_{H})\ot \mu_{H^{\sigma}})\co (\delta_{H}\ot H) $ {\scriptsize ({\blue by (\ref{lh-sigma-eta-2})})}

\item[ ]$=\sigma\co (H\ot (\partial^4(\sigma^{-1})\ast \partial^1(\sigma))\ot ((\mu_H\ot \sigma^{-1})\co \delta_{H\ot H}))\co (((f\ot \lambda_{H})\co \delta_{H})\ot ((\lambda_{H}\ot H)\co \delta_{H})\ot H\ot H\ot H)$
\item[ ]$\hspace{0.38cm}\co (H\ot \delta_{H\ot H})\co (\delta_H\ot H)$ {\scriptsize ({\blue by  coassociativity})}

\item[ ]$=\sigma\co (H\ot (\partial^{2}(\sigma)\ast  \partial^{3}(\sigma^{-1}))\ot ((\mu_H\ot \sigma^{-1})\co \delta_{H\ot H}))\co (((f\ot \lambda_{H})\co \delta_{H})\ot ((\lambda_{H}\ot H)\co \delta_{H})\ot H\ot H\ot H)$
\item[ ]$\hspace{0.38cm}\co (H\ot \delta_{H\ot H})\co (\delta_H\ot H)$ {\scriptsize ({\blue by  (\ref{inversa4con1})})}

\item[ ]$=\sigma\co (H\ot \sigma\ot \sigma^{-1}\ot H)\co (H\ot ((\mu_{H}\ot c_{H,H})\co (H\ot c_{H,H}\ot H)\co ((\delta_{H}\co \lambda_{H})\ot H\ot H))\ot (((\delta_{H}$
\item[ ]$\hspace{0.38cm}\co \mu_{H})\ot \sigma^{-1})\co \delta_{H\ot H}))\co (((f\ot \lambda_{H})\co \delta_{H})\ot H\ot \delta_{H\ot H})\co (H\ot \delta_{H}\ot H)\co (\delta_{H}\ot H)$ {\scriptsize ({\blue by  coassociativity}}
\item[ ]$\hspace{0.38cm}${\scriptsize {\blue  and naturality of $c$})}

\item[ ]$=\sigma\co (H\ot \sigma\ot \sigma^{-1}\ot H)\co (H\ot ((\mu_{H}\ot c_{H,H})\co (H\ot c_{H,H}\ot H)\co ((c_{H,H}\co (\lambda_{H}\ot \lambda_{H})\co \delta_{H})\ot H\ot H))\ot (((\delta_{H}$
\item[ ]$\hspace{0.38cm}\co \mu_{H})\ot \sigma^{-1})\co \delta_{H\ot H}))\co (((f\ot \lambda_{H})\co \delta_{H})\ot H\ot \delta_{H\ot H})\co (H\ot \delta_{H}\ot H)\co (\delta_{H}\ot H)$ {\scriptsize ({\blue by  (\ref{anticomul})})}

\item[ ]$=\sigma\co (H\ot  \sigma^{-1}\ot H)\co (((f\ot \lambda_{H})\co \delta_{H})\ot ((\lambda_{H}\ot \sigma)\co (H\ot (\lambda_{H}\ast id_{H})\ot H)\co (\delta_{H}\ot H))\ot (((\delta_{H}$
\item[ ]$\hspace{0.38cm} \co \mu_{H})\ot \sigma^{-1})\co \delta_{H\ot H}))\co (H\ot \delta_{H\ot H})\co  (\delta_{H}\ot H)$ {\scriptsize ({\blue  by naturality of $c$})}

\item[ ]$=\sigma\co (H\ot  \sigma^{-1}\ot H)\co (((f\ot \lambda_{H})\co \delta_{H})\ot \lambda_{H}\ot (((\delta_{H} \co \mu_{H})\ot \sigma^{-1})\co \delta_{H\ot H}))\co (H\ot \delta_{H}\ot H) \co (\delta_{H}\ot H)$ 
\item[ ]$\hspace{0.38cm}${\scriptsize ({\blue  by (\ref{primeradealpha}), counit properties, naturality of $c$, and (\ref{normaltwo-cocycle})})}

\item[ ]$=(\sigma\ot \sigma^{-1})\co (H\ot c_{H,H}\ot H)\co ((c_{H,H}\co (\lambda_{H}\ot \lambda_{H})\co \delta_{H})\ot (((\delta_{H}\co \mu_{H})\ot \sigma^{-1})\co \delta_{H\ot H}))\co (((f\ot H) $
\item[ ]$\hspace{0.38cm}\co \delta_{H})\ot H\ot H)\co (\delta_{H}\ot H)$ {\scriptsize ({\blue  by naturality of $c$})}

\item[ ]$=(\sigma\ast \sigma^{-1})\co (((f\ot \lambda_{H})\co \delta_{H})\ot 
((\mu_{H}\ot \sigma^{-1}) \co \delta_{H\ot H}))\co (\delta_{H}\ot H)${\scriptsize ({\blue  by (\ref{anticomul})})} 

\item[ ]$=(f\ot \sigma^{-1})\co (\delta_H\ot H)$ {\scriptsize ({\blue by invertibility of $\sigma$, (\ref{lambda-vareps}), counit properties, (\ref{mu-eps}), and naturality of $c$}).}

\end{itemize}

To get (\ref{segundadealphasigma}), we firstly show the equality

\begin{equation}
\label{terceradealphasigma}
(f\ot\sigma^{-1})\co (H\ot \lambda_{H}\ot H)\co (\delta_H\ot H)=\sigma\co (H\ot\mu_H)\co (H\ot \lambda_{H}\ot H)\co(\delta_H\ot H).
\end{equation}

Indeed,

\begin{itemize}
\item[ ]$\hspace{0.38cm} (f\ot\sigma^{-1})\co (H\ot \lambda_{H}\ot H)\co (\delta_H\ot H)$

\item[ ]$=(\sigma\ot \sigma^{-1})\co (H\ot ((\lambda_{H}\ot \lambda_{H})\co \delta_{H})\ot H)
\co (\delta_H\ot H)$
{\scriptsize ({\blue by definition of $f$ and coassociativity})}

\item[ ]$=(\sigma\ot \sigma^{-1})\co (H\ot (c_{H,H}\co c_{H,H}\co (\lambda_{H}\ot \lambda_{H})\co \delta_{H})\ot H)
\co (\delta_H\ot H)$
{\scriptsize ({\blue by symmetry of $c$})}

\item[ ]$=(\sigma\ot \sigma^{-1})\co (H\ot (c_{H,H}\co \delta_{H}\co \lambda_{H})\ot H)
\co (\delta_H\ot H)$
{\scriptsize ({\blue by (\ref{anticomul})})}

\item[ ]$=(\partial^{3}(\sigma)\ast \partial^{2}(\sigma^{-1}))\co  (((H\ot \lambda_{H})\co \delta_H)\ot H)$
{\scriptsize ({\blue by (\ref{3inversacon2}) and (\ref{3inversacon2-new})})}

\item[ ]$=(\sigma\ot \sigma^{-1})\co (H\ot \mu_{H\ot H}\ot H)\co (H\ot c_{H,H}\ot c_{H,H}\ot H)\co 
(\delta_{H}\ot (c_{H,H}\co (\lambda_{H}\ot \lambda_{H})\co \delta_{H})\ot \delta_{H})\co (\delta_H\ot H)$
\item[ ]$\hspace{0.38cm} ${\scriptsize ({\blue by (\ref{anticomul})})}

\item[ ]$=(\sigma\ot \sigma^{-1})\co (H\ot c_{H,H}\ot H)\co (H\ot \mu_{H}\ot \mu_{H}\ot H)\co 
(\delta_{H}\ot ( (\lambda_{H}\ot \lambda_{H})\co \delta_{H})\ot \delta_{H})\co (\delta_H\ot H)$
\item[ ]$\hspace{0.38cm} ${\scriptsize ({\blue by naturality of $c$})}

\item[ ]$=(\sigma\ot \sigma^{-1})\co (H\ot c_{H,H}\ot H)\co (H\ot (id_{H}\ast \lambda_{H})\ot \mu_{H}\ot H)\co (\delta_{H}\ot \lambda_{H}\ot \delta_{H})\co (\delta_H\ot H)$ {\scriptsize ({\blue by coasso-}}
\item[ ]$\hspace{0.38cm} ${\scriptsize {\blue ciativity})}

\item[ ]$=\sigma\co (H\ot\mu_H)\co (H\ot \lambda_{H}\ot H)\co(\delta_H\ot H)$
{\scriptsize ({\blue by (\ref{new-h-2delta}), counit properties, naturality of $c$ and normality for $\sigma^{-1}$}).}

\end{itemize}

As a consequence,

\begin{itemize}
\item[ ]$\hspace{0.38cm}\sigma^{-1}\co (H\ot (\mu_{H^{\sigma}}\co ((l_{H^{\sigma}}\co (H\ot \eta_{H}))\ot H)))\co (\delta_{H}\ot H)$

\item[ ]$= \sigma^{-1}\co (H\ot \mu_{H^{\sigma}})\co (H\ot ((f\ot \lambda_{H}\ot f^{-1})\co (\delta_{H}\ot H)\co \delta_{H})\ot H)\co (\delta_{H}\ot H) $
{\scriptsize ({\blue by (\ref{lh-sigma-eta-2})})}

\item[ ]$=  \sigma^{-1}\co (H\ot ((\mu_{H}\ot \sigma^{-1})\co (H\ot c_{H,H}\ot H)\co ((c_{H,H}\co (\lambda_{H}\ot \lambda_{H})\co \delta_{H})\ot \delta_{H})))$
\item[ ]$\hspace{0.38cm}\co (H\ot ((f\ot ((\sigma\ot H)\co (H\ot c_{H,H})\co  (((\lambda_{H}\ot H)\co c_{H,H}\co \delta_{H})\ot f^{-1}\ot H)\co (\delta_{H}\ot H)))\co (\delta_{H}\ot H))\ot H)$
\item[ ]$\hspace{0.38cm}\co (\delta_{H}\ot \delta_{H})$
{\scriptsize ({\blue by naturality of $c$ and (\ref{anticomul})})}

\item[ ]$= (\sigma^{-1}\ot ((f\ot\sigma^{-1})\co (H\ot \lambda_{H}\ot H)\co (\delta_H\ot H)))  $
\item[ ]$\hspace{0.38cm}\co (H\ot (c_{H,H}\co (H\ot (\mu_{H}\co (H\ot \sigma\ot H)\co (((\lambda_{H}\ot \lambda_{H})\co \delta_{H})\ot f^{-1}\ot \delta_{H})\co (\delta_{H}\ot H)))\co (\delta_{H}\ot H))\ot H)))$
\item[ ]$\hspace{0.38cm}\co (\delta_{H}\ot \delta_{H})$ {\scriptsize ({\blue by naturality of $c$})}

\item[ ]$= (\sigma^{-1}\ot (\sigma\co (H\ot\mu_H)\co (H\ot \lambda_{H}\ot H)\co(\delta_H\ot H))) $
\item[ ]$\hspace{0.38cm}\co (H\ot (c_{H,H}\co (H\ot (\mu_{H}\co (H\ot \sigma\ot H)\co (((\lambda_{H}\ot \lambda_{H})\co \delta_{H})\ot f^{-1}\ot \delta_{H})\co (\delta_{H}\ot H)))\co (\delta_{H}\ot H))\ot H)))$
\item[ ]$\hspace{0.38cm}\co (\delta_{H}\ot \delta_{H})$ {\scriptsize ({\blue by (\ref{terceradealphasigma})})}

\item[ ]$=((\sigma^{-1}\co (H\ot \mu_{H}))\ot (\sigma \co (H\ot \mu_{H})))\co (H\ot H\ot c_{H,H}\ot H\ot H)\co (H\ot c_{H,H}\ot c_{H,H}\ot H) $
\item[ ]$\hspace{0.38cm}\co (\delta_{H}\ot (c_{H,H}\co (\lambda_{H}\ot \lambda_{H})\co \delta_{H})\ot (\sigma\co (((\lambda_{H}\ot f^{-1})\co \delta_{H})\ot H))\ot \delta_{H})\co (\delta_{H}\ot H\ot \delta_{H})\co  (\delta_{H}\ot H)$
\item[ ]$\hspace{0.38cm}$ {\scriptsize ({\blue  by naturality of $c$ and coassociativity})}

\item[ ]$=(\partial_{3}(\sigma^{-1})\ast \partial_{3}(\sigma))\co (H\ot \lambda_{H}\ot H)\co (\delta_{H}\ot (\sigma\co (\lambda_{H}\ot f^{-1}\ot H)\co (\delta_{H}\ot H))\ot H)\co (\delta_{H}\ot \delta_{H})$ 
\item[ ]$\hspace{0.38cm}${\scriptsize ({\blue by (\ref{anticomul})})}

\item[ ]$=\sigma\co (\lambda_{H}\ot f^{-1}\ot H)\co(\delta_H\ot H)$
{\scriptsize({\blue by invertibility of $\partial_{3}(\sigma)$ (see Remark \ref{formulas2cociclos}), counit properties and (\ref{lambda-vareps})}),}

\end{itemize}

and the proof is complete.

\end{proof}

\begin{lemma}
\label{propiedadesauxiliares-r}
Let $H$ be a non-associative bimonoid with right division $r_{H}$, put $\varrho_{H}=r_{H}\co (\eta_{H}\ot H)$ and assume that (\ref{primeradealpha2}) holds. Let $\sigma$ be a two-cocycle and let $g$, $g^{-1}$ be the morphisms introduced in Proposition (\ref{morfismosauxiliares}). Then,  the equalities
\begin{equation}
\label{primeradealphasigma-r}
\sigma^{-1}\co (H\ot (r_{H^{\sigma}}\co (\eta_{H}\ot H)))\co ( \mu_{H^{\sigma}}\ot H)\co (H\ot \delta_H)=(\sigma\ot g)\co (H\ot \delta_H),
\end{equation}
\begin{equation}
\label{segundadealphasigma-r}
\sigma\co ((\mu_{H^{\sigma}}\co (H\ot (r_{H^{\sigma}}\co (\eta_{H}\ot H))))\ot H)\co (H\ot \delta_{H})=\sigma^{-1}\co (H\ot g^{-1}\ot \varrho_{H})\co(H\ot \delta_H),
\end{equation}
hold.
\end{lemma}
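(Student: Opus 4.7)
The strategy is to mirror the proof of Lemma \ref{propiedadesauxiliares} under the obvious left-right duality: the roles of $l_{H^{\sigma}}$, $\lambda_H$, $f$, $f^{-1}$, (\ref{primeradealpha}), (\ref{anticomul}) and (\ref{lh-sigma-eta-2}) are played by $r_{H^{\sigma}}$, $\varrho_H$, $g$, $g^{-1}$, (\ref{primeradealpha2}), (\ref{anticomul-r}) and (\ref{lh-sigma-eta-2-r}), respectively. At the same time, because the construction of $r_{H^{\sigma}}$ begins with $g^{-1}$ rather than $f$, the roles of $\sigma$ and $\sigma^{-1}$ get swapped in the outer applications, so the cocycle identity (\ref{inversa4con1}) used in the left-hand proof is replaced by (\ref{3inversacon2}) (or equivalently (\ref{two-cocycle-1}) rearranged), and (\ref{inversa3coninversa1-new}) is used in the place that (\ref{two-cocycle-1}) played. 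The underlying normalization and convolution-invertibility arguments, the naturality of $c$, and coassociativity are used verbatim.

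For (\ref{primeradealphasigma-r}), the plan is first to substitute the explicit form $r_{H^{\sigma}}\co(\eta_H\ot H)=(g^{-1}\ot\varrho_H\ot g)\co(\delta_H\ot H)\co\delta_H$ from (\ref{lh-sigma-eta-2-r}) and to expand $\mu_{H^{\sigma}}$ via its definition in Proposition \ref{algebradeformada}. Then one regroups the scalar factors into a convolution of two of the $\partial^{i}(\sigma^{\pm 1})$'s; here (\ref{3inversacon2}) (in its diagrammatic form (\ref{3inversacon2-new})) is applied to reshape the middle. After that, (\ref{anticomul-r}) lets us replace $\delta_H\co\varrho_H$ by $(\varrho_H\ot\varrho_H)\co c_{H,H}\co\delta_H$, which exposes the sub-term $\varrho_H\ast id_H=\varepsilon_H\ot\eta_H$ from (\ref{primeradealpha2}); this collapses using the normal condition (\ref{normaltwo-cocycle}) for $\sigma^{-1}$ and counit properties. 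What survives is precisely $(\sigma\ot g)\co(H\ot\delta_H)$.

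For (\ref{segundadealphasigma-r}) one first proves the analog of the auxiliary identity (\ref{terceradealphasigma}), namely an equality of the shape ``$(\sigma^{-1}\ot g)\co(H\ot\varrho_H\ot H)\co(H\ot\delta_H)=\sigma^{-1}\co(\mu_H\ot H)\co(H\ot\varrho_H\ot H)\co(H\ot\delta_H)$'', by rewriting $(\delta_H\co\varrho_H)$ via (\ref{anticomul-r}), swapping the two tensor factors using symmetry, and applying (\ref{3inversacon2}) together with (\ref{primeradealpha2}) and normality. With this auxiliary identity in hand, one expands the left-hand side of (\ref{segundadealphasigma-r}) through (\ref{lh-sigma-eta-2-r}) and the definition of $\mu_{H^{\sigma}}$, uses the auxiliary identity to rewrite one block, and then invokes the cocycle condition in the form (\ref{inversa3coninversa1-new}) together with $\sigma\ast\sigma^{-1}=\varepsilon_H\ot\varepsilon_H$ and (\ref{lambda-vareps-r}) to collapse the expression to $\sigma^{-1}\co(H\ot g^{-1}\ot\varrho_H)\co(H\ot\delta_H)$.

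The main obstacle is not conceptual but bookkeeping: within each long string of tensor products one must carefully choose, among (\ref{two-cocycle})--(\ref{inversa3coninversa1}) and their diagrammatic forms (\ref{3inversacon2-new})--(\ref{inversa3coninversa1-new}), the single cocycle rearrangement that produces a visible $\varrho_H\ast id_H$ factor; getting the variance right (that is, using the $d$-inverse style identities adapted to right divisions rather than the $h$-inverse style identities used in the left proof) is where the asymmetry between the two lemmas manifests itself. Once the correct substitutions are made, every remaining step is an application of coassociativity, naturality of $c$, normality of $\sigma^{\pm 1}$, counit properties, and the invertibility of $g$, exactly as in the proof of Lemma \ref{propiedadesauxiliares}.
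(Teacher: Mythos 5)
Your proposal takes essentially the same route as the paper: the paper's own proof of this lemma is the single remark that one repeats the argument of Lemma \ref{propiedadesauxiliares} under the left--right mirror, replacing the auxiliary identity (\ref{terceradealphasigma}) by its right-handed analogue (\ref{terceradealphasigma-r}), and your dictionary of substitutions ($r_{H^{\sigma}}$, $\varrho_{H}$, $g$, $g^{-1}$, (\ref{primeradealpha2}), (\ref{anticomul-r}), (\ref{lh-sigma-eta-2-r}), with $\sigma$ and $\sigma^{-1}$ interchanged in the outer slots) is the correct one. One correction: the auxiliary identity should read $(\sigma\ot g)\co (H\ot \varrho_{H}\ot H)\co (H\ot \delta_{H})=\sigma^{-1}\co (\mu_{H}\ot H)\co (H\ot \varrho_{H}\ot H)\co (H\ot \delta_{H})$, i.e.\ with $\sigma$ rather than $\sigma^{-1}$ in the first tensor slot; this is forced by consistency with the right-hand side $(\sigma\ot g)\co (H\ot \delta_H)$ of (\ref{primeradealphasigma-r}) (the mirror of $(f\ot\sigma^{-1})$ is $(\sigma\ot g)$), and the version with $\sigma^{-1}$ is not the mirror of (\ref{terceradealphasigma}) and does not hold in general. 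Your bookkeeping of which of the equivalent cocycle identities (\ref{3inversacon2})--(\ref{inversa3coninversa1}) is invoked at each step is not quite the literal mirror (several of these identities are in fact their own mirrors), but since they are all reformulations of the single condition (\ref{two-cocycle}) this is, as you say, only bookkeeping; with the one symbol above fixed, the outline goes through exactly as in the left-handed proof.
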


\begin{proof} The proof is similar to the one performed in the previous lemma but using 
\begin{equation}
\label{terceradealphasigma-r}
(\sigma\ot g)\co (H\ot \varrho_{H}\ot H)\co (H\ot \delta_{H})=\sigma^{-1}\co (\mu_{H}\ot H)\co (H\ot \varrho_{H}\ot H)\co (H\ot \delta_{H})
\end{equation}
instead of (\ref{terceradealphasigma}).
\end{proof}

The following theorem is the main result of this section. We will show that,  under suitable conditions, $H^{\sigma}$ is a non-associative bimonoid with (right) left division ($r_{H^{\sigma}}$) $l_{H^{\sigma}}$.

\begin{theorem}
\label{construccion}
The following asssertions hold:
\begin{itemize}
\item[(i)] Let $H$ be a left Hopf quasigroup with left antipode $\lambda_{H}$. Let $\sigma$ be a two-cocycle.  Then the non-associative bimonoid $H^{\sigma}$ defined in Proposition \ref{algebradeformada} is a left Hopf quasigroup with left antipode $\lambda_{H^{\sigma}}=l_{H^{\sigma}}\co (H\ot \eta_{H})$, where  $l_{H^{\sigma}}$ is the morphism introduced in Proposition \ref{alphasigmacondelta}.
\item[(ii)] Let $H$ be a right Hopf quasigroup with right antipode $\varrho_{H}$. Let $\sigma$ be a two-cocycle.  Then the non-associative bimonoid $H^{\sigma}$ defined in Proposition \ref{algebradeformada} is a right Hopf quasigroup with right antipode $\varrho_{H^{\sigma}}=r_{H^{\sigma}}\co (\eta_{H}\ot H)$, where  $r_{H^{\sigma}}$ is the morphism introduced in Proposition \ref{alphasigmacondelta-r}.
\item[(iii)] Let $H$ be a  Hopf quasigroup with  antipode $\lambda_{H}$. Let $\sigma$ be a two-cocycle.  Then the non-associative bimonoid $H^{\sigma}$ defined in Proposition \ref{algebradeformada} is a  Hopf quasigroup with  antipode 
$\lambda_{H^{\sigma}}$.
\end{itemize}

\end{theorem}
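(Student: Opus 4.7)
The strategy is to reduce the theorem to checking the left (resp.\ right) division axiom on $H^{\sigma}$, and then invoke Proposition~\ref{igualdadesalpha} (resp.\ Proposition~\ref{igualdadesalpha-r}). Proposition~\ref{algebradeformada} already provides the non-associative bimonoid structure of $H^{\sigma}$, and since a left Hopf quasigroup satisfies (\ref{primeradealpha}), the hypotheses of Propositions~\ref{morfismosauxiliares} and~\ref{alphasigmacondelta} are in force, so $f$, $f^{-1}$ and $l_{H^{\sigma}}$ are well defined. Thus, to prove (i) it suffices to establish
\begin{equation}
\label{targetldiv}
l_{H^{\sigma}}\co(H\ot\mu_{H^{\sigma}})\co(\delta_H\ot H)=\varepsilon_H\ot H=\mu_{H^{\sigma}}\co(H\ot l_{H^{\sigma}})\co(\delta_H\ot H),
\end{equation}
after which Proposition~\ref{igualdadesalpha} yields both that $H^{\sigma}$ is a left Hopf quasigroup and that its left antipode is $\lambda_{H^{\sigma}}=l_{H^{\sigma}}\co(H\ot\eta_H)$.

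For the first equality in (\ref{targetldiv}) I would start by rewriting the outer $l_{H^{\sigma}}$ via (\ref{lh-sigma-eta-2-1}), so that the left-hand side becomes $\mu_{H^{\sigma}}\co((l_{H^{\sigma}}\co(H\ot\eta_H))\ot\mu_{H^{\sigma}})\co(\delta_H\ot H)$. Expanding both occurrences of $\mu_{H^{\sigma}}$ by its definition produces a chain of $\sigma$ and $\sigma^{-1}$ factors around an inner $\mu_H$. Here the two cocycle identities packaged in Lemma~\ref{propiedadesauxiliares}, namely (\ref{primeradealphasigma}) and (\ref{segundadealphasigma}), are tailored exactly for this situation: applying (\ref{primeradealphasigma}) to the leftmost $\sigma$ (with $l_{H^{\sigma}}\co(H\ot\eta_H)$ as input) and (\ref{segundadealphasigma}) to the rightmost $\sigma^{-1}$ collapses the $\sigma$-scaffolding to factors involving $f$, $f^{-1}$, $\lambda_H$ and $\mu_H$. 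What remains inside can then be rewritten, using coassociativity and the naturality of $c$, as an instance of the left Hopf quasigroup axiom (\ref{leftHqg}) for $H$, which collapses to $\varepsilon_H\ot(-)$; the leftover $f\ast f^{-1}$ disappears by Proposition~\ref{morfismosauxiliares}, leaving $\varepsilon_H\ot H$. The second equality is handled with the same ingredients, using (\ref{lh-sigma-eta-2-2}) to move $l_{H^{\sigma}}$ through, and then applying the other half of Lemma~\ref{propiedadesauxiliares} together with the second identity in (\ref{leftHqg}).

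For (ii) the proof is completely parallel, with Propositions~\ref{igualdadesalpha-r} and~\ref{alphasigmacondelta-r} and Lemma~\ref{propiedadesauxiliares-r} replacing their left analogues, verifying
\[
r_{H^{\sigma}}\co(\mu_{H^{\sigma}}\ot H)\co(H\ot\delta_H)=H\ot\varepsilon_H=\mu_{H^{\sigma}}\co(r_{H^{\sigma}}\ot H)\co(H\ot\delta_H),
\]
and concluding $\varrho_{H^{\sigma}}=r_{H^{\sigma}}\co(\eta_H\ot H)$ is the right antipode. For (iii), applying (i) and (ii) simultaneously produces a left and a right antipode on $H^{\sigma}$, and Remark~\ref{antipodounico} forces them to coincide, yielding the antipode of the Hopf quasigroup $H^{\sigma}$.

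I expect the main obstacle to be the sheer bookkeeping in the first reduction: one must carefully track six or seven tensor slots, repeatedly apply coassociativity and naturality of the symmetry, and identify at the right moment that the cocycle identity in Lemma~\ref{propiedadesauxiliares} is applicable. Conceptually however the only nontrivial inputs are the cocycle condition (encapsulated in Lemma~\ref{propiedadesauxiliares}), the left Hopf quasigroup axiom for $H$, and the convolution invertibility of $f$ from Proposition~\ref{morfismosauxiliares}; once these are invoked in the correct order the argument is forced.
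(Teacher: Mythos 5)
Your proposal follows essentially the same route as the paper: reduce to verifying (\ref{leftdivision}) for $l_{H^{\sigma}}$ and $\mu_{H^{\sigma}}$, conclude via Proposition \ref{igualdadesalpha} using (\ref{lh-sigma-eta-2-1}), carry out the two long computations with Lemma \ref{propiedadesauxiliares}, the invertibility of $f$ and the Hopf quasigroup axiom for $H$, and obtain (ii) by symmetry and (iii) from Remark \ref{antipodounico}. The only understatement is that the paper's computations also invoke the cocycle identities (\ref{two-cocycle}), (\ref{two-cocycle-1}), (\ref{inversa3coninversa1}) and (\ref{inversa3coninversa1-new}) directly, not solely through Lemma \ref{propiedadesauxiliares}, but this does not change the structure of the argument.
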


\begin{proof}  We prove (i). The proof for (ii) is similar using Lemma \ref{propiedadesauxiliares-r} instead of Lemma \ref{propiedadesauxiliares}. The assertion (iii) follows from Remark \ref{antipodounico}. Note that if $H$ is a Hopf quasigroup $\lambda_{H}$ is a left and right antipode and then $$\lambda_{H^{\sigma}}=l_{H^{\sigma}}\co (H\ot \eta_{H})=r_{H^{\sigma}}\co (\eta_{H}\ot H)=\varrho_{H^{\sigma}}$$ because 
$f=g^{-1}$ and $f^{-1}=g$.

First note that  by Proposition \ref{algebradeformada}, $H^{\sigma}$ is a non-associative bimonoid. Therefore, to complete the proof, we only need to show  (\ref{leftdivision}) for $l_{H^{\sigma}}$ and $\mu_{H^{\sigma}}$, because (\ref{lh-sigma-eta-2-1}) holds (see Proposition  \ref{alphasigmacondelta}) and then, by Proposition \ref{igualdadesalpha}, we obtain that $H^{\sigma}$ is a left Hopf quasigroup where $\lambda_{H^{\sigma}}=l_{H^{\sigma}}\co (H\ot \eta_{H})$ is the left antipode. Indeed, in one hand we have 

\begin{itemize}
\item[ ]$\hspace{0.38cm} l_{H^{\sigma}}\co (H\ot \mu_{H^{\sigma}})\co (\delta_H\ot H)$

\item[ ]$=(\sigma \ot ((\mu_{H}\ot \sigma^{-1})\co \delta_{H\ot H}))\co (H\ot c_{H,H}\ot H)\co 
(f\ot (((\delta_{H}\co \lambda_{H})\ot f^{-1})\co \delta_{H})\ot (\delta_{H}\co \mu_{H^{\sigma}})) $
\item[ ]$\hspace{0.38cm} \co (\delta_H\ot H\ot H) \co (\delta_H\ot H)$  {\scriptsize ({\blue by definition of $\mu_{H^{\sigma}}$})}

\item[ ]$=(\sigma \ot ((\mu_{H}\ot \sigma^{-1})\co \delta_{H\ot H}))\co (H\ot c_{H,H}\ot H)\co 
(f\ot (((c_{H,H}\co (\lambda_{H}\ot \lambda_{H})\co \delta_{H})\ot f^{-1})\co \delta_{H})
$
\item[ ]$\hspace{0.38cm} \ot ((\mu_{H^{\sigma}}\ot \mu_{H^{\sigma}})\co \delta_{H\ot H}))\co (H\ot \delta_{H}\ot H)\co (\delta_H\ot H)  $  {\scriptsize ({\blue by (\ref{anticomul}) and (\ref{delta-mu}) for $H^{\sigma}$})}

\item[ ]$= ((\sigma\co (\lambda_{H}\ot f^{-1}\ot \mu_{H^{\sigma}})\co (((H\ot \delta_{H})\co \delta_{H})\ot H))\ot ((\mu_{H}\ot \sigma^{-1})\co \delta_{H\ot H})) \co (H\ot c_{H,H}\ot \mu_{H^{\sigma}})$
\item[ ]$\hspace{0.38cm}\co (c_{H,H}\ot c_{H,H}\ot H)\co (((f\ot \lambda_{H})\co \delta_{H})\ot \delta_{H}\ot \delta_{H})\co (\delta_H\ot H) $  {\scriptsize({\blue by naturality of $c$ and coassociativity})}

\item[ ]$= (((\sigma\co (\lambda_{H}\ot f^{-1}\ot \mu_{H^{\sigma}})\co (((H\ot \delta_{H})\co \delta_{H})\ot H))\co ((((f^{-1}\ast f)\ot H)\co \delta_{H})\ot H))\ot ((\mu_{H}\ot \sigma^{-1})\co \delta_{H\ot H})) $
\item[ ]$\hspace{0.38cm}\co (H\ot c_{H,H}\ot \mu_{H^{\sigma}}) \co (c_{H,H}\ot c_{H,H}\ot H)\co (((f\ot \lambda_{H})\co \delta_{H})\ot \delta_{H}\ot \delta_{H})\co (\delta_H\ot H) $  {\scriptsize ({\blue by invertibility }}
\item[ ]$\hspace{0.38cm}$ {\scriptsize {\blue of $f$ and counit properties})}

\item[ ]$= (((\sigma\co ((l_{H^{\sigma}}\co (H\ot \eta_{H}))\ot \mu_{H^{\sigma}})\co (\delta_{H}\ot H)))\ot ((\mu_{H}\ot \sigma^{-1})\co \delta_{H\ot H}))\co (H\ot c_{H,H}\ot \mu_{H^{\sigma}})  $
\item[ ]$\hspace{0.38cm}\co (c_{H,H}\ot c_{H,H}\ot H)\co (((f\ot ((\lambda_{H}\ot f^{-1})\co \delta_{H}))\co \delta_{H})\ot \delta_{H}\ot \delta_{H})\co (\delta_H\ot H) $ {\scriptsize ({\blue by naturality of $c$}}, \item[ ]$\hspace{0.38cm}$ {\scriptsize {\blue coassociativity of $\delta_{H}$ and (\ref{lh-sigma-eta-2})})}

\item[ ]$= (((f\ot \sigma^{-1})\co (\delta_{H}\ot H))\ot ((\mu_{H}\ot \sigma^{-1})\co \delta_{H\ot H}))\co (H\ot c_{H,H}\ot \mu_{H^{\sigma}})  \co (c_{H,H}\ot c_{H,H}\ot H)$
\item[ ]$\hspace{0.38cm}\co (((f\ot ((\lambda_{H}\ot f^{-1})\co \delta_{H})) \co \delta_{H})\ot \delta_{H}\ot \delta_{H})\co (\delta_H\ot H) ${\scriptsize ({\blue by  (\ref{primeradealphasigma})})}

\item[ ]$= ( \sigma^{-1}\ot ((\mu_{H}\ot \sigma^{-1})\co \delta_{H\ot H}))\co (H\ot c_{H,H}\ot \mu_{H^{\sigma}})  \co (c_{H,H}\ot c_{H,H}\ot H)\co (((f\ot ((\lambda_{H}\ot (f^{-1}\ast f))\co \delta_{H})) $
\item[ ]$\hspace{0.38cm} \co \delta_{H})\ot \delta_{H}\ot \delta_{H})\co (\delta_H\ot H) $ {\scriptsize ({\blue by  naturality of $c$ and coassociativity})}

\item[ ]$= ( \sigma^{-1}\ot ((\mu_{H}\ot \sigma^{-1})\co \delta_{H\ot H}))\co (H\ot c_{H,H}\ot \mu_{H^{\sigma}})  \co (c_{H,H}\ot c_{H,H}\ot H)\co (((f\ot \lambda_{H})\co \delta_{H})\ot \delta_{H}\ot \delta_{H})$
\item[ ]$\hspace{0.38cm} \co (\delta_H\ot H) $ {\scriptsize ({\blue by invertibility of $f$ and counit properties})}

\item[ ]$= (\mu_{H}\ot \sigma^{-1})\co (H\ot c_{H,H}\ot H)\co (\delta_{H}\ot (\delta_{H}\co \mu_{H})\ot \sigma^{-1})\co (((f\ot \lambda_{H})\co \delta_{H})\ot (\sigma^{-1}\ast \sigma)\ot \delta_{H\ot H})$
\item[ ]$\hspace{0.38cm}\co (H\ot \delta_{H\ot H})\co (\delta_{H}\ot H) $ 
 {\scriptsize ({\blue by  naturality of $c$, coassociativity of $\delta_{H}$ and definition of $\mu_{H^{\sigma}}$})}

\item[ ]$= (\mu_{H}\ot \sigma^{-1})\co (H\ot c_{H,H}\ot H)\co (\delta_{H}\ot ((\mu_{H}\ot \mu_{H})\co \delta_{H\ot H})\ot \sigma^{-1})\co (((f\ot \lambda_{H})\co \delta_{H})\ot (\sigma^{-1}\ast \sigma)\ot \delta_{H\ot H})$
\item[ ]$\hspace{0.38cm}\co (H\ot \delta_{H\ot H})\co (\delta_{H}\ot H) $ 
 {\scriptsize ({\blue by   (\ref{delta-mu}) and counit properties})}

\item[ ]$= (\mu_{H}\ot (\sigma^{-1}\co (H\ot ((\mu_{H}\ot \sigma^{-1})\co \delta_{H\ot H}))))\co  (H\ot c_{H,H}\ot H\ot H)\co (((f\ot (\delta_{H}\co \lambda_{H}))\co \delta_{H})\ot \mu_{H}\ot H\ot H)$
\item[ ]$\hspace{0.38cm} \co (H\ot \delta_{H\ot H})\co (\delta_{H}\ot H)$ {\scriptsize ({\blue by  naturality of $c$ and  coassociativity})}

\item[ ]$= (\mu_{H}\ot (\partial^{3}(\sigma^{-1})\ast \partial^{1}(\sigma^{-1})))\co  (H\ot c_{H,H}\ot H\ot H)\co (((f\ot (\delta_{H}\co \lambda_{H}))\co \delta_{H})\ot \mu_{H}\ot H\ot H)\co (H\ot \delta_{H\ot H})$
\item[ ]$\hspace{0.38cm} \co (\delta_{H}\ot H)$ {\scriptsize ({\blue by  (\ref{inversa3coninversa1-new})})}

\item[ ]$= (\mu_{H}\ot (\partial^{2}(\sigma^{-1})\ast \partial^{4}(\sigma^{-1})))\co  (H\ot c_{H,H}\ot H\ot H)\co (((f\ot (\delta_{H}\co \lambda_{H}))\co \delta_{H})\ot \mu_{H}\ot H\ot H)\co (H\ot \delta_{H\ot H})$
\item[ ]$\hspace{0.38cm} \co (\delta_{H}\ot H)$ {\scriptsize ({\blue by  (\ref{inversa3coninversa1})})}

\item[ ]$= (\mu_{H}\ot (\sigma^{-1}\co (((\mu_{H}\ot \sigma^{-1})\co \delta_{H\ot H})\ot H)))\co  (H\ot c_{H,H}\ot H\ot H)\co (((f\ot (\delta_{H}\co \lambda_{H}))\co \delta_{H})\ot \mu_{H}\ot H\ot H)$
\item[ ]$\hspace{0.38cm} \co (H\ot \delta_{H\ot H}) \co (\delta_{H}\ot H)$ {\scriptsize ({\blue by  (\ref{inversa3coninversa1-new})})}

 \item[ ]$=(\mu_{H}\ot (\sigma^{-1}\co (((\mu_{H}\ot \sigma^{-1})\co \delta_{H\ot H})\ot H)))\co  (H\ot c_{H,H}\ot H\ot H)\co (((f\ot (c_{H,H}\co (\lambda_{H}\ot \lambda_{H})$
\item[ ]$\hspace{0.38cm} \co \delta_{H}))\co \delta_{H})\ot \mu_{H}\ot H\ot H) \co (H\ot \delta_{H\ot H}) \co (\delta_{H}\ot H)$
{\scriptsize ({\blue by (\ref{anticomul})})}

\item[ ]$=(H\ot (\sigma^{-1}\co ((\mu_{H}\ot \sigma^{-1})\co \delta_{H\ot H}) \ot H))\co (c_{H,H}\ot H\ot H)\co (f\ot \lambda_{H}\ot (\mu_{H}\co (\lambda_{H}\ot \mu_{H})\co (\delta_{H}\ot H))$
\item[ ]$\hspace{0.38cm}\ot H\ot H) \co (H\ot H\ot \delta_{H\ot H})\co (H\ot \delta_{H}\ot H)\co (\delta_{H}\ot H) $ 
{\scriptsize ({\blue by  naturality of $c$ and  coassociativity})}

\item[ ]$=(H\ot \sigma^{-1})\co (c_{H,H}\ot H) \co (f\ot ((\mu_{H}\ot \sigma^{-1})\co (H\ot c_{H,H}\ot H)\co ((\delta_{H}\co \lambda_{H})\ot \delta_{H})\co \delta_{H})\ot \delta_{H})\co (\delta_{H}\ot H)$
\item[ ]$\hspace{0.38cm} $ {\scriptsize ({\blue by (\ref{leftdivision}), counit properties and naturality of $c$})}

\item[ ]$=(H\ot \sigma^{-1})\co (\sigma^{-1}\ot c_{H,H}\ot H)\co (f\ot ((H\ot c_{H,H})\co (H\ot \mu_{H}\ot H)\co (((\lambda_{H}\ot \lambda_{H})\co \delta_{H}) \ot \delta_{H})\co \delta_{H})\ot \delta_{H})$
\item[ ]$\hspace{0.38cm} \co (\delta_{H}\ot H)$ 
{\scriptsize ({\blue by naturality of $c$ and (\ref{anticomul})})}

\item[ ]$=(H\ot \sigma^{-1})\co (\sigma^{-1}\ot c_{H,H}\ot H)\co 
(f\ot ((\lambda_{H}\ot (c_{H,H}\co ((\lambda_{H}\ast id_{H})\ot  H)\co \delta_{H}))\co \delta_{H})\ot \delta_{H})\co (\delta_{H}\ot H)$
\item[ ]$\hspace{0.38cm} $ {\scriptsize ({\blue by coassociativity of $\delta_{H}$})}

\item[ ]$=(f\ast f^{-1})\ot H$
{\scriptsize ({\blue by (\ref{primeradealpha}), naturality of $c$, normality for $\sigma^{-1}$ and counit properties})}

\item[ ]$=\varepsilon_H\ot H$
{\scriptsize ({\blue by invertibility of $f$}).}

\end{itemize}

Finally, on the other hand,
\begin{itemize}
\item[ ]$\hspace{0.38cm} \mu_{H^{\sigma}}\co (H\ot l_{H^{\sigma}})\co (\delta_H\ot H)$

\item[ ]$=(\mu_{H}\ot \sigma^{-1})\co (H\ot c_{H,H}\ot H)\co (\sigma\ot \delta_{H}\ot ((l_{H^{\sigma}}\ot l_{H^{\sigma}})\co (H\ot c_{H,H}\ot H)\co ((c_{H,H}\co \delta_{H})\ot \delta_{H})))$
\item[ ]$\hspace{0.38cm} \co (H\ot c_{H,H}\ot H\ot H)\co (\delta_{H}\ot ((l_{H^{\sigma}}\ot H)\co (H\ot c_{H,H})\co ((c_{H,H}\co \delta_{H})\ot H))\ot \delta_{H})\co (\delta_{H}\ot H)$ 
\item[ ]$\hspace{0.38cm}${\scriptsize ({\blue by (\ref{deltaalphasigma})})}

\item[ ]$=(\mu_{H}\ot (\sigma^{-1}\co (H\ot l_{H^{\sigma}})\co (\delta_{H}\ot H)))\co (\sigma\ot ((H\ot c_{H,H})\co (\delta_{H}\ot l_{H^{\sigma}})\co (\delta_{H}\ot H))\ot H)\co (H\ot c_{H,H}\ot \delta_{H}) $
\item[ ]$\hspace{0.38cm}\co (\delta_{H}\ot l_{H^{\sigma}}\ot H)\co (\delta_{H}\ot \delta_{H}) $ {\scriptsize ({\blue by naturality of $c$ and coassociativity})}

\item[ ]$=(\mu_{H}\ot (\sigma^{-1}\co (H\ot (\mu_{H^{\sigma}}\co ((l_{H^{\sigma}}\co (H\ot \eta_{H}))\ot H)))\co (\delta_{H}\ot H)))\co (\sigma\ot ((H\ot c_{H,H})\co (\delta_{H}\ot l_{H^{\sigma}})\co $
\item[ ]$\hspace{0.38cm} (\delta_{H}\ot H))\ot H) \co (H\ot c_{H,H}\ot \delta_{H})\co (\delta_{H}\ot l_{H^{\sigma}}\ot H)\co (\delta_{H}\ot \delta_{H}) $ {\scriptsize ({\blue by (\ref{lh-sigma-eta-2-1})})}

\item[ ]$=(\mu_{H}\ot (\sigma\co (\lambda_{H}\ot f^{-1}\ot H)\co(\delta_H\ot H)))\co  (\sigma\ot ((H\ot c_{H,H})\co (\delta_{H}\ot l_{H^{\sigma}})\co (\delta_{H}\ot H))\ot H)  $
\item[ ]$\hspace{0.38cm} \co (H\ot c_{H,H}\ot \delta_{H})\co (\delta_{H}\ot l_{H^{\sigma}}\ot H)\co (\delta_{H}\ot \delta_{H}) $ {\scriptsize ({\blue by (\ref{segundadealphasigma})})}

\item[ ]$=(\mu_{H}\ot \sigma)\co (H\ot c_{H,H}\ot H)\co (\sigma \ot ((((H\ot \lambda_{H})\co \delta_{H})\ot ((f^{-1}\ot l_{H^{\sigma}})\co (\delta_{H}\ot H))\ot H)\co (\delta_{H}\ot \delta_{H})))$
\item[ ]$\hspace{0.38cm} \co (H\ot c_{H,H}\ot H)\co (\delta_{H}\ot l_{H^{\sigma}}\ot H)\co (\delta_{H}\ot \delta_{H}) $ {\scriptsize ({\blue by naturality of $c$ and coassociativity})}

\item[ ]$=(\mu_{H}\ot \sigma)\co (H\ot c_{H,H}\ot H)\co (\sigma \ot ((((H\ot \lambda_{H})\co \delta_{H})\ot (\mu_{H^{\sigma}}\co (((\lambda_{H}\ot f^{-1})\co \delta_{H})\ot H))\ot H)\co (\delta_{H}\ot \delta_{H})))$
\item[ ]$\hspace{0.38cm} \co (H\ot c_{H,H}\ot H)\co (\delta_{H}\ot l_{H^{\sigma}}\ot H)\co (\delta_{H}\ot \delta_{H}) $ {\scriptsize ({\blue by (\ref{lh-sigma-eta-2-2})})}

\item[ ]$=\mu_{H}\co (H\ot \mu_{H^{\sigma}}\ot \sigma)\co (H\ot H\ot c_{H,H}\ot H)\co (H\ot (c_{H,H}\co (\lambda_{H}\ot \lambda_{H})\co \delta_{H})\ot H\ot H)\co (\sigma\ot \delta_{H}\ot \delta_{H})$
\item[ ]$\hspace{0.38cm} \co (H\ot c_{H,H}\ot H)\co (\delta_{H}\ot ((f^{-1}\ot l_{H^{\sigma}})\co (\delta_{H}\ot H))\ot H)\co (\delta_{H}\ot \delta_{H})$ {\scriptsize({\blue by naturality of $c$ and coassociativity})}

\item[ ]$=\mu_{H}\co (H\ot \mu_{H^{\sigma}}\ot \sigma)\co (H\ot H\ot c_{H,H}\ot H)\co (H\ot (\delta_{H}\co \lambda_{H})\ot H\ot H)\co (\sigma\ot \delta_{H}\ot \delta_{H})\co (H\ot c_{H,H}\ot H)$
\item[ ]$\hspace{0.38cm} \co (\delta_{H}\ot (\mu_{H^{\sigma}}\co (((\lambda_{H}\ot f^{-1})\co \delta_{H})\ot H))\ot H)\co (\delta_{H}\ot \delta_{H})$ {\scriptsize ({\blue by (\ref{lh-sigma-eta-2-2}) and (\ref{anticomul})})}

\item[ ]$=\mu_{H}\co  (H\ot ((((\sigma \ot \mu_{H})\co \delta_{H\ot H})\ot (\sigma^{-1}\ast \sigma))\co \delta_{H\ot H}\co (\lambda_{H}\ot H)))\co (\sigma\ot \delta_{H}\ot H)\co (H\ot c_{H,H}\ot H)$
\item[ ]$\hspace{0.38cm} \co (\delta_{H}\ot (\mu_{H^{\sigma}}\co (((\lambda_{H}\ot f^{-1})\co \delta_{H})\ot H))\ot H)\co (\delta_{H}\ot \delta_{H})$ {\scriptsize ({\blue by naturality of $c$ and coassociativity})}

\item[ ]$=\mu_{H}\co  (H\ot ((\sigma \ot \mu_{H})\co (H\ot c_{H,H}\ot H)\co ((\delta_{H}\co \lambda_{H})\ot \delta_{H})))\co (\sigma\ot \delta_{H}\ot H)\co (H\ot c_{H,H}\ot H)$
\item[ ]$\hspace{0.38cm} \co (\delta_{H}\ot (\mu_{H^{\sigma}}\co (((\lambda_{H}\ot f^{-1})\co \delta_{H})\ot H))\ot H)\co (\delta_{H}\ot \delta_{H})$ {\scriptsize ({\blue by invertibility of $\sigma$, naturality of $c$ and  }}
\item[ ]$\hspace{0.38cm} ${\scriptsize {\blue counit properties})}

\item[ ]$=\mu_{H}\co  (H\ot ((\sigma \ot \mu_{H})\co (H\ot c_{H,H}\ot H)\co ((c_{H,H}\co (\lambda_{H}\ot \lambda_{H})\co \delta_{H})\ot \delta_{H})))\co (\sigma\ot \delta_{H}\ot H)\co (H\ot c_{H,H}\ot H)$
\item[ ]$\hspace{0.38cm} \co (\delta_{H}\ot (\mu_{H^{\sigma}}\co (((\lambda_{H}\ot f^{-1})\co \delta_{H})\ot H))\ot H)\co (\delta_{H}\ot \delta_{H})$ {\scriptsize ({\blue by (\ref{anticomul})})}

\item[ ]$=\mu_{H}\co  (H\ot (\mu_{H}\co (H\ot \sigma\ot H)\co (((\lambda_{H}\ot \lambda_{H})\co \delta_{H})\ot \delta_{H})))\co (\sigma\ot \delta_{H}\ot H)\co (H\ot c_{H,H}\ot H)$
\item[ ]$\hspace{0.38cm} \co (\delta_{H}\ot (\mu_{H^{\sigma}}\co (((\lambda_{H}\ot f^{-1})\co \delta_{H})\ot H))\ot H)\co (\delta_{H}\ot \delta_{H})$ {\scriptsize ({\blue by naturality of $c$})}

\item[ ]$=\mu_{H}\co (H\ot (\mu_{H}\co (\lambda_{H}\ot H))\co (\delta_{H}\ot (\sigma\co (\lambda_{H}\ot H))\ot H)\co (\sigma\ot \delta_{H}\ot \delta_{H})\co (H\ot c_{H,H}\ot H)$
\item[ ]$\hspace{0.38cm} \co (\delta_{H}\ot (\mu_{H^{\sigma}}\co (((\lambda_{H}\ot f^{-1})\co \delta_{H})\ot H))\ot H)\co (\delta_{H}\ot \delta_{H})$ {\scriptsize ({\blue by coassociativity})}

\item[ ]$=((\sigma\co (\lambda_{H}\ot H))\ot H)\co (\sigma\ot H\ot \delta_{H})\co (H\ot c_{H,H}\ot H)
\co (\delta_{H}\ot (\mu_{H^{\sigma}}\co (((\lambda_{H}\ot f^{-1})\co \delta_{H})\ot H))\ot H)$ 
\item[ ]$\hspace{0.38cm} \co (\delta_{H}\ot \delta_{H})$ {\scriptsize ({\blue by (\ref{leftdivision}) and counit properties})}

\item[ ]$=(\sigma\ot H)\co (H\ot ((\mu_{H^{\sigma}}\ot \sigma)\co (H\ot c_{H,H}\ot H)\co ((c_{H,H}\co (\lambda_{H}\ot \lambda_{H})\co \delta_{H})\ot f^{-1}\ot \delta_{H}))\ot H)\co (\delta_{H}\ot H\ot \delta_{H})$
\item[ ]$\hspace{0.38cm} \co (\delta_{H}\ot H)$ {\scriptsize ({\blue by naturality of $c$ and coassociativity})}

\item[ ]$=(\sigma\ot H)\co (H\ot ((\mu_{H^{\sigma}}\ot \sigma)\co (H\ot c_{H,H}\ot H)\co ((\delta_{H}\co \lambda_{H})\ot f^{-1}\ot \delta_{H}))\ot H)\co (\delta_{H}\ot H\ot \delta_{H}) \co (\delta_{H}\ot H)$
\item[ ]$\hspace{0.38cm}${\scriptsize ({\blue by (\ref{anticomul})})}

\item[ ]$=((\sigma\co (H\ot ((\sigma\ot \mu_{H})\co\delta_{H\ot H})))\ot (\sigma^{-1}\ast \sigma)\ot H)\co (H\ot H\ot c_{H,H}\ot H\ot H)\co (H\ot (\delta_{H}\co \lambda_{H})\ot f^{-1}$
\item[ ]$\hspace{0.38cm}\ot \delta_{H}\ot H) \co (\delta_{H}\ot H\ot \delta_{H}) \co (\delta_{H}\ot H) $ {\scriptsize ({\blue by naturality of $c$ and coassociativity})}

\item[ ]$=((\partial^1(\sigma)\ast \partial^3(\sigma))\ot H)\co (H\ot \lambda_{H}\ot f^{-1}\ot \delta_{H}) \co (\delta_{H}\ot H\ot H) \co (\delta_{H}\ot H) $ {\scriptsize ({\blue by (\ref{two-cocycle-1})})}

\item[ ]$=((\partial^4(\sigma)\ast \partial^2(\sigma))\ot H)\co (H\ot \lambda_{H}\ot f^{-1}\ot \delta_{H}) \co (\delta_{H}\ot H\ot H) \co (\delta_{H}\ot H) $ {\scriptsize ({\blue by (\ref{two-cocycle})})}

\item[ ]$=(\sigma\ot H)\co (((\sigma\ot \mu_{H})\co \delta_{H\ot H})\ot H\ot H)\co (H\ot \lambda_{H} \ot f^{-1}\ot \delta_{H}) \co  (\delta_{H}\ot H\ot H) \co (\delta_{H}\ot H)$ {\scriptsize ({\blue by }}
\item[ ]$\hspace{0.38cm}${\scriptsize {\blue  (\ref{two-cocycle-1})})}

\item[ ]$=(\sigma\ot H)\co (((\sigma\ot \mu_{H})\co (H\ot c_{H,H}\ot H)\co (\delta_{H}\ot (c_{H,H}\co (\lambda_{H}\ot \lambda_{H})\co \delta_{H})))\ot f^{-1}\ot \delta_{H}) \co  (\delta_{H}\ot H\ot H)  $ 
\item[ ]$\hspace{0.38cm}\co (\delta_{H}\ot H)$ {\scriptsize ({\blue by (\ref{anticomul})})}

\item[ ]$=(\sigma\ot (\sigma\co ((id_{H}\ast \lambda_{H})\ot H)))\co (H\ot c_{H,H}\ot \delta_{H})\co (\delta_{H}\ot \lambda_{H}\ot f^{-1}\ot H)\co (\delta_{H}\ot H\ot H) \co  (\delta_{H}\ot H)$ 
\item[ ]$\hspace{0.38cm}${\scriptsize ({\blue by naturality of $c$ and coassociativity})}

\item[ ]$=(f\ast f^{-1})\ot H$
{\scriptsize ({\blue by (\ref{new-h-2delta}), naturality of $c$, normality for $\sigma$ and counit properties} )}

\item[ ]$=\varepsilon_H\ot H$
{\scriptsize ({\blue  by invertibility of $f$}),}

\end{itemize}

and the proof is complete.
\end{proof}

\section{Two-cocycles and  skew pairings}

In this section we will see that, in a similar way that for the Hopf algebra case, a class of two-cocycles is provided by invertible skew pairings for non-associative bimonoids.  The following definition is inspired in the corresponding one for Hopf algebras introduced by Doi and Takeuchi in \cite{D-T} (see also \cite{P-M} for the monoidal setting). 

\begin{definition}
\label{skewpairing}
{\rm Let $A$ and $H$ be non-associative bimonoids  in ${\mathcal C}$. A pairing between $A$ and $H$ over $K$ is a morphism $\tau:A\ot H\rightarrow K$ such that the equalities
\begin{itemize}
\item[(a1)]$\tau\co (\mu_A\ot H)=(\tau\ot \tau)\co (A\ot c_{A,H}\ot H)\co (A\ot A\ot\delta_H),$
\item[(a2)]$\tau\co (A\ot \mu_H)=(\tau\ot \tau)\co (A\ot c_{A,H}\ot H)\co (\delta_{A}\ot H\ot H),$
\item[(a3)]$\tau\co (A\ot \eta_{H})=\varepsilon_{A}, $
\item[(a4)]$\tau\co (\eta_{A}\ot H)=\varepsilon_{H}, $
\end{itemize}
hold.

A skew pairing between $A$ and $H$ is a pairing between $A^{cop}$ and $H$, i.e., a morphism $\tau:A\ot H\rightarrow K$ satisfying (a1), (a3), (a4) and
\begin{itemize}
\item[(a2')]$\tau\co (A\ot \mu_H)=(\tau\ot \tau)\co (A\ot c_{A,H}\ot H)\co ((c_{A,A}\co\delta_{A})\ot H\ot H).$
\end{itemize}

It is easy to see that, by naturality of $c$, equality (a2') is equivalent to 
\begin{equation}
\label{c2'-new}
\tau\co (A\ot \mu_H)=(\tau\ot \tau)\co (A\ot c_{A,H}\ot H)\co (\delta_{A}\ot c_{H,H}).
\end{equation}

}
\end{definition}

\begin{remark}\label{Hopfpairing}
{\rm Note that, if $A$ and $H$ are Hopf quasigroups, a pairing between $A$ and $H^{cop}$ corresponds with the definition of Hopf pairing introduced in \cite{FT}.
}
\end{remark}

\begin{proposition}
\label{propiedadesskew}
Let $A$, $H$ be non-associative bimonoids with left division $l_A$ and $l_H$, respectively. Let $\tau:A\ot H\rightarrow K$ be a skew pairing. Then $\tau$ is convolution invertible. Moreover, if $\tau^{-1}$ is the inverse of $\tau$, the equalities
\begin{equation}
\label{skewpairing-1}
\tau^{-1}\co (\eta_A\ot H)=\varepsilon_H,
\end{equation}
\begin{equation}
\label{skewpairing-2}
\tau^{-1}\co (A\ot \eta_H)=\varepsilon_A,
\end{equation}
and
\begin{equation}
\label{skewpairing-3}
\tau^{-1}\co (A\ot \mu_{H})=(\tau^{-1}\ot \tau^{-1})\co (A\ot c_{A,H}\ot H)\co (\delta_{A}\ot H\ot H),
\end{equation}
hold.

\end{proposition}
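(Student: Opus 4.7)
My plan is to define the convolution inverse explicitly as $\tau^{-1}:=\tau\circ(\lambda_A\otimes H)$, where $\lambda_A:=l_A\circ(A\otimes\eta_A)$ is the ``left antipode'' morphism induced by the left division of $A$. The essential structural ingredients will be the one-sided convolution identity $id_A\ast\lambda_A=\varepsilon_A\otimes\eta_A$ coming from (\ref{new-h-2delta}), the anti-comultiplicativity of $\lambda_A$ established in Proposition \ref{PI-adv-Prop 6} (equation (\ref{anticomul})), and the unit/counit compatibilities (\ref{lambda-eta}) and (\ref{lambda-vareps}).

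To verify $\tau\ast\tau^{-1}=\varepsilon_A\otimes\varepsilon_H$, I would unfold the convolution product via $\delta_{A\otimes H}=(A\otimes c_{A,H}\otimes H)\circ(\delta_A\otimes\delta_H)$, use the skew pairing axiom (a1) to collapse the two $\tau$-factors into $\tau\circ((\mu_A\circ(A\otimes\lambda_A)\circ\delta_A)\otimes H)$, and finish using (\ref{new-h-2delta}) followed by (a4). Verifying the opposite direction $\tau^{-1}\ast\tau=\varepsilon_A\otimes\varepsilon_H$ will be the main obstacle, because the analogous route via (a1) would reduce it to $\lambda_A\ast id_A=\varepsilon_A\otimes\eta_A$, an identity not guaranteed from the left division axiom alone. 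My plan for this step is to bypass a direct use of (a1) in favor of (a2'): starting from $\tau(\lambda_A(a_1),h_1)\,\tau(a_2,h_2)$, I would insert trivial unit factors $\mu_H(-,\eta_H)=id_H=\mu_H(\eta_H,-)$ so that (a2') rearranges the $H$-Sweedler components, then use (\ref{anticomul}) to convert $\lambda_A$-of-a-Sweedler-component into a Sweedler component of $\lambda_A(-)$, and finally telescope the whole expression through $id_A\ast\lambda_A=\varepsilon_A\otimes\eta_A$ together with the unit/counit axioms of $A$ and $H$.

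The three listed equalities then follow formally from the definition of $\tau^{-1}$. Identity (\ref{skewpairing-1}) is $\tau(\lambda_A(\eta_A),h)=\tau(\eta_A,h)=\varepsilon_H(h)$ by (\ref{lambda-eta}) and (a4); identity (\ref{skewpairing-2}) is $\tau(\lambda_A(a),\eta_H)=\varepsilon_A(\lambda_A(a))=\varepsilon_A(a)$ by (a3) and (\ref{lambda-vareps}). Identity (\ref{skewpairing-3}) is the structurally most illuminating one and is the reason this is the natural formula: applying (a2') to $\tau(\lambda_A(a),\mu_H(h,h'))$ gives $\tau(\lambda_A(a)_2,h)\,\tau(\lambda_A(a)_1,h')$, and then the anti-comultiplicativity (\ref{anticomul}), which reads $\delta_A\circ\lambda_A=(\lambda_A\otimes\lambda_A)\circ c_{A,A}\circ\delta_A$, identifies $\lambda_A(a)_1=\lambda_A(a_2)$ and $\lambda_A(a)_2=\lambda_A(a_1)$, turning the expression into $\tau^{-1}(a_1,h)\,\tau^{-1}(a_2,h')$ as required.
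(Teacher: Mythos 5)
Your definition $\tau^{-1}=\tau\co(\lambda_A\ot H)$, your verification of $\tau\ast\tau^{-1}=\varepsilon_A\ot\varepsilon_H$, and your derivations of (\ref{skewpairing-1}), (\ref{skewpairing-2}) and (\ref{skewpairing-3}) all coincide with the paper's proof. The gap is in the step you yourself flag as the main obstacle: the identity $\tau^{-1}\ast\tau=\varepsilon_A\ot\varepsilon_H$. Your proposed workaround --- insert unit factors, apply (a2'), use (\ref{anticomul}), and ``telescope through $id_A\ast\lambda_A=\varepsilon_A\ot\eta_A$'' --- does not go through for the expression $\sum\tau(\lambda_A(a_1)\ot h_1)\,\tau(a_2\ot h_2)$. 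Axiom (a1) combines the two $A$-arguments only in the order $\lambda_A(a_1)\cdot a_2=(\lambda_A\ast id_A)(a)$, which is exactly the convolution you cannot control from a left division; axiom (a2') requires both $A$-arguments to be Sweedler components of a \emph{single} element, which $\lambda_A(a_1)$ and $a_2$ are not; and inserting $\mu_H(-,\eta_H)$ and expanding by (a2') just collapses back to where you started via (a3). There is no way to manufacture the combination $a_1\lambda_A(a_2)$ (in that order) needed for your telescope. A telltale sign is that your argument never uses the left division of $H$, which is part of the hypotheses and is genuinely needed.

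The paper's way around this is an auxiliary morphism: set $\overline{\tau}=\tau\co(\lambda_A\ot\lambda_H)$ with $\lambda_H=l_H\co(H\ot\eta_H)$. For $\tau^{-1}\ast\overline{\tau}=\sum\tau(\lambda_A(a_1)\ot h_1)\,\tau(\lambda_A(a_2)\ot\lambda_H(h_2))$ your (a2')-plus-(\ref{anticomul}) manoeuvre \emph{does} work, because now both $A$-arguments carry $\lambda_A$, so by (\ref{anticomul}) they are the reversed Sweedler components of $\lambda_A(a)$ and (a2') recombines the $H$-arguments into $h_1\lambda_H(h_2)=(id_H\ast\lambda_H)(h)=\varepsilon_H(h)\eta_H$ by (\ref{new-h-2delta}) for $H$; then (a3) and (\ref{lambda-vareps}) finish. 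This gives $\tau\ast\tau^{-1}=\varepsilon_A\ot\varepsilon_H=\tau^{-1}\ast\overline{\tau}$, and since the convolution algebra $\mathrm{Hom}(A\ot H,K)$ is associative (coassociativity of $\delta_{A\ot H}$ is all that is needed, not associativity of $\mu_A$ or $\mu_H$), the standard computation
$$\tau=\tau\ast(\tau^{-1}\ast\overline{\tau})=(\tau\ast\tau^{-1})\ast\overline{\tau}=\overline{\tau}$$
identifies $\overline{\tau}$ with $\tau$ and hence yields $\tau^{-1}\ast\tau=\varepsilon_A\ot\varepsilon_H$. You should incorporate this detour (or an equivalent proof that $\tau=\tau\co(\lambda_A\ot\lambda_H)$); without it the two-sidedness of the inverse is unproved.
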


\begin{proof}
Define $\tau^{-1}=\tau\co (\lambda_{A}\ot H),$ where $\lambda_{A}=l_A\co (A\ot \eta_A).$ Then, 
\begin{itemize}
\item[ ]$\hspace{0.38cm} \tau\ast \tau^{-1}$

\item[ ]$=(\tau\ot \tau)\co (A\ot c_{A,H}\ot H)\co (((A\ot \lambda_{A})\co \delta_{A})\ot \delta_{H})$ {\scriptsize ({\blue by naturality of $c$})}

\item[ ]$=\tau \co ((id_{A}\ast \lambda_{A})\ot H)$
{\scriptsize ({\blue by (a1) of Definition \ref{skewpairing}})}

\item[ ]$=\varepsilon_{A}\ot (\tau\co (\eta_{A}\ot H))$ {\scriptsize ({\blue by (\ref{new-h-2delta}) for $A$)}}

\item[ ]$=\varepsilon_A\ot \varepsilon_H$
{\scriptsize ({\blue by (a4) of Definition \ref{skewpairing}})}.

\end{itemize}

 Moreover, if $\lambda_{H}=l_H\co (H\ot \eta_H)$,
 the morphism $\overline{\tau}=\tau\co (\lambda_{A}\ot \lambda_{H})$ satisfies $\tau^{-1}\ast \overline{\tau}=\varepsilon_A\ot \varepsilon_H$. Indeed,

\begin{itemize}
\item[ ]$\hspace{0.38cm} \tau^{-1}\ast \overline{\tau}$

\item[ ]$=(\tau \ot \tau)\co (A\ot c_{A,H}\ot H)\co ((c_{A,A}\co (\lambda_{A}\ot \lambda_{A})\co c_{A,A}\co\delta_{A})\ot ((H\ot \lambda_{H})\co\delta_{H}))$ {\scriptsize ({\blue by naturality of $c$})}

\item[ ]$=(\tau \ot \tau)\co (A\ot c_{A,H}\ot H)\co (( c_{A,A}\co\delta_{A}\co \lambda_{A})\ot ((H\ot \lambda_{H})\co\delta_{H}))$ {\scriptsize ({\blue by (\ref{anticomul}) for $A$})}

\item[ ]$=\tau\co (\lambda_{A}\ot (id_{H}\ast \lambda_{H}))$
{\scriptsize ({\blue by (a2') of Definition \ref{skewpairing}})}

\item[ ]$=\tau\co (\lambda_{A}\ot \varepsilon_{H}\ot \eta_{H})$
{\scriptsize ({\blue by (\ref{new-h-2delta}) for $H$})}

\item[ ]$=(\varepsilon_{A}\co \lambda_{A})\ot\varepsilon_{H}$
{\scriptsize ({\blue by (a3) of Definition \ref{skewpairing}})}

\item[ ]$=\varepsilon_A\ot \varepsilon_H$
{\scriptsize ({\blue by (\ref{lambda-vareps}) for $A$})}.

\end{itemize}

As a consequence, $\tau^{-1}=\tau\co (\lambda_{A}\ot H)$ is the convolution inverse of $\tau$ because 
$$\tau=\tau\ast (\tau^{-1}\ast \overline{\tau})=(\tau\ast \tau^{-1})\ast \overline{\tau}=\overline{\tau}.$$

Thus 
\begin{equation}
\label{igualdadtau}
\tau=\tau^{-1}\co (A\ot \lambda_{H}).
\end{equation}

It is not difficult to obtain the equalities (\ref{skewpairing-1}) and (\ref{skewpairing-2}) because 

\begin{itemize}
\item[ ]$\hspace{0.38cm} \tau^{-1}\co (\eta_{A}\ot H)$

\item[ ]$=\tau\co ( (\lambda_{A}\co \eta_{A})\ot H))$ {\scriptsize ({\blue by definition of $\tau^{-1}$})}

\item[ ]$=\tau\co ( \eta_{A}\ot H))$
{\scriptsize ({\blue by (\ref{lambda-eta}) for $A$})}

\item[ ]$=\varepsilon_H$
{\scriptsize ({\blue by (a4) of Definition \ref{skewpairing}})}, 

\end{itemize}

and 

\begin{itemize}
\item[ ]$\hspace{0.38cm} \tau^{-1}\co (A\ot  \eta_{H})$

\item[ ]$=\tau\co ( \lambda_{A}\ot \eta_{H}))$ {\scriptsize ({\blue by definition of $\tau^{-1}$})}

\item[ ]$=\varepsilon_{A}\co \lambda_{A}$
{\scriptsize ({\blue by (a4) of Definition \ref{skewpairing}})}

\item[ ]$=\varepsilon_A$
{\scriptsize ({\blue by (\ref{lambda-vareps}) for $A$})}.

\end{itemize}

Finally, the proof for (\ref{skewpairing-3}) is the following:

\begin{itemize}
\item[ ]$\hspace{0.38cm} \tau^{-1}\co (A\ot \mu_{H})$

\item[ ]$=(\tau \ot \tau)\co (A\ot c_{A,H}\ot H)\co (( c_{A,A}\co\delta_{A}\co \lambda_{A})\ot H\ot H)$ {\scriptsize ({\blue by (a2') of Definition \ref{skewpairing})}}

\item[ ]$=(\tau \ot \tau)\co (A\ot c_{A,H}\ot H)\co (( c_{A,A}\co (\lambda_{A}\ot \lambda_{A})\co c_{A,A}\co\delta_{A})\ot H\ot H)$
{\scriptsize ({\blue by  (\ref{anticomul}) for $A$})}

\item[ ]$=(\tau^{-1}\ot \tau^{-1})\co (A\ot c_{A,H}\ot H)\co (\delta_{A}\ot H\ot H)$
{\scriptsize ({\blue by naturality of $c$ and $c^{2}=id$})}.

\end{itemize}

\end{proof}

\begin{proposition}
\label{propiedadesskew-r}
Let $A$, $H$ be non-associative bimonoids with right division $r_A$ and $r_H$, respectively. Let $\tau:A\ot H\rightarrow K$ be a skew pairing. Then $\tau$ is convolution invertible. Moreover, if $\tau^{-1}$ is the inverse of $\tau$, the equalities
\begin{equation}
\label{skewpairing-1-r}
\tau^{-1}\co (\eta_A\ot H)=\varepsilon_H,
\end{equation}
\begin{equation}
\label{skewpairing-2-r}
\tau^{-1}\co (A\ot \eta_H)=\varepsilon_A,
\end{equation}
and
\begin{equation}
\label{skewpairing-3-r}
\tau^{-1}\co (A\ot \mu_{H})=(\tau^{-1}\ot \tau^{-1})\co (A\ot c_{A,H}\ot H)\co (\delta_{A}\ot H\ot H),
\end{equation}
hold.

\end{proposition}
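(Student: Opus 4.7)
The plan is to mirror the strategy used for Proposition \ref{propiedadesskew}, exchanging the roles of left and right divisions throughout. Concretely, I would set $\tau^{-1}=\tau\co(\varrho_{A}\ot H)$ and prove invertibility via the familiar two-step argument: first establish one-sided invertibility directly, then introduce an auxiliary $\overline{\tau}$ to recover the other side.

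First, I would verify that $\tau^{-1}\ast\tau=\varepsilon_{A}\ot\varepsilon_{H}$ directly. Unfolding the convolution, the composite equals $(\tau\ot\tau)\co(\varrho_{A}\ot A\ot c_{A,H}\ot H)\co(\delta_{A}\ot\delta_{H})$; applying (a1) of Definition \ref{skewpairing} rewrites this as $\tau\co((\varrho_{A}\ast id_{A})\ot H)$, which collapses to $\varepsilon_{A}\ot\varepsilon_{H}$ by (\ref{new-h-2delta-r}) for $A$ and (a4). This step is completely parallel to its counterpart in Proposition \ref{propiedadesskew}, trading $id_{A}\ast\lambda_{A}$ for $\varrho_{A}\ast id_{A}$ and (a1) for (a1), so no new idea is needed.

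The main obstacle is the reverse direction $\tau\ast\tau^{-1}=\varepsilon_{A}\ot\varepsilon_{H}$. Following the left-sided proof, I would define $\overline{\tau}=\tau\co(\varrho_{A}\ot\varrho_{H})$ and instead show $\overline{\tau}\ast\tau^{-1}=\varepsilon_{A}\ot\varepsilon_{H}$. The key calculation unfolds $\overline{\tau}\ast\tau^{-1}$ into $\tau(\varrho_{A}(a_{1}),\varrho_{H}(h_{1}))\,\tau(\varrho_{A}(a_{2}),h_{2})$ (informally); then the right-sided anticomultiplicativity (\ref{anticomul-r}), $\delta_{A}\co\varrho_{A}=(\varrho_{A}\ot\varrho_{A})\co c_{A,A}\co\delta_{A}$, lets me rewrite $\varrho_{A}(a_{1})\ot\varrho_{A}(a_{2})$ as $(c_{A,A}\co\delta_{A})(\varrho_{A}(a))$. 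At that point (a2') of Definition \ref{skewpairing} gathers the two factors as $\tau(\varrho_{A}(a),\varrho_{H}(h_{1})\cdot h_{2})=\tau(\varrho_{A}(a),(\varrho_{H}\ast id_{H})(h))$, and (\ref{new-h-2delta-r}) for $H$ together with (a3) and (\ref{lambda-vareps-r}) finishes the count to $\varepsilon_{A}\ot\varepsilon_{H}$. This is the spot where the choice $\overline{\tau}=\tau\co(\varrho_{A}\ot\varrho_{H})$ (rather than $\tau\co(A\ot\varrho_{H})$ or $\tau\co(\varrho_{A}\ot H)$) is dictated: only this combination is compatible with the swap hidden inside (a2') and (\ref{anticomul-r}), and spotting this is the only non-routine piece of the argument. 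Associativity of convolution then yields $\overline{\tau}=\overline{\tau}\ast(\tau^{-1}\ast\tau)=(\overline{\tau}\ast\tau^{-1})\ast\tau=\tau$, so that $\tau\ast\tau^{-1}=\overline{\tau}\ast\tau^{-1}=\varepsilon_{A}\ot\varepsilon_{H}$ as required; as a by-product we obtain the right-sided analogue $\tau=\tau^{-1}\co(A\ot\varrho_{H})$ of (\ref{igualdadtau}).

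Finally, the remaining identities follow by short direct calculations. Equality (\ref{skewpairing-1-r}) reduces to $\tau\co((\varrho_{A}\co\eta_{A})\ot H)=\tau\co(\eta_{A}\ot H)=\varepsilon_{H}$ using (\ref{lambda-eta-r}) and (a4); equality (\ref{skewpairing-2-r}) reduces to $\varepsilon_{A}\co\varrho_{A}=\varepsilon_{A}$ via (a3) and (\ref{lambda-vareps-r}); and equality (\ref{skewpairing-3-r}) is obtained by writing $\tau^{-1}\co(A\ot\mu_{H})=\tau\co(\varrho_{A}\ot\mu_{H})$, applying (a2'), invoking (\ref{anticomul-r}) to absorb the swap $c_{A,A}$ into the pair of $\varrho_{A}$'s, and then using naturality of $c$ and the symmetry $c^{2}=id$ to recognise the right-hand side as $(\tau^{-1}\ot\tau^{-1})\co(A\ot c_{A,H}\ot H)\co(\delta_{A}\ot H\ot H)$. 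No further difficulty is expected in this last part.
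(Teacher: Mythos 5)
Your proposal is correct and matches the paper's intended argument: the paper's own proof of this proposition is a one-line remark that one repeats Proposition \ref{propiedadesskew} with $\tau^{-1}=\tau\co(\varrho_{A}\ot H)$, which is exactly the definition you take, and your mirrored two-step verification (first $\tau^{-1}\ast\tau$ via (a1) and $\varrho_{A}\ast id_{A}=\varepsilon_{A}\ot\eta_{A}$, then $\overline{\tau}\ast\tau^{-1}$ via (a2') and (\ref{anticomul-r})) is the right dualization. The only blemish is a typographical one in your unfolding of $\tau^{-1}\ast\tau$, where the string $(\varrho_{A}\ot A\ot c_{A,H}\ot H)$ does not typecheck as written (it should read $(\tau\ot\tau)\co(A\ot c_{A,H}\ot H)\co(((\varrho_{A}\ot A)\co\delta_{A})\ot\delta_{H})$), but the subsequent application of (a1) shows you computed the correct composite.
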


\begin{proof} The proof is similar to the one given in the previous proposition but defining $\tau^{-1}$ as $\tau^{-1}=\tau\co (\varrho_{A}\ot H)$ where $\varrho_{A}=r_A\co (\eta_A\ot A).$
\end{proof}

\begin{remark}
Note that, in the conditions of Proposition \ref{propiedadesskew},  we obtain that 
\begin{equation}
\label{tau1}
\tau=\tau\co (\lambda_{A}\ot \lambda_{H}) 
\end{equation}
and 
\begin{equation}
\label{tau2}
\tau^{-1}=\tau^{-1}\co (\lambda_{A}\ot \lambda_{H}). 
\end{equation}

Similarly, in the conditions of Proposition \ref{propiedadesskew-r}, for right divisions we have: 
\begin{equation}
\label{tau1-r}
\tau=\tau\co (\varrho_{A}\ot \varrho_{H}) 
\end{equation}
and 
\begin{equation}
\label{tau2-r}
\tau^{-1}=\tau^{-1}\co (\varrho_{A}\ot \varrho_{H}). 
\end{equation}
\end{remark}

\begin{proposition}
\label{propiedadskewnova}
Let $A$, $H$ be non-associative bimonoids with left division $l_A$ and $l_H$, respectively. Let $\tau:A\ot H\rightarrow K$ be a skew pairing.  If $\lambda_{H}=l_H\co (H\ot \eta_H)$ is an isomorphism the equality
\begin{equation}
\label{skewpairing-4}
\tau^{-1}\co (\mu_A\ot H)=(\tau^{-1}\ot \tau^{-1})\co (A\ot c_{A,H}\ot H)\co (A\ot A\ot (c_{H,H}\co \delta_H))
\end{equation}
holds, where $\tau^{-1}$ is the morphism defined in Proposition \ref{propiedadesskew}. Moreover, if $A$ is a Hopf quasigroup (\ref{skewpairing-4}) holds for any non-associative bimonoid $H$ with left division.

\end{proposition}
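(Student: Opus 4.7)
The plan is to reduce (\ref{skewpairing-4}) to the multiplicativity axiom (a1) of Definition \ref{skewpairing}, using in each hypothesis a different rewriting of $\tau^{-1}$. Both arguments proceed by substitution and the naturality of $c$; the main subtlety lies in the final bookkeeping of the braiding.

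For the first case, assume $\lambda_H$ is invertible. From (\ref{igualdadtau}) I rewrite $\tau^{-1}=\tau\co(A\ot \lambda_H^{-1})$, so that $\tau^{-1}\co(\mu_A\ot H)=\tau\co(\mu_A\ot H)\co(A\ot A\ot \lambda_H^{-1})$. Applying (a1) produces an expression involving $\delta_H\co \lambda_H^{-1}$. The key computation is the identity $\delta_H\co \lambda_H^{-1}=c_{H,H}\co(\lambda_H^{-1}\ot \lambda_H^{-1})\co \delta_H$, which I obtain by inverting the anticomultiplicativity relation (\ref{anticomul}) (using that $c_{H,H}^{-1}=c_{H,H}$ in the symmetric setting). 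After substituting this formula, the two copies of $\lambda_H^{-1}$ can be slid past $c_{A,H}$ by naturality of $c$, and recognising $\tau\co(A\ot \lambda_H^{-1})=\tau^{-1}$ in each tensor factor yields the desired right-hand side.

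For the second case, suppose $A$ is a Hopf quasigroup. I would now exploit the antimultiplicativity of $\lambda_A$ provided by (\ref{lambda-anti}): writing $\tau^{-1}=\tau\co(\lambda_A\ot H)$ and expanding $\lambda_A\co \mu_A=\mu_A\co(\lambda_A\ot \lambda_A)\co c_{A,A}$, an application of (a1) followed by naturality of $c$ brings me to $(\tau^{-1}\ot \tau^{-1})\co(A\ot c_{A,H}\ot H)\co(c_{A,A}\ot \delta_H)$. The chief, though routine, obstacle is then to close the gap with $(\tau^{-1}\ot \tau^{-1})\co(A\ot c_{A,H}\ot H)\co(A\ot A\ot(c_{H,H}\co \delta_H))$. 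A direct diagrammatic check shows that the two shuffles $A\ot A\ot H\to A\ot H\ot A\ot H$ underlying these composites are intertwined by precomposition with $c_{A\ot H,A\ot H}$; since $c_{K,K}=id_K$, the naturality of the symmetry forces $(\tau^{-1}\ot \tau^{-1})\co c_{A\ot H,A\ot H}=\tau^{-1}\ot \tau^{-1}$, which absorbs the discrepancy and completes the proof.
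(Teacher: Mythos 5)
Your proof is correct and follows essentially the same route as the paper's: for the first case the paper verifies (\ref{skewpairing-4}) by composing both sides with the isomorphism $A\ot A\ot\lambda_H$ and invoking (\ref{igualdadtau}), (a1) of Definition \ref{skewpairing} and (\ref{anticomul}), which is your computation with $\tau^{-1}=\tau\co(A\ot\lambda_H^{-1})$ read in reverse. Your second case (antimultiplicativity of $\lambda_A$, then (a1), then absorbing the residual shuffle via $(\tau^{-1}\ot\tau^{-1})\co c_{A\ot H,A\ot H}=\tau^{-1}\ot\tau^{-1}$) is exactly the paper's argument with its final appeal to ``naturality of $c$ and $c^2=id$'' spelled out in detail.
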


\begin{proof}
By composing with the isomorphism $A\ot A\ot \lambda_{H}$ in the left side of (\ref{skewpairing-4}), we have 

\begin{itemize}
\item[ ]$\hspace{0.38cm} \tau^{-1}\co (\mu_A\ot \lambda_{H})$

\item[ ]$=\tau\co (\mu_A\ot H)$
{\scriptsize ({\blue by (\ref{igualdadtau})}}

\item[ ]$=(\tau\ot \tau)\co (A\ot c_{A,H}\ot H)\co (A\ot A\ot\delta_H)$
{\scriptsize ({\blue by (a1) of Definition \ref{skewpairing}})}

\item[ ]$=((\tau^{-1}\co (A\ot \lambda_{H}))\ot (\tau^{-1}\co (A\ot \lambda_{H})))\co (A\ot c_{A,H}\ot H)\co (A\ot A\ot \delta_{H})$
{\scriptsize ({\blue by (\ref{igualdadtau})}}

\item[ ]$=(\tau^{-1}\ot \tau^{-1})\co (A\ot c_{A,H}\ot H)\co (A\ot A\ot (c_{H,H}\co (\lambda_{H}\ot \lambda_{H})\co c_{H,H}\co \delta_H))$
{\scriptsize ({\blue by naturality of $c$ and}}
\item[ ]$\hspace{0.38cm}${\scriptsize {\blue  coassociativity})}

\item[ ]$=(\tau^{-1}\ot \tau^{-1})\co (A\ot c_{A,H}\ot H)\co (A\ot A\ot (c_{H,H}\co \delta_H\co \lambda_{H}))$ {\scriptsize ({\blue by (\ref{anticomul}) and and $c^{2}=id$})}.
\end{itemize}

Therefore, (\ref{skewpairing-4}) holds.

Finally, if we assume that $A$ is a Hopf quasigroup, the antipode $\lambda_A$ is antimultiplicative. Then condition (\ref{skewpairing-4}) is true without the assumption of that $\lambda_{H}$ be an isomorphism because 
 
\begin{itemize}
\item[ ]$\hspace{0.38cm} \tau^{-1}\co (\mu_A\ot H)$

\item[ ]$=\tau\co ((\mu_{A}\co (\lambda_{A}\ot \lambda_{A})\co c_{A,A})\ot H)$ {\scriptsize ({\blue by  (\ref{lambda-anti})})}

\item[ ]$=(\tau^{-1} \ot \tau^{-1})\co (A\ot c_{A,H}\ot H)\co (c_{A,A}\ot \delta_{H})$
{\scriptsize ({\blue by  (a1) of Definition \ref{skewpairing}})}

\item[ ]$=(\tau^{-1}\ot \tau^{-1})\co (A\ot c_{A,H}\ot H)\co (A\ot A\ot (c_{H,H}\co \delta_H))$
{\scriptsize ({\blue by naturality of $c$ and $c^2=id$})}.

\end{itemize}

\end{proof}

Similarly we can prove the result for non-associative bimonoids with right division.

\begin{proposition}
\label{propiedadskewnova-r}
Let $A$, $H$ be non-associative bimonoids with right division $r_A$ and $r_H$, respectively. Let $\tau:A\ot H\rightarrow K$ be a skew pairing.  If $\varrho_{H}=r_H\co (\eta_H\ot H)$ is an isomorphism the equality
\begin{equation}
\label{skewpairing-4-r}
\tau^{-1}\co (\mu_A\ot H)=(\tau^{-1}\ot \tau^{-1})\co (A\ot c_{A,H}\ot H)\co (A\ot A\ot (c_{H,H}\co \delta_H))
\end{equation}
holds, where $\tau^{-1}$ is the morphism defined in  Proposition \ref{propiedadesskew-r}. 

Moreover, if $A$ is a Hopf quasigroup (\ref{skewpairing-4-r}) holds for any non-associative bimonoid $H$ with right division.

\end{proposition}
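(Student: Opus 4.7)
The plan is to mirror the proof of Proposition \ref{propiedadskewnova} step by step, replacing every appeal to the left-division machinery by its right-division counterpart already developed in this section.

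First I would set the stage by recording the formulas I need. By Proposition \ref{propiedadesskew-r} the inverse is $\tau^{-1}=\tau\co(\varrho_A\ot H)$, and the analog of (\ref{igualdadtau}) in this setting is
\[
\tau=\tau^{-1}\co(A\ot\varrho_H),
\]
which I would derive in one line from (\ref{tau1-r}): $\tau=\tau\co(\varrho_A\ot\varrho_H)=(\tau\co(\varrho_A\ot H))\co(A\ot\varrho_H)=\tau^{-1}\co(A\ot\varrho_H)$. Since $\varrho_H$ is assumed to be an isomorphism, equation (\ref{skewpairing-4-r}) is equivalent to the identity obtained from it by composing both sides on the right with $A\ot A\ot\varrho_H$, so it suffices to verify this composed identity.

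Next I would compute the left-hand side of that composed identity:
\[
\tau^{-1}\co(\mu_A\ot \varrho_H)=\tau\co(\mu_A\ot H)
\]
by the displayed formula above, and then expand via axiom (a1) of Definition \ref{skewpairing} to get $(\tau\ot\tau)\co(A\ot c_{A,H}\ot H)\co(A\ot A\ot\delta_H)$. Inserting $\tau=\tau^{-1}\co(A\ot\varrho_H)$ on each factor and using naturality of $c$ together with coassociativity of $\delta_H$ yields
\[
(\tau^{-1}\ot\tau^{-1})\co(A\ot c_{A,H}\ot H)\co\bigl(A\ot A\ot\bigl(c_{H,H}\co(\varrho_H\ot\varrho_H)\co c_{H,H}\co\delta_H\bigr)\bigr).
\]
Finally, the right-division anticomultiplicativity (\ref{anticomul-r}) rewrites $(\varrho_H\ot\varrho_H)\co c_{H,H}\co\delta_H$ as $\delta_H\co\varrho_H$, and a single use of $c^{2}=id$ collapses the expression to $(\tau^{-1}\ot\tau^{-1})\co(A\ot c_{A,H}\ot H)\co(A\ot A\ot(c_{H,H}\co\delta_H))\co(A\ot A\ot\varrho_H)$, which is the right-hand side of (\ref{skewpairing-4-r}) precomposed with $A\ot A\ot\varrho_H$, as required.

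For the second assertion, I would drop the hypothesis that $\varrho_H$ is an isomorphism and instead use that the antipode $\lambda_A$ of the Hopf quasigroup $A$ agrees with $\varrho_A$ (see Remark \ref{antipodounico}) and is antimultiplicative by (\ref{lambda-anti}). Then $\tau^{-1}\co(\mu_A\ot H)=\tau\co((\mu_A\co(\lambda_A\ot\lambda_A)\co c_{A,A})\ot H)$; applying (a1) and rearranging via naturality of $c$ and $c^{2}=id$ gives the desired formula directly, without touching $H$. The only mildly delicate step is the appeal to (\ref{anticomul-r}) in the first part, because the symmetries now sit on the $H$-factor and one has to track carefully that the right-division formula produces $\delta_H\co\varrho_H$ in the exact position needed; once this is done the rest of the calculation is bookkeeping entirely parallel to the proof of Proposition \ref{propiedadskewnova}.
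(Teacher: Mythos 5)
Your proposal is correct and follows exactly the route the paper intends (it only says ``similarly'' and leaves the right-division case to the reader): you compose with the isomorphism $A\ot A\ot\varrho_{H}$, use $\tau=\tau^{-1}\co(A\ot\varrho_{H})$ derived from (\ref{tau1-r}), apply (a1), and finish with naturality of $c$, $c^{2}=id$ and (\ref{anticomul-r}), with the Hopf quasigroup case handled via (\ref{lambda-anti}) just as in Proposition \ref{propiedadskewnova}. The step you flag as delicate does go through, since $(\varrho_{H}\ot\varrho_{H})\co\delta_{H}=c_{H,H}\co(\varrho_{H}\ot\varrho_{H})\co c_{H,H}\co\delta_{H}=c_{H,H}\co\delta_{H}\co\varrho_{H}$ places $\varrho_{H}$ exactly where it can be cancelled.
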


The following Propositions give the connection between skew pairings and two-cocycles.

\begin{proposition}
\label{Hopfpairingcocycle}
Let $A$, $H$ be non-associative bimonoids with left division  $l_A$ and $l_H$, respectively. Then  $A\ot H=(A\ot H, \eta_{A\ot H}, \mu_{A\ot H}, \varepsilon_{A\ot H}, \delta_{A\ot H})$ is a non-associative bimonoid with left division $l_{A\ot H}=(l_{A}\ot l_{H})\co (A\ot c_{H,A}\ot H)$. If $A$, $H$ are left Hopf quasigroups with left antipodes $\lambda_{A}$, $\lambda_{H}$ respectively, $A\ot H$ is a left Hopf quasigroup with left antipode $\lambda_{A\ot H}=\lambda_{A}\ot \lambda_{H}$.

Moreover, let $\tau:A\ot H\rightarrow K$ be a skew 
pairing. The morphism 
$\omega=\varepsilon_A\ot (\tau\co c_{H,A})\ot \varepsilon_H$ is a normal two-cocycle with convolution inverse $\omega^{-1}=\varepsilon_A\ot (\tau^{-1}\co c_{H,A})\ot \varepsilon_H$, where $\tau^{-1}$ is defined as in Proposition \ref{propiedadesskew}.

\end{proposition}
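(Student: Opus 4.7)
The proof splits naturally into three parts.

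\textbf{Bimonoid and division structure on $A\ot H$.} The plan is first to establish the non-associative bimonoid axioms (\ref{eta-eps})--(\ref{delta-mu}) for $A\ot H$ with the standard tensor-product structure; these follow factor-wise from the corresponding axioms for $A$ and $H$ together with naturality of $c$. To verify that $l_{A\ot H}$ is a left division, I would substitute the definitions of $\mu_{A\ot H}$, $\delta_{A\ot H}$ and $l_{A\ot H}$ into both sides of (\ref{leftdivision}) and use naturality of $c$ and $c^{2}=\mathrm{id}$ to separate the expression into independent $A$-side and $H$-side pieces; applying (\ref{leftdivision}) for $l_{A}$ and $l_{H}$ separately then produces $\varepsilon_{A\ot H}\ot A\ot H$. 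When $A$ and $H$ are left Hopf quasigroups, Proposition \ref{igualdadesalpha} gives $l_{A}=\mu_{A}\co(\lambda_{A}\ot A)$ and $l_{H}=\mu_{H}\co(\lambda_{H}\ot H)$, so direct computation of $l_{A\ot H}\co(A\ot H\ot \eta_{A}\ot \eta_{H})$ yields $\lambda_{A}\ot \lambda_{H}$; applying Proposition \ref{igualdadesalpha} once more concludes.

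\textbf{Convolution invertibility and normality of $\omega$.} I would next compute $\omega\ast\omega^{-1}$ by expanding $\delta_{(A\ot H)\ot(A\ot H)}$: the outer $\varepsilon_{A}$ and $\varepsilon_{H}$ appearing in $\omega$ and $\omega^{-1}$ are absorbed by the counit axioms on $A$ and $H$, leaving $\tau\ast\tau^{-1}$ acting on the middle $A\ot H$ factors, which by Proposition \ref{propiedadesskew} equals $\varepsilon_{A}\ot\varepsilon_{H}$. The opposite convolution is symmetric. Normality follows immediately from axioms (a3) and (a4) of Definition \ref{skewpairing}: $\omega\co(\eta_{A\ot H}\ot A\ot H)$ evaluates $\tau$ at $\eta_{H}$ in its second slot and produces $\varepsilon_{A\ot H}$, and $\omega\co(A\ot H\ot \eta_{A\ot H})$ evaluates $\tau$ at $\eta_{A}$ in its first slot and again produces $\varepsilon_{A\ot H}$.

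\textbf{Two-cocycle identity.} The main work is verifying (\ref{two-cocycle-1}) for $\omega$. The strategy is to expand both sides on $(A\ot H)^{\ot 3}$ using $\delta_{(A\ot H)\ot(A\ot H)}$ and $\mu_{A\ot H}$; the outer $\varepsilon_{A}$ and $\varepsilon_{H}$ in each copy of $\omega$ collapse most Sweedler factors, and the essential content reduces on the LHS to a composite involving $\tau\co(\mu_{A}\ot H)$ multiplied by another $\tau$ on the remaining tensor slots, and on the RHS to a composite involving $\tau\co(A\ot \mu_{H})$ multiplied analogously. Applying axiom (a1) of Definition \ref{skewpairing} to the LHS and axiom (a2') (equivalently (\ref{c2'-new})) to the RHS, both sides reduce to the same product of three $\tau$-evaluations at the same Sweedler components, and the identity is established. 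The main obstacle will be the bookkeeping of the many braidings that intertwine the $A$- and $H$-components of each $A\ot H$ factor, but once the outer counits absorb the trivial slots the cocycle condition for $\omega$ is essentially a direct transcription of (a1) and (a2') for $\tau$.
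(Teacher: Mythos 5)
Your proposal is correct and follows essentially the same route as the paper: factor-wise verification of the division axioms via naturality of $c$ and $c^{2}=id$, identification of $\lambda_{A\ot H}=\lambda_{A}\ot\lambda_{H}$ through Proposition \ref{igualdadesalpha}, and reduction of the two sides of the cocycle identity to a common expression by applying (a1) to the $\partial^{1}\ast\partial^{3}$ side and (a2') to the $\partial^{4}\ast\partial^{2}$ side, with invertibility of $\omega$ collapsing to $\tau\ast\tau^{-1}$. Your explicit check of normality via (a3) and (a4) is a harmless addition the paper leaves implicit.
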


\begin{proof}
Trivially, $A\ot H$  is a non-associative bimonoid. The morphism $l_{A\ot H}=(l_{A}\ot l_{H})\co (A\ot c_{H,A}\ot H)$ is a left division for $A\ot H$ because

\begin{itemize}
\item[ ]$\hspace{0.38cm} l_{A\ot H} \co (A\ot H\ot \mu_{A\ot H})\co (\delta_{A\ot H}\ot A\ot H)$

\item[ ]$=((l_{A}\co (A\ot \mu_{A})\co (\delta_{A}\ot A))\ot (l_{H}\co (H\ot \mu_{H})\co (\delta_{H}\ot H)))\co (A\ot c_{H,A}\ot H)$  {\scriptsize ({\blue by naturality of $c$ and }}
\item[ ]$\hspace{0.38cm}${\scriptsize {\blue  $c^2=id$})}

\item[ ]$=(\varepsilon_{A}\ot A\ot \varepsilon_{H}\ot H)\co (A\ot c_{H,A}\ot H)$
{\scriptsize ({\blue by (\ref{leftdivision}) for $A$ and $H$})}

\item[ ]$=\varepsilon_{A\ot H}\ot A\ot H$
{\scriptsize ({\blue by naturality of $c$})},

\end{itemize}

and 

\begin{itemize}
\item[ ]$\hspace{0.38cm} \mu_{A\ot H} \co (A\ot H\ot l_{A\ot H})\co (\delta_{A\ot H}\ot A\ot H)$

\item[ ]$=((\mu_{A}\co (A\ot l_{A})\co (\delta_{A}\ot A))\ot (\mu_{H}\co (H\ot l_{H})\co (\delta_{H}\ot H)))\co (A\ot c_{H,A}\ot H)$ {\scriptsize ({\blue by naturality of $c$ and }}
\item[ ]$\hspace{0.38cm}${\scriptsize {\blue  $c^2=id$})}

\item[ ]$=(\varepsilon_{A}\ot A\ot \varepsilon_{H}\ot H)\co (A\ot c_{H,A}\ot H)$
{\scriptsize ({\blue by (\ref{leftdivision}) for $A$ and $H$})}

\item[ ]$=\varepsilon_{A\ot H}\ot A\ot H$
{\scriptsize ({\blue by naturality of $c$})}.

\end{itemize}

If $A$, $H$ are left Hopf quasigroups with left antipodes $\lambda_{A}$, $\lambda_{H}$, by Proposition \ref{igualdadesalpha}, we have that 
$$\lambda_{A\ot H}=l_{A\ot H}\co (A\ot H\ot \eta_{A\ot H})=\lambda_{A}\otimes \lambda_{H}.$$ 

Then, $A\ot H$ is a left Hopf quasigroup because 
\begin{itemize}
\item[ ]$\hspace{0.38cm} \mu_{A\ot H} \co ((\lambda_{A}\ot \lambda_{H})\ot \mu_{A\ot H})\co (\delta_{A\ot H}\ot A\ot H)$

\item[ ]$=((\mu_{A}\co (\lambda_{A}\ot \mu_{A})\co (\delta_{A}\ot A))\ot (\mu_{H}\co (\lambda_{H}\ot \mu_{H})\co (\delta_{H}\ot H)))\co (A\ot c_{H,A}\ot H)$ {\scriptsize ({\blue by naturality of $c$}}
\item[ ]$\hspace{0.38cm}${\scriptsize {\blue  and $c^2=id$})}

\item[ ]$=(\varepsilon_{A}\ot A\ot \varepsilon_{H}\ot H)\co (A\ot c_{H,A}\ot H)$
{\scriptsize ({\blue by (\ref{quasiasociativity}) for $A$ and $H$})}

\item[ ]$=\varepsilon_{A\ot H}\ot A\ot H$
{\scriptsize ({\blue by naturality of $c$})}.

\end{itemize}

Let $\tau:A\ot H\rightarrow K$ be a skew pairing. Then $\omega=\varepsilon_A\ot (\tau\co c_{H,A})\ot \varepsilon_H$ is a two-cocycle. Indeed, in one hand we have 
\begin{itemize}
\item[ ]$\hspace{0.38cm}\partial^1(\omega)\ast \partial^3(\omega) $

\item[ ]$= \varepsilon_{A}\ot (\tau\co c_{H,A}\co (H\ot (\mu_{A}\co (A\ot (\tau\co c_{H,A})\ot A)\co (A\ot H\ot \delta_{A}))\ot \varepsilon_{H}
$ {\scriptsize ({\blue by naturality of $c$, counit }}
\item[ ]$\hspace{0.38cm}${\scriptsize {\blue properties  and (\ref{mu-eps})})}

\item[ ]$= \varepsilon_{A}\ot (\tau\co (\mu_{A}\ot H)\co (A\ot c_{H,A})\co (c_{H,A}\ot ((\tau\co c_{H,A})\ot A)\co (H\ot \delta_{A})))\ot \varepsilon_{H}
$ {\scriptsize ({\blue by naturality of $c$)})}

\item[ ]$=\varepsilon_{A}\ot ((\tau\ot \tau)\co (A\ot c_{A,H}\ot H)\co (A\ot A\ot \delta_{H})\co (A\ot c_{H,A})\co (c_{H,A}\ot ((\tau\co c_{H,A})\ot A)\co (A\ot \delta_{A})))\ot \varepsilon_{H}$ 
\item[ ]$\hspace{0.38cm}$ {\scriptsize ({\blue by (a1) of Definition \ref{skewpairing}})}

\item[ ]$=\varepsilon_{A}\ot ((\tau\ot \tau)\co (A\ot (c_{A,H}\co c_{H,A})\ot H)\co (A\ot H\ot c_{H,A})\co (A\ot \delta_{H}\ot A)\co (c_{H,A}\ot ((\tau\co c_{H,A})\ot A)$ 
\item[ ]$\hspace{0.38cm}\co (A\ot \delta_{A})))\ot \varepsilon_{H}$ {\scriptsize ({\blue by naturality of $c$})}

\item[ ]$=\varepsilon_{A}\ot ((\tau\ot \tau)\co (A\ot H\ot c_{H,A})\co (A\ot \delta_{H}\ot A)\co (c_{H,A}\ot ((\tau\co c_{H,A})\ot A)\co (H\ot \delta_{A})))\ot \varepsilon_{H}$ {\scriptsize ({\blue by  }}
\item[ ]$\hspace{0.38cm}$ {\scriptsize {\blue naturality of $c$ and $c^2=id$}),}
\end{itemize}
and, in the other hand, 
\begin{itemize}
\item[ ]$\hspace{0.38cm} \partial^4(\omega)\ast\partial^2(\omega)$

\item[ ]$=\varepsilon_{A}\ot (((\tau\co c_{H,A})\ot (\tau\co c_{H,A}\co (\mu_{H}\ot A)))\co (H\ot c_{H,A}\ot H\ot A)\co (\delta_{H}\ot A\ot H\ot A))\ot \varepsilon_{H} $ {\scriptsize ({\blue by }}
\item[ ]$\hspace{0.38cm}$ {\scriptsize {\blue  naturality of $c$, counit properties, and (\ref{mu-eps})})}

\item[ ]$=\varepsilon_{A}\ot ((\tau\ot (\tau\co (A\ot \mu_{H})\co (c_{H,A}\ot H)))\co (A\ot \delta_{H}\ot A\ot H)\co (c_{H,A}\ot c_{H,A}))\ot \varepsilon_{H} $ {\scriptsize ({\blue by naturality}}
\item[ ]$\hspace{0.38cm}$ {\scriptsize {\blue  of $c$})}

\item[ ]$=\varepsilon_{A}\ot ((\tau\ot ((\tau \ot \tau)\co (A\ot c_{A,H}\ot H)\co ((c_{A,A}\co \delta_{A})\ot H\ot H)\co (c_{H,A}\ot H)))\co (A\ot \delta_{H}\ot A\ot H)$
\item[ ]$\hspace{0.38cm}\co (c_{H,A}\ot c_{H,A}))\ot \varepsilon_{H} $ {\scriptsize ({\blue by (a2') of Definition \ref{skewpairing}})}

\item[ ]$= \varepsilon_{A}\ot ((\tau\ot \tau)\co (A\ot H\ot c_{H,A})\co (A\ot \delta_{H}\ot A)\co (c_{H,A}\ot ((\tau\co c_{H,A})\ot A)\co (H\ot \delta_{A})))\ot \varepsilon_{H}$ {\scriptsize ({\blue by  }}
\item[ ]$\hspace{0.38cm}$ {\scriptsize {\blue naturality of $c$ and  $c^2=id$})}.

\end{itemize}

Finally, $\omega$ is convolution invertible because 

\begin{itemize}
\item[ ]$\hspace{0.38cm} \omega\ast \omega^{-1}$

\item[ ]$=\varepsilon_{A}\ot ((\tau\ast \tau^{-1})\co c_{H,A})\ot \varepsilon_{H}$ {\scriptsize ({\blue by naturality of $c$ and counit properties})}

\item[ ]$=\varepsilon_{A\ot H}\ot \varepsilon_{A\ot H}$
{\scriptsize ({\blue by invertibility of $\tau$})}, 

\end{itemize}
 and similarly 
\begin{itemize}
\item[ ]$\hspace{0.38cm} \omega^{-1}\ast \omega$

\item[ ]$=\varepsilon_{A}\ot ((\tau^{-1}\ast \tau)\co c_{H,A})\ot \varepsilon_{H}$ {\scriptsize ({\blue by naturality of $c$ and counit properties})}

\item[ ]$=\varepsilon_{A\ot H}\ot \varepsilon_{A\ot H}$
{\scriptsize ({\blue by invertibility of $\tau$})}.

\end{itemize}

\end{proof}

\begin{proposition}
\label{Hopfpairingcocycle-r}
Let $A$, $H$ be non-associative bimonoids with right division  $r_A$ and $r_H$, respectively. Then  $A\ot H=(A\ot H, \eta_{A\ot H}, \mu_{A\ot H}, \varepsilon_{A\ot H}, \delta_{A\ot H})$ is a non-associative bimonoid with left division $r_{A\ot H}=(r_{A}\ot r_{H})\co (A\ot c_{H,A}\ot H)$. If $A$, $H$ are right Hopf quasigroups with right antipodes $\varrho_{A}$, $\varrho_{H}$ respectively, $A\ot H$ is a right Hopf quasigroup with right antipode $\varrho_{A\ot H}=\varrho_{A}\ot \varrho_{H}$.

Moreover, let $\tau:A\ot H\rightarrow K$ be a skew 
pairing. The morphism 
$\omega=\varepsilon_A\ot (\tau\co c_{H,A})\ot \varepsilon_H$ is a two-cocycle with convolution inverse $\omega^{-1}=\varepsilon_A\ot (\tau^{-1}\co c_{H,A})\ot \varepsilon_H$, where $\tau^{-1}$ is defined as in Proposition \ref{propiedadesskew-r}.

\end{proposition}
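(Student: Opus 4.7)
The plan is to mirror the proof of Proposition \ref{Hopfpairingcocycle} step for step, swapping the role of left divisions, left antipodes, and Proposition \ref{igualdadesalpha} for their right-sided counterparts (right divisions, right antipodes, and Proposition \ref{igualdadesalpha-r}). In particular, the two-cocycle part of the statement does not depend on whether we are in the left or right setting, so that computation will be literally the same as in Proposition \ref{Hopfpairingcocycle}.

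First, I would note that $A\ot H$ is a non-associative bimonoid (with the standard tensor structure $\mu_{A\ot H}$, $\delta_{A\ot H}$, $\eta_{A\ot H}$, $\varepsilon_{A\ot H}$) without any reference to divisions, exactly as in the left case. Then I verify the right-division identities (\ref{rightdivision}) for $r_{A\ot H}=(r_A\ot r_H)\co(A\ot c_{H,A}\ot H)$: applying naturality of $c$ and $c^2=id$, the left-hand side of each identity factors as a tensor product of the corresponding composites for $A$ and $H$, after which (\ref{rightdivision}) for $A$ and $H$ yields $\varepsilon_A\ot A\ot \varepsilon_H\ot H$, which is $A\ot H\ot \varepsilon_{A\ot H}$ modulo naturality of $c$.

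Next, assume $A$ and $H$ are right Hopf quasigroups with right antipodes $\varrho_A$, $\varrho_H$. By Proposition \ref{igualdadesalpha-r}, $r_{A\ot H}$ exists and its associated right antipode is $\varrho_{A\ot H}=r_{A\ot H}\co(\eta_{A\ot H}\ot A\ot H)$; evaluating this formula and using (\ref{new-lh-eta-r}) gives $\varrho_{A\ot H}=\varrho_A\ot\varrho_H$. To conclude that $A\ot H$ is a right Hopf quasigroup, by Proposition \ref{igualdadesalpha-r} it suffices to check (\ref{quasiasociativity-r}) for $\varrho_{A\ot H}$, and this again splits componentwise using naturality of $c$ and $c^2=id$, reducing to (\ref{quasiasociativity-r}) for $A$ and $H$ separately.

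For the skew-pairing statement, recall that Proposition \ref{propiedadesskew-r} provides convolution invertibility of $\tau$ with inverse $\tau^{-1}$ in the right-division setting. The computation that $\omega=\varepsilon_A\ot(\tau\co c_{H,A})\ot\varepsilon_H$ satisfies $\partial^1(\omega)\ast\partial^3(\omega)=\partial^4(\omega)\ast\partial^2(\omega)$ uses only the skew-pairing axioms (a1), (a2') of Definition \ref{skewpairing}, naturality of $c$, counit properties, and (\ref{mu-eps}); it is therefore identical to the verification in the proof of Proposition \ref{Hopfpairingcocycle} and transfers verbatim. Finally, convolution invertibility of $\omega$ with inverse $\omega^{-1}=\varepsilon_A\ot(\tau^{-1}\co c_{H,A})\ot\varepsilon_H$ follows at once from $\tau\ast\tau^{-1}=\tau^{-1}\ast\tau=\varepsilon_A\ot\varepsilon_H$ (Proposition \ref{propiedadesskew-r}), together with naturality of $c$ and the counit properties. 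Since the two-cocycle is normal on both sides (the $\tau$ evaluations with units collapse to counits by (a3), (a4) and (\ref{skewpairing-1-r})--(\ref{skewpairing-2-r})), no further adjustment is needed.

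The only genuinely delicate point, as in the left case, is the bookkeeping of braidings in the two-cocycle identity: one must keep track of where $c_{H,A}$ appears and exploit $c^2=id$ to fold the axioms (a1) and (a2') into the same expression. All of this is already laid out in the proof of Proposition \ref{Hopfpairingcocycle}, so no new idea is required; the right-sided version is obtained simply by substituting Propositions \ref{igualdadesalpha-r} and \ref{propiedadesskew-r} in place of Propositions \ref{igualdadesalpha} and \ref{propiedadesskew} and leaving the braiding computation untouched.
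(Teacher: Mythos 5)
Your proposal is correct and follows exactly the route the paper takes: the paper's entire proof of this proposition is ``Similar to the one of Proposition \ref{Hopfpairingcocycle},'' and your write-up is precisely that mirroring argument, with Propositions \ref{igualdadesalpha-r} and \ref{propiedadesskew-r} substituted for their left-sided counterparts and the two-cocycle computation carried over unchanged.
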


\begin{proof}
Similar to the one of Proposition \ref{Hopfpairingcocycle}.
\end{proof}

As a corollary of Propositions \ref{Hopfpairingcocycle} and \ref{Hopfpairingcocycle-r} we have 

\begin{corollary}
Let $A$, $H$ be  Hopf quasigroups with  antipodes $\lambda_{A}$, $\lambda_{H}$ respectively. Then  $A\ot H=(A\ot H, \eta_{A\ot H}, \mu_{A\ot H}, \varepsilon_{A\ot H}, \delta_{A\ot H})$ is a Hopf quasigroup with  antipode $\lambda_{A\ot H}=\lambda_{A}\ot \lambda_{H}$.

Moreover, let $\tau:A\ot H\rightarrow K$ be a skew 
pairing. The morphism 
$\omega=\varepsilon_A\ot (\tau\co c_{H,A})\ot \varepsilon_H$ is a two-cocycle with convolution inverse $\omega^{-1}=\varepsilon_A\ot (\tau^{-1}\co c_{H,A})\ot \varepsilon_H$, where $\tau^{-1}$ is defined as in Proposition \ref{propiedadesskew} (or as in Proposition \ref{propiedadesskew-r}).
\end{corollary}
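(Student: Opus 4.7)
The plan is to derive this corollary as a direct consequence of Propositions \ref{Hopfpairingcocycle} and \ref{Hopfpairingcocycle-r}, together with the observation in Remark \ref{antipodounico} that a non-associative bimonoid is a Hopf quasigroup precisely when it is simultaneously a left and a right Hopf quasigroup, in which case the two antipodes coincide.

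First I would record that, since $A$ and $H$ are Hopf quasigroups with antipodes $\lambda_A$ and $\lambda_H$, each of them is at once a left and a right Hopf quasigroup, with $\varrho_A=\lambda_A$ and $\varrho_H=\lambda_H$ and divisions $l_A=\mu_A\co(\lambda_A\ot A)$, $r_A=\mu_A\co(A\ot\lambda_A)$, together with the analogous formulas for $H$ (see Remark \ref{naHxeraliza}). This places $A$ and $H$ simultaneously in the hypotheses of Propositions \ref{Hopfpairingcocycle} and \ref{Hopfpairingcocycle-r}.

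Applying Proposition \ref{Hopfpairingcocycle} then yields that $A\ot H$ is a left Hopf quasigroup with left antipode $\lambda_A\ot\lambda_H$, while Proposition \ref{Hopfpairingcocycle-r} yields that $A\ot H$ is a right Hopf quasigroup with right antipode $\varrho_A\ot\varrho_H=\lambda_A\ot\lambda_H$. Invoking Remark \ref{antipodounico} on the non-associative bimonoid $A\ot H$ then gives that $A\ot H$ is a Hopf quasigroup with antipode $\lambda_{A\ot H}=\lambda_A\ot\lambda_H$, proving the first part of the claim.

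For the cocycle assertion, both Propositions \ref{Hopfpairingcocycle} and \ref{Hopfpairingcocycle-r} already produce the same morphism $\omega=\varepsilon_A\ot(\tau\co c_{H,A})\ot\varepsilon_H$ as a normal two-cocycle on $A\ot H$, with convolution inverse $\omega^{-1}=\varepsilon_A\ot(\tau^{-1}\co c_{H,A})\ot\varepsilon_H$, so the second statement is immediate. The only consistency point is that the two formulas for $\tau^{-1}$, namely $\tau\co(\lambda_A\ot H)$ coming from Proposition \ref{propiedadesskew} and $\tau\co(\varrho_A\ot H)$ coming from Proposition \ref{propiedadesskew-r}, must coincide; this is clear since $\lambda_A=\varrho_A$ for a Hopf quasigroup, and uniqueness of convolution inverses then makes the formula for $\omega^{-1}$ unambiguous. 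I do not anticipate any genuine obstacle in this argument: the corollary is essentially the packaging of the two preceding propositions, and the only conceptual ingredient is the appeal to Remark \ref{antipodounico} that fuses the left- and right-sided structures on $A\ot H$ into a single Hopf quasigroup structure.
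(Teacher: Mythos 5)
Your proposal is correct and follows exactly the route the paper intends: the corollary is stated as an immediate consequence of Propositions \ref{Hopfpairingcocycle} and \ref{Hopfpairingcocycle-r}, fused via Remark \ref{antipodounico}, which is precisely your argument. The additional check that the two formulas for $\tau^{-1}$ coincide because $\lambda_A=\varrho_A$ is a useful detail the paper leaves implicit.
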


Also, we get the following result which is a generalization of the one given in \cite{FT}, Proposition 2.2. (There is a slightly difference because the definition of Hopf pairing in \cite{FT} corresponds with our notion of pairing betwen $A$ and $H^{cop}$).
\begin{corollary}
\label{FangTorrecillas}
Let $A$, $H$ be left Hopf quasigroups  with left antipodes $\lambda_{A}$, $\lambda_{H}$ respectively. Let $\tau:A\ot H\rightarrow K$ be a skew pairing. Then $A\bowtie_{\tau} H=(A\ot H, \eta_{A\bowtie_{\tau} H}, \mu_{A\bowtie_{\tau} H}, \varepsilon_{A\bowtie_{\tau} H}, \delta_{A\bowtie_{\tau} H}, \lambda_{A\bowtie_{\tau} H})$ has a structure of left Hopf quasigroup, where
$$\eta_{A\bowtie_{\tau} H}=\eta_{A\otimes H},\;\; \varepsilon_{A\bowtie_{\tau} H}=\varepsilon_{A\otimes H},\;\; \delta_{A\bowtie_{\tau} H}=\delta_{A\otimes H},$$
\begin{equation}
\label{mutimes}
\mu_{A\bowtie_{\tau} H}=(\mu_A\ot\mu_H)\co (A\ot \tau\ot A\ot H\ot \tau^{-1}\ot H)\co (A\ot \delta_{A\ot H}\ot A\ot H\ot H)\co(A\ot \delta_{A\ot H}\ot H)\co (A\ot c_{H,A}\ot H)
\end{equation}
and 
\begin{equation}
\label{lambdatimes}
\lambda_{A\bowtie_{\tau} H}=(\tau^{-1}\ot \lambda_A\ot \lambda_H\ot \tau)\co (A\ot H\ot \delta_{A\ot H})\co \delta_{A\ot H}.
\end{equation}

\end{corollary}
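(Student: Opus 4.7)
The proof proceeds by identifying $A\bowtie_{\tau} H$ with $(A\ot H)^{\omega}$, where $\omega=\varepsilon_A\ot (\tau\co c_{H,A})\ot \varepsilon_H$, and then invoking the machinery already developed. By Proposition \ref{Hopfpairingcocycle}, $A\ot H$ is a left Hopf quasigroup with left antipode $\lambda_{A\ot H}=\lambda_{A}\ot \lambda_{H}$, and $\omega$ is a normal two-cocycle whose convolution inverse is $\omega^{-1}=\varepsilon_A\ot (\tau^{-1}\co c_{H,A})\ot \varepsilon_H$. Therefore Theorem \ref{construccion}(i) applies to $A\ot H$ with cocycle $\omega$ and yields that $(A\ot H)^{\omega}$ is a left Hopf quasigroup. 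By construction its unit, counit and comultiplication agree with those of $A\ot H$, so the only content left to verify is that the product $\mu_{(A\ot H)^{\omega}}$ and left antipode $\lambda_{(A\ot H)^{\omega}}$ coincide with the explicit expressions (\ref{mutimes}) and (\ref{lambdatimes}).

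For the product, I would expand $\mu_{(A\ot H)^{\omega}}=(\omega\ot\mu_{A\ot H}\ot \omega^{-1})\co ((A\ot H)\ot(A\ot H)\ot \delta_{(A\ot H)\ot (A\ot H)})\co \delta_{(A\ot H)\ot (A\ot H)}$ using $\delta_{A\ot H}=(A\ot c_{H,A}\ot H)\co (\delta_{A}\ot \delta_{H})$ and $\mu_{A\ot H}=(\mu_{A}\ot \mu_{H})\co (A\ot c_{H,A}\ot H)$. Because $\omega$ factors as $\varepsilon_{A}\ot (\tau\co c_{H,A})\ot \varepsilon_{H}$ (and likewise for $\omega^{-1}$ with $\tau^{-1}$), all the outer tensorands against which $\varepsilon_{A}$ or $\varepsilon_{H}$ is paired collapse via counit properties, and what survives is precisely two copies of $\tau$ applied to a braided cross-pairing of the middle factors. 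Using naturality of $c$ to reorganize the result gives formula (\ref{mutimes}).

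For the left antipode, I would apply Proposition \ref{alphasigmacondelta} to $(A\ot H)^{\omega}$, which produces
\[
\lambda_{(A\ot H)^{\omega}}=(f\ot \lambda_{A\ot H}\ot f^{-1})\co (\delta_{A\ot H}\ot (A\ot H))\co \delta_{A\ot H},
\]
with $f=\omega\co ((A\ot H)\ot \lambda_{A\ot H})\co \delta_{A\ot H}$ and $f^{-1}=\omega^{-1}\co (\lambda_{A\ot H}\ot (A\ot H))\co \delta_{A\ot H}$. Substituting $\lambda_{A\ot H}=\lambda_{A}\ot \lambda_{H}$ and the explicit form of $\omega,\omega^{-1}$, and again absorbing the counit factors, one checks that $f$ reduces to $\tau^{-1}\co (A\ot \lambda_{H}\co c_{H,A}) \co \cdots$ applied on suitable components; after bookkeeping one recovers exactly (\ref{lambdatimes}).

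The only real work is the combinatorial bookkeeping in these two verifications: tracking which factor of a six-fold tensor product each $\varepsilon$, $\tau$, $\mu$ or $\lambda$ acts on, and repeatedly applying naturality of $c$ together with $c^{2}=\mathrm{id}$. No new structural idea is needed beyond Proposition \ref{Hopfpairingcocycle} and Theorem \ref{construccion}(i); the corollary is essentially their juxtaposition packaged in the notation $A\bowtie_{\tau}H$.
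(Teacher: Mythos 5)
Your proposal is correct and follows exactly the paper's route: the paper likewise obtains the result by applying Theorem \ref{construccion}(i) to the left Hopf quasigroup $A\ot H$ with the normal two-cocycle $\omega=\varepsilon_A\ot (\tau\co c_{H,A})\ot \varepsilon_H$ from Proposition \ref{Hopfpairingcocycle}, and then verifies $\mu_{A\bowtie_{\tau} H}=\mu_{(A\otimes H)^{\omega}}$ and $\lambda_{A\bowtie_{\tau} H}=\lambda_{(A\otimes H)^{\omega}}$ by naturality of $c$, counit properties, coassociativity and (\ref{lambda-vareps}). The bookkeeping you defer is precisely what the paper also leaves as ``easy to check.''
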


\begin{proof}
The result follows  by application of Theorem \ref{construccion} to the left Hopf quasigroup $A\otimes H$ and the two-cocycle $\omega=\varepsilon_A\ot (\tau\co c_{H,A})\ot \varepsilon_H$. Using the naturality of $c$, the counit properties, the coassociativity of the coproducts,  and (\ref{lambda-vareps}), it is easy to check that 
$$\mu_{A\bowtie_{\tau} H}=\mu_{{(A\otimes H)}^{\omega}},\;\;\; \lambda_{A\bowtie_{\tau} H}=\lambda_{{(A\otimes H)}^{\omega}}.$$ 

\end{proof}

Similarly, for right Hopf quasigroups we have 

\begin{corollary}
\label{FangTorrecillas-r}
Let $A$, $H$ be right Hopf quasigroups  with right antipodes $\varrho_{A}$, $\varrho_{H}$ respectively. Let $\tau:A\ot H\rightarrow K$ be a skew pairing. Then $A\bowtie_{\tau} H=(A\ot H, \eta_{A\bowtie_{\tau} H}, \mu_{A\bowtie_{\tau} H}, \varepsilon_{A\bowtie_{\tau} H}, \delta_{A\bowtie_{\tau} H}, \varrho_{A\bowtie_{\tau} H})$ has a structure of right Hopf quasigroup, where
$$\eta_{A\bowtie_{\tau} H}=\eta_{A\otimes H},\;\; \varepsilon_{A\bowtie_{\tau} H}=\varepsilon_{A\otimes H},\;\; \delta_{A\bowtie_{\tau} H}=\delta_{A\otimes H},$$
\begin{equation}
\label{mutimes-r}
\mu_{A\bowtie_{\tau} H}=(\mu_A\ot\mu_H)\co (A\ot \tau\ot A\ot H\ot \tau^{-1}\ot H)\co (A\ot \delta_{A\ot H}\ot A\ot H\ot H)\co(A\ot \delta_{A\ot H}\ot H)\co (A\ot c_{H,A}\ot H)
\end{equation}
and 
\begin{equation}
\label{lambdatimes-r}
\varrho_{A\bowtie_{\tau} H}=(\tau^{-1}\ot \varrho_A\ot \varrho_H\ot \tau)\co (A\ot H\ot \delta_{A\ot H})\co \delta_{A\ot H}.
\end{equation}

\end{corollary}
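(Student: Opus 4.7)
The plan is to mimic the argument for Corollary \ref{FangTorrecillas} in the right-handed setting. First, by Proposition \ref{Hopfpairingcocycle-r}, the tensor product $A\ot H$ carries the structure of a right Hopf quasigroup with unit $\eta_{A\ot H}$, counit $\varepsilon_{A\ot H}$, coproduct $\delta_{A\ot H}$, product $\mu_{A\ot H}$ and right antipode $\varrho_{A\ot H}=\varrho_A\ot \varrho_H$. Moreover, the same proposition supplies the normal two-cocycle $\omega=\varepsilon_A\ot (\tau\co c_{H,A})\ot \varepsilon_H$ on $A\ot H$, with convolution inverse $\omega^{-1}=\varepsilon_A\ot (\tau^{-1}\co c_{H,A})\ot \varepsilon_H$, where $\tau^{-1}$ is the one produced by Proposition \ref{propiedadesskew-r}.

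Next I would invoke Theorem \ref{construccion}(ii) for this right Hopf quasigroup and this two-cocycle: it yields a right Hopf quasigroup $(A\ot H)^{\omega}$ whose unit, counit and coproduct coincide with those of $A\ot H$, whose product is $\mu_{(A\ot H)^{\omega}}$ as defined in Proposition \ref{algebradeformada}, and whose right antipode is $\varrho_{(A\ot H)^{\omega}}=r_{(A\ot H)^{\omega}}\co(\eta_{A\ot H}\ot A\ot H)$ with $r_{(A\ot H)^{\omega}}$ as in Proposition \ref{alphasigmacondelta-r}. It then remains to identify this induced structure with the one displayed in the statement, i.e.\ to show
\[
\mu_{A\bowtie_{\tau}H}=\mu_{(A\ot H)^{\omega}},\qquad \varrho_{A\bowtie_{\tau}H}=\varrho_{(A\ot H)^{\omega}}.
\]

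For the product equality, I would expand $\mu_{(A\ot H)^{\omega}}=(\omega\ot \mu_{A\ot H}\ot \omega^{-1})\co(A\ot H\ot H\ot \delta_{(A\ot H)\ot(A\ot H)})\co\delta_{(A\ot H)\ot(A\ot H)}$, substitute the explicit formulas for $\omega$, $\omega^{-1}$, $\delta_{A\ot H}$ and $\mu_{A\ot H}$, and then reduce by naturality of $c$, the counit properties, and coassociativity of $\delta_A$ and $\delta_H$; the many occurrences of $\varepsilon_A$ and $\varepsilon_H$ inside $\omega$ and $\omega^{-1}$ collapse most of the diagram, leaving exactly the expression (\ref{mutimes-r}). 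For the antipode, I would use the morphism $g$ of Proposition \ref{morfismosauxiliares} applied to the right Hopf quasigroup $A\ot H$ with cocycle $\omega$; a direct computation using naturality of $c$, the counit properties and (\ref{lambda-vareps-r}) shows that $g=\tau^{-1}\co c_{H,A}$ and $g^{-1}=\tau\co c_{H,A}$, so that unfolding the definition of $r_{(A\ot H)^{\omega}}\co(\eta_{A\ot H}\ot A\ot H)$ in Proposition \ref{alphasigmacondelta-r} and using $\varrho_{A\ot H}=\varrho_A\ot\varrho_H$ yields precisely (\ref{lambdatimes-r}).

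The only subtle point, and the one I expect to require the most care, is the bookkeeping when the cocycle $\omega$ is concentrated on the middle two tensor factors: one has to track the way the iterated coproducts $\delta_{(A\ot H)\ot(A\ot H)}$ interact with $\omega$ and $\omega^{-1}$ so that the $\varepsilon_A$, $\varepsilon_H$ legs of these cocycles really do eliminate the expected factors. Once this simplification is made, the identification with (\ref{mutimes-r}) and (\ref{lambdatimes-r}) is purely formal, and the corollary follows immediately from Theorem \ref{construccion}(ii). The proof for Hopf quasigroups (not merely right ones) would then be obtained by combining this argument with Corollary \ref{FangTorrecillas} and Remark \ref{antipodounico}, but for the present statement only the right-handed version is needed.
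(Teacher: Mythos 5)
Your overall strategy coincides with the paper's: the paper proves the left-handed Corollary \ref{FangTorrecillas} by applying Theorem \ref{construccion} to the Hopf quasigroup $A\ot H$ of Proposition \ref{Hopfpairingcocycle} and the cocycle $\omega$, and then matching the deformed product and antipode with the displayed formulas; the right-handed version is obtained in exactly the same way from Proposition \ref{Hopfpairingcocycle-r} and Theorem \ref{construccion}(ii). The cocycle-collapse computation you describe for $\mu_{(A\ot H)^{\omega}}=\mu_{A\bowtie_{\tau}H}$ is precisely what the paper leaves to the reader.

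There is, however, a concrete slip in your identification of the auxiliary morphism $g$. For the right Hopf quasigroup $A\ot H$ and the cocycle $\omega$, Proposition \ref{morfismosauxiliares} gives $g=\omega^{-1}\co(\varrho_{A\ot H}\ot A\ot H)\co\delta_{A\ot H}$ and $g^{-1}=\omega\co(A\ot H\ot \varrho_{A\ot H})\co\delta_{A\ot H}$. Unwinding these using $\varepsilon_A\co\varrho_A=\varepsilon_A$, $\varepsilon_H\co\varrho_H=\varepsilon_H$, the counit properties, $\tau^{-1}=\tau\co(\varrho_A\ot H)$ and (\ref{tau1-r}) yields $g=\tau^{-1}\co(A\ot\varrho_H)=\tau$ and $g^{-1}=\tau\co(\varrho_A\ot H)=\tau^{-1}$, not $g=\tau^{-1}\co c_{H,A}$ and $g^{-1}=\tau\co c_{H,A}$ as you assert (note also that $\tau^{-1}\co c_{H,A}$ does not even have the correct source, since $g$ is defined on $A\ot H$). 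This matters because Proposition \ref{alphasigmacondelta-r} expresses the right antipode as $(g^{-1}\ot\varrho_{A\ot H}\ot g)\co(\delta_{A\ot H}\ot A\ot H)\co\delta_{A\ot H}$: with your swapped values you would land on $(\tau\ot\varrho_A\ot\varrho_H\ot\tau^{-1})\co(A\ot H\ot\delta_{A\ot H})\co\delta_{A\ot H}$ instead of (\ref{lambdatimes-r}). With the correct identification $g=\tau$, $g^{-1}=\tau^{-1}$ the formula comes out exactly as (\ref{lambdatimes-r}), and the rest of your argument goes through as in the paper.
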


Therefore, as a consequence of Corollaries \ref{FangTorrecillas} and \ref{FangTorrecillas-r} we obtain the following result:

\begin{corollary}
\label{FangTorrecillas-q}
Let $A$, $H$ be  Hopf quasigroups  with  antipodes $\lambda_{A}$, $\lambda_{H}$ respectively. Let $\tau:A\ot H\rightarrow K$ be a skew pairing. Then $A\bowtie_{\tau} H=(A\ot H, \eta_{A\bowtie_{\tau} H}, \mu_{A\bowtie_{\tau} H}, \varepsilon_{A\bowtie_{\tau} H}, \delta_{A\bowtie_{\tau} H}, \lambda_{A\bowtie_{\tau} H})$ has a structure of  Hopf quasigroup, where
$$\eta_{A\bowtie_{\tau} H}=\eta_{A\otimes H},\;\; \varepsilon_{A\bowtie_{\tau} H}=\varepsilon_{A\otimes H},\;\; \delta_{A\bowtie_{\tau} H}=\delta_{A\otimes H},$$
\begin{equation}
\label{mutimes-q}
\mu_{A\bowtie_{\tau} H}=(\mu_A\ot\mu_H)\co (A\ot \tau\ot A\ot H\ot \tau^{-1}\ot H)\co (A\ot \delta_{A\ot H}\ot A\ot H\ot H)\co(A\ot \delta_{A\ot H}\ot H)\co (A\ot c_{H,A}\ot H)
\end{equation}
and 
\begin{equation}
\label{lambdatimes-q}
\lambda_{A\bowtie_{\tau} H}=(\tau^{-1}\ot \lambda_A\ot \lambda_H\ot \tau)\co (A\ot H\ot \delta_{A\ot H})\co \delta_{A\ot H}.
\end{equation}

\end{corollary}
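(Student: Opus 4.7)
The plan is to reduce Corollary \ref{FangTorrecillas-q} to the combination of the one-sided corollaries \ref{FangTorrecillas} and \ref{FangTorrecillas-r}, using Remark \ref{antipodounico} to glue the two one-sided structures into a genuine Hopf quasigroup. First, I would observe that a Hopf quasigroup $A$ (resp.\ $H$) with antipode $\lambda_A$ (resp.\ $\lambda_H$) is automatically a left Hopf quasigroup with left antipode $\lambda_A$ (resp.\ $\lambda_H$) and a right Hopf quasigroup with right antipode $\varrho_A=\lambda_A$ (resp.\ $\varrho_H=\lambda_H$), so both Corollary \ref{FangTorrecillas} and Corollary \ref{FangTorrecillas-r} apply to $A$ and $H$ simultaneously.

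Next, applying Corollary \ref{FangTorrecillas} we get that $A\bowtie_{\tau} H$, with the unit, counit, coproduct and product described in the statement, is a left Hopf quasigroup whose left antipode is exactly the morphism $\lambda_{A\bowtie_{\tau} H}$ in (\ref{lambdatimes-q}). Applying Corollary \ref{FangTorrecillas-r} to the same data gives that $A\bowtie_{\tau} H$ is also a right Hopf quasigroup with right antipode $\varrho_{A\bowtie_{\tau} H}$ as in (\ref{lambdatimes-r}). Since $\lambda_A=\varrho_A$ and $\lambda_H=\varrho_H$, the expressions (\ref{lambdatimes-q}) and (\ref{lambdatimes-r}) are literally the same morphism, so
\[
\lambda_{A\bowtie_{\tau} H}=\varrho_{A\bowtie_{\tau} H}.
\]

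Finally, I would invoke Remark \ref{antipodounico}: any non-associative bimonoid that is at the same time a left and a right Hopf quasigroup is a Hopf quasigroup, and its two one-sided antipodes necessarily coincide. Therefore $A\bowtie_{\tau} H$ is a Hopf quasigroup with antipode $\lambda_{A\bowtie_{\tau} H}$ given by (\ref{lambdatimes-q}), as claimed.

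The only non-routine point is the coincidence of the two one-sided antipodes, but this is immediate from the explicit formulas once one remembers that the antipode of a Hopf quasigroup serves as both the left and the right antipode. All the hard work (the bimonoid axioms, the left/right division identities, the deformed product being well-defined and the antipode formula being correct) has already been carried out in Theorem \ref{construccion} and in the two preceding corollaries, so no further computation is required.
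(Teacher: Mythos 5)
Your proposal is correct and follows exactly the route the paper takes: the paper derives Corollary \ref{FangTorrecillas-q} "as a consequence of Corollaries \ref{FangTorrecillas} and \ref{FangTorrecillas-r}", gluing the left and right structures via Remark \ref{antipodounico} just as you do, with the coincidence of the two one-sided antipodes being immediate since $\lambda_A=\varrho_A$, $\lambda_H=\varrho_H$ (and the convolution inverse $\tau^{-1}$ is unique). No gaps.
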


\begin{remark}
\label{DobleDrinfeld}
{\rm  When particularizing to the Hopf algebra setting, it is a well-known fact that the Drinfeld double of a Hopf algebra $H$ (roughly speaking, a product involving $H$ and the opposite comonoid of its dual Hopf algebra $H^*$) is an example of a deformation of a Hopf algebra by the two-cocycle associated to a skew pairing. We want to point out that in our context we can not describe the Drinfeld double in this way because the dual of a Hopf quasigroup $H$ is not a Hopf quasigroup but a Hopf coquasigroup.   }
\end{remark}

\begin{example}
\label{prin-ej}
Let ${\Bbb F}$ be a field such that Char(${\Bbb F}$)$\neq 2$ and denote the tensor product over ${\Bbb F}$ as $\ot$. Consider the nonabelian group $S_{3}=\{\sigma_{0}, \sigma_{1}, \sigma_{2}, \sigma_{3}, \sigma_{4}, \sigma_{5}\}$ where $\sigma_{0}$ is  the identity, $o(\sigma_{1})=o(\sigma_{2})=o(\sigma_{3})=2$ and $o(\sigma_{4})=o(\sigma_{5})=3$. Let $u$ be an additional element such that $u^2=1$. By Theorem 1 of \cite{Chein} the set 
$$L=M(S_{3},2)=\{\sigma_{i}u^{\alpha}\;; \; \alpha=0,1\}$$
is a Moufang loop where the product is defined by 
$$\sigma_{i}u^{\alpha}.\;\sigma_{j}u^{\beta}=(\sigma_{i}^{\nu}\sigma_{j}^{\mu})^{\nu}u^{\alpha +\beta}.$$
$$\nu=(-1)^{\beta}, \; \mu=(-1)^{\alpha +\beta}.$$

Then, $L$ is an IP loop and by example \ref{IPloops}, $A={\Bbb F}L$ is a cocommutative Hopf quasigroup. 

On the other hand, let $H_{4}$ be the $4$-dimensional Taft Hopf algebra. This Hopf algebra is the smallest non commutative, non cocommutative Hopf algebra. The basis of $H_{4}$ is $\{1,x,y,w=xy\}$ and the multiplication table is defined by 
\begin{center}
\begin{tabular}{|c|c|c|c|c|}
\hline  $\;$ & $x$ & $y$ & $w$   \\
\hline  $ x$ &  $1$ & $w$ & $y$ \\
\hline  $ y$ &  $-w$ & $0$ & $0$ \\
\hline  $ w$ &  $-y$ & $0$ & $0$ \\
\hline
\end{tabular}
\end{center}

The costructure of $H_{4}$ is given by 
$$ \delta_{H_{4}}(x)=x\ot x,\; \delta_{H_{4}}(y)=y\ot x +1\ot y,\; \delta_{H_{4}}(w)=w\ot 1 +x\ot w,$$
$$\varepsilon_{H_{4}}(x)=1_{\Bbb F},\; \varepsilon_{H_{4}}(y)=\varepsilon_{H_{4}}(w)=0, $$
and the antipode $\lambda_{H_{4}}$ is described by 
$$\l \lambda_{H_{4}}(x)=x,\; \lambda_{H_{4}}(y)=w,\; \lambda_{H_{4}}(w)=-y.$$

By Proposition \ref{Hopfpairingcocycle}, $A\ot H_{4}$ is a non commutative, non cocommutative Hopf quasigroup and the morphism $\tau:A\ot H_{4}\rightarrow {\Bbb F}$ defined by 
$$\tau (\sigma_{i}u^{\alpha}\ot z)=\left\{ \begin{array}{ccc} 1 & {\rm if} &
z=1 \\
 (-1)^{\alpha} & {\rm if} &
z=x \\ 
 0 & {\rm if} &
z=y, w 
 \end{array}\right.$$
 is a skew pairing such that $\tau=\tau^{-1}$. Then, by Proposition \ref{Hopfpairingcocycle},  
 $$\omega= \varepsilon_A\ot (\tau\co c_{H_{4},A})\ot \varepsilon_{H_{4}}$$ is a two-cocycle with convolution inverse $\omega^{-1}=\omega$. Finally, $A\bowtie_{\tau}H_{4}$ is Hopf quasigroup isomorphic to 
 $(A\ot H_{4})^{\omega}$.

\end{example}

\section{Double crossproducts and skew pairings}

In this section we will show that the construction of $A\bowtie_{\tau} H$ introduced in the previous section is also a special case of the double cross product defined in \cite{M}. First of all, we need to recall some definitions, following  \cite{Brz}, \cite {BrzezJiao1} and \cite{jeniffer}, to  state a characterization of double cross products in the quasigroup setting.

\begin{definition}
\label{quasimodulo-comodulo}
 {\rm
Let  $H$ be a left Hopf quasigroup. We say that $(M,\varphi_{M})$ is a
left $H$-quasimodule if $M$ is an object in ${\mathcal C}$ and
$\varphi_{M}:H\ot M\rightarrow M$ is a morphism in ${\mathcal C}$ (called the action) satisfying
\begin{equation}
\label{unidadquasimodulo}
\varphi_{M}\circ(\eta_{H}\ot M)=id_{M},
\end{equation}
\begin{equation}
\label{productoquasimodulo}
\varphi_{M}\co (H\ot\varphi_{M})\co  (((H\ot \lambda_{H})\co \delta_H)\ot M)=\varepsilon_H\ot M=\varphi_{M}\co (\lambda_H\ot \varphi_M)\co (\delta_H\ot M).
\end{equation}

Given two left ${H}$-quasimodules $(M,\varphi_{M})$
and $(N,\varphi_{N})$, $f:M\rightarrow N$ is a morphism of left
$H$-quasimodules if 
\begin{equation}
\label{morphism-mod}
\varphi_{N}\co(H\ot f)=f\co\varphi_{M}.
\end{equation}

We denote the category of left $H$-quasimodules by $_{H}{\mathcal {QC}}$.

If $(M,\varphi_{M})$ and $(N,\varphi_{N})$ are left $H$-quasimodules, the tensor product 
 $M\ot N$ is a left $H$-quasimodule with the diagonal action 
 $$\varphi_{M\otimes
N}=(\varphi_{M}\ot\varphi_{N})\circ
(H\ot c_{H,M}\ot N)\co(\delta_{H}\ot M\ot N).$$

This makes the category of left $H$-quasimodules into a strict monoidal category $(_{H}{\mathcal {QC}}, \ot , K)$ (see Remark 3.3 of \cite{BrzezJiao1}).

We will say that a unital magma $A$ is a left $H$-quasimodule magma  if it is a left $H$-quasimodule with action $\varphi_{A}: H\ot A\rightarrow A$ and the following equalities 
\begin{equation}
\label{etaquasimodulo}
\varphi_{A}\circ(H\ot \eta_A)=\varepsilon_H\ot \eta_A,
\end{equation}
\begin{equation}
\label{muquasimodulo}
\mu_A\co \varphi_{A\ot A}=\varphi_A\co (H\ot \mu_A),
\end{equation}
hold, i.e., $\varphi_{A}$ is a morphism of unital magmas.

A comonoid $A$ is a left $H$-quasimodule comonoid if it is a left $H$-quasimodule with action $\varphi_{A}$ and 
\begin{equation}
\label{epsilonquasimodulo}
\varepsilon_A\co \varphi_{A}=\varepsilon_H\ot \varepsilon_A,
\end{equation}
\begin{equation}
\label{deltaquasimodulo}
\delta_A\co \varphi_{A}=\varphi_{A\ot A}\co (\delta_{H}\ot\delta_A),
\end{equation}
hold, i.e., $\varphi_{A}$ is a comonoid morphism.

Replacing (\ref{productoquasimodulo}) by the equality 
\begin{equation}
\label{modulo}
\varphi_{M}\co(H\ot\varphi_{M})=\varphi_M\co (\mu_H\ot M),
\end{equation}
we obtain the definition of left $H$-module  and the ones of left $H$-module magma and comonoid (because (\ref{productoquasimodulo}) follows trivially from (\ref{modulo})). Note that $(H, \mu_H)$ is not an $H$-module but an $H$-quasimodule. The morphism between left $H$-modules is defined as for $H$-quasimodules and we denote the category of left $H$-modules by $\;_{H}{\mathcal C}$.  Note that for the right side we have similar definitions.

}
\end{definition}

\begin{proposition}
\label{doublecrossskewpairing}
Let $A$, $H$ be  (right) left  Hopf quasigroups and let $\tau:A\ot H\rightarrow K$ be a skew pairing. Define $\varphi_A:H\ot A\rightarrow A$ and $\phi_H:H\ot A\rightarrow H$ as $$\varphi_A=(\tau\ot A\ot \tau^{-1})\co (A\ot H\ot \delta_A\ot H)\co \delta_{A\ot H}\co c_{H,A}$$ and $$\phi_H=(\tau\ot H\ot \tau^{-1})\co (A\ot H\ot c_{A,H}\ot H)\co (A\ot H\ot A\ot \delta_H)\co \delta_{A\ot H}\co c_{H,A}.$$ 

Then
\begin{itemize}
\item[(i)] The pair $(A, \varphi_A)$ is a left $H$-module comonoid.
\item[(ii)] If the (right) left antipode of $H$ is an isomorphism, the pair $(H, \phi_H)$ is a right $A$-module 
comonoid.
\item[(iii)] If  $H$ is a Hopf quasigroup, the pair $(H, \phi_H)$ is a right $A$-module 
comonoid.

\end{itemize} 
\end{proposition}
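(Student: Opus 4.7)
The plan is to verify directly that the morphisms $\varphi_A$ and $\phi_H$ endow $A$ and $H$ with the claimed algebraic structures by unpacking them in generalized Sweedler notation and invoking the skew pairing axioms together with the convolution identities. Writing $\delta_A(a)=a_1\ot a_2$ and $\delta_H(h)=h_1\ot h_2$ (and iterates), the definitions unfold to
$$\varphi_A(h\ot a)=\tau(a_1\ot h_1)\,a_2\,\tau^{-1}(a_3\ot h_2),\qquad \phi_H(h\ot a)=\tau(a_1\ot h_1)\,h_2\,\tau^{-1}(a_2\ot h_3),$$
so the proof reduces in each case to four verifications: unit, module associativity, counit, and comultiplication.

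For (i), the unit axiom $\varphi_A\co(\eta_H\ot A)=id_A$ follows from (a3) of Definition \ref{skewpairing} and (\ref{skewpairing-2}), which both give $\tau\co(A\ot\eta_H)=\tau^{-1}\co(A\ot\eta_H)=\varepsilon_A$, together with the counit axioms of $\delta_A$. The module axiom $\varphi_A\co(\mu_H\ot A)=\varphi_A\co(H\ot\varphi_A)$ is obtained by using (\ref{delta-mu}) to expand $\delta_H\co\mu_H$ and then applying (a2') to rewrite $\tau(a\ot hh')=\tau(a_2\ot h)\,\tau(a_1\ot h')$ and (\ref{skewpairing-3}) to rewrite $\tau^{-1}(a\ot hh')=\tau^{-1}(a_1\ot h)\,\tau^{-1}(a_2\ot h')$; after reshuffling the resulting scalars both sides agree. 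Observe that no associativity of $H$ is used: everything is encoded in the pairing axioms.

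The compatibility of $\varphi_A$ with the comonoid structure of $A$ is the more delicate part. The counit condition $\varepsilon_A\co\varphi_A=\varepsilon_H\ot\varepsilon_A$ follows at once from $\tau\ast\tau^{-1}=\varepsilon_A\ot\varepsilon_H$ after absorbing $\varepsilon_A(a_2)$. For the comultiplication, expanding the right-hand side of $\delta_A\co\varphi_A=(\varphi_A\ot\varphi_A)\co(H\ot c_{H,A}\ot A)\co(\delta_H\ot\delta_A)$ gives an expression with $a$ split six times and $h$ split four times, namely
$$\tau(a_1\ot h_1)\,a_2\,\tau^{-1}(a_3\ot h_2)\,\ot\,\tau(a_4\ot h_3)\,a_5\,\tau^{-1}(a_6\ot h_4).$$
The key step is to recognize the middle pair $\tau^{-1}(a_3\ot h_2)\,\tau(a_4\ot h_3)$ as an evaluation of $\tau^{-1}\ast\tau$ on intermediate coproducts (by coassociativity of $\delta_A$ and $\delta_H$) and apply $\tau^{-1}\ast\tau=\varepsilon_A\ot\varepsilon_H$ to collapse those two scalars. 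The counit axioms then reduce the expression to $\tau(a_1\ot h_1)\,a_2\ot a_3\,\tau^{-1}(a_4\ot h_2)$, which is $\delta_A\co\varphi_A$.

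Parts (ii) and (iii) are proved by a symmetric argument for $\phi_H$. The unit follows from (a4) together with (\ref{skewpairing-1}) or (\ref{skewpairing-1-r}). The module associativity $\phi_H\co(\phi_H\ot A)=\phi_H\co(H\ot\mu_A)$ is obtained by applying (a1) to split $\tau(ab\ot h)=\tau(a\ot h_2)\tau(b\ot h_1)$ and by applying (\ref{skewpairing-4}) (respectively (\ref{skewpairing-4-r})) to split $\tau^{-1}(ab\ot h)=\tau^{-1}(a\ot h_2)\tau^{-1}(b\ot h_1)$. This is precisely the step where the hypothesis on the antipode of $H$ for (ii), or on $A$ being a Hopf quasigroup for (iii), enters, since Propositions \ref{propiedadskewnova} and \ref{propiedadskewnova-r} supply those identities only under those hypotheses. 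The counit and coproduct conditions for $\phi_H$ go through exactly as for $\varphi_A$, once again collapsing a middle pair of adjacent scalars via $\tau\ast\tau^{-1}=\tau^{-1}\ast\tau=\varepsilon_{A\ot H}$.

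The principal obstacle is the bookkeeping of iterated coproducts: each identity becomes a long chain of equalities using naturality of the symmetry $c$, coassociativity of $\delta_A$ and $\delta_H$, the counit and magma-compatibility identities (including (\ref{delta-mu})), and the six pairing axioms. Conceptually, however, all four nontrivial verifications reduce to a single idea: using the convolution identities for $\tau$ and $\tau^{-1}$ to annihilate pairs of scalar factors whenever adjacent coproduct indices line up, and nothing more than this is required to conclude that $\varphi_A$ and $\phi_H$ are module-comonoid actions.
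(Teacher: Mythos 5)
Your proposal is correct and follows essentially the same route as the paper's proof: a direct verification of the unit, action, counit and coproduct axioms using (a1)--(a4), (\ref{skewpairing-2}), (\ref{skewpairing-3}) and (\ref{skewpairing-4})/(\ref{skewpairing-4-r}), with the comonoid compatibility obtained by collapsing the adjacent middle factors via $\tau^{-1}\ast\tau=\varepsilon_{A\ot H}$, and with the antipode hypotheses entering only through Propositions \ref{propiedadskewnova} and \ref{propiedadskewnova-r}, exactly as in the paper. The only quibble is a transposed Sweedler index in your transcription of (a1), which should read $\tau(ab\ot h)=\tau(a\ot h_1)\,\tau(b\ot h_2)$; this does not affect the structure of the argument.
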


\begin{proof}
We prove the result for left Hopf quasigroups. The proof for right Hopf quasigroups is similar and is left to the reader. Trivially, by (\ref{delta-eta}) for $H$, (a3) of Definition \ref{skewpairing}, (\ref{skewpairing-2}), and the counit properties we obtain $\varphi_A\co (\eta_H\ot A)= id_A$. The equality $\varepsilon_A\co \varphi_A=\varepsilon_H\ot \varepsilon_A$ follows by 
the counit properties, the invertibility of $\tau$ and the naturality of $c$.  Moreover, 
\begin{itemize}
\item[ ]$\hspace{0.38cm} \varphi_A\co (\mu_H\ot A)$

\item[ ]$=((\tau\co (A\ot \mu_H))\ot A\ot (\tau^{-1}\co (A\ot \mu_H)))\co (A\ot H\ot H\ot \delta_A\ot H\ot H)\co \delta_{A\ot H\ot H}\co (c_{H,A}\ot H)$
\item[ ]$\hspace{0.38cm}\co (H\ot c_{H,A})$ {\scriptsize ({\blue by (a2') of Definition \ref{skewpairing} and (\ref{skewpairing-3})})}

\item[ ]$=(((\tau\ot \tau)\co (A\ot c_{A,H}\ot H)\co ((c_{A,A}\co \delta_{A})\ot H\ot H))\ot A\ot ((\tau^{-1}\ot \tau^{-1})\co (A\ot c_{A,H}\ot H)\co (\delta_{A}\ot H\ot H)))$
\item[ ]$\hspace{0.38cm}\co (A\ot H\ot H\ot \delta_{A}\ot H\ot H)\co \delta_{A\ot H\ot H}\co  (c_{H,A}\ot H)\co (H\ot c_{H,A})$ {\scriptsize({\blue by naturality of $c$ and coassociativity})}

\item[ ]$=\varphi_A\co (H\ot \varphi_A)$
 {\scriptsize ({\blue by naturality of $c$ and (\ref{delta-mu})})}.

\end{itemize}

Finally,
\begin{itemize}
\item[ ]$\hspace{0.38cm} (\varphi_A\ot \varphi_A)\co \delta_{H\ot A}$

\item[ ]$=(A\ot (\tau^{-1}\ast\tau)\ot A)\co (A\ot A\ot c_{A,H})\co (A\ot \delta_A\ot H)\co (\delta_A\ot H\ot \tau^{-1})\co (\tau\ot \delta_{A\ot H})\co \delta_{A\ot H}\co c_{H,A}$
\item[ ]$\hspace{0.38cm}$ {\scriptsize ({\blue by naturality of $c$ and coassociativity})}
                                 
\item[ ]$=\delta_A\co\varphi_A$ {\scriptsize ({\blue by naturality of $c$, invertibility of $\tau$, and counit properties}).}

\end{itemize}

The proof for $(ii)$ follows a similar pattern but using (a1) of Definition \ref{skewpairing} and (\ref{skewpairing-4}) instead of (a2')  and (\ref{skewpairing-3}). By Proposition \ref{propiedadskewnova}, we obtain (iii) because  in the quasigroup setting condition (\ref{skewpairing-4}) is true without the assumption of that $\lambda_{H}$ be an isomorphism.

\end{proof}

 The following result is a version of \cite{M}, Theorem 7.2.2 for left Hopf quasigroups (see also \cite{Christian}, Theorem IX.2.3).

\begin{theorem}
\label{productoMajid}
Let $A$, $H$ be left Hopf quasigroups with left antipodes $\lambda_{A}$, $\lambda_{H}$ respectively. Assume that $(A, \varphi_A)$ is a left $H$-module comonoid and $(H, \phi_H)$ is a right $A$-module comonoid. Then the following assertions are equivalent:

\begin{itemize}
\item [(i)] The double crossproduct $A\bowtie H$ built on the object $A\ot H$ with product
$$\mu_{A\bowtie H}=(\mu_A\ot \mu_H)\co (A\ot \varphi_A\ot \phi_H\ot H)\co (A\ot \delta_{H\ot A}\ot H)$$
and tensor product unit, counit and coproduct, is a left Hopf quasigroup with left antipode
$$\lambda_{A\bowtie H}=(\varphi_A\ot \phi_H)\co \delta_{H\ot A}\co (\lambda_H\ot \lambda_A)\co c_{A,H}.$$

\item [(ii)] The  equalities

 \begin{equation}\label{EtaAfi}
\varphi_A\co (H\ot \eta_A)=\varepsilon_H\ot \eta_A,
 \end{equation}
\begin{equation}\label{EtaHpsi}
\phi_H\co (\eta_H\ot A)=\eta_H\ot \varepsilon_A,
 \end{equation}
\begin{equation}\label{fipsicompatibles}
(\phi_H\ot \varphi_A)\co \delta_{H\ot A}=c_{A,H}\co(\varphi_A\ot \phi_H)\co \delta_{H\ot A},
 \end{equation}
\begin{equation}\label{muAfi}
\varphi_A\co (H\ot \mu_A)\co (\lambda_H\ot \lambda_A\ot A)=\mu_A\co(A\ot \varphi_A)\co ((\lambda_{A\bowtie H}\co c_{H,A})\ot A),
 \end{equation}
\begin{equation}\label{primeraqg}
\mu_H\co(\phi_H\ot \mu_H)\co (\lambda_H\ot ((\varphi_A\ot \phi_H)\co \delta_{H\ot A})\ot H)\co  (\delta_H\ot A\ot H)=\varepsilon_H\ot \varepsilon_A\ot H,
 \end{equation}
\begin{equation}\label{segundaqg}
\mu_H\co(\phi_H\ot \mu_H)\co (H\ot ((\varphi_A\ot \phi_H)\co \delta_{H\ot A})\ot H)\co  (((H\ot \lambda_H)\co\delta_H)\ot A\ot H)=\varepsilon_H\ot \varepsilon_A\ot H,
 \end{equation}
hold.

\end{itemize}

\end{theorem}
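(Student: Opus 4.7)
The plan is to prove both implications by systematic verification, decomposing the structure of $A\bowtie H$ into the bimonoid part and the left Hopf quasigroup part, and then tracking which conditions on $(\varphi_A,\phi_H)$ are forced by each axiom. Since the unit, counit and coproduct of $A\bowtie H$ are those of the comonoid tensor product $A\ot H$, the comonoid axioms are automatic throughout, and only the compatibilities involving the deformed product and the deformed antipode need to be examined.

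For the implication (ii) $\Rightarrow$ (i), I would first check that $A\bowtie H$ is a non-associative bimonoid. The unit axioms for $\mu_{A\bowtie H}$ unfold using (\ref{EtaAfi}), (\ref{EtaHpsi}), the unit axioms of the actions $\varphi_A$ and $\phi_H$, the equalities (\ref{delta-eta}) for $A$ and $H$, and naturality of $c$. The compatibility $\varepsilon_{A\ot H}\co\mu_{A\bowtie H}=\varepsilon_{A\ot H}\ot\varepsilon_{A\ot H}$ follows from (\ref{mu-eps}) for $A$ and $H$ combined with the counit condition of the module-comonoid structures. The most delicate point is the coproduct compatibility $\delta_{A\ot H}\co\mu_{A\bowtie H}=(\mu_{A\bowtie H}\ot \mu_{A\bowtie H})\co \delta_{(A\ot H)\ot(A\ot H)}$: it factors through (\ref{deltaquasimodulo}) applied to both $\varphi_A$ and $\phi_H$ together with (\ref{fipsicompatibles}), which precisely expresses the braided intertwining of the two actions needed to rearrange the tensor factors after the coproducts.

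To verify the left Hopf quasigroup axioms (\ref{leftHqg}) for $A\bowtie H$, I would substitute the definitions of $\lambda_{A\bowtie H}$ and $\mu_{A\bowtie H}$, expand using coassociativity and naturality of $c$, and identify the inner blocks as the expressions appearing in (\ref{muAfi}), (\ref{primeraqg}) and (\ref{segundaqg}). Condition (\ref{muAfi}) controls the $A$-component of the left antipode identity by intertwining $\lambda_{A\bowtie H}$ with $\varphi_A$, while (\ref{primeraqg}) and (\ref{segundaqg}) supply respectively the two halves of the $H$-component of the left division identity of Definition \ref{leftHopfqg}. In both computations the left Hopf quasigroup axioms (\ref{leftHqg}) for $A$ and for $H$ are invoked repeatedly to collapse the intermediate blocks. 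For the converse (i) $\Rightarrow$ (ii), I would extract each condition by composing the established axioms of $A\bowtie H$ with suitable units and counits: (\ref{EtaAfi}) and (\ref{EtaHpsi}) drop out by evaluating the unit axioms of $A\bowtie H$ against well-chosen tensor factors of $\eta_A,\eta_H$; (\ref{fipsicompatibles}) is obtained by applying $\delta_{A\ot H}\co\mu_{A\bowtie H}$ to $\eta_A\ot H\ot A\ot\eta_H$ and exploiting counitarity; and (\ref{muAfi}), (\ref{primeraqg}), (\ref{segundaqg}) follow in the same spirit by composing the left Hopf quasigroup identity of $A\bowtie H$ with analogous projection morphisms.

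The main obstacle I anticipate is controlling the lengthy diagram chases in which the module structures and the non-associativity interact. Unlike the Hopf algebra version of Majid (Theorem 7.2.2 of \cite{M}), we cannot appeal to the full associativity of $\mu_A$ or $\mu_H$; hence any intermediate occurrence of $\mu_A\co(A\ot\mu_A)$ or $\mu_H\co(H\ot\mu_H)$ that would normally be reassociated freely must instead be split using the specific left Hopf quasigroup identities (\ref{leftHqg}), thereby introducing auxiliary copies of $\lambda_A$ and $\lambda_H$ at carefully chosen positions. Keeping this bookkeeping in sync with the module-comonoid coproduct compatibility (\ref{deltaquasimodulo}) and with the braided rearrangement (\ref{fipsicompatibles}) is, in my view, the most delicate part of the argument.
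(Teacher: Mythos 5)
Your proposal follows essentially the same route as the paper: extracting (\ref{EtaAfi})--(\ref{segundaqg}) in the direction (i)$\Rightarrow$(ii) by composing the axioms of $A\bowtie H$ with suitable units and counits (for (\ref{muAfi}) the paper additionally inverts the isomorphisms $(H\ot\varphi_A)\co(\delta_H\ot A)$ and $(A\ot\mu_A)\co(\delta_A\ot A)$, which your ``analogous projection morphisms'' would have to become), and in the converse direction verifying the bimonoid part as in Majid's Theorem 7.2.2 and then reducing the two left Hopf quasigroup identities to (\ref{muAfi}), (\ref{primeraqg}), (\ref{segundaqg}) via (\ref{anticomul}), (\ref{fipsicompatibles}) and the module-comonoid compatibilities. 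The outline is correct and consistent with the paper's proof.
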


\begin{proof} 
$(i)\Rightarrow (ii)$  First of all, we have 
\begin{equation}
\label{prin1}
id_{A\ot H}=((\mu_{A}\co (A\ot (\varphi_{A}\co (H\ot \eta_{A}))))\ot H)\co (A\ot \delta_{H})
\end{equation}
because 
\begin{itemize}
\item[ ]$\hspace{0.38cm} id_{A\ot H}$

\item[ ]$=\mu_{A\bowtie H}\co (A\ot H\ot \eta_{A\ot H})$ {\scriptsize ({\blue by unit properties})}

\item[ ]$=((\mu_{A}\co (A\ot (\varphi_{A}\co (H\ot \eta_{A}))))\ot H)\co (A\ot \delta_{H})$ {\scriptsize ({\blue by (\ref{delta-eta}) for $A$, (\ref{unidadquasimodulo}) for $\phi_{H}$, and the properties of $\eta_{A}$})}.

\end{itemize}

Therefore, composing with $A\ot \varepsilon_{H}$ on the left side and with $\eta_{A}\ot H$ on the right sides of  the equality (\ref{prin1}) we get (\ref{EtaAfi}). In a similar way the identity $\mu_{A\bowtie H}\co (\eta_{A\ot H}\ot A\ot H)=id_{A\ot H}$ leads to (\ref{EtaHpsi}). As far as (\ref{fipsicompatibles}), it can be obtained by composing with $\eta_A\ot H\ot A\ot \eta_H$ on the right and with $\varepsilon_A\ot H\ot A\ot \varepsilon_H$ on the left in the two terms of  the equality
$$\delta_{A\ot H}\co\mu_{A\bowtie H}=(\mu_{A\bowtie H}\ot \mu_{A\bowtie H})\co (A\ot H\ot c_{A\ot H,A\ot H}\ot A\ot H)\co (\delta_{A\ot H}\ot\delta_{A\ot H}).$$

Indeed: 

\begin{itemize}
\item[ ]$\hspace{0.38cm} (\phi_H\ot \varphi_A)\co \delta_{H\ot A}$

\item[ ]$= ((((\varepsilon_{A}\co \varphi_{A})\ot \phi_{H})\co \delta_{H\ot A})\ot ((\varphi_{A}\ot (\varepsilon_{H}\co\phi_{H}))\co \delta_{H\ot A}))\co \delta_{A\ot H} $ {\scriptsize ({\blue by (\ref{epsilonquasimodulo}) for $\varphi_{A}$ and $\phi_{H}$, naturality }}
\item[ ]$\hspace{0.38cm}${\scriptsize {\blue of $c$, and counit properties})}

\item[ ]$=(\varepsilon_A\ot H\ot A\ot \varepsilon_H)\co (\mu_{A\bowtie H}\ot \mu_{A\bowtie H})\co (A\ot H\ot c_{A\ot H,A\ot H}\ot A\ot H)\co (\delta_{A\ot H}\ot\delta_{A\ot H})$
\item[ ]$\hspace{0.38cm}\co (\eta_A\ot H\ot A\ot \eta_H)$ {\scriptsize ({\blue by (\ref{delta-eta}) for $A$ and $H$, naturality of $c$, and the properties of $\eta_{A}$ and $\eta_{H}$})}

\item[ ]$=(\varepsilon_A\ot H\ot A\ot \varepsilon_H)\co\delta_{A\ot H}\co \mu_{A\bowtie H}\co (\eta_A\ot H\ot A\ot \eta_H)$ {\scriptsize ({\blue by (\ref{delta-mu}) for $A\bowtie H$})}

\item[ ]$=c_{A,H}\co(\varphi_A\co \phi_H)\co \delta_{H\ot A}$ {\scriptsize ({\blue by naturality of $c$, and the properties of $\eta_{A}$, $\eta_{H}$, $\varepsilon_{A}$ and $\varepsilon_{H}$})}.

\end{itemize}

On the other hand, if $A\bowtie H$ is a left Hopf quasigroup with left antipode $\lambda_{A\bowtie H}$, (\ref{leftHqg}) holds. Then, we have 
\begin{equation}
\label{prin2}
\mu_{A\bowtie H}\circ (\lambda_{A\bowtie H}\ot \mu_{A\bowtie  H})\circ (\delta_{A\otimes H}\ot A\ot H)=
\varepsilon_{A\otimes H}\ot A\ot H.
\end{equation}

Composing with  $A\ot \varepsilon_{H}$ on the left and with $A\ot H\ot A\ot \eta_{H}$ on the right in the two terms of  the equality (\ref{prin2}) we get

\begin{equation}
\label{prin3}
\varepsilon_A\ot\varepsilon_H\ot A=
\end{equation}
$$\mu_A\co (A\ot \varphi_A)\co (((\varphi_{A}\ot \phi_{H})\co \delta_{H\ot A}\co (\lambda_{H}\ot 
\lambda_{A}))\ot A)$$
$$\co (H\ot ((A\ot \mu_A)\co (\delta_A\ot A)))\co (c_{A,H}\ot A)\co (A\ot ((H\ot \varphi_A)\co (\delta_H\ot A))).$$

Indeed:

\begin{itemize}
\item[ ]$\hspace{0.38cm} \varepsilon_A\ot\varepsilon_H\ot A$ 

\item[ ]$ = \varepsilon_A\ot\varepsilon_H\ot A\ot (\varepsilon_{H}\co \eta_{H})$ {\scriptsize ({\blue by (\ref{eta-eps}) for $H$})}

\item[ ]$=(A\ot \varepsilon_H)\co \mu_{A\bowtie H}\co (\lambda_{A\bowtie H}\ot \mu_{A\bowtie H})\co (\delta_{A\ot H}\ot A\ot \eta_H)$ {\scriptsize ({\blue by (\ref{prin2}) for $A\bowtie H$})}

\item[ ]$=\mu_A\co (A\ot \varphi_A)\co (\lambda_{A\bowtie H}\ot \mu_A)\co (A\ot c_{A,H}\ot A)\co (\delta_A\ot ((H\ot \varphi_A)\co (\delta_H\ot A)))$ {\scriptsize ({\blue by (\ref{mu-eps}) for $H$, (\ref{epsilonquasimodulo})}}
\item[ ]$\hspace{0.38cm}${\scriptsize {\blue for $\phi_{H}$, the naturality of $c$ and counit properties})}

\item[ ]$=\mu_A\co (A\ot \varphi_A)\co (((\varphi_{A}\ot \phi_{H})\co \delta_{H\ot A}\co (\lambda_{H}\ot 
\lambda_{A}))\ot A)\co (H\ot ((A\ot \mu_A)\co (\delta_A\ot A)))\co (c_{A,H}\ot A)$
\item[ ]$\hspace{0.38cm} \co (A\ot ((H\ot \varphi_A)\co (\delta_H\ot A)))$ {\scriptsize ({\blue by the naturality of $c$})}.

\end{itemize}

Having into account that $(H\ot \varphi_A)\co (\delta_H\ot A)$ and $(A\ot \mu_A)\co (\delta_A\ot A)$ are isomorphisms with inverses 
$(H\ot \varphi_A)\co (H\ot \lambda_H\ot A)\co(\delta_H\ot A)$ and $(A\ot \mu_A)\co (A\ot \lambda_A\ot A)\co (\delta_A\ot A)$, respectively, we have

\begin{itemize}

\item[ ]$\hspace{0.38cm} \mu_A\co(A\ot \varphi_A)\co ((\lambda_{A\bowtie H}\co c_{H,A})\ot A)$ 
 
\item[ ]$\hspace{0.38cm} \mu_A\co (A\ot \varphi_A)\co (((\varphi_{A}\ot \phi_{H})\co \delta_{H\ot A}\co (\lambda_{H}\ot \lambda_{A}))\ot A)$ {\scriptsize ({\blue by $c^2=id$})}

\item[ ]$=\mu_A\co (A\ot \varphi_A)\co (((\varphi_{A}\ot \phi_{H})\co \delta_{H\ot A}\co (\lambda_{H}\ot 
\lambda_{A}))\ot A)\co (H\ot ((A\ot \mu_A)\co (\delta_A\ot A)))\co (c_{A,H}\ot A)$
\item[ ]$\hspace{0.38cm} \co (A\ot ((H\ot \varphi_A)\co (\delta_H\ot A)))\co (A\ot ((H\ot \varphi_A)\co (H\ot \lambda_H\ot A)\co (\delta_H\ot A)))\co (c_{H,A}\ot A)\co (H\ot ((A\ot \mu_A)$
\item[ ]$\hspace{0.38cm}\co (A\ot \lambda_A\ot A)\co (\delta_A\ot A))$ {\scriptsize ({\blue by composition with the inverses})}

\item[ ]$ =(\varepsilon_{A}\ot  \varepsilon_{H}\ot A)\co (A\ot (((H\ot \varphi_A)\co (H\ot \lambda_{H}\ot A)\co (\delta_H\ot A))))\co (c_{H,A}\ot A)\co (H\ot ((A\ot \mu_A)$
\item[ ]$\hspace{0.38cm}\co  (A\ot \lambda_A\ot A)\co (\delta_A\ot A)))$ {\scriptsize ({\blue by (\ref{prin3})})}

\item[ ]$=\varphi_A\co (H\ot \mu_A)\co (\lambda_H\ot \lambda_A\ot A)$ {\scriptsize ({\blue by naturality of $c$ and counit properties}).}

\end{itemize}

Therefore,  (\ref{muAfi}) holds. Now we show  (\ref{primeraqg}): Composing with  $ \varepsilon_{A}\ot H$ on the left and with $\eta_{A}\ot H\ot A\ot H$ on the right in the two terms of  the equality (\ref{prin2}) we get

\begin{itemize}
\item[ ]$\hspace{0.38cm} \varepsilon_H\ot\varepsilon_A\ot H$

\item[ ]$=\mu_H\co(\phi_H\ot \mu_H)\co (\lambda_H\ot ((\varphi_A\ot \phi_H)\co \delta_{H\ot A})\ot H)\co  (\delta_H\ot A\ot H)$ {\scriptsize ({\blue by (\ref{eta-eps}), (\ref{mu-eps}), (\ref{delta-eta}), (\ref{lambda-eta}) for $A$, (\ref{etaquasimodulo})  }}
\item[ ]$\hspace{0.38cm}${\scriptsize {\blue  (\ref{epsilonquasimodulo}) for $\varphi_{A}$, (\ref{unidadquasimodulo}) for $\phi_{H}$,  naturality of $c$ and counit properties}).}

\end{itemize}

Finally, by (\ref{leftHqg}) for $A\bowtie H$, we have 
\begin{equation}
\label{prin4}
\mu_{A\bowtie H}\circ (A\ot H\ot \mu_{A\bowtie H})\co (A\ot H\ot \lambda_{A\bowtie H}\ot A\ot H)\co  (\delta_{A\otimes H}\ot A\ot H)=\varepsilon_{A\otimes H}\ot A\ot H.
\end{equation}

Then, composing with  $ \varepsilon_{A}\ot H$ on the left and with $\eta_{A}\ot H\ot A\ot H$ on the right in the two terms of  the equality (\ref{prin4}) 
 
\begin{itemize}
\item[ ]$\hspace{0.38cm} \varepsilon_H\ot\varepsilon_A\ot H$

\item[ ]$=\mu_H\co(\phi_H\ot \mu_H)\co (H\ot \varphi_A\co \phi_H\ot H)\co (H\ot \delta_{H\ot A}\ot H)\co (((H\ot \lambda_H)\co\delta_H)\ot A\ot H)$ {\scriptsize ({\blue by (\ref{eta-eps}),}}
\item[ ]$\hspace{0.38cm}${\scriptsize {\blue (\ref{mu-eps}), (\ref{delta-eta}), (\ref{lambda-eta}) for $A$, (\ref{etaquasimodulo}),  (\ref{epsilonquasimodulo}), (\ref{EtaAfi}) for $\varphi_{A}$, (\ref{unidadquasimodulo}) for $\phi_{H}$,  naturality of $c$ and counit properties}).}

\end{itemize}

Therefore, (\ref{segundaqg}) holds.

$(ii)\Rightarrow (i)$ We only prove the equalities involving the left antipode. The proof for the other conditions are analogous to the ones given in \cite{M}, Theorem 7.2.2.

\begin{itemize}
\item[ ]$\hspace{0.38cm} \mu_{A\bowtie H}\co (\lambda_{A\bowtie H}\ot \mu_{A\bowtie H})\co (\delta_{A\ot H}\ot A\ot H)$

\item[ ]$=((\mu_A\co (A\ot \varphi_A))\ot \mu_H)\co (((((\varphi_A\ot \phi_H)\co \delta_{H\ot A})\ot A\ot (\phi_H\co (\phi_H\ot A)))\co \delta_{H\ot A\ot A})\ot H)$
\item[ ]$\hspace{0.38cm} \co (((\lambda_H\ot \lambda_A)\co c_{A,H})\ot \mu_{A\bowtie H})\co (\delta_{A\ot H}\ot A\ot H)$ {\scriptsize ({\blue by the  comonoid morphism condition for $\phi_H$, coassociativity,}}
\item[ ]$\hspace{0.38cm}$ {\scriptsize {\blue and naturality of $c$})}

\item[ ]$=((\mu_A\co(A\ot \varphi_A)\co ((\lambda_{A\bowtie H}\co c_{H,A})\ot A))\ot (\mu_H\co (\phi_H\co (H\ot \mu_A)\co (\lambda_H\ot \lambda_A\ot A))\ot H))$
\item[ ]$\hspace{0.38cm} \co (H\ot ((A\ot c_{H,A}\ot A)\co (c_{H,A}\ot c_{A,A}))\ot A\ot H)\co ((c_{H,H}\co \delta_H)\ot (c_{A,A}\co \delta_A)\ot \delta_A\ot H)\co (c_{A,H}\ot \mu_{A\bowtie H})$
\item[ ]$\hspace{0.38cm} \co (\delta_{A\ot H}\ot A\ot H)$ 
{\scriptsize ({\blue by (\ref{anticomul}) for $\lambda_{H}$ and  $\lambda_{A}$, coassociativity, naturality of 
$c$, and condition of $A$-module for $H$})}

\item[ ]$=((\varphi_A\co (H\ot \mu_A)\co (\lambda_H\ot \lambda_A\ot A))\ot (\mu_H\co (\phi_H\co (H\ot \mu_A)\co (\lambda_H\ot \lambda_A\ot A))\ot H))$
\item[ ]$\hspace{0.38cm} \co (H\ot ((A\ot c_{H,A}\ot A)\co (c_{H,A}\ot c_{A,A}))\ot A\ot H)\co ((c_{H,H}\co \delta_H)\ot (c_{A,A}\co \delta_A)\ot \delta_A\ot H)\co (c_{A,H}\ot \mu_{A\bowtie H})$
\item[ ]$\hspace{0.38cm} \co (\delta_{A\ot H}\ot A\ot H)$ 
{\scriptsize ({\blue by (\ref{muAfi}))})}

\item[ ]$=(A\ot \mu_H)\co (((\varphi_A\ot \phi_H)\co \delta_{H\ot A})\ot H)\co (\lambda_H\ot (\mu_A\co(\lambda_A\ot \mu_A)\co (\delta_A\ot A))\ot \mu_H)$
\item[ ]$\hspace{0.38cm} \co (c_{A,H}\ot ((\varphi_A\ot \phi_H)\co \delta_{H\ot A})\ot H)\co (A\ot \delta_H\ot A\ot H)$ 
{\scriptsize ({\blue by (\ref{anticomul}) for $\lambda_{H}$ and  $\lambda_{A}$, (\ref{delta-mu}) for $A$, and naturality}}
\item[ ]$\hspace{0.38cm}${\scriptsize {\blue  of $c$})}

\item[ ]$=(A\ot \mu_H)\co (((\varphi_A\ot \phi_H)\co \delta_{H\ot A})\ot \mu_H)\co (\lambda_H\ot ((\varphi_A\ot\phi_H)\co \delta_{H\ot A})\ot H)\co
(\varepsilon_A\ot \delta_H\ot A\ot H)$
\item[ ]$\hspace{0.38cm}$ {\scriptsize  ({\blue by (\ref{leftHqg}) for $A$ and naturality of $c$})}

\item[ ]$=(A\ot \mu_H)\co ((\varphi_A\co (H\ot \varphi_A))\ot (\phi_H\co (H\ot \varphi_A))\ot \mu_H)\co (\delta_{H\ot H\ot A}\ot \phi_H\ot H)\co (\lambda_H\ot \delta_{H\ot A}\ot H)$
\item[ ]$\hspace{0.38cm} \co (\varepsilon_A\ot \delta_H\ot A\ot H)$
{\scriptsize  ({\blue by the condition of comonoid morphism for $\varphi_A$ and naturality of $c$})}

\item[ ]$=(A\ot \mu_H)\co (A\ot \phi_H\ot \mu_H)\co (c_{H,A}\ot \varphi_A\ot \phi_{H}\ot H)\co ((\lambda_H\ot (\varphi_A\co ((\lambda_H\ast id_{H})\ot A))$
\item[ ]$\hspace{0.38cm} \co (\delta_{H}\ot A))\ot H\ot A\ot H\ot A\ot H)\co (\delta_{H\ot A}\ot \ot A\ot H)\co (\varepsilon_A\ot \delta_{H\ot A}\ot H)$
{\scriptsize  ({\blue by (\ref{anticomul}) for $\lambda_{H}$, the}}
\item[ ]$\hspace{0.38cm}${\scriptsize  {\blue  condition of $H$-module for $A$, coassociativity of $\delta_{H}$, and naturality of $c$})}

\item[ ]$=(A\ot (\mu_H\co(\phi_H\ot \mu_H)\co (\lambda_H\ot \varphi_A\ot \phi_H\ot H)\co (H\ot \delta_{H\ot A}\ot H)\co (\delta_H\ot A\ot H)))\co (c_{H,A}\ot A\ot H) $

\item[ ]$\hspace{0.38cm} \co (\varepsilon_A\ot H\ot \delta_A\ot H)$
{\scriptsize  ({\blue by (\ref{primeradealpha}), the condition of $H$-module for $A$ and counit properties})}

\item[ ]$=(A\ot \varepsilon_H\ot \varepsilon_A\ot H)\co (c_{H,A}\ot A\ot H)\co (\varepsilon_A\ot H\ot \delta_A\ot H)$ {\scriptsize  ({\blue by (\ref{primeraqg})})}

\item[ ]$=\varepsilon_{A\ot H}\ot A\ot H$ {\scriptsize  ({\blue by counit properties and naturality of $c$}).}

\end{itemize}

On the other hand,

\begin{itemize}
\item[ ]$\hspace{0.38cm} \mu_{A\bowtie H}\co (A\ot H\ot \mu_{A\bowtie H})\co (A\ot H\ot \lambda_{A\bowtie H}\ot A\ot H)\co (\delta_{A\ot H}\ot A\ot H)$

\item[ ]$=\mu_{A\bowtie H}\co (A\ot H\ot (\mu_A\co (A\ot\varphi_A)\co (((\varphi_A\ot \phi_H)\co \delta_{H\ot A})\ot A))$
\item[ ]$\hspace{0.38cm} \ot (\mu_H\co (\phi_H\ot H)\co (H\ot \mu_A\ot H)))
\co (A\ot H\ot \delta_{H\ot A\ot A}\ot H)\co (A\ot H\ot ((\lambda_H\ot \lambda_A)\co c_{A,H})\ot A\ot H)$
\item[ ]$\hspace{0.38cm} \co (\delta_{A\ot H}\ot A\ot H)$
{\scriptsize  ({\blue by the condition of  comonoid morphism for $\phi_H$, coassociativity, the  condition of $A$-module for}}
\item[ ]$\hspace{0.38cm}${\scriptsize  {\blue  $H$, and naturality of $c$})}

\item[ ]$=\mu_{A\bowtie H}\co (A\ot H\ot (\mu_A\co(A\ot \varphi_A)\co ((\lambda_{A\bowtie H}\co c_{H,A})\ot A))\ot (\mu_H\co (\phi_H\ot H)\co (H\ot \mu_A\ot H)$
\item[ ]$\hspace{0.38cm} \co (\lambda_H\ot \lambda_A\ot A\ot H)))\co (A\ot H\ot H\ot ((A\ot c_{H,A}\ot A)\co (c_{H,A}\ot c_{A,A}))\ot A\ot H)$
\item[ ]$\hspace{0.38cm} \co (A\ot H\ot (c_{H,H}\co \delta_H)\ot (c_{A,A}\co \delta_A)\ot \delta_A\ot H)\co (A\ot H\ot c_{A,H}\ot A\ot H)\co (\delta_{A\ot H}\ot A\ot H)$
\item[ ]$\hspace{0.38cm} ${\scriptsize ({\blue by (\ref{anticomul}) for the antipodes $\lambda_{A}$, $\lambda_{H}$, and naturality of $c$})}

\item[ ]$=\mu_{A\bowtie H}\co (A\ot H\ot ((\varphi_A\co (H\ot \mu_A)\co (\lambda_H\ot \lambda_A\ot A))\ot ((\mu_H\co (\phi_H\ot H)\co (H\ot \mu_A\ot H)$
\item[ ]$\hspace{0.38cm} \co (\lambda_H\ot \lambda_A\ot A\ot H))))\co (A\ot H\ot H\ot ((A\ot c_{H,A}\ot A)\co (c_{H,A}\ot c_{A,A}))\ot A\ot H)$
\item[ ]$\hspace{0.38cm} \co (A\ot H\ot (c_{H,H}\co \delta_H)\ot (c_{A,A}\co \delta_A)\ot \delta_A\ot H)\co (A\ot H\ot c_{A,H}\ot A\ot H)\co (\delta_{A\ot H}\ot A\ot H)$
\item[ ]$\hspace{0.38cm} ${\scriptsize ({\blue by  (\ref{muAfi})})}

\item[ ]$=\mu_{A\bowtie H}\co (A\ot H\ot A\ot \mu_{H})\co (A\ot H\ot ((\varphi_{A}\ot \phi_{H})\co (H\ot c_{H,A}\ot A)\co ((\delta_{H}\co \lambda_{H})\ot ((\mu_{A}\ot \mu_{A})\co \delta_{A\ot A}$
\item[ ]$\hspace{0.38cm} \co (\lambda_{A}\ot A))))\ot H)\co (A\ot H\ot c_{A,H}\ot A\ot H)\co (\delta_{A\ot H}\ot A\ot H)$ {\scriptsize ({\blue by  (\ref{anticomul}) for $\lambda_{A}$ and $\lambda_{H}$, and naturality }}
\item[ ]$\hspace{0.38cm}${\scriptsize {\blue  of $c$})}

\item[ ]$=(\mu_{A}\ot \mu_{H})\co (A\ot ((\varphi_{A}\ot \phi_{H})\co \delta_{H\ot A}\co (H\ot \varphi_{A}))\ot (\mu_{H}\co (\phi_{H}\ot H)))\co (A\ot H\ot (\delta_{H\ot A}\co (\lambda_{H}\ot (\mu_{A}$
\item[ ]$\hspace{0.38cm}\co (\lambda_{A}\ot A)))\co (c_{A,H}\ot A))\ot H)\co  (\delta_{A\ot H}\ot A\ot H)$ {\scriptsize ({\blue by  the definition of $\mu_{A\bowtie H}$, and (\ref{delta-mu}) for $H$})}

\item[ ]$=(\mu_A\ot \mu_H)\co (A\ot (\varphi_A\co (H\ot \varphi_A))\ot (\phi_H\co (H\ot \varphi_A))\ot \mu_H)\co (A\ot \delta_{H\ot H\ot A}\ot \phi_H\ot H)$
\item[ ]$\hspace{0.38cm} 
\co (A\ot H\ot \delta_{H\ot A}\ot H)\co (A\ot H\ot H\ot \mu_A\ot H)\co (A\ot H\ot ((\lambda_H\ot \lambda_A)\co c_{A,H})\ot A\ot H)$
\item[ ]$\hspace{0.38cm} \co (\delta_{A\ot H}\ot A\ot H)$
 {\scriptsize ({\blue by the condition of  comonoid morphism for $\varphi_A$})}

\item[ ]$=(\mu_A\ot ((\mu_H\co(\phi_H\ot \mu_H)\co (H\ot \varphi_A\ot \phi_H\ot H)\co (H\ot \delta_{H\ot A}\ot H)\co (((H\ot \lambda_H)\co\delta_H)\ot A\ot H))$
\item[ ]$\hspace{0.38cm} \co (A\ot (\varphi_A\co (\mu_H\ot A))\ot H\ot A\ot H)\co (A\ot H\ot \lambda_H\ot c_{H,A}\ot A\ot H)$
\item[ ]$\hspace{0.38cm}\co (A\ot H\ot (c_{H,H}\co \delta_H)\ot (\delta_A\co \mu_A)\ot H)\co
(A\ot H\ot H\ot\lambda_A\ot A\ot H)\co (A\ot H\ot c_{A,H}\ot A\ot H)$
\item[ ]$\hspace{0.38cm} \co (\delta_{A\ot H}\ot A\ot H)$
{\scriptsize ({\blue by (\ref{anticomul}) for $\lambda_{H}$, the condition of left $H$-module for $A$, coassociativity of the coproducts, and}}
\item[ ]$\hspace{0.38cm}${\scriptsize {\blue  naturality of $c$})}

\item[ ]$=((\mu_A\co (A\ot \varphi_A)\co (A\ot \mu_{H}\ot A))\ot \varepsilon_H\ot \varepsilon_A\ot H)\co  (A\ot H\ot H\ot c_{H,A}\ot A\ot H)\co (A\ot H\ot (c_{H,H}$
\item[ ]$\hspace{0.38cm}\co (H\ot \lambda_{H})\co \delta_{H})\ot (\delta_{A}\co \mu_{A}\co (\lambda_{A}\ot A))\ot H)\co (A\ot H\ot c_{A,H}\ot A\ot H)  \co (\delta_{A\ot H}\ot A\ot H)$ {\scriptsize ({\blue  by}}
\item[ ]$\hspace{0.38cm}${\scriptsize {\blue  (\ref{segundaqg})})}

\item[ ]$=((\mu_{A}\co (A\ot (\varphi_{A}\co ((\mu_{H}\co (H\ot \lambda_{H}))\ot (\mu_{A}\co (\lambda_{A}\ot A)))\co (H\ot c_{A,H}\ot A)))\co (\delta_{A\ot H}\ot A))\ot H)$ 
\item[ ]$\hspace{0.38cm}${\scriptsize ({\blue  by naturality of $c$ and counit properties})}

\item[ ]$=((\mu_A\co (A\ot \varphi_A)\co (A\ot (id_{H}\ast\lambda_{H})\ot \mu_A)\co (A\ot H\ot \lambda_A\ot A)\co (A\ot c_{A,H}\ot A)\co (\delta_A\ot H\ot A))\ot H)$
\item[ ]$\hspace{0.38cm}$ {\scriptsize ({\blue by naturality of $c$})}

\item[ ]$=((\mu_A\co (A\ot\mu_A)\co(A\ot \lambda_A\ot A)\co (\delta_A\ot A))\ot H)\co (A\ot \varepsilon_H\ot A\ot H)$
 {\scriptsize ({\blue by (\ref{new-h-2delta}) and (\ref{unidadquasimodulo}) for $\varphi_{A}$})}

\item[ ]$=\varepsilon_{A\ot H}\ot A\ot H$
{\scriptsize ({\blue by (\ref{leftHqg}) for $A$}).}

\end{itemize}

\end{proof}

Similarly,  the following result can be proved for right Hopf quasigroups.

\begin{theorem}
\label{productoMajid-r}
Let $A$, $H$ be right Hopf quasigroups with right antipodes $\varrho_{A}$, $\varrho_{H}$ respectively. Assume that $(A, \varphi_A)$ is a left $H$-module comonoid and $(H, \phi_H)$ is a right $A$-module comonoid. Then the following assertions are equivalent:

\begin{itemize}
\item [(i)] The double crossproduct $A\bowtie H$ built on the object $A\ot H$ with product
$$\mu_{A\bowtie H}=(\mu_A\ot \mu_H)\co (A\ot \varphi_A\ot \phi_H\ot H)\co (A\ot \delta_{H\ot A}\ot H)$$
and tensor product unit, counit and coproduct, is a right Hopf quasigroup with right antipode
$$\varrho_{A\bowtie H}=(\varphi_A\ot \phi_H)\co \delta_{H\ot A}\co (\varrho_{H}\ot \varrho_{A})\co c_{A,H}.$$

\item [(ii)] The  equalities (\ref{EtaAfi}), (\ref{EtaHpsi}), (\ref{fipsicompatibles}), 
\begin{equation}\label{muAfi-r}
\phi_H\co (\mu_{H}\ot A)\co (H\ot\varrho_H\ot \varrho_A)=\mu_H\co(\phi_{H}\ot H)\co (H\ot (\varrho_{A\bowtie H}\co c_{H,A})),
 \end{equation}
\begin{equation}\label{primeraqg-r}
\mu_A\co(\mu_{A}\ot \varphi_A)\co (A\ot ((\varphi_{A}\ot \phi_{H})\co \delta_{H\ot A})\ot \varrho_{A})\co (A\ot H\ot \delta_{A})=A\ot \varepsilon_H\ot \varepsilon_A,
 \end{equation}
\begin{equation}\label{segundaqg-r}
\mu_{A}\co (\mu_{A}\ot \varphi_{A})\co (A\ot ((\varphi_{A}\ot \phi_{H})\co \delta_{H\ot A})\ot A)\co (A\ot H\ot ((\varrho_{A}\ot A)\co \delta_{A}))=A\ot \varepsilon_H\ot \varepsilon_A,
 \end{equation}
hold.

\end{itemize}

\end{theorem}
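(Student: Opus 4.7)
The plan is to mirror, side for side, the argument used for Theorem \ref{productoMajid}, replacing every occurrence of the left Hopf quasigroup axiom (\ref{leftHqg}) and of the left antipode $\lambda$ by their right analogues (\ref{rightHqg}) and $\varrho$. Since the unit, counit, and coproduct of $A\bowtie H$ are tensor products of those of $A$ and $H$, the non-associative bimonoid structure on $A\bowtie H$ (together with the comonoid morphism condition for $\mu_{A\bowtie H}$) is established exactly as in the left-handed case, and Proposition \ref{igualdadesalpha-r} tells us that, once $\mu_{A\bowtie H}$ is in place, it is enough to check the right-division/antipode axioms.

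For the implication $(i)\Rightarrow(ii)$, I would read the six equalities out of the axioms of $A\bowtie H$ by composing with well-chosen units and counits. Conditions (\ref{EtaAfi}) and (\ref{EtaHpsi}) are extracted from $\mu_{A\bowtie H}\co(A\ot H\ot \eta_{A\ot H})=id_{A\ot H}=\mu_{A\bowtie H}\co(\eta_{A\ot H}\ot A\ot H)$ by projecting onto $A$ or $H$ with the counit, exactly as in the proof of Theorem \ref{productoMajid}. Condition (\ref{fipsicompatibles}) comes from $\delta_{A\ot H}\co\mu_{A\bowtie H}=(\mu_{A\bowtie H}\ot\mu_{A\bowtie H})\co\delta_{A\ot H\ot A\ot H}$ after sandwiching with $\eta_A\ot H\ot A\ot \eta_H$ and $\varepsilon_A\ot H\ot A\ot \varepsilon_H$. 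The three remaining equalities (\ref{muAfi-r}), (\ref{primeraqg-r}), (\ref{segundaqg-r}) come from the right Hopf quasigroup identities
\[
\mu_{A\bowtie H}\co(\mu_{A\bowtie H}\ot\varrho_{A\bowtie H})\co(A\ot H\ot\delta_{A\ot H})=A\ot H\ot\varepsilon_{A\ot H},
\]
\[
\mu_{A\bowtie H}\co(\mu_{A\bowtie H}\ot A\ot H)\co(A\ot H\ot\varrho_{A\bowtie H}\ot A\ot H)\co(A\ot H\ot\delta_{A\ot H})=A\ot H\ot\varepsilon_{A\ot H},
\]
by projecting onto the appropriate tensor factor and, in the derivation of (\ref{muAfi-r}), using that the morphisms $(H\ot\varphi_A)\co(\delta_H\ot A)$ and $(A\ot\mu_A)\co(\delta_A\ot A)$ (or their right-handed counterparts) are isomorphisms, so one can invert them just as in the left-handed proof.

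For $(ii)\Rightarrow(i)$, I would verify the two right Hopf quasigroup identities (\ref{rightHqg}) for $A\bowtie H$ through a direct diagrammatic computation. The recipe is the dual of the one in Theorem \ref{productoMajid}: expand $\mu_{A\bowtie H}$ and $\varrho_{A\bowtie H}$ by definition, use the anticomultiplicativity of the right antipodes (Proposition \ref{PI-adv-Prop 6}), the comonoid morphism conditions on $\varphi_A$ and $\phi_H$, the module conditions, and the compatibility (\ref{fipsicompatibles}); at the decisive moment, substitute (\ref{muAfi-r}) to collapse the antipode-containing factor, and then use either (\ref{primeraqg-r}) or (\ref{segundaqg-r}) to cancel one side; finally apply the right Hopf quasigroup axioms for $A$ and $H$ (through Proposition \ref{igualdadesalpha-r}) to reach $A\ot H\ot\varepsilon_{A\ot H}$. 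The assertion that $\varrho_{A\bowtie H}=(\varphi_A\ot\phi_H)\co\delta_{H\ot A}\co(\varrho_H\ot\varrho_A)\co c_{A,H}$ is the right antipode follows from Proposition \ref{igualdadesalpha-r} applied to the resulting right division $r_{A\bowtie H}=\mu_{A\bowtie H}\co(A\ot H\ot\varrho_{A\bowtie H})$, so no separate verification is required.

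The main obstacle, as in the left-handed case, is the long calculation in step $(ii)\Rightarrow(i)$: one must track carefully the interplay between anticomultiplicativity, the two module structures, the nontrivial braiding steps in $\mu_{A\bowtie H}$, and the right-antipode condition, and deploy (\ref{muAfi-r})--(\ref{segundaqg-r}) at exactly the right spots. Everything else is a routine translation of the left-handed argument, so I would not reproduce those bookkeeping computations beyond highlighting the symmetry $\lambda\leftrightarrow\varrho$, (\ref{leftHqg})$\leftrightarrow$(\ref{rightHqg}), (\ref{primeradealpha})$\leftrightarrow$(\ref{primeradealpha2}).
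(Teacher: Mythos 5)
Your sketch is correct and follows exactly the route the paper takes: the paper gives no separate proof of Theorem \ref{productoMajid-r}, stating only that it is obtained ``similarly'' from Theorem \ref{productoMajid}, i.e., by precisely the left--right mirroring you describe ($\lambda\leftrightarrow\varrho$, (\ref{leftHqg})$\leftrightarrow$(\ref{rightHqg}), with the right-handed counterparts of the invertible morphisms $(H\ot \varphi_A)\co(\delta_H\ot A)$ and $(A\ot\mu_A)\co(\delta_A\ot A)$). Your identification of where each of (\ref{EtaAfi}), (\ref{EtaHpsi}), (\ref{fipsicompatibles}), (\ref{muAfi-r}), (\ref{primeraqg-r}), (\ref{segundaqg-r}) comes from, and of the point at which (\ref{muAfi-r}) and the two quasigroup identities are deployed in the converse, matches the structure of the paper's left-handed argument.
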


Then as a consequence of  Theorems (\ref{productoMajid}) and (\ref{productoMajid-r}) we have:

\begin{corollary}
\label{productoMajid-q}
Let $A$, $H$ be Hopf quasigroups with  antipodes $\lambda_{A}$, $\lambda_{H}$ respectively. Assume that $(A, \varphi_A)$ is a left $H$-module comonoid and $(H, \phi_H)$ is a right $A$-module comonoid. Then the following assertions are equivalent:

\begin{itemize}
\item [(i)] The double crossproduct $A\bowtie H$ built on the object $A\ot H$ with product
$$\mu_{A\bowtie H}=(\mu_A\ot \mu_H)\co (A\ot \varphi_A\ot \phi_H\ot H)\co (A\ot \delta_{H\ot A}\ot H)$$
and tensor product unit, counit and coproduct, is a  Hopf quasigroup with  antipode
$$\lambda_{A\bowtie H}=(\varphi_A\ot \phi_H)\co \delta_{H\ot A}\co (\lambda_{H}\ot \lambda_{A})\co c_{A,H}.$$

\item [(ii)] The  equalities (\ref{EtaAfi}), (\ref{EtaHpsi}), (\ref{fipsicompatibles}), (\ref{muAfi}), (\ref{primeraqg}), (\ref{segundaqg}), (\ref{muAfi-r}), (\ref{primeraqg-r}), and (\ref{segundaqg-r}) hold for $\lambda_{H}$ and $\lambda_{A}$.

\end{itemize}

\end{corollary}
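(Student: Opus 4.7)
The strategy is to reduce the statement to Theorems \ref{productoMajid} and \ref{productoMajid-r} by using the observation, recorded in Remark \ref{antipodounico}, that a Hopf quasigroup is exactly a non-associative bimonoid that is simultaneously a left and a right Hopf quasigroup, the left and right antipodes necessarily agreeing. Since for a Hopf quasigroup $A$ (respectively $H$) the antipode $\lambda_A$ (respectively $\lambda_H$) plays simultaneously the role of left antipode and right antipode in the sense of Definition \ref{leftHopfqg}, we may set $\varrho_A=\lambda_A$ and $\varrho_H=\lambda_H$ throughout, so that Theorems \ref{productoMajid} and \ref{productoMajid-r} become available in parallel.

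For the implication $(ii)\Rightarrow (i)$, the hypothesis gives all equalities (\ref{EtaAfi}), (\ref{EtaHpsi}), (\ref{fipsicompatibles}), (\ref{muAfi}), (\ref{primeraqg}), (\ref{segundaqg}), as well as (\ref{muAfi-r}), (\ref{primeraqg-r}), (\ref{segundaqg-r}) with $\varrho_A=\lambda_A$ and $\varrho_H=\lambda_H$. Applying Theorem \ref{productoMajid} we get that $A\bowtie H$ is a left Hopf quasigroup with left antipode
$$\lambda_{A\bowtie H}=(\varphi_A\ot \phi_H)\co \delta_{H\ot A}\co (\lambda_H\ot \lambda_A)\co c_{A,H},$$
while Theorem \ref{productoMajid-r} applied to $A$, $H$ regarded as right Hopf quasigroups yields that $A\bowtie H$ is a right Hopf quasigroup with right antipode
$$\varrho_{A\bowtie H}=(\varphi_A\ot \phi_H)\co \delta_{H\ot A}\co (\varrho_H\ot \varrho_A)\co c_{A,H}.$$
Since $\lambda_H=\varrho_H$ and $\lambda_A=\varrho_A$, both formulas describe the same morphism. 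By Remark \ref{antipodounico}, $A\bowtie H$ is then a Hopf quasigroup whose antipode is precisely $\lambda_{A\bowtie H}$, as claimed.

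For the implication $(i)\Rightarrow (ii)$, assume $A\bowtie H$ is a Hopf quasigroup with antipode $\lambda_{A\bowtie H}$. Then, since a Hopf quasigroup is both a left and a right Hopf quasigroup and its unique antipode serves as both left and right antipode (Remark \ref{antipodounico}), Theorem \ref{productoMajid} yields the equalities (\ref{EtaAfi}), (\ref{EtaHpsi}), (\ref{fipsicompatibles}), (\ref{muAfi}), (\ref{primeraqg}), (\ref{segundaqg}), whereas Theorem \ref{productoMajid-r}, applied with $\varrho_A=\lambda_A$ and $\varrho_H=\lambda_H$, delivers (\ref{muAfi-r}), (\ref{primeraqg-r}), (\ref{segundaqg-r}).

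The main (and essentially only) subtlety is the bookkeeping that ensures we are entitled to invoke Theorem \ref{productoMajid-r} with the same morphism $\lambda_{A\bowtie H}$ that appears in the formulation of Theorem \ref{productoMajid}; this is guaranteed by the fact that for Hopf quasigroups the left and right antipodes coincide, so that the expressions for $\lambda_{A\bowtie H}$ and $\varrho_{A\bowtie H}$ are literally equal. Once this identification is made, the corollary follows by juxtaposing the two theorems, and no additional computation is required.
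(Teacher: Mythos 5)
Your proposal is correct and follows essentially the same route as the paper: both reduce the statement to Theorems \ref{productoMajid} and \ref{productoMajid-r} via the identification $\lambda_{A}=\varrho_{A}$, $\lambda_{H}=\varrho_{H}$ and the fact (Remark \ref{antipodounico}) that a Hopf quasigroup is precisely a left and right Hopf quasigroup with coinciding antipodes. Your write-up merely spells out the bookkeeping that the paper leaves implicit.
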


\begin{proof} The proof follows using  Theorems \ref{productoMajid} and \ref{productoMajid-r}, because in this case $\lambda_{H}=\varrho_{H}$ and $\lambda_{A}=\varrho_{A}$.

\end{proof}

Now we show that the construction  of $A\bowtie_{\tau} H$ introduced in the previous section is an example of a double cross product.

\begin{proposition}
\label{crossproductexample}
Let $A$, $H$ be left Hopf quasigroups with left antipodes $\lambda_{A}$, $\lambda_{H}$  such that $\lambda_H$ is an isomorphism, and let $\tau:A\ot H\rightarrow K$ be a skew pairing. Then the left Hopf quasigroup $A\bowtie_{\tau} H$ introduced in Corollary \ref{FangTorrecillas} is the double crossproduct induced by the actions $\varphi_{A}$, $\phi_{H}$ defined in Proposition \ref{doublecrossskewpairing}.

\end{proposition}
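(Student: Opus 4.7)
The plan is to show that the double crossproduct $A\bowtie H$ built from $\varphi_A$ and $\phi_H$ is well-defined and has the same unit, counit, comultiplication, multiplication, and left antipode as $A\bowtie_\tau H$. Since both structures share the underlying object $A\ot H$ with the tensor-product unit, counit, and comultiplication, the task reduces to comparing the products (and then the antipodes). The main obstacle is the diagrammatic identification of the two product formulas.

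First, I would invoke Proposition \ref{doublecrossskewpairing} (using the standing hypothesis that $\lambda_H$ is an isomorphism) to conclude that $(A,\varphi_A)$ is a left $H$-module comonoid and $(H,\phi_H)$ is a right $A$-module comonoid. This makes $\mu_{A\bowtie H}=(\mu_A\ot\mu_H)\co(A\ot\varphi_A\ot\phi_H\ot H)\co(A\ot\delta_{H\ot A}\ot H)$ a well-defined morphism on $A\ot H\ot A\ot H$.

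The heart of the proof is computing $\Psi:=(\varphi_A\ot\phi_H)\co\delta_{H\ot A}$. Expanding $\delta_{H\ot A}=(H\ot c_{H,A}\ot A)\co(\delta_H\ot\delta_A)$ together with the definitions of $\varphi_A$ and $\phi_H$, and using coassociativity plus naturality of $c$, one obtains a morphism in which a factor of the form $\tau^{-1}\ast\tau$ appears between the two inner tensorands of $\delta_H$ and of $\delta_A$. Applying the convolution identity $\tau^{-1}\ast\tau=\varepsilon_A\ot\varepsilon_H$ (established in Proposition \ref{propiedadesskew}) collapses these inner positions via the counit, leaving
\[
\Psi=(\tau\ot A\ot H\ot\tau^{-1})\co(A\ot H\ot c_{A,H}\ot A\ot H)\co(\delta_{A\ot H}\ot\delta_{A\ot H})\co c_{H,A}.
\]
Substituting $\Psi$ into the double crossproduct formula and comparing term by term with (\ref{mutimes}), one checks that $\mu_{A\bowtie H}=\mu_{A\bowtie_{\tau} H}$; this is a routine rewriting using coassociativity, naturality of $c$, and the counit properties.

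Finally, since the unit, counit, comultiplication, and multiplication coincide, both objects are the same non-associative bimonoid. The left antipode of a left Hopf quasigroup is uniquely determined by these data, since by Proposition \ref{divisionunica} the left division $l$ is unique, and by Proposition \ref{igualdadesalpha} the left antipode equals $l\co(H\ot\eta)$. Hence $\lambda_{A\bowtie H}=\lambda_{A\bowtie_{\tau} H}$, and the two left Hopf quasigroup structures agree. The main obstacle is the bookkeeping required for the simplification of $\Psi$; once the $\tau^{-1}\ast\tau$ contraction is identified, the rest is essentially formal.
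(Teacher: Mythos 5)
Your identification of the products follows the paper's own route: the paper's first (and only substantive) step for the multiplication is precisely your computation of $\Psi=(\varphi_A\ot\phi_H)\co\delta_{H\ot A}$, in which a factor $\tau^{-1}\ast\tau$ is cancelled against the counits to give (\ref{prin5}), after which $\mu_{A\bowtie H}=\mu_{A\bowtie_{\tau} H}$ is routine. Note, however, that your displayed formula for $\Psi$ does not typecheck: $(\delta_{A\ot H}\ot\delta_{A\ot H})$ cannot be applied to the output of $c_{H,A}$, and the inserted $c_{A,H}$ would leave the middle tensorands in the order $H\ot A$ rather than $A\ot H$. The correct expression is $(\tau\ot A\ot H\ot\tau^{-1})\co(A\ot H\ot\delta_{A\ot H})\co\delta_{A\ot H}\co c_{H,A}$, as in (\ref{prin5}).

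For the antipode you depart from the paper, which verifies $\lambda_{A\bowtie H}=\lambda_{A\bowtie_{\tau} H}$ by a direct computation using (\ref{anticomul}) for $\lambda_{A}$ and $\lambda_{H}$ together with (\ref{tau1}) and (\ref{tau2}). Your uniqueness argument is sound as far as it goes: by Propositions \ref{divisionunica} and \ref{igualdadesalpha} the left division, and hence the left antipode, of a left Hopf quasigroup is determined by the underlying bimonoid, so the bimonoid $A\ot H$ with product $\mu_{A\bowtie H}=\mu_{A\bowtie_{\tau} H}$ has $\lambda_{A\bowtie_{\tau} H}$ as its unique left antipode. But this only shows that \emph{if} the explicit morphism $\lambda_{A\bowtie H}=(\varphi_A\ot \phi_H)\co \delta_{H\ot A}\co (\lambda_H\ot \lambda_A)\co c_{A,H}$ of Theorem \ref{productoMajid} is a left antipode for this product, \emph{then} it equals $\lambda_{A\bowtie_{\tau} H}$; you have neither checked that it is one (which would require verifying the conditions of Theorem \ref{productoMajid}(ii)) nor compared the two formulas directly. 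The cheapest repair is the paper's: substitute (\ref{prin5}) into the formula for $\lambda_{A\bowtie H}$ and use anticomultiplicativity of the antipodes together with (\ref{tau1}) and (\ref{tau2}) to arrive at (\ref{lambdatimes}). With that short computation added (and the formula for $\Psi$ corrected), your argument is complete.
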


\begin{proof}
First note that 
\begin{equation}
\label{prin5}
(\varphi_A\ot \phi_H)\co \delta_{H\ot A}=(\tau\ot A\ot H\ot \tau^{-1})\co (A\ot H\ot\delta_{A\ot H})\co \delta_{A\ot H}\co c_{H,A}.
\end{equation}

Indeed:
\begin{itemize}
\item[ ]$\hspace{0.38cm} (\varphi_A\ot \phi_H)\co \delta_{H\ot A}$

\item[ ]$=(A\ot (\tau^{-1}\ast\tau)\ot H)\co (\tau\ot\delta_A\ot \delta_H\ot \tau^{-1})\co (A\ot H\ot \delta_{A\ot H})\co \delta_{A\ot H}\co c_{H,A}$ {\scriptsize ({\blue by naturality of $c$, and}}
\item[ ]$\hspace{0.38cm}${\scriptsize {\blue  coassociativity})}
                                 
\item[ ]$=(\tau\ot A\ot H\ot \tau^{-1})\co (A\ot H\ot\delta_{A\ot H})\co \delta_{A\ot H}\co c_{H,A}$ {\scriptsize ({\blue by  invertibility of $\tau$, naturality of $c$, and counit}}
\item[ ]$\hspace{0.38cm}${\scriptsize {\blue   properties}).}
\end{itemize}

Then, as a consequence,  it is not difficult to see that $\mu_{A\bowtie_{\tau} H}=\mu_{A\bowtie H}$. On the other hand $\lambda_{A\bowtie_{\tau} H}=\lambda_{A\bowtie H}$ because 

\begin{itemize}
\item[ ]$\hspace{0.38cm} \lambda_{A\bowtie H}$

\item[ ]$=(\tau\ot \lambda_{A}\ot \lambda_{H}\ot \tau^{-1})\co (A\ot H\ot ((A\ot c_{A,H}\ot H)\co ((c_{A,A}\co \delta_{A})\ot (c_{H,H}\co \delta_{H}))))\co (A\ot c_{A,H}\ot H)$
\item[ ]$\hspace{0.38cm}\co ((c_{A,A}\co \delta_{A})\ot (c_{H,H}\co \delta_{H}))$ {\scriptsize ({\blue by (\ref{anticomul}) for $\lambda_{A}$ and $\lambda_{H}$, naturality of $c$, $c^{2}=id$, (\ref{tau1}) and (\ref{tau2})})}

\item[ ]$=(\tau^{-1}\ot \lambda_{A}\ot \lambda_{H}\ot \tau)\co (A\ot H\ot c_{A\ot H,A\ot H})\co (c_{A\ot H,A\ot H}\ot A\ot H)\co (A\ot H\ot \delta_{A\ot H})\co c_{A\ot H,A\ot H}\co \delta_{A\ot H}$
\item[ ]$\hspace{0.38cm}$ {\scriptsize ({\blue by naturality of $c$, and $c^{2}=id$})}

\item[ ]$=(\tau^{-1}\ot \lambda_{A}\ot \lambda_{H}\ot \tau)\co (\delta_{A\ot H}\ot A\ot H) \co \delta_{A\ot H}$ {\scriptsize ({\blue by naturality of $c$, and $c^{2}=id$})}

\item[ ]$=\lambda_{A\bowtie_{\tau} H}$ {\scriptsize ({\blue by coasociativity}).}

\end{itemize}
\end{proof}

For the right side we have a similar result, i.e.:

\begin{proposition}
\label{crossproductexample-r}
Let $A$, $H$ be right Hopf quasigroups with right antipodes $\varrho_{A}$, $\varrho_{H}$ such that $\varrho_H$ is an isomorphism, and let $\tau:A\ot H\rightarrow K$ be a skew pairing. Then the right Hopf quasigroup $A\bowtie_{\tau} H$ introduced in Corollary \ref{FangTorrecillas-r} is the doublecross product induced by the actions $\varphi_{A}$, $\phi_{H}$ defined in Proposition \ref{doublecrossskewpairing}.

\end{proposition}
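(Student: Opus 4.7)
The plan is to mirror the proof of Proposition \ref{crossproductexample} verbatim, exploiting the fact that the actions $\varphi_A$, $\phi_H$ from Proposition \ref{doublecrossskewpairing} and the multiplication formulas (\ref{mutimes}) and (\ref{mutimes-r}) are formally identical; only the antipodes (and the invertibility hypothesis) change sides.

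First I would establish the right-side analogue of (\ref{prin5}), namely
\[
(\varphi_A\ot \phi_H)\co \delta_{H\ot A}=(\tau\ot A\ot H\ot \tau^{-1})\co (A\ot H\ot\delta_{A\ot H})\co \delta_{A\ot H}\co c_{H,A}.
\]
This computation is identical to the one given in the proof of Proposition \ref{crossproductexample}: it only uses coassociativity, naturality of $c$, the invertibility of $\tau$ (which holds by Proposition \ref{propiedadesskew-r}), and counit properties; it is insensitive to the choice of antipode. With this in hand, substituting into
\[
\mu_{A\bowtie H}=(\mu_A\ot \mu_H)\co (A\ot \varphi_A\ot \phi_H\ot H)\co (A\ot \delta_{H\ot A}\ot H)
\]
and comparing with (\ref{mutimes-r}) immediately yields $\mu_{A\bowtie_{\tau}H}=\mu_{A\bowtie H}$, exactly as in the left case.

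Next I would verify $\varrho_{A\bowtie_{\tau} H}=\varrho_{A\bowtie H}$. Starting from
\[
\varrho_{A\bowtie H}=(\varphi_A\ot \phi_H)\co \delta_{H\ot A}\co (\varrho_H\ot \varrho_A)\co c_{A,H},
\]
I would apply the anticomultiplicativity identity (\ref{anticomul-r}) to both $\varrho_A$ and $\varrho_H$, then use the symmetries (\ref{tau1-r}) and (\ref{tau2-r}) to replace $\tau$ and $\tau^{-1}$ composed with $\varrho_A\ot\varrho_H$ by $\tau$ and $\tau^{-1}$ themselves, and finally rearrange via $c^2=\mathrm{id}$, naturality of $c$ and coassociativity to recover the expression for $\varrho_{A\bowtie_{\tau} H}$ from (\ref{lambdatimes-r}), i.e.
\[
\varrho_{A\bowtie_{\tau}H}=(\tau^{-1}\ot \varrho_A\ot \varrho_H\ot \tau)\co (A\ot H\ot \delta_{A\ot H})\co \delta_{A\ot H}.
\]
This is the direct translation of the step at the end of the proof of Proposition \ref{crossproductexample}, now with $\varrho$ replacing $\lambda$.

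There is essentially no genuine obstacle: the left/right symmetry between Propositions \ref{propiedadesskew} and \ref{propiedadesskew-r}, between Corollaries \ref{FangTorrecillas} and \ref{FangTorrecillas-r}, and between (\ref{anticomul})/(\ref{tau1})/(\ref{tau2}) and (\ref{anticomul-r})/(\ref{tau1-r})/(\ref{tau2-r}), makes every step of the left-handed proof transport mechanically. The only minor care needed is to invoke the right-sided invertibility hypothesis on $\varrho_H$ to ensure that Proposition \ref{doublecrossskewpairing}(ii) applies so that $(H,\phi_H)$ is truly a right $A$-module comonoid, which is precisely where the assumption of this proposition is used.
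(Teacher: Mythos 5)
Your proposal is correct and matches the paper's intent exactly: the paper omits the proof of this right-handed version, stating only that it is analogous to Proposition \ref{crossproductexample}, and your step-by-step transport of that proof — the right-sided analogue of (\ref{prin5}), the identification of the products, and the antipode computation via (\ref{anticomul-r}), (\ref{tau1-r}), (\ref{tau2-r}) — is precisely the intended argument. You also correctly locate where the invertibility of $\varrho_{H}$ enters, namely through Proposition \ref{doublecrossskewpairing}(ii) to make $(H,\phi_{H})$ a right $A$-module comonoid.
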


Finally, by Propositions \ref{crossproductexample}, \ref{crossproductexample-r}, and (iii) of Proposition \ref{doublecrossskewpairing},  we have the corresponding result for quasigroups without conditions over the antipode of $H$.

\begin{corollary}
\label{crossproductexample-q}
Let $A$, $H$ be  Hopf quasigroups  and let $\tau:A\ot H\rightarrow K$ be a skew pairing. Then the  Hopf quasigroup $A\bowtie_{\tau} H$ introduced in Corollary \ref{FangTorrecillas-q} is the double crossproduct induced by the actions $\varphi_{A}$, $\phi_{H}$ defined in Proposition \ref{doublecrossskewpairing}.
\end{corollary}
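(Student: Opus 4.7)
The plan is to assemble the result directly from the three cited ingredients rather than redoing any substantial computation. First, since $A$ and $H$ are Hopf quasigroups, Remark \ref{antipodounico} gives that each of them is simultaneously a left and a right Hopf quasigroup whose left and right antipodes coincide with the (unique) antipode. In particular $\lambda_A=\varrho_A$ and $\lambda_H=\varrho_H$, so both Propositions \ref{crossproductexample} and \ref{crossproductexample-r} are applicable in principle.

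Next, I would observe that parts (i) and (iii) of Proposition \ref{doublecrossskewpairing} supply, without any invertibility hypothesis on the antipode of $H$, that $(A,\varphi_A)$ is a left $H$-module comonoid and $(H,\phi_H)$ is a right $A$-module comonoid. Hence Corollary \ref{productoMajid-q} can be invoked to form the double crossproduct $A\bowtie H$ with product $\mu_{A\bowtie H}$ and candidate antipode $\lambda_{A\bowtie H}$.

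The remaining task is to identify $A\bowtie H$ with $A\bowtie_\tau H$ as Hopf quasigroups, and this is done exactly as in the proof of Proposition \ref{crossproductexample}. The key step is the rewriting
\[
(\varphi_A\ot \phi_H)\co \delta_{H\ot A}=(\tau\ot A\ot H\ot \tau^{-1})\co (A\ot H\ot\delta_{A\ot H})\co \delta_{A\ot H}\co c_{H,A},
\]
which follows from coassociativity, naturality of $c$, and the invertibility of $\tau$; from it one reads off $\mu_{A\bowtie H}=\mu_{A\bowtie_\tau H}$. The antipode equality $\lambda_{A\bowtie H}=\lambda_{A\bowtie_\tau H}$ is obtained by inserting the anticomultiplicativity relation (\ref{anticomul}) for $\lambda_A$ and $\lambda_H$, the symmetry identity $c^{2}=id$, and the two conjugation-by-antipode identities (\ref{tau1}) and (\ref{tau2}) satisfied by $\tau$ and $\tau^{-1}$, exactly as in the display at the end of the proof of Proposition \ref{crossproductexample}.

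The only point that could look delicate is whether the asymmetric arguments of Propositions \ref{crossproductexample} and \ref{crossproductexample-r}, each of which required an invertibility assumption on one antipode, can really be patched together without such an assumption. This is precisely the content of part (iii) of Proposition \ref{doublecrossskewpairing}: in the Hopf quasigroup setting the right $A$-module comonoid structure on $H$ is already guaranteed (because equation (\ref{skewpairing-4}) is then automatic by the antimultiplicativity of $\lambda_A$), so no invertibility of $\lambda_H$ is needed. Thus the combination of Corollary \ref{FangTorrecillas-q}, Corollary \ref{productoMajid-q} and Proposition \ref{doublecrossskewpairing} (iii), together with the two identifications above, finishes the proof.
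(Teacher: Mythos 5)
Your proposal is correct and follows essentially the same route as the paper: the paper's proof is precisely the observation that Propositions \ref{crossproductexample} and \ref{crossproductexample-r} combined with part (iii) of Proposition \ref{doublecrossskewpairing} remove the invertibility hypothesis on the antipode of $H$, and you unfold this by noting that the module comonoid structures come from (i) and (iii) of Proposition \ref{doublecrossskewpairing} while the identifications $\mu_{A\bowtie H}=\mu_{A\bowtie_{\tau}H}$ and $\lambda_{A\bowtie H}=\lambda_{A\bowtie_{\tau}H}$ (via (\ref{prin5}), (\ref{anticomul}), (\ref{tau1}) and (\ref{tau2})) never use that invertibility. The appeals to Remark \ref{antipodounico} and Corollary \ref{productoMajid-q} are harmless but not strictly needed, since the double crossproduct formulas only require the module comonoid structures and the Hopf quasigroup property of $A\bowtie_{\tau}H$ is already supplied by Corollary \ref{FangTorrecillas-q}.
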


\section{Quasitriangular Hopf quasigroups, skew pairings, biproducts and projections}

In this section we will explore the connections between Yetter-Drinfeld modules for Hopf quasigroups, projections of Hopf quasigroups, skew pairings, and quasitriangular structures, obtaining the non-associative version of the main results proved in \cite{P-M}.  

In the Hopf quasigroup setting the notion of left $H$-comodule is exactly the same as for ordinary Hopf algebras since it only depends on the comonoid structure of $H$. Then,  we will denote a left $H$-comodule by $(M,\rho_{M})$ where $M$ is an object in ${\mathcal C}$ and $\rho_{M}:M\rightarrow H\ot M$ is a morphism in ${\mathcal C}$ (called the coaction) satisfying the comodule conditions:
\begin{equation}
\label{ep-coact}
(\varepsilon_{H}\ot M)\co\rho_{M}=id_{M}, 
\end{equation}
\begin{equation}
\label{delta-coact}
(H\otimes
\rho_{M})\co\rho_{M}=(\delta_{H}\ot M)\circ
\rho_{M}.
\end{equation} 

Given two left $H$-comodules $(M,\rho_{M})$ and
$(N,\rho_{N})$, $f:M\rightarrow N$ is a morphism of left
$H$-comodules if $\rho_{N}\co f=(H\ot f)\circ
\rho_{M}$. We denote the category of left $H$-comodules by
$\;^{H}{\mathcal C}$.

For  two left $H$-comodules $(M,\rho_{M})$ and
$(N,\rho_{N})$, the tensor product $M\ot N$ is a left $H$-comodule with the codiagonal coaction 
$$\rho_{M\ot N}=(\mu_{H}\ot M\ot N)\circ
(H\ot c_{M,H}\ot N)\co(\rho_{M}\otimes
\rho_{N}).$$

This makes the category of left $H$-comodules into a strict monoidal category $(^{H}{\mathcal C}, \ot , K)$.

Moreover, we will say that a unital magma $A$ is a left $H$-comodule magma if it is a left $H$-comodule with coaction $\rho_{A}$ and the following equalities hold:
\begin{equation}
\label{etacomodulo}
\rho_{A}\co\eta_A=\eta_H\ot \eta_A,
\end{equation}
\begin{equation}
\label{mucomodulo}
\rho_A\co\mu_A=(H\ot \mu_A)\co \rho_{A\ot A}.
\end{equation}

Finally, a comonoid $A$ is a left $H$-comodule comonoid if it is a a left $H$-comodule with coaction $\rho_A$ and 
\begin{equation}
\label{epsiloncomodulo}
(H\ot \varepsilon_A)\co \rho_A=\eta_H\ot \varepsilon_A,
\end{equation}
\begin{equation}
\label{deltacomodulo}
(H\ot \delta_A)\co \rho_A=\rho_{A\ot A}\co \delta_A,
\end{equation}
hold.

Now, following \cite{our1}, we recall the notion of Yetter-Drinfeld quasimodule for a Hopf quasigroup $H$.

\begin{definition}
\label{YD}
Let $H$ be a Hopf quasigroup. We say that $M=(M, \varphi_{M},
\rho_{M})$ is a left-left Yetter-Drinfeld quasimodule over $H$ if
 $(M,\varphi_{M})$ is a left $H$-quasimodule and $(M,\rho_{M})$ is a
left $H$-comodule which satisfies the following equalities:
\begin{itemize}
\item[(b1)]$(\mu_{H}\ot M)\co(H\ot c_{M,H})\co((\rho_{M}\co\varphi_{M})\ot H)\circ
(H\ot c_{H,M})\co(\delta_{H}\ot M)$
\item[]$=(\mu_{H}\ot\varphi_{M})\co(H\ot c_{H,H}\otimes
M)\co(\delta_{H}\ot\rho_{M}).$

\item[(b2)]$(\mu_{H}\ot M)\circ
(H\ot c_{M,H})\co(\rho_M\ot \mu_H)$
\item[ ]$=(\mu_H\ot M)\co(\mu_H\ot c_{M,H})\co(H\ot c_{M,H}\ot H)\co(\rho_M\ot H\ot H).$

\item[(b3)]$(\mu_{H}\ot M)\co(H\ot \mu_H\ot M)\co
(H\ot H\ot c_{M,H})\co(H\ot \rho_M\ot H)$
\item[ ]$=(\mu_H\ot M)\co(\mu_H\ot c_{M,H})\co(H\ot \rho_M\ot H).$
\end{itemize}

Let $M$ and $N$ be two left-left  Yetter-Drinfeld quasimodules over $H$.
We say that  $f:M\rightarrow N$ is a morphism of left-left Yetter-Drinfeld
quasimodules  if $f$ is a morphism of $H$-quasimodules and $H$-comodules.

\end{definition}

We shall denote by
$^{H}_{H}{\mathcal Q}{\mathcal Y}{\mathcal D}$ the category of left-left
Yetter-Drinfeld quasimodules over $H$. Moreover, if we assume that $M$ is a left $H$-module, we say that $M$ is a left-left Yetter-Drinfeld module over $H$. Obviously, left-left Yetter-Drinfeld modules with the obvious morphisms is a subcategory of $^{H}_{H}{\mathcal Q}{\mathcal Y}{\mathcal D}$. We will denote it by  $^{H}_{H}{\mathcal Y}{\mathcal D}$. Note that if $H$ is a Hopf algebra, conditions (b2) and (b3) trivialize. In this case,  $^{H}_{H}{\mathcal Y}{\mathcal D}$ is the classical category of left-left Yetter-Drinfeld modules over $H$.

Let $(M,\varphi_{M},\rho_{M})$ and $(N,\varphi_{N},\rho_{N})$ be two objects in $^{H}_{H}{\mathcal Q} {\mathcal Y} {\mathcal D}$. Then $M\ot N$, with the diagonal structure $\varphi_{M\ot N}$ and the codiagonal costructure $\rho_{M\ot N}$, is an object in 
$^{H}_{H}{\mathcal Q} {\mathcal Y} {\mathcal D}$. Then
$(^{H}_{H}{\mathcal Q} {\mathcal Y} {\mathcal D}, \ot, K)$ is a strict  monoidal category. If moreover $\lambda_H$ is an isomorphism,  $(^{H}_{H}{\mathcal Y} {\mathcal D}, \ot, K)$ is a strict braided monoidal category where the braiding $t$ and its inverse are defined by 
\begin{equation}
\label{braidyd}
t_{M,N}=(\varphi_N\ot M)\co(H\ot c_{M,N})\co(\rho_M\ot N)
\end{equation}
and 
$$t^{-1}_{M,N}=c_{N,M}\co ((\varphi_N\co c_{N,H})\ot M)\co (N\ot \lambda_H^{-1}\ot M)\co (N\ot \rho_M),$$ 
respectively (see Proposition 1.8 of \cite{our1}). As a consequence  we can consider Hopf quasigroups in  $^{H}_{H}{\mathcal {YD}}$. The definition is the following:

\begin{definition}
Let $H$ be a Hopf quasigroup such that its antipode is an isomorphism. 
 Let $(D,u_{D}, m_{D})$ be a unital magma in ${\mathcal C}$ such that $(D,e_{D},\Delta_{D})$ is a comonoid in ${\mathcal C}$, and let $s_{D}:D\rightarrow D$ be a morphism in ${\mathcal C}$. We say that the triple $(D, \varphi_D, \varrho_D)$ is a Hopf quasigroup in $^{H}_{H}{\mathcal {YD}}$ if:
\begin{itemize}
\item[(c1)] The triple $(D, \varphi_D, \rho_D)$  is a left-left Yetter-Drinfeld $H$-module.

\item[(c2)] The triple  $(D,u_{D}, m_{D})$ is a unital magma  in $^{H}_{H}{\mathcal {YD}}$, i.e.,  $(D,u_{D}, m_{D})$ is a unital magma in ${\mathcal C}$, 
$(D,\varphi_{D})$ is a left $H$-module magma and $(D,\rho_{D})$ is a left $H$-comodule magma.

\item[(c3)] The triple  $(D,e_{D},\Delta_{D})$ is a   comonoid in 
$^{H}_{H}{\mathcal {YD}}$, i.e., $(D,e_{D},\Delta_{D})$ is a  comonoid in  ${\mathcal C}$, $(D,\varphi_{D})$ is a left $H$-module comonoid and 
$(D,\rho_{D})$ is a left $H$-comodule comonoid.

\item[(c4)] The following identities hold:
\begin{itemize}
\item[(c4-1)] $e_{D}\co u_{D}=id_{K},$
\item[(c4-2)] $e_{D}\co m_{D}=e_{D}\ot e_{D},$
\item[(c4-3)] $\Delta_{D}\co e_{D}=e_{D}\ot e_{D},$
\item[(c4-4)] $\Delta_{D}\co m_{D}=(m_{D}\ot m_{D})\co (D\ot t_{D,D}\ot D)\co (\Delta_{D}\ot \Delta_{D})$,
\end{itemize}
where $t_{D,D}$ is the braiding of $^{H}_{H}{\mathcal {YD}}$ for $M=N=D$.

\item[(c5)] The following identities hold:
\begin{itemize}
\item[(c5-1)] $m_D\co(s_D\ot m_D)\co(\Delta_D\ot D)=e_D\ot D=m_D\co(D\ot m_D)\co(D\ot s_D\ot D)\co(\Delta_D\ot D).$
\item[(c5-2)] $m_D\co(m_D\ot D)\co(D\ot s_D\ot D)\co(D\ot \Delta_D)=D\ot e_D=\mu_D\circ(m_D\ot s_D)\co(D\ot \Delta_D).$
\end{itemize}
\end{itemize}

Note that under these conditions, $s_{D}$ is a morphism in $^{H}_{H}{\mathcal {YD}}$ (see Lemmas 1.11, 1.12 of \cite{our1}).

By Theorem 1.14 of \cite{our1} we know that if $(D, \varphi_D, \varrho_D)$ is a Hopf quasigroup in $^{H}_{H}{\mathcal {YD}}$,  then 
$$D\rtimes H=(D\ot H, \eta_{D\rtimes H}, \mu_{D\rtimes H}, \varepsilon_{D\rtimes H}, \delta_{D\rtimes H}, \lambda_{D\rtimes H})$$ is a Hopf quasigroup in ${\mathcal C}$, with the biproduct structure induced by the smash product coproduct, i.e., 
$$\eta_{D\rtimes H}=\eta_D \ot \eta_H, \;\;\;\mu_{D\rtimes H}=(\mu_D\ot \mu_H)\co(D\ot \Psi_{D}^{H}\ot H),$$
$$\varepsilon_{D\rtimes H}=\varepsilon_D\ot \varepsilon_H,\;\;\;\delta_{D\rtimes H}=(D\ot \Gamma_{D}^{H}\ot H)\co(\delta_D\ot \delta_H),$$
$$\lambda_{D\rtimes H}=\Psi_{D}^{H}\co (\lambda_H\ot \lambda_D)\co \Gamma_{D}^{H},$$
where the morphisms $\Gamma_{D}^{H}:D\ot H\rightarrow H\ot D,$ $\Psi_{D}^{H}:H\ot D\rightarrow D\ot H$  are defined by 
 $$\Gamma_{D}^{H}=(\mu_H\ot D)\co (H\ot c_{D,H})\co (\varrho_D\ot H),\;\;\;\;
 \Psi_{D}^{H}=(\varphi_D\ot H)\co (H\ot c_{H,D})\co (\delta_H\ot D). $$
\end{definition}

Let $H$ and $B$ be Hopf quasigroups and let $f:H\rightarrow B$ and $g:B\rightarrow H$ be morphisms of  Hopf quasigroups such that $g\co f=id_{H}$. By Proposition 2.1 of \cite{our1} we know that  $q_{H}^{B}=id_{B}\ast (f\co\lambda_{H}\co g):B\rightarrow B$
is an idempotent morphism. Moreover, if $B_H$ is the image of $q_{H}^{B}$ and $p_{H}^{B}:B\rightarrow B_H$, $i_{H}^{B}:B_H\rightarrow B$ a factorization of $q_{H}^{B}$, 
$$
\setlength{\unitlength}{3mm}
\begin{picture}(30,4)
\put(3,2){\vector(1,0){4}} \put(11,2.5){\vector(1,0){10}}
\put(11,1.5){\vector(1,0){10}} \put(1,2){\makebox(0,0){$B_{H}$}}
\put(9,2){\makebox(0,0){$B$}} \put(24,2){\makebox(0,0){$B\otimes
H$}} \put(5.5,3){\makebox(0,0){$i_{H}^{B}$}}
\put(16,3.5){\makebox(0,0){$(B\ot g)\co\delta_{B}$}}
\put(16,0.5){\makebox(0,0){$B\ot\eta_H$}}
\end{picture}
$$
is an equalizer diagram. As a consequence, the triple $(B_{H}, u_{B_{H}}, m_{B_{H}})$ is a unital magma where $u_{B_{H}}$ and $m_{B_{H}}$ are the factorizations, through the equalizer  $i_{H}^{B}$, of the morphisms $\eta_{B}$ and $\mu_{B}\co (i_{H}^{B}\ot i_{H}^{B})$, respectively. Therefore, the equalities 
\begin{equation}
\label{product-bh-2}
u_{B_{H}}=p_{H}^{B}\co \eta_{B}, \;\;\; m_{B_{H}}=p_{H}^{B}\co \mu_{B}\co  (i_{H}^{B}\ot i_{H}^{B}),
\end{equation}
hold.

\begin{definition}
Let $H$ be a  Hopf quasigroup. A  Hopf quasigroup
projection over $H$ is a triple $(B,f,g)$ where $B$ is a  Hopf
quasigroup, $f:H\rightarrow B$ and $g:B\rightarrow H$ are morphisms
of  Hopf quasigroups such that $g\co f=id_{H}$, and the equality
\begin{equation}
\label{condicionfuerte-1}
q_{H}^{B}\co \mu_B\ot (B\ot q_{H}^{B})=q_{H}^{B}\co\mu_B
\end{equation}
holds. 

A morphism between two Hopf quasigroup projections $(B,f,g)$ and
$(B^{\prime},f^{\prime},g^{\prime})$ over $H$  is a Hopf quasigroup morphism  $h:B\rightarrow B^{\prime}$ such that $h\co f=f^{\prime}$, $g^{\prime}\co h=g$.  Hopf quasigroup projections over
$H$ and morphisms of  Hopf quasigroup projections with the obvious composition form a category,
denoted by ${\mathcal P}roj(H)$.

\end{definition}

If $(B,f,g)$ is a Hopf quasigroup projection over $H$, 

$$
\setlength{\unitlength}{1mm}
\begin{picture}(101.00,10.00)
\put(20.00,8.00){\vector(1,0){25.00}}
\put(20.00,4.00){\vector(1,0){25.00}}
\put(55.00,6.00){\vector(1,0){21.00}}
\put(32.00,11.00){\makebox(0,0)[cc]{$\mu_{B}\co(B\ot f)$ }}
\put(33.00,0.00){\makebox(0,0)[cc]{$B\otimes
\varepsilon_H  $ }}
\put(65.00,9.00){\makebox(0,0)[cc]{$p_{H}^{B} $ }}
\put(13.00,6.00){\makebox(0,0)[cc]{$ B\ot H$ }}
\put(50.00,6.00){\makebox(0,0)[cc]{$ B$ }}
\put(83.00,6.00){\makebox(0,0)[cc]{$B_{H}$}}
\end{picture}
$$
is a coequalizer diagram. Moreover, the triple  $(B_H, e_{B_H}, \Delta_{B_H})$ is a comonoid, where $e_{B_H}$ and $\Delta_{B_H}$ are the factorizations,
through the coequalizer $p_{H}^{B}$, of the morphisms $\varepsilon_{B}$ and $(p_{H}^{B}\ot p_{H}^{B})\co \delta_{B}$, respectively. Moreover, the equalities 
\begin{equation}
\label{coproduct-bh}
e_{B_{H}}=\varepsilon_{B}\co i_{H}^{B}, \;\;\; \Delta_{B_{H}}= (p_{H}^{B}\ot p_{H}^{B})\co \delta_{B}\co i_{H}^{B},
\end{equation}
hold (see Proposition 2.3 of \cite{our1}).

\begin{definition}
 Let $H$ be a Hopf quasigroup. We say that a Hopf quasigroup projection $(B,f,g)$ over $H$ is strong if it satisfies 
\begin{equation}
 \label{condicionuno}
 p_{H}^{B}\co \mu_B\co (B\ot \mu_B)\co (i_{H}^{B}\ot f\ot i_{H}^{B})=p_{H}^{B}\co \mu_B\co (\mu_B\ot B)\co (i_{H}^{B}\ot f\ot i_{H}^{B}),
 \end{equation}
 \begin{equation}
 \label{condiciondos}
  p_{H}^{B}\co \mu_B\co (B\ot \mu_B)\co (f\ot i_{H}^{B}\ot i_{H}^{B})=p_{H}^{B}\co \mu_B\co (\mu_B\ot B)\co (f\ot i_{H}^{B}\ot i_{H}^{B}),
 \end{equation}
  \begin{equation}
 \label{condiciontres}
  p_{H}^{B}\co \mu_B\co (B\ot \mu_B)\co (f\ot f\ot i_{H}^{B})=p_{H}^{B}\co \mu_B\co (\mu_B\ot B)\co (f\ot f\ot i_{H}^{B}).
 \end{equation}
 
Note that, by the factorization of $q_{H}^{B}$, we have that (\ref{condicionuno}), (\ref{condiciondos}), and (\ref{condiciontres}) are equivalent to 
\begin{equation}
 \label{condicionuno-q}
 q_{H}^{B}\co \mu_B\co (B\ot \mu_B)\co (i_{H}^{B}\ot f\ot i_{H}^{B})=q_{H}^{B}\co \mu_B\co (\mu_B\ot B)\co (i_{H}^{B}\ot f\ot i_{H}^{B}),
 \end{equation}
 \begin{equation}
 \label{condiciondos-q}
  q_{H}^{B}\co \mu_B\co (B\ot \mu_B)\co (f\ot i_{H}^{B}\ot i_{H}^{B})=q_{H}^{B}\co \mu_B\co (\mu_B\ot B)\co (f\ot i_{H}^{B}\ot i_{H}^{B}),
 \end{equation}
  \begin{equation}
 \label{condiciontres-q}
  q_{H}^{B}\co \mu_B\co (B\ot \mu_B)\co (f\ot f\ot i_{H}^{B})=q_{H}^{B}\co \mu_B\co (\mu_B\ot B)\co (f\ot f\ot i_{H}^{B}).
 \end{equation}

 We will denote by  ${\mathcal {SP}}roj(H)$ the category of strong Hopf quasigroup projections over $H$. The morphisms of ${\mathcal {SP}}roj(H)$ are the morphisms of ${\mathcal {P}}roj(H)$.
 
\end{definition}

Let $H$ be a Hopf quasigroup with invertible antipode. By Proposition 2.7 of  \cite{our1}, if $D$ is a Hopf quasigroup in $^{H}_{H}{\mathcal Y}{\mathcal D}$,  the triple $(D\rtimes H, f=\eta_D\ot H, g=\varepsilon_D\ot H)$ is a strong Hopf quasigroup projection over $H$. In this case $q_{H}^{D\rtimes H}=D\ot \eta_{H}\ot \varepsilon_{H}$. As a consequence, we can choose $p_{H}^{D\rtimes H}=D\ot \varepsilon_{H}$ and $i_{H}^{D\rtimes H}=D\ot \eta_{H}$ and then $(D\rtimes H)_{H}=D$.

On the other hand, by Corollary 2.10 and Proposition 2.5 of \cite{our1}, we can assure that, if $(B,f,g)$ is a strong Hopf quasigroup projection over $H$, the triple  $(B_H, \varphi_{B_H}, \varrho_{B_H})$ is a Hopf quasigroup in $^{H}_{H}{\mathcal Y}{\mathcal D}$, where the magma-comonoid structure is defined by (\ref{product-bh-2}) and (\ref{coproduct-bh}), 
\begin{equation}
\label{act-coat-bh}
\varphi_{B_H}=p_{H}^{B}\co \mu_B\co (f\ot i_{H}^{B}),\;\;\; \rho_{B_H}=(g\ot p_{H}^{B})\co \delta_B\co i_{H}^{B}, 
\end{equation}
and 
\begin{equation}
\label{antipo-bh}
s_{B_H}=p_{H}^{B}\co ((f\circ g)\ast \lambda_{B})\co i_{H}^{B}. 
\end{equation}

Moreover, $w=\mu_B\co (i_{H}^{B}\ot f):B_H\rtimes H\rightarrow B$ is an isomorphism of Hopf quasigroups in ${\mathcal C}$ with inverse $w^{-1}=(p_{H}^{B}\ot g)\co \delta_B$ (see Propositions 2.8 and 2.9 of \cite{our1}). Therefore there exists an equivalence between the categories ${\mathcal {SP}}roj(H)$ and the category of Hopf quasigroups in $^{H}_{H}{\mathcal Y}{\mathcal D}$ (see Theorem 2.11 of \cite{our1}).

In the final part of the paper we will prove that we can construct examples of strong projections, and then of Hopf quasigroups in a braided setting, by working with quasitriangular structures and skew pairings. First we will introduce the notion of quasitriangular Hopf quasigroup.

\begin{definition}
\label{quasitri}
Let $H$ be a Hopf quasigroup. We will say that $H$ is quasitriangular if there exists a morphism $R:K\rightarrow H\ot H$ such that: 
\begin{itemize}
\item[(d1)] $(\delta_{H}\ot H)\co R=(H\ot H\ot \mu_{H})\co (H\ot c_{H,H}\ot H)\co (R\ot R),$
\item[(d2)] $(H\ot \delta_{H})\co R=(\mu_{H}\ot c_{H,H})\co (H\ot c_{H,H}\ot H)\co (R\ot R),$
\item[(d3)] $\mu_{H\ot H}\co ((c_{H,H}\co \delta_{H})\ot R)=\mu_{H\ot H}\co (R\ot \delta_{H}),$
\item[(d4)] $(\varepsilon_{H}\ot H)\co R=(H\ot \varepsilon_{H})\co R=\eta_{H}.$
\end{itemize}

In the Hopf algebra setting, the morphism $R$ is invertible for the convolution with inverse $R^{-1}=(\lambda_{H}\ot H)\co R$ and $R=(\lambda_{H}\ot \lambda_{H})\co R$. In our non-associative context we have that if $S=(\lambda_{H}\ot H)\co R$ and $T=(\lambda_{H}\ot \lambda_{H})\co R$,  the following identities hold: 
\begin{equation}
\label{quasi-1}
R\ast S=S\ast R=\eta_{H\ot H}, 
\end{equation}
\begin{equation}
\label{quasi-2}
S\ast T=T\ast S=\eta_{H\ot H}.
\end{equation}
Indeed:
\begin{itemize}
\item[ ]$\hspace{0.38cm} R\ast S$

\item[ ]$=((\mu_{H}\co (H\ot \lambda_{H}))\ot \mu_{H})\co (H\ot c_{H,H}\ot H)\co (R\ot R)$ {\scriptsize ({\blue by naturality of $c$})}

\item[ ]$=((id_{H}\ast \lambda_{H})\ot H)\co R$
{\scriptsize ({\blue by (d1) of Definition \ref{quasitri}})}

\item[ ]$=((\varepsilon_{H}\ot \eta_{H})\ot H)\co R$
{\scriptsize ({\blue by (\ref{new-h-2delta})})}

\item[ ]$=\eta_{H\ot H}$
{\scriptsize ({\blue by (d4) of Definition \ref{quasitri}}).}

\end{itemize}

Similarly, we prove $S\ast R=\eta_{H\ot H}$ using (\ref{primeradealpha}) instead of (\ref{new-h-2delta}). On the other hand 

\begin{itemize}
\item[ ]$\hspace{0.38cm} S\ast T$

\item[ ]$=((\mu_{H}\co (\lambda_{H}\ot \lambda_{H}))\ot (\mu_{H}\co (H\ot \lambda_{H})))\co (H\ot c_{H,H}\ot H)\co (R\ot R)$ {\scriptsize ({\blue by naturality of $c$})}

\item[ ]$=((\lambda_{H}\co\mu_{H}\co c_{H,H})\ot (\mu_{H}\co (H\ot \lambda_{H})))\co (H\ot c_{H,H}\ot H)\co (R\ot R)$
{\scriptsize ({\blue by (\ref{lambda-anti})})}

\item[ ]$=((\lambda_{H}\co\mu_{H})\ot (\mu_{H}\co (H\ot \lambda_{H})\co c_{H,H}))\co (H\ot c_{H,H}\ot H)\co (R\ot R)$ {\scriptsize ({\blue by naturality of $c$})}

\item[ ]$=(\lambda_{H}\ot (id_{H}\ast \lambda_{H}))\co R$
{\scriptsize ({\blue by (d2) of Definition \ref{quasitri}})}

\item[ ]$=(\lambda_{H}\ot (\varepsilon_{H}\ot \eta_{H}))\co R$
{\scriptsize ({\blue by (\ref{new-h-2delta})})}

\item[ ]$=\eta_{H\ot H}$
{\scriptsize ({\blue by (d4) of Definition \ref{quasitri} and (\ref{lambda-eta})}).}

\end{itemize}

The proof  for $T\ast S=\eta_{H\ot H}$ is similar using (\ref{primeradealpha}) instead of (\ref{new-h-2delta}). 

Note that, by the lack of associativity, we can not assure that the morphism $S$ be unique.

Finally, the identity
\begin{equation}
\label{quasi-3}
(\mu_{H}\ot H\ot (\mu_{H}\co c_{H,H}))\co (H\ot H\ot (c_{H,H}\co (H\ot \mu_{H}))\ot H)\co (H\ot c_{H,H}\ot c_{H,H}\ot H)\co (R\ot R\ot R)
\end{equation}
$$=(\mu_{H}\ot \mu_{H}\ot \mu_{H})\co (H\ot c_{H,H}\ot c_{H,H}\ot H)\co (R\ot R\ot R) $$
holds because 
\begin{itemize}
\item[ ]$\hspace{0.38cm} (\mu_{H}\ot H\ot (\mu_{H}\co c_{H,H}))\co (H\ot H\ot (c_{H,H}\co (H\ot \mu_{H}))\ot H)\co (H\ot c_{H,H}\ot c_{H,H}\ot H)\co (R\ot R\ot R)$

\item[ ]$= (H\ot (((\mu_{H}\co c_{H,H})\ot (\mu_{H}\co c_{H,H}))\co (H\ot c_{H,H}\ot H)))\co 
(((\mu_{H}\ot c_{H,H})\co (H\ot c_{H,H}\ot H)$
\item[ ]$\hspace{0.38cm}\co (R\ot R))\ot R)$ {\scriptsize ({\blue by naturality of $c$})}

\item[ ]$=(H\ot (((\mu_{H}\co c_{H,H})\ot (\mu_{H}\co c_{H,H}))\co (H\ot c_{H,H}\ot H)))\co 
(((H\ot \delta_{H})\co R)\ot R)$
{\scriptsize ({\blue by (d2) of Definition }}
\item[ ]$\hspace{0.38cm}$ {\scriptsize {\blue  \ref{quasitri}})}

\item[ ]$=(H\ot (\mu_{H\ot H}\co c_{H\ot H, H\ot H}\co (\delta_{H}\ot R)))\co R$ {\scriptsize ({\blue by $c^2=id$})}

\item[ ]$=(H\ot (\mu_{H\ot H}\co (R\ot \delta_{H}))) \co R$
{\scriptsize ({\blue by naturality of $c$})}

\item[ ]$=(H\ot (\mu_{H\ot H}\co ((c_{H,H}\co\delta_{H})\ot R)))\co R$
{\scriptsize ({\blue by (d3) of Definition \ref{quasitri}})}

\item[ ]$=(H\ot \mu_{H\ot H})\co (\mu_{H}\ot (c_{H,H}\co c_{H,H})\ot R)\co (H\ot c_{H,H}\ot H)\co (R\ot R)$
{\scriptsize ({\blue by (d2) of Definition \ref{quasitri}})}

\item[ ]$=(\mu_{H}\ot \mu_{H}\ot \mu_{H})\co (H\ot c_{H,H}\ot c_{H,H}\ot H)\co (R\ot R\ot R)$
{\scriptsize ({\blue by naturality of $c$}).}

\end{itemize}

\end{definition}

\begin{proposition}
\label{projec-1}
Let $A$, $H$ be  Hopf quasigroups and let $\tau:A\ot H\rightarrow K$ be a skew pairing. Assume that $H$ is quasitriangular with morphism $R$. Let $A\bowtie_{\tau} H$ be the Hopf quasigroup defined in Corollary \ref{FangTorrecillas-q}. Define the morphism 
$g:A\bowtie_{\tau} H\rightarrow H$ by 
$$g=(\tau\ot \mu_{H})\co (A\ot R\ot H).$$

Then, $g$ is a morphism of unital magmas if and only if the following equalities hold:
\begin{equation}
\label{rt-1}
\mu_{H}\co (g\ot H)=g\co (A\ot \mu_{H}),
\end{equation}
\begin{equation}
\label{rt-2}
\mu_{H}\co (H\ot g)=\mu_{H}\co (\mu_{H}\ot H)\co (H\ot ((\tau\ot H)\co (A\ot R))\ot H).
\end{equation}

\end{proposition}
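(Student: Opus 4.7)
First observe that the unit preservation $g\co\eta_{A\bowtie_\tau H}=\eta_H$ is automatic: the normalization $\tau\co(\eta_A\ot H)=\varepsilon_H$ from (a4) of Definition \ref{skewpairing}, combined with $(\varepsilon_H\ot H)\co R=\eta_H$ from (d4) of Definition \ref{quasitri} and the unit of $\mu_H$, yield $g\co(\eta_A\ot\eta_H)=\eta_H$. Hence the statement reduces to showing that the multiplicativity identity $g\co\mu_{A\bowtie_\tau H}=\mu_H\co(g\ot g)$ is equivalent to the conjunction of (\ref{rt-1}) and (\ref{rt-2}).

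\textbf{Necessity.} Assume $g$ is multiplicative. To obtain (\ref{rt-1}), precompose the multiplicativity equation with $A\ot H\ot\eta_A\ot H$: from $\delta_A\co\eta_A=\eta_A\ot\eta_A$, the normalizations $\tau^{\pm 1}\co(\eta_A\ot H)=\varepsilon_H$ and the counit/unit axioms, formula (\ref{mutimes-q}) collapses to $\mu_{A\bowtie_\tau H}\co(A\ot H\ot\eta_A\ot H)=A\ot\mu_H$, while $g\co(\eta_A\ot H)=id_H$ by the same computation used for unit preservation; the substitution then yields precisely (\ref{rt-1}). For (\ref{rt-2}) one exploits the ``hidden associativity'' in $A\bowtie_\tau H$ captured by the identity
\[
(\eta_A\ot h)\cdot(a\ot h')=\bigl((\eta_A\ot h)\cdot(a\ot\eta_H)\bigr)\cdot(\eta_A\ot h'),
\]
(with $\cdot$ the product of $A\bowtie_\tau H$), valid for all $h,a,h'$ by a direct computation from (\ref{mutimes-q}): both sides reduce to $\sum\tau(a_{(1)}\ot h_{(1)})\tau^{-1}(a_{(3)}\ot h_{(3)})\,a_{(2)}\ot h_{(2)}h'$. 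Applying $g$ to both sides and invoking multiplicativity twice produces $h\cdot g(a\ot h')=(h\cdot g(a\ot\eta_H))\cdot h'$, which upon substituting $g\co(A\ot\eta_H)=(\tau\ot H)\co(A\ot R)$ is exactly (\ref{rt-2}).

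\textbf{Sufficiency.} Conversely, assuming (\ref{rt-1}) and (\ref{rt-2}), one proves multiplicativity by a direct categorical computation. Expanding the right-hand side $\mu_H\co(g\ot g)$ with two independent copies $R,\bar R$ of the quasitriangular element produces a diagram containing the product $(R^2h)(\bar R^2h')$ scaled by $\tau(a\ot R^1)\tau(a'\ot\bar R^1)$. Expanding the left-hand side $g\co\mu_{A\bowtie_\tau H}$ via (\ref{mutimes-q}) and applying $g$ yields $\sum\tau(a'_{(1)}\ot h_{(1)})\tau^{-1}(a'_{(3)}\ot h_{(3)})\,\tau(aa'_{(2)}\ot R^1)\,R^2(h_{(2)}h')$. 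Use axiom (a1) of Definition \ref{skewpairing} together with the coproduct identity (d1) of Definition \ref{quasitri} to rewrite $\tau(aa'_{(2)}\ot R^1)\cdot R^2$ as $\tau(a\ot R^1)\tau(a'_{(2)}\ot\bar R^1)\cdot(R^2\bar R^2)$; then (\ref{rt-1}) and (\ref{rt-2}) realize the required reassociations of the $H$-factors, while (a2') of Definition \ref{skewpairing} together with the convolution invertibility $\tau\ast\tau^{-1}=\varepsilon_A\ot\varepsilon_H$ eliminates the auxiliary Sweedler factors $\tau(a'_{(1)}\ot h_{(1)})\tau^{-1}(a'_{(3)}\ot h_{(3)})$.

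\textbf{Main obstacle.} The sufficiency direction is the delicate part: since $H$ is only a Hopf quasigroup, no free associativity is available, and every reparenthesization of the non-trivially entangled products involving $R^2,\bar R^2,h,h'$ must be performed by a carefully chosen invocation of (\ref{rt-1}) or (\ref{rt-2}); coordinating these local reassociations with the coproduct identities (d1), (d2) of $R$ and the multiplicativity identities (a1), (a2') of $\tau$ is the technical crux.
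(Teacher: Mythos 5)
Your unit computation and your necessity argument are correct. For (\ref{rt-1}) you take the same route as the paper. For (\ref{rt-2}) your route is genuinely different and in fact cleaner: the internal identity $\mu_{A\bowtie_{\tau}H}\co(\eta_{A}\ot H\ot A\ot H)=\mu_{A\bowtie_{\tau}H}\co\bigl((\mu_{A\bowtie_{\tau}H}\co(\eta_{A}\ot H\ot A\ot\eta_{H}))\ot\eta_{A}\ot H\bigr)$ does hold by direct expansion of (\ref{mutimes-q}) (only coassociativity, the normalizations $\tau^{\pm 1}\co(\eta_{A}\ot H)=\varepsilon_{H}$ and the counit are needed), and pushing it through the multiplicative $g$ gives (\ref{rt-2}) at once. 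The paper instead computes $g\co\mu_{A\bowtie_{\tau}H}\co(\eta_{A}\ot H\ot A\ot H)$ head-on, which costs it an appeal to (\ref{rt-1}), to (a2') and to (d3); your version avoids all of that.

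The sufficiency direction, however, has a genuine gap. After (a1) and (d1) split $\tau(aa'_{(2)}\ot R^{1})R^{2}$ into $\tau(a\ot R^{1})\tau(a'_{(2)}\ot\bar{R}^{1})(R^{2}\bar{R}^{2})$ and after the reassociations supplied by (\ref{rt-1}) and (\ref{rt-2}), the core identity still to be proved is essentially $\sum\tau(a'_{(1)}\ot h_{(1)})\,\tau(a'_{(2)}\ot\bar{R}^{1})\,\tau^{-1}(a'_{(3)}\ot h_{(3)})\,\bar{R}^{2}h_{(2)}=\sum\tau(a'\ot\bar{R}^{1})\,h\bar{R}^{2}$. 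Your proposed mechanism, (a2') plus $\tau\ast\tau^{-1}=\varepsilon_{A}\ot\varepsilon_{H}$, cannot carry this out: the leg $\tau(a'_{(2)}\ot\bar{R}^{1})$ sits \emph{between} the two legs of $a'$ that are paired against $h$, so the convolution inverse is not applicable to $a'_{(1)}$ and $a'_{(3)}$. The indispensable ingredient is axiom (d3) of Definition \ref{quasitri}, i.e. $\sum\bar{R}^{1}h_{(1)}\ot\bar{R}^{2}h_{(2)}=\sum h_{(2)}\bar{R}^{1}\ot h_{(1)}\bar{R}^{2}$: one first merges $\sum\tau(a'_{(1)}\ot h_{(1)})\tau(a'_{(2)}\ot\bar{R}^{1})=\tau(a'_{(1)}\ot\bar{R}^{1}h_{(1)})$ by (a2'), applies (d3) to interchange the $H$-factors, re-splits by (a2'), and only then do the two $h$-legs become adjacent so that $\tau\ast\tau^{-1}$ collapses them. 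This is the pivot of the paper's computation; your plan cites (d1) and (d2) but never (d3) (and (d2) is not actually needed here), so the elimination step as you describe it would fail.
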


\begin{proof} Assume that $g=(\tau\ot \mu_{H})\co (A\ot R\ot H)$ is a magma morphism. Then, 
\begin{equation}
\label{magma-mor}
g\co \mu_{A\bowtie_{\tau} H}=\mu_{H}\co (g\ot g)
\end{equation}
holds.  Moreover,  
\begin{itemize}
\item[ ]$\hspace{0.38cm}g\co (A\ot \mu_{H}) $

\item[ ]$=g\co \mu_{A\bowtie_{\tau} H}\co  (A\ot H\ot \eta_{A}\ot H)$ {\scriptsize ({\blue by naturality of $c$, (\ref{delta-eta}), (a3) of Definition \ref{skewpairing}, (\ref{skewpairing-1-r}), and unit and counit }}
\item[ ]$\hspace{0.38cm}${\scriptsize {\blue  properties})}

\item[ ]$=\mu_{H}\co (g\ot g)\co  (A\ot H\ot \eta_{A}\ot H)$
{\scriptsize ({\blue by (\ref{magma-mor})})}

\item[ ]$=\mu_{H}\co (g\ot H)$
{\scriptsize ({\blue by (a3) of Definition \ref{skewpairing}, (d4) of Definition \ref{quasitri}, and unit properties}).}

\end{itemize}

Therefore, (\ref{rt-1}) holds. On the other hand, the proof for (\ref{rt-2}) is the following:

\begin{itemize}
\item[ ]$\hspace{0.38cm}\mu_{H}\co (H\ot g) $

\item[ ]$=\mu_{H}\co ((g\co (\eta_{A}\ot H))\ot g)$ {\scriptsize ({\blue by (a3) of Definition \ref{skewpairing} and unit properties})}

\item[ ]$=g\co \mu_{A\bowtie_{\tau} H}\co (\eta_{A}\ot H\ot A\ot H) $
{\scriptsize ({\blue by (\ref{magma-mor})})}

\item[ ]$= (\tau\ot ( g\co (A\ot \mu_{H})))\co (((\delta_{A\ot H}\ot \tau^{-1})\co \delta_{A\ot H}\co c_{H,A})\ot H) $ {\scriptsize ({\blue by unit properties})}

\item[ ]$= (\tau\ot (\mu_{H}\co (g\ot H)))\co (((\delta_{A\ot H}\ot \tau^{-1})\co \delta_{A\ot H}\co c_{H,A})\ot H) $ {\scriptsize ({\blue by (\ref{rt-1})})}

\item[ ]$=((\tau\co (A\ot (\mu_{H}\co c_{H,H})))\ot (\mu_{H}\co (\mu_{H}\ot H)))\co (A\ot ((H\ot R\ot H)\co \delta_{H})\ot \tau^{-1}\ot H)\co ((\delta_{A\ot H}\co c_{H,A})\ot H) $ 
\item[ ]$\hspace{0.38cm}$ {\scriptsize ({\blue by (\ref{c2'-new})})}

\item[ ]$= (\tau\ot \mu_{H})\co (((A\ot (\mu_{H\ot H}\co (R\ot \delta_{H}))\ot \tau^{-1})\co \delta_{A\ot H}\co c_{H,A})\ot H)$
{\scriptsize ({\blue by naturality of $c$})}

\item[ ]$= (\tau\ot \mu_{H})\co (((A\ot (\mu_{H\ot H}\co ((c_{H,H}\co \delta_{H})\ot (R\co \tau^{-1}))))\co  \delta_{A\ot H}\co c_{H,A})\ot H)$ {\scriptsize ({\blue by (d3) of Definition \ref{quasitri}})}

\item[ ]$= \mu_{H}\co (((\tau\ot \tau\ot \mu_{H})\co (A\ot c_{H,H}\ot c_{H,H}\ot H)\co ((c_{A,A}\co \delta_{A})\ot (c_{H,H}\co \delta_{H})\ot (R\co \tau^{-1}))\co \delta_{A\ot H}\co c_{H,A})\ot H)$ 
\item[ ]$\hspace{0.38cm}$ {\scriptsize ({\blue by (a2') of Definition \ref{skewpairing}})}

\item[ ]$= \mu_{H}\co (((\tau\ot (\mu_{H}\co c_{H,H})\ot (\tau\ast \tau^{-1}))\co (A\ot R\ot H\ot A\ot H)\co \delta_{A\ot H}\co c_{H,A})\ot H)$ {\scriptsize ({\blue by naturality of $c$})}

\item[ ]$=(\tau\ot (\mu_{H}\co ((\mu_{H}\co c_{H,H})\ot H)))\co (A\ot R\ot H\ot H)\co (c_{H,A}\ot H)  $
{\scriptsize ({\blue by naturality of $c$, invertibility of $\tau$}}
\item[ ]$\hspace{0.38cm}$ {\scriptsize {\blue  and counit properties})}

\item[ ]$=\mu_{H}\co (\mu_{H}\ot H)\co (H\ot ((\tau\ot H)\co (A\ot R))\ot H) $ {\scriptsize ({\blue by naturality of $c$}).}

\end{itemize}

Conversely, assume that (\ref{rt-1}) and (\ref{rt-2}) hold. Firstly note that $g\co \eta_{A\bowtie_{\tau} H}=\eta_{H}$ follows by (a4) of Definition \ref{skewpairing}, (d4) of Definition \ref{quasitri}, and the unit properties. Secondly, 

\begin{itemize}
\item[ ]$\hspace{0.38cm}g\co \mu_{A\bowtie_{\tau} H} $

\item[ ]$= \mu_{H}\co ((g\co (\mu_{A}\ot H)\co (A\ot ((((\tau\ot A\ot H)\co \delta_{A\ot H})\ot \tau^{-1})\co \delta_{A\ot H}\co c_{H,A})))\ot H) $ 
 {\scriptsize ({\blue by (\ref{rt-1})})}

\item[ ]$=(((\tau\ot \tau)\co (A\ot c_{A,H}\ot H)\co (A\ot A\ot \delta_{H}))\ot (\mu_{H}\co (\mu_{H}\ot H)))
\co (A\ot \tau\ot A\ot R\ot H\ot \tau^{-1}\ot H)$
\item[ ]$\hspace{0.38cm} \co (A\ot ((\delta_{A\ot H}\ot A\ot H)\co \delta_{A\ot H}\co c_{H,A})\ot H)$ {\scriptsize ({\blue by (a1) of Definition \ref{skewpairing}})}

\item[ ]$= \mu_{H}\co (((\tau\ot \tau\ot (\mu_{H}\co (\mu_{H}\ot H))\ot H)\co (A\ot \tau\ot c_{A,H}\ot c_{H,H}\ot H\ot H\ot H)$
\item[ ]$\hspace{0.38cm}\co (A\ot A\ot c_{A,H}\ot \ot A\ot R\ot R\ot H\ot H)\co (A\ot \delta_{A}\ot \delta_{H}\ot \tau^{-1}\ot H)\co (A\ot  (\delta_{A\ot H}\co c_{H,A})\ot H)$ 
\item[ ]$\hspace{0.38cm}$ {\scriptsize ({\blue by (d1) of Definition \ref{quasitri}})}

\item[ ]$= \mu_{H}\co ( (\mu_{H}\co (((\tau\ot H)\co (A\ot R))\ot ((((\tau\ot \tau)\co (A\ot c_{A,H}\ot H)\co ((c_{A,A}\co \delta_{A})\ot c_{H,H}))\ot \mu_{H})$
\item[ ]$\hspace{0.38cm}\co (A\ot ((H\ot R\ot H)\co \delta_{H})))))\ot \tau^{-1}\ot H)\co (A\ot  (\delta_{A\ot H}\co c_{H,A})\ot H)$ {\scriptsize ({\blue by naturality of $c$, $c^2=id$, and}}
\item[ ]$\hspace{0.38cm}${\scriptsize {\blue (\ref{rt-2})})}

\item[ ]$=\mu_{H}\co ( (\mu_{H}\co (((\tau\ot H)\co (A\ot R))\ot ((\tau\ot H)\co (A\ot ((((\mu_{H}\co c_{H,H})\ot \mu_{H}))\co (H\ot R\ot H)$
\item[ ]$\hspace{0.38cm}\co \delta_{H})))))\ot \tau^{-1}\ot H)\co (A\ot  (\delta_{A\ot H}\co c_{H,A})\ot H)$  {\scriptsize ({\blue by (a2') of Definition \ref{skewpairing}})}

\item[ ]$=\mu_{H}\co ((\mu_{H}\co (((\tau\ot H)\co (A\ot R))\ot ((\tau\ot H)\co (A\ot ((\mu_{H\ot H}\co (R\ot \delta_{H})))))))\ot \tau^{-1}\ot H) $
\item[ ]$\hspace{0.38cm}\co (A\ot  (\delta_{A\ot H}\co c_{H,A})\ot H)$ {\scriptsize ({\blue  by naturality of $c$ and $c^{2}=id$})}

\item[ ]$=\mu_{H}\co ((\mu_{H}\co (((\tau\ot H)\co (A\ot R))\ot ((\tau\ot H)\co (A\ot ((\mu_{H\ot H}\co ((c_{H,H}\co \delta_{H})\ot R)))))))\ot \tau^{-1}\ot H) $
\item[ ]$\hspace{0.38cm}\co (A\ot  (\delta_{A\ot H}\co c_{H,A})\ot H)$ {\scriptsize ({\blue  by (d3) of Definition \ref{quasitri}})}

\item[ ]$=\mu_{H}\co ((\mu_{H}\co (((\tau\ot H)\co (A\ot R))\ot ((\tau\ot \tau\ot \mu_{H})\co (A\ot c_{A,H}\ot c_{H,H}\ot H)\co ((c_{A,A}\co \delta_{A})\ot (c_{H,H}$
\item[ ]$\hspace{0.38cm}\co \delta_{H})\ot R))))\ot \tau^{-1}\ot H) \co (A\ot  (\delta_{A\ot H}\co c_{H,A})\ot H)$ {\scriptsize ({\blue  by (a2') of Definition \ref{skewpairing}})}

\item[ ]$= \mu_{H}\co ((\mu_{H}\co (((\tau\ot H)\co (A\ot R))\ot ((\tau\ot (\mu_{H}\co c_{H,H}))\co (A\ot R\ot H))))\ot (\tau\ast\tau^{-1})\ot H)$
\item[ ]$\hspace{0.38cm}\co (A\ot  (\delta_{A\ot H}\co c_{H,A})\ot H) $
 {\scriptsize ({\blue by naturality of $c$})}

\item[ ]$= \mu_{H}\co ((g\co (A\ot \mu_{H}))\ot H)\co (A\ot H\ot ((\tau\ot H)\co (A\ot R))\ot H) $
{\scriptsize ({\blue by  invertibility of $\tau$ and counit}}
\item[ ]$\hspace{0.38cm}$ {\scriptsize {\blue  properties})}

\item[ ]$= \mu_{H}\co ((\mu_{H}\co (g\ot (((\tau\ot H)\co (A\ot R)))))\ot H)$
 {\scriptsize ({\blue by (\ref{rt-1})})}

\item[ ]$=\mu_{H}\co (g\ot g) $
 {\scriptsize ({\blue by (\ref{rt-2})}),}

\end{itemize}

and then, $g$ is morphism of unital magmas.

\end{proof}

\begin{remark}
In the previous Proposition, note that, if $H$ is a Hopf algebra, the equalities (\ref{rt-1}) and (\ref{rt-2}) always hold.

\end{remark}

\begin{theorem}
\label{theend}
Let $A$, $H$ be  Hopf quasigroups and let $\tau:A\ot H\rightarrow K$ be a skew pairing. Assume that $H$ is quasitriangular with morphism $R$. Let $A\bowtie_{\tau} H$ be the Hopf quasigroup defined in Corollary \ref{FangTorrecillas-q} and let $g:A\bowtie_{\tau} H\rightarrow H$ be the morphism introduced in Proposition \ref{projec-1}. Define the morphism 
$f:H\rightarrow A\bowtie_{\tau} H$ by $f=\eta_{A}\ot H.$ Then, if (\ref{rt-1}) and (\ref{rt-2}) hold, the triple $(A\bowtie_{\tau} H,f,g)$ is a strong Hopf quasigroup projection  over $H$.
\end{theorem}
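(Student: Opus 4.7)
The proof falls into three tasks: (i) verifying that $f$ and $g$ are morphisms of Hopf quasigroups; (ii) checking $g\co f=id_H$; and (iii) verifying the strong projection conditions.

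For (i), the morphism $f=\eta_A\ot H$ is a comonoid morphism for free, since the coalgebra structure of $A\bowtie_{\tau} H$ is the tensor one and $\eta_A$ is grouplike. Its multiplicativity follows by substituting $b=\eta_A$ in formula (\ref{mutimes-q}) and applying (a3)--(a4) of Definition \ref{skewpairing} together with (\ref{skewpairing-1})--(\ref{skewpairing-2}) to collapse the $\tau$- and $\tau^{-1}$-factors; antipode compatibility is then automatic by (\ref{antipode-morphism}). For $g$, the unital magma condition is precisely Proposition \ref{projec-1} applied under the assumed hypotheses (\ref{rt-1})--(\ref{rt-2}); the counit identity $\varepsilon_H\co g=\varepsilon_{A\ot H}$ reduces, via (\ref{mu-eps}) for $\mu_H$, to $(\varepsilon_H\ot \varepsilon_H)\co R=id_K$, which is a consequence of (d4) of Definition \ref{quasitri} combined with (\ref{eta-eps}); and the coproduct identity $\delta_H\co g=(g\ot g)\co \delta_{A\ot H}$ I would prove by expanding $\delta_H\co \mu_H$ through (\ref{delta-mu}), rewriting $(H\ot \delta_H)\co R$ using (d2), and reassembling via (a2') of Definition \ref{skewpairing}. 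Task (ii) is immediate:
\[
g\co f = (\varepsilon_H\ot \mu_H)\co (R\ot H) = \mu_H\co (\eta_H\ot H) = id_H,
\]
by (a4) of Definition \ref{skewpairing} and (d4).

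For (iii), the core step is to obtain an explicit description of the idempotent $q_H^B=id_B\ast (f\co \lambda_H\co g)$. Using the antimultiplicativity (\ref{lambda-anti}) of $\lambda_H$ and the Hopf quasigroup identity $h_{(1)}\cdot (\lambda_H(h_{(2)})\cdot x) = \varepsilon_H(h)\,x$ (the second equality in (\ref{leftHqg}) for $H$), a direct computation yields
\[
q_H^B(a\ot h) = \varepsilon_H(h)\,\sum \tau(a_{(2)}\ot R^{1})\, a_{(1)}\ot \lambda_H(R^{2}).
\]
Hence $q_H^B = i_H^B\co p_H^B$ with $p_H^B = A\ot \varepsilon_H$ and $i_H^B = (A\ot \tau\ot \lambda_H)\co (\delta_A\ot R)$, and the relation $p_H^B\co i_H^B = id_A$ boils down to $(H\ot \varepsilon_H)\co R = \eta_H$, i.e.\ (d4). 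With this concrete description, each of the strong projection identities (\ref{condicionfuerte-1}) and (\ref{condicionuno})--(\ref{condiciontres}) becomes a diagrammatic equality involving $\mu_A$, $\mu_H$, $\delta_A$, $\delta_H$, $\lambda_H$, $R$ and $\tau$; the essential inputs are the quasitriangular axiom (d3), the left and right Hopf quasigroup axioms (\ref{leftHqg})--(\ref{rightHqg}) of $H$, and the compatibility hypotheses (\ref{rt-1})--(\ref{rt-2}). I expect the principal obstacle to be (\ref{condiciontres}): because $A\bowtie_{\tau} H$ is non-associative, the two sides of a triple product with two $f$-factors cannot be compared by plain associativity, and the plan is to use (d3) to braid the two $R$-pieces past one another so that the resulting imbalance in $H$ is absorbed by the Hopf quasigroup axioms for $H$ and the hypotheses (\ref{rt-1})--(\ref{rt-2}).
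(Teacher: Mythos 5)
Your overall architecture coincides with the paper's: the same three tasks, the same appeal to Proposition \ref{projec-1} for the magma part of $g$, the same use of (\ref{delta-mu}), (d2) and (a2') for $\delta_H\co g=(g\ot g)\co\delta_{A\bowtie_{\tau}H}$, and, crucially, the same explicit formula $q_{H}^{A\bowtie_{\tau}H}=(A\ot\tau\ot\lambda_{H})\co(\delta_{A}\ot R\ot\varepsilon_{H})$, which the paper also derives via (\ref{lambda-anti}) and the second identity in (\ref{leftHqg}). The morphism-level checks and $g\co f=id_H$ are fine (your reduction of $\varepsilon_H\co g$ also needs (a3) to dispose of the $\tau(-\ot\eta_H)$ factor, but that is cosmetic).

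The genuine gap is in task (iii): the three strong-projection identities are the substance of the theorem, and you have only forecast them, with a forecast that points at the wrong tools. In the paper's verification of (\ref{condicionuno-q})--(\ref{condiciontres-q}), neither (d3) nor the hypotheses (\ref{rt-1})--(\ref{rt-2}) appear at all; those are consumed entirely by Proposition \ref{projec-1}. What actually makes the strong conditions work is that $q_{H}^{A\bowtie_{\tau}H}$ ends in $\varepsilon_{H}$ on the $H$-leg, so by (\ref{mu-eps}) one may insert $\eta_H\co\varepsilon_H$ into the outer product (the paper's identity (\ref{aux-2-q})); after that, both sides of each triple-product identity reduce to the same expression using only the skew-pairing axioms (a1), (a2'), the identities (\ref{skewpairing-3}) and (\ref{skewpairing-4}) for $\tau^{-1}$, the comultiplicativity (\ref{delta-mu}), and $\mu_{A\bowtie_{\tau}H}\co(A\ot H\ot\eta_A\ot H)=\mu_H$. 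There is no ``braiding of the two $R$-pieces past one another'' via (d3). You have also misplaced the difficulty: the two-$f$ case (\ref{condiciontres-q}) is the shortest of the three, whereas the delicate one is (\ref{condiciondos-q}) (a single $f$ on the left), which requires (\ref{skewpairing-4}), i.e. the anti-multiplicativity of $\tau^{-1}$ in the $A$-variable --- an identity that itself rests on $A$ being a genuine Hopf quasigroup (Proposition \ref{propiedadskewnova}). Without carrying out these reductions your argument does not yet establish that the projection is strong.
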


\begin{proof} By Proposition \ref{projec-1} we know that $g$ is a morphism of unital magmas. Also, by (\ref{mu-eps}), (d4) of Definition \ref{quasitri}, and (a3) of Definition \ref{skewpairing}, we obtain that 
$\varepsilon_{H}\co g=\varepsilon_{A\bowtie_{\tau} H}$. Moreover, 
\begin{itemize}
\item[ ]$\hspace{0.38cm} \delta_{H}\co g$

\item[ ]$= (\tau \ot (\mu_{H\ot H}\co (\delta_{H}\ot \delta_{H})))\co (A\ot R\ot H)$ {\scriptsize ({\blue by (\ref{delta-mu})})}

\item[ ]$=(\tau\ot \tau\ot \mu_{H\ot H})\co (A\ot c_{A,H}\ot H\ot c_{H,H}\ot H\ot H)\co ((c_{A,A}\co \delta_{A})\ot ((H\ot c_{H,H}\ot H)\co (R\ot R))\ot \delta_{H})$ 
\item[ ]$\hspace{0.38cm}${\scriptsize ({\blue by (d2) of Definition \ref{quasitri}, and (a2') of Definition \ref{skewpairing}})}

\item[ ]$=(g\ot g)\co \delta_{A\bowtie_{\tau} H}$
{\scriptsize ({\blue by naturality of $c$ and $c^{2}=id$}).}

\end{itemize}

Therefore $g$ is a comonoid morphism. On the other hand, trivially $f\co \eta_{H}=\eta_{A\bowtie_{\tau} H}$ and  $\mu_{A\bowtie_{\tau} H}\co (f\ot f)=f\co \mu_{H}$ follows easily by  naturality of $c$, (\ref{delta-eta}), (a4) of Definition \ref{skewpairing}, (\ref{skewpairing-1}), and unit and counit properties. By (\ref{eta-eps}) it is clear that $\varepsilon_{A\bowtie_{\tau} H}\co f=\varepsilon_{H}$, and the identity $\delta_{A\bowtie_{\tau} H}\co f=(f\ot f)\co \delta_{H}$ can be proved using (\ref{delta-eta}) and the naturality of $c$. As a consequence, $f$ is a morphism of unital magmas and comonoids.  By  (a4) of Definition \ref{skewpairing} and (d4) of Definition \ref{quasitri} an easy computation shows that $g\co f=id_{H}$.

Our next goal is to obtain a simple expression for the idempotent morphism 
$$q_{H}^{A\bowtie_{\tau} H}=id_{A\bowtie_{\tau} H}\ast (f\co\lambda_{H}\co g):A\bowtie_{\tau} H\rightarrow A\bowtie_{\tau} H.$$ 

Indeed, the equality 
\begin{equation}
\label{q-complex}
q_{H}^{A\bowtie_{\tau} H}=(A\ot \tau\ot \lambda_{H})\co (\delta_{A}\ot R\ot\varepsilon_{H})
\end{equation}
holds because 

\begin{itemize}
\item[ ]$\hspace{0.38cm} q_{H}^{A\bowtie_{\tau} H}$

\item[ ]$= (A\ot (\mu_{H}\co (H\ot (\lambda_{H}\co g))))\co \delta_{A\ot H}$ {\scriptsize ({\blue by naturality of $c$, (\ref{delta-eta}), (a4) of Definition \ref{skewpairing}, (\ref{skewpairing-1}), and unit and counit}}
\item[ ]$\hspace{0.38cm}${\scriptsize {\blue   properties})}

\item[ ]$=(A\ot ( \mu_{H}\co  (H\ot \tau\ot (\mu_{H}\co c_{H,H}\co (\lambda_{H}\ot \lambda_{H})))\co (H\ot A\ot R\ot H)))\co \delta_{A\ot H} $ {\scriptsize ({\blue by (\ref{lambda-anti})})}

\item[ ]$=(A\ot (\mu_H\circ (H\ot \mu_H)\circ (H\ot \lambda_H\ot H)\circ (\delta_H\ot H)\co c_{H,H}\co (\tau\ot \lambda_{H}\ot H)))\co (\delta_{A}\ot R\ot H) $ {\scriptsize ({\blue by }}
\item[ ]$\hspace{0.38cm}${\scriptsize {\blue naturality of $c$})}

\item[ ]$=(A\ot \tau\ot \lambda_{H})\co (\delta_{A}\ot R\ot\varepsilon_{H})$ {\scriptsize ({\blue by (\ref{leftHqg})}).}

\end{itemize}

Then, using (\ref{q-complex}), we can prove that  $(A\bowtie_{\tau} H,f,g)$ is a Hopf quasigroup projection  over $H$. Indeed: 

\begin{itemize}
\item[ ]$\hspace{0.38cm} q_{H}^{A\bowtie_{\tau} H}\co \mu_{A\bowtie_{\tau} H}\co (A\ot H\ot q_{H}^{A\bowtie_{\tau} H})$

\item[ ]$= (A\ot \tau\ot H)\co ((\delta_{A}\co \mu_{A})\ot ((H\ot \lambda_{H})\co R))\co (A\ot ((\tau\ot A)\co (A\ot c_{A,H}\ot \tau^{-1})\co (\delta_{A}\ot H\ot A\ot H)$
\item[ ]$\hspace{0.38cm}\co \delta_{A\ot H}\co c_{H,A})\ot \varepsilon_{H})$ 
 {\scriptsize ({\blue by (\ref{mu-eps}), (\ref{lambda-vareps}), (d4) of Definition \ref{quasitri}, (a3) of Definition \ref{skewpairing},  and  counit properties})}

\item[ ]$=q_{H}^{A\bowtie_{\tau} H}\co \mu_{A\bowtie_{\tau} H}$ {\scriptsize ({\blue by (\ref{mu-eps}), and  counit properties}).}

\end{itemize}

Note that, by (\ref{delta-eta}), the naturality of $c$, (\ref{skewpairing-1}), (a4) of Definition \ref{skewpairing}, and unit and counit properties, we have the equality 
\begin{equation}
\label{aux-1-q}
\mu_{A\bowtie_{\tau} H}\co (A\ot H\ot \eta_{A}\ot H)=\mu_{H}, 
\end{equation}
and, by unit and counit properties, and (\ref{mu-eps}), the identity 
\begin{equation}
\label{aux-2-q}
q_{H}^{A\bowtie_{\tau} H}\co \mu_{A\bowtie_{\tau} H}\co (A\ot H\ot A\ot (\eta_{H}\co \varepsilon_{H}))=q_{H}^{A\bowtie_{\tau} H}\co \mu_{A\bowtie_{\tau} H}
\end{equation}
holds.

To finish the proof it is sufficient to show that the projection is strong, i.e., (\ref{condicionuno-q}), (\ref{condiciondos-q}), and (\ref{condiciontres-q}) hold.  Let us first prove (\ref{condicionuno-q}): 

\begin{itemize}
\item[ ]$\hspace{0.38cm} q_{H}^{A\bowtie_{\tau} H}\co \mu_{A\bowtie_{\tau} H}\co (A\ot H\ot \mu_{A\bowtie_{\tau} H})\co (i_{H}^{A\bowtie_{\tau} H}\ot f\ot i_{H}^{A\bowtie_{\tau} H})$

\item[ ]$= (A\ot \tau\ot \lambda_{H})\co ((\delta_{A}\co \mu_{A})\ot (R\co \tau^{-1}))\co (A\ot ((((\tau\ot H)\co (A\ot c_{A,H})\co (\delta_{A}\ot H))\ot A\ot H)\co \delta_{A\ot H}))$
\item[ ]$\hspace{0.38cm}\co (A\ot c_{H,A})\co (i_{H}^{A\bowtie_{\tau} H}\ot ((((\tau\ot H)\co (A\ot c_{A,H})\co (\delta_{A}\ot H))\ot \tau^{-1})\co \delta_{A\ot H}\co (c_{H,A}\ot \varepsilon_{H})$
\item[ ]$\hspace{0.38cm}\co (H\ot i_{H}^{A\bowtie_{\tau} H})))$ {\scriptsize ({\blue by (\ref{mu-eps}), (\ref{aux-2-q}) and  unit and counit properties})}

\item[ ]$=(A\ot \tau\ot \lambda_{H})\co ((\delta_{A}\co \mu_{A})\ot (R\co ((\tau\ot \tau)\co (A\ot c_{A,H}\ot H)\co ((c_{A,A}\co \delta_{A})\ot H\ot H))))$
\item[ ]$\hspace{0.38cm}\co (A\ot (((c_{A,A}\co \delta_{A})\ot H\ot \tau^{-1})\co \delta_{A\ot H}\co c_{H,A})\ot H\ot \tau^{-1})\co (i_{H}^{A\bowtie_{\tau} H}\ot (((\delta_{A\ot H}\co c_{H,A})\ot \varepsilon_{H})$
\item[ ]$\hspace{0.38cm}\co (H\ot i_{H}^{A\bowtie_{\tau} H})))$ {\scriptsize ({\blue by naturality of $c$, coassociativity, and $c^{2}=id$})} 

\item[ ]$=(A\ot \tau\ot \lambda_{H})\co ((\delta_{A}\co \mu_{A})\ot (R\co \tau\co (A\ot \mu_{H})))\co (A\ot (((c_{A,A}\co \delta_{A})\ot H\ot \tau^{-1})\co \delta_{A\ot H}\co c_{H,A})\ot H\ot \tau^{-1})$
\item[ ]$\hspace{0.38cm}\co (i_{H}^{A\bowtie_{\tau} H}\ot (((\delta_{A\ot H}\co c_{H,A})\ot \varepsilon_{H})\co (H\ot i_{H}^{A\bowtie_{\tau} H})))$ {\scriptsize ({\blue by (a2') of Definition \ref{skewpairing}})}

\item[ ]$= (A\ot ((\tau\ot \lambda_{H})\co (A\ot R))\co ((\delta_{A}\co \mu_{A})\ot ((\tau^{-1}\ot \tau^{-1})\co (A\ot c_{A,H}\ot H)\co (\delta_{A}\ot H\ot H)))\co (A\ot ((((\tau\ot \delta_{A})$
\item[ ]$\hspace{0.38cm}\co (A\ot c_{A,H})\co (\delta_{A}\ot \mu_{H}))\ot H\ot H)\co (A\ot  \delta_{H\ot H})\co (c_{H,A}\ot H)))\co  (i_{H}^{A\bowtie_{\tau} H}\ot ((c_{H,A}\ot \varepsilon_{H})$
\item[ ]$\hspace{0.38cm}\co (H\ot i_{H}^{A\bowtie_{\tau} H})))$ {\scriptsize ({\blue by naturality of $c$, coassociativity, and $c^{2}=id$})} 

\item[ ]$= (A\ot ((\tau\ot \lambda_{H})\co (A\ot R))\co ((\delta_{A}\co \mu_{A})\ot \tau^{-1})\co 
(A\ot ((((\tau\ot \delta_{A})\co (A\ot c_{A,H}))\ot H)$
\item[ ]$\hspace{0.38cm}\co (\delta_{A}\ot ((\mu_{H}\ot \mu_{H})\co \delta_{H\ot H}))\co (c_{H,A}\ot H))) \co (i_{H}^{A\bowtie_{\tau} H}\ot ((c_{H,A}\ot \varepsilon_{H})\co (H\ot i_{H}^{A\bowtie_{\tau} H})))$ {\scriptsize ({\blue by  (\ref{skewpairing-3})})}

\item[ ]$= (A\ot ((\tau\ot \lambda_{H})\co (A\ot R))\co ((\delta_{A}\co \mu_{A})\ot \tau^{-1})\co 
(A\ot ((((\tau\ot \delta_{A})\co (A\ot c_{A,H}))\ot H)$
\item[ ]$\hspace{0.38cm}\co (\delta_{A}\ot (\delta_{H}\co \mu_{H}))\co (c_{H,A}\ot H))) \co (i_{H}^{A\bowtie_{\tau} H}\ot ((c_{H,A}\ot \varepsilon_{H})\co (H\ot i_{H}^{A\bowtie_{\tau} H})))$ {\scriptsize ({\blue by  (\ref{delta-mu})})}

\item[ ]$=(A\ot ((\tau\ot \lambda_{H})\co (A\ot R))\co ((\delta_{A}\co \mu_{A})\ot (\varepsilon_{H}\co \mu_{H}))\co (A\ot ((((\tau\ot A\ot H)\co \delta_{A\ot H})\ot \tau^{-1}) $
\item[ ]$\hspace{0.38cm}\co \delta_{A\ot H}\co c_{H,A})\ot H)\co (((A\ot \mu_{H})\co (i_{H}^{A\bowtie_{\tau} H}\ot H))\ot i_{H}^{A\bowtie_{\tau} H})$ {\scriptsize ({\blue by (\ref{mu-eps}), and  counit properties})}

\item[ ]$=q_{H}^{A\bowtie_{\tau} H}\co \mu_{A\bowtie_{\tau} H}\co (\mu_{A\bowtie_{\tau} H}\ot A\ot H)\co (i_{H}^{A\bowtie_{\tau} H}\ot f\ot i_{H}^{A\bowtie_{\tau} H})$ {\scriptsize ({\blue by (\ref{aux-1-q})}).}

\end{itemize}

Secondly, we will prove (\ref{condiciondos-q}): 

\begin{itemize}
\item[ ]$\hspace{0.38cm}q_{H}^{A\bowtie_{\tau} H}\co \mu_{A\bowtie_{\tau} H}\co (\mu_{A\bowtie_{\tau} H}\ot A\ot H)\co (f\ot i_{H}^{A\bowtie_{\tau} H}\ot i_{H}^{A\bowtie_{\tau} H})$

\item[ ]$= (A\ot \tau\ot \lambda_{H})\co ((\delta_{A}\co \mu_{A})\ot R)\co (A\ot ((((\tau\ot A)\co (A\ot c_{A,H})\co (\delta_{A}\ot H))\ot \tau^{-1})\co \delta_{A\ot H}\co c_{H,A}))$
\item[ ]$\hspace{0.38cm}\co (((\tau\ot A\ot \mu_{H})\co (((\delta_{A\ot H}\ot \tau^{-1})\co \delta_{A\ot H}\co c_{H,A})\ot H))\ot A)\co (H\ot i_{H}^{A\bowtie_{\tau} H}
\ot ((A\ot \varepsilon_{H})\co i_{H}^{A\bowtie_{\tau} H}))$ {\scriptsize ({\blue by}}
\item[ ]$\hspace{0.38cm}$ {\scriptsize {\blue  (\ref{mu-eps}), and  unit and counit properties})}

\item[ ]$= (A\ot \tau\ot \lambda_{H})\co ((\delta_{A}\co \mu_{A})\ot R)\co (A\ot ((((\tau\ot A)\co (A\ot c_{A,H})\co (\delta_{A}\ot H))\ot \tau^{-1})\co (A\ot c_{A,H}\ot H)$
\item[ ]$\hspace{0.38cm}\co (\delta_{A}\ot (\delta_{H}\co \mu_{H}))\co (c_{H,A}\ot H)))\co (((\tau\ot A\ot H)\co (((\delta_{A\ot H}\ot \tau^{-1})\co 
\delta_{A\ot H}\co c_{H,A})))\ot c_{H,A})$
\item[ ]$\hspace{0.38cm}\co (H\ot i_{H}^{A\bowtie_{\tau} H}
\ot ((A\ot \varepsilon_{H})\co i_{H}^{A\bowtie_{\tau} H}))$ {\scriptsize ({\blue by naturality of $c$})} 

\item[ ]$= (A\ot \tau\ot \lambda_{H})\co ((\delta_{A}\co \mu_{A})\ot R)\co (A\ot ((((\tau\ot A)\co (A\ot c_{A,H})\co (\delta_{A}\ot H))\ot \tau^{-1})\co (A\ot c_{A,H}\ot H)$
\item[ ]$\hspace{0.38cm}\co (\delta_{A}\ot ((\mu_{H}\ot \mu_{H})\co \delta_{H\ot H}))\co (c_{H,A}\ot H)))\co (((\tau\ot A\ot H)\co (((\delta_{A\ot H}\ot \tau^{-1})\co 
\delta_{A\ot H}\co c_{H,A})))\ot c_{H,A})$
\item[ ]$\hspace{0.38cm}\co (H\ot i_{H}^{A\bowtie_{\tau} H}
\ot ((A\ot \varepsilon_{H})\co i_{H}^{A\bowtie_{\tau} H}))$ {\scriptsize ({\blue by (\ref{delta-mu})})} 

\item[ ]$=(A\ot \tau\ot \lambda_{H})\co ((\delta_{A}\co \mu_{A})\ot (R\co \tau^{-1}\co (A\ot \mu_{H})))\co 
(A\ot (((\tau\co (A\ot \mu_{H}))\ot A\ot A\ot H\ot H)$
\item[ ]$\hspace{0.38cm}\co (A\ot c_{A\ot A,H\ot H}\ot H\ot H)\co (((\delta_{A}\ot A)\co \delta_{A})\ot \delta_{H\ot H})\co (c_{H,A}\ot H)))\co (((\tau\ot A\ot H)\co (((\delta_{A\ot H}\ot \tau^{-1})$
\item[ ]$\hspace{0.38cm}\co  \delta_{A\ot H}\co c_{H,A})))\ot c_{H,A})\co (H\ot i_{H}^{A\bowtie_{\tau} H}
\ot ((A\ot \varepsilon_{H})\co i_{H}^{A\bowtie_{\tau} H}))$ {\scriptsize ({\blue by naturality of $c$})} 

\item[ ]$=(A\ot \tau\ot \lambda_{H})\co ((\delta_{A}\co \mu_{A})\ot (R\co \tau^{-1}\co (A\ot \mu_{H})))\co 
(A\ot ((((\tau\ot \tau)\co (A\ot c_{A,H}\ot H)\co ((c_{A,A}\co \delta_{A})$
\item[ ]$\hspace{0.38cm}\ot H\ot H))\ot A\ot A\ot H\ot H)
\co (A\ot c_{A\ot A,H\ot H}\ot H\ot H)\co (((\delta_{A}\ot A)\co \delta_{A})\ot \delta_{H\ot H})\co (c_{H,A}\ot H)))$
\item[ ]$\hspace{0.38cm}\co (((\tau\ot A\ot H)\co (((\delta_{A\ot H}\ot \tau^{-1})
\co  \delta_{A\ot H}\co c_{H,A})))\ot c_{H,A})\co (H\ot i_{H}^{A\bowtie_{\tau} H}
\ot ((A\ot \varepsilon_{H})\co i_{H}^{A\bowtie_{\tau} H}))$ 
\item[ ]$\hspace{0.38cm}$ {\scriptsize ({\blue  by (a2') of Definition \ref{skewpairing}})}

\item[ ]$=(((\tau\ot \tau)\co (A\ot c_{A,H}\ot H)\co (A\ot A\ot \delta_{H}))\ot ((A\ot \tau\ot \lambda_{H})\co (\delta_{A}\ot R)))\co (A\ot ((A\ot c_{A,H})$
\item[ ]$\hspace{0.38cm}\co (((A\ot \mu_{A})\co (c_{A,A}\ot A)\co (A\ot \delta_{A}))\ot H)\co (A\ot c_{H,A}))\ot \tau\ot (\tau^{-1}\co (A\ot \mu_{H})))$
\item[ ]$\hspace{0.38cm}\co (\delta_{A}\ot H\ot (c_{A,A}\co \delta_{A})\ot c_{A,H}\ot H\ot H)\co (A\ot H\ot \delta_{A}\ot c_{H,H}\ot H)\co (A\ot H\ot c_{H,A}\ot \delta_{H})$
\item[ ]$\hspace{0.38cm}\co 
(((A\ot \delta_{H}\ot \tau^{-1})\co \delta_{A\ot H}\co c_{H,A})\ot c_{H,A})\co (H\ot i_{H}^{A\bowtie_{\tau} H}
\ot ((A\ot \varepsilon_{H})\co i_{H}^{A\bowtie_{\tau} H}))$ {\scriptsize ({\blue by naturality of $c$,}}
\item[ ]$\hspace{0.38cm}$ {\scriptsize {\blue  coassociativity, and $c^{2}=id$})}

\item[ ]$= (\tau\ot  ((A\ot \tau\ot \lambda_{H})\co (\delta_{A}\ot R)))\co (A\ot c_{A,H})\co (((\mu_{A}\ot \mu_{A})\co \delta_{A\ot A})\ot H)$
\item[ ]$\hspace{0.38cm}\co (A\ot c_{H,A}\ot \tau\ot (\tau^{-1}\co (A\ot \mu_{H})))\co  (A\ot H\ot (c_{A,A}\co \delta_{A})\ot c_{A,H}\ot H\ot H)\co  (A\ot H\ot \delta_{A}\ot c_{H,H}\ot H)$
\item[ ]$\hspace{0.38cm}\co (A\ot H\ot c_{H,A}\ot \delta_{H})\co (((A\ot \delta_{H}\ot \tau^{-1})\co \delta_{A\ot H}\co c_{H,A})\ot c_{H,A})\co (H\ot i_{H}^{A\bowtie_{\tau} H}
\ot ((A\ot \varepsilon_{H})\co i_{H}^{A\bowtie_{\tau} H}))$
\item[ ]$\hspace{0.38cm}$ {\scriptsize ({\blue by  (a1) of Definition \ref{skewpairing}})}

\item[ ]$= (\tau\ot  ((A\ot \tau\ot \lambda_{H})\co (\delta_{A}\ot R)))\co (A\ot c_{A,H})\co (((\delta_{A}\co \mu_{A})\ot H)$
\item[ ]$\hspace{0.38cm}\co (A\ot c_{H,A}\ot \tau\ot (\tau^{-1}\co (A\ot \mu_{H})))\co  (A\ot H\ot (c_{A,A}\co \delta_{A})\ot c_{A,H}\ot H\ot H)\co  (A\ot H\ot \delta_{A}\ot c_{H,H}\ot H)$
\item[ ]$\hspace{0.38cm}\co (A\ot H\ot c_{H,A}\ot \delta_{H})\co (((A\ot \delta_{H}\ot \tau^{-1})\co \delta_{A\ot H}\co c_{H,A})\ot c_{H,A})\co (H\ot i_{H}^{A\bowtie_{\tau} H}
\ot ((A\ot \varepsilon_{H})\co i_{H}^{A\bowtie_{\tau} H}))$
\item[ ]$\hspace{0.38cm}$ {\scriptsize ({\blue by  (\ref{delta-mu})})}

\item[ ]$= (\tau\ot  ((A\ot \tau\ot \lambda_{H})\co (\delta_{A}\ot R)))\co (A\ot c_{A,H})\co (((\delta_{A}\co \mu_{A})\ot H)$
\item[ ]$\hspace{0.38cm}\co (A\ot c_{H,A}\ot \tau\ot ((\tau^{-1}\ot \tau^{-1})\co (A\ot c_{A,H}\ot H)\co (\delta_{A}\ot H\ot H)))\co  (A\ot H\ot (c_{A,A}\co \delta_{A})\ot c_{A,H}\ot H\ot H)$
\item[ ]$\hspace{0.38cm}\co  (A\ot H\ot \delta_{A}\ot c_{H,H}\ot H) \co (A\ot H\ot c_{H,A}\ot \delta_{H})\co (((A\ot \delta_{H}\ot \tau^{-1})\co \delta_{A\ot H}\co c_{H,A})\ot c_{H,A})$
\item[ ]$\hspace{0.38cm} \co (H\ot i_{H}^{A\bowtie_{\tau} H}
\ot ((A\ot \varepsilon_{H})\co i_{H}^{A\bowtie_{\tau} H}))$ {\scriptsize ({\blue by  (\ref{skewpairing-3})})}

\item[ ]$=   (A\ot \tau\ot \lambda_{H})\co (\delta_{A}\ot R)\co (((\tau\ot A)\co (A\ot c_{A,H})\co ((\delta_{A}\co \mu_{A})\ot H))\ot ((\tau^{-1}\ot \tau^{-1})\co (A\ot c_{A,H}\ot H)$
\item[ ]$\hspace{0.38cm}\co (A\ot A\ot (c_{H,H}\co \delta_{H}))))\co \delta_{A\ot A\ot H}\co (A\ot c_{H,A})\co (c_{H,A}\ot A)\co (H\ot A\ot ((A\ot\tau\ot \tau^{-1})$
\item[ ]$\hspace{0.38cm}\co ((c_{A,A}\co \delta_{A})\ot H\ot A\ot H)\co \delta_{A\ot H}\co c_{H,A}))\co (H\ot i_{H}^{A\bowtie_{\tau} H}
\ot ((A\ot \varepsilon_{H})\co i_{H}^{A\bowtie_{\tau} H}))$ {\scriptsize ({\blue by naturality}} 
\item[ ]$\hspace{0.38cm}${\scriptsize {\blue  of $c$, coassociativity, and $c^{2}=id$})}

\item[ ]$=   (A\ot \tau\ot \lambda_{H})\co (\delta_{A}\ot R)\co (((\tau\ot A)\co (A\ot c_{A,H})\co ((\delta_{A}\co \mu_{A})\ot H))\ot (\tau^{-1}\co (\mu_{A}\ot H))$
\item[ ]$\hspace{0.38cm}\co \delta_{A\ot A\ot H}
\co (A\ot c_{H,A})\co (c_{H,A}\ot A)\co (H\ot A\ot ((A\ot\tau\ot \tau^{-1})\co ((c_{A,A}\co \delta_{A})\ot H\ot A\ot H)\co \delta_{A\ot H}\co c_{H,A}))$
\item[ ]$\hspace{0.38cm}
\co (H\ot i_{H}^{A\bowtie_{\tau} H}
\ot ((A\ot \varepsilon_{H})\co i_{H}^{A\bowtie_{\tau} H}))$ {\scriptsize ({\blue by (\ref{skewpairing-4})})}

\item[ ]$=   (A\ot \tau\ot \lambda_{H})\co (\delta_{A}\ot R)\co (((\tau\ot A)\co (A\ot c_{A,H})\co (\delta_{A}\ot H))\ot \tau^{-1})\co (A\ot c_{A,H}\ot H)\co (((\mu_{A}\ot \mu_{A})$
\item[ ]$\hspace{0.38cm}\co \delta_{A\ot A})\ot \delta_{H})
\co (A\ot c_{H,A})\co (c_{H,A}\ot A)\co \co (H\ot A\ot ((A\ot\tau\ot \tau^{-1})\co ((c_{A,A}\co \delta_{A})\ot H\ot A\ot H)$
\item[ ]$\hspace{0.38cm}\co \delta_{A\ot H}\co c_{H,A}))
\co (H\ot i_{H}^{A\bowtie_{\tau} H}
\ot ((A\ot \varepsilon_{H})\co i_{H}^{A\bowtie_{\tau} H}))$ {\scriptsize ({\blue by naturality of $c$})} 

\item[ ]$=   (A\ot \tau\ot \lambda_{H})\co (\delta_{A}\ot R)\co (((\tau\ot A)\co (A\ot c_{A,H})\co (\delta_{A}\ot H))\ot \tau^{-1})\co (A\ot c_{A,H}\ot H)\co ((\delta_{A}$
\item[ ]$\hspace{0.38cm}\co \mu_{A})\ot \delta_{H})
\co (A\ot c_{H,A})\co (c_{H,A}\ot A)\co  (H\ot A\ot ((A\ot\tau\ot \tau^{-1})\co ((c_{A,A}\co \delta_{A})\ot H\ot A\ot H)$
\item[ ]$\hspace{0.38cm}\co \delta_{A\ot H}\co c_{H,A}))
\co (H\ot i_{H}^{A\bowtie_{\tau} H}
\ot ((A\ot \varepsilon_{H})\co i_{H}^{A\bowtie_{\tau} H}))$ {\scriptsize ({\blue by (\ref{delta-mu})})} 

\item[ ]$= q_{H}^{A\bowtie_{\tau} H}\co \mu_{A\bowtie_{\tau} H}\co (A\ot A\ot (\eta_{H}\co \varepsilon_{H}))\co (A\ot H\ot \mu_{A\bowtie_{\tau} H})\co (f\ot i_{H}^{A\bowtie_{\tau} H}\ot i_{H}^{A\bowtie_{\tau} H})$ {\scriptsize ({\blue by (\ref{mu-eps}),  (\ref{eta-eps}),}}
\item[ ]$\hspace{0.38cm}$ {\scriptsize {\blue naturality of $c$, and  unit and counit properties})}

\item[ ]$= q_{H}^{A\bowtie_{\tau} H}\co \mu_{A\bowtie_{\tau} H}\co (A\ot H\ot \mu_{A\bowtie_{\tau} H})\co (f\ot i_{H}^{A\bowtie_{\tau} H}\ot i_{H}^{A\bowtie_{\tau} H})$ {\scriptsize ({\blue by (\ref{aux-2-q})}).}

\end{itemize}

Finally, we prove   (\ref{condiciontres-q}):

\begin{itemize}
\item[ ]$\hspace{0.38cm} q_{H}^{A\bowtie_{\tau} H}\co \mu_{A\bowtie_{\tau} H}\co (A\ot H\ot \mu_{A\bowtie_{\tau} H})\co (f\ot f\ot i_{H}^{A\bowtie_{\tau} H})$

\item[ ]$=(A\ot \tau\ot \lambda_{H})\co (\delta_{A}\ot R)\co (((\tau\ot A)\co (A\ot c_{A,H})\co (\delta_{A}\ot H))\ot \tau^{-1})\co  \delta_{A\ot H}\co c_{H,A}\co (H\ot  ((((\tau\ot A)$
\item[ ]$\hspace{0.38cm}\co (A\ot c_{A,H})\co (\delta_{A}\ot H))\ot \tau^{-1})\co  \delta_{A\ot H}\co c_{H,A}))\co (H\ot H\ot ((A\ot \varepsilon_{H})\co i_{H}^{A\bowtie_{\tau} H})) $ {\scriptsize ({\blue   by (\ref{mu-eps}), and  unit}}
\item[ ]$\hspace{0.38cm}${\scriptsize {\blue  and counit properties})}

\item[ ]$= (((\tau\ot \tau)\co (A\ot c_{A,H}\ot H)\co ((c_{A,A}\co \delta_{A})\ot H\ot H))\ot ((A\ot \tau\ot \lambda_{H})\co (\delta_{A}\ot (R\co \tau^{-1}))))$
\item[ ]$\hspace{0.38cm}\co (A\ot H\ot c_{A,H}\ot A\ot H)\co (((((A\ot c_{A,H})\co (\delta_{A}\ot H))\ot H\ot \tau^{-1})\co \delta_{A\ot H}\co c_{H,A})\ot H\ot A\ot H)$
\item[ ]$\hspace{0.38cm}\co (H\ot (\delta_{A\ot H}\co c_{H,A}))\co (H\ot H\ot ((A\ot \varepsilon_{H})\co i_{H}^{A\bowtie_{\tau} H})) $ {\scriptsize ({\blue by naturality of $c$, coassociativity, and $c^{2}=id$})} 

\item[ ]$= ((\tau\co (A\ot \mu_{H}))\ot ((A\ot \tau\ot \lambda_{H})\co (\delta_{A}\ot (R\co \tau^{-1}))))$
\item[ ]$\hspace{0.38cm}\co (A\ot H\ot c_{A,H}\ot A\ot H)\co (((((A\ot c_{A,H})\co (\delta_{A}\ot H))\ot H\ot \tau^{-1})\co \delta_{A\ot H}\co c_{H,A})\ot H\ot A\ot H)$
\item[ ]$\hspace{0.38cm}\co (H\ot (\delta_{A\ot H}\co c_{H,A}))\co (H\ot H\ot ((A\ot \varepsilon_{H})\co i_{H}^{A\bowtie_{\tau} H})) $ {\scriptsize ({\blue by (a2') of Definition \ref{skewpairing}})}

\item[ ]$= ((\tau\co (A\ot \mu_{H}))\ot ((A\ot \tau\ot \lambda_{H})\co (\delta_{A}\ot (R\co((\tau^{-1}\ot \tau^{-1})\co (A\ot c_{A,H}\ot H)\co (\delta_{A}\ot H\ot H))))))$
\item[ ]$\hspace{0.38cm}\co (A\ot c_{A\ot A,H\ot H}\ot H\ot H)\co 
(((\delta_{A}\ot A)\co \delta_{A})\ot \delta_{H\ot H})\co (c_{H,A}\ot H)\co (H\ot c_{H,A})$
\item[ ]$\hspace{0.38cm}\co  (H\ot H\ot ((A\ot \varepsilon_{H})\co i_{H}^{A\bowtie_{\tau} H}))$ {\scriptsize ({\blue by naturality of $c$, coassociativity, and $c^{2}=id$})}

\item[ ]$= ((\tau\co (A\ot \mu_{H}))\ot ((A\ot \tau\ot \lambda_{H})\co (\delta_{A}\ot (R\co(\tau^{-1}\co (A\ot \mu_{H}))))))$
\item[ ]$\hspace{0.38cm}\co (A\ot c_{A\ot A,H\ot H}\ot H\ot H)\co 
(((\delta_{A}\ot A)\co \delta_{A})\ot \delta_{H\ot H})\co (c_{H,A}\ot H)\co (H\ot c_{H,A})$
\item[ ]$\hspace{0.38cm}\co  (H\ot H\ot ((A\ot \varepsilon_{H})\co i_{H}^{A\bowtie_{\tau} H}))$ {\scriptsize ({\blue by (\ref{skewpairing-3})})}

\item[ ]$= (A\ot \tau\ot \lambda_{H})\co (\delta_{A}\ot (R\co \tau^{-1}))\co (((\tau\ot A)\co (A\ot c_{A,H})\co (\delta_{A}\ot H))\ot A\ot H)\co (A\ot c_{A,H}\ot H)$
\item[ ]$\hspace{0.38cm}\co (\delta_{A}\ot ((\mu_{H}\ot \mu_{H})\co \delta_{H\ot H}))\co (c_{H,A}\ot H)\co (H\ot c_{H,A})\co  (H\ot H\ot ((A\ot \varepsilon_{H})\co i_{H}^{A\bowtie_{\tau} H}))$ {\scriptsize ({\blue  by}}
\item[ ]$\hspace{0.38cm}$ {\scriptsize {\blue naturality of $c$})}

\item[ ]$= (A\ot \tau\ot \lambda_{H})\co (\delta_{A}\ot (R\co \tau^{-1}))\co (((\tau\ot A)\co (A\ot c_{A,H})\co (\delta_{A}\ot H))\ot A\ot H)\co (A\ot c_{A,H}\ot H)$
\item[ ]$\hspace{0.38cm}\co (\delta_{A}\ot ((\delta_{H}\co \mu_{H}))\co (c_{H,A}\ot H)\co (H\ot c_{H,A})\co  (H\ot H\ot ((A\ot \varepsilon_{H})\co i_{H}^{A\bowtie_{\tau} H}))$ {\scriptsize ({\blue  by (\ref{delta-mu})})}

\item[ ]$=(\tau \ot ((A\ot \tau\ot \lambda_{H})\co (\delta_{A}\ot (R\co \varepsilon_{H}\co \mu_{H}))))\co 
(((\delta_{A\ot H}\ot \tau^{-1})\co \delta_{A\ot H}\co c_{H,A}) \ot H)\co (\mu_{H}\ot i_{H}^{A\bowtie_{\tau} H})$ 
\item[ ]$\hspace{0.38cm}$ {\scriptsize ({\blue  by naturality of $c$, (\ref{mu-eps}), and counit properties})}

\item[ ]$= q_{H}^{A\bowtie_{\tau} H}\co \mu_{A\bowtie_{\tau} H}\co (\mu_{A\bowtie_{\tau} H}\ot A\ot H)\co (f\ot f\ot i_{H}^{A\bowtie_{\tau} H})$ {\scriptsize ({\blue by (\ref{mu-eps}), (\ref{aux-1-q}), (\ref{aux-2-q}) and  unit and counit properties}).}

\end{itemize}

\end{proof}

\begin{corollary}
\label{thelastone}
Let $A$, $H$ be  Hopf quasigroups and let $\tau:A\ot H\rightarrow K$ be a skew pairing. Assume that $H$ is quasitriangular with morphism $R$. Then if (\ref{rt-1}) and (\ref{rt-2}) hold, there exist an action $\varphi_{A}$ and a coaction $\rho_{A}$ such that $(A, \varphi_{A}, \rho_{A})$ is a Hopf quasigroup in $^{H}_{H}{\mathcal Y}{\mathcal D}$. Moreover, $A\rtimes H$ and $A\bowtie_{\tau} H$ are isomorphic Hopf quasigroups in ${\mathcal C}$.

\end{corollary}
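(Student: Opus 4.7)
The plan is to obtain the result as a direct consequence of Theorem \ref{theend} together with the general machinery of strong Hopf quasigroup projections developed in \cite{our1}. Since (\ref{rt-1}) and (\ref{rt-2}) are assumed, Theorem \ref{theend} produces a strong Hopf quasigroup projection $(A\bowtie_{\tau} H,f,g)$ over $H$, with $f=\eta_{A}\ot H$ and $g=(\tau\ot \mu_{H})\co (A\ot R\ot H)$, whose associated idempotent was already computed as
\[
q_{H}^{A\bowtie_{\tau} H}=(A\ot \tau\ot \lambda_{H})\co (\delta_{A}\ot R\ot \varepsilon_{H}).
\]

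First I would exhibit an explicit splitting of this idempotent identifying $(A\bowtie_{\tau} H)_H$ with the object $A$. The natural candidates are
\[
i_{H}^{A\bowtie_{\tau} H}=(A\ot \tau\ot \lambda_{H})\co(\delta_{A}\ot R):A\longrightarrow A\ot H,\qquad p_{H}^{A\bowtie_{\tau} H}=A\ot \varepsilon_{H}:A\ot H\longrightarrow A.
\]
The equality $i_{H}^{A\bowtie_{\tau} H}\co p_{H}^{A\bowtie_{\tau} H}=q_{H}^{A\bowtie_{\tau} H}$ is immediate from the formula for $q_{H}^{A\bowtie_{\tau} H}$, while $p_{H}^{A\bowtie_{\tau} H}\co i_{H}^{A\bowtie_{\tau} H}=id_{A}$ follows from $\varepsilon_{H}\co \lambda_{H}=\varepsilon_{H}$ (see (\ref{lambda-vareps})), condition (d4) of Definition \ref{quasitri}, axiom (a3) of Definition \ref{skewpairing} and the counit properties.

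Next, using the formulas (\ref{product-bh-2}), (\ref{coproduct-bh}), (\ref{act-coat-bh}) and (\ref{antipo-bh}) with this splitting, I would read off the structure that the general projection theory transports to $A$: the action
\[
\varphi_{A}=p_{H}^{A\bowtie_{\tau} H}\co \mu_{A\bowtie_{\tau} H}\co (f\ot i_{H}^{A\bowtie_{\tau} H}),
\]
the coaction $\rho_{A}=(g\ot p_{H}^{A\bowtie_{\tau} H})\co \delta_{A\bowtie_{\tau} H}\co i_{H}^{A\bowtie_{\tau} H}$, together with the unit, counit, coproduct, product and antipode obtained in the analogous way. By Corollary 2.10 and Proposition 2.5 of \cite{our1}, the triple $(A,\varphi_{A},\rho_{A})$ is then a Hopf quasigroup in $\,^{H}_{H}{\mathcal Y}{\mathcal D}$, which proves the first assertion.

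For the isomorphism statement, I would invoke Propositions 2.8 and 2.9 of \cite{our1}, which provide a Hopf quasigroup isomorphism
\[
w=\mu_{A\bowtie_{\tau} H}\co (i_{H}^{A\bowtie_{\tau} H}\ot f):A\rtimes H\longrightarrow A\bowtie_{\tau} H,
\]
with inverse $w^{-1}=(p_{H}^{A\bowtie_{\tau} H}\ot g)\co \delta_{A\bowtie_{\tau} H}$. Since the biproduct $A\rtimes H$ is built from the $\,^{H}_{H}{\mathcal Y}{\mathcal D}$-structure found in the previous step, this gives the desired isomorphism $A\rtimes H\cong A\bowtie_{\tau} H$ of Hopf quasigroups in ${\mathcal C}$. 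I expect the only delicate point to be verifying the splitting identities for $(i_{H}^{A\bowtie_{\tau} H},p_{H}^{A\bowtie_{\tau} H})$; the rest is an application of results already established in \cite{our1} to the strong projection produced by Theorem \ref{theend}.
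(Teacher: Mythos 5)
Your proposal is correct and follows essentially the same route as the paper: it splits the idempotent $q_{H}^{A\bowtie_{\tau} H}$ via $p_{H}^{A\bowtie_{\tau} H}=A\ot\varepsilon_{H}$ and $i_{H}^{A\bowtie_{\tau} H}=(A\ot \tau\ot \lambda_{H})\co(\delta_{A}\ot R)$, checks $p_{H}^{A\bowtie_{\tau} H}\co i_{H}^{A\bowtie_{\tau} H}=id_{A}$ with exactly the ingredients the paper uses ((\ref{lambda-vareps}), (d4) of Definition \ref{quasitri}, (a3) of Definition \ref{skewpairing}), and then transports the structure to $A$ and obtains the isomorphism $w$ by citing the same results of \cite{our1}. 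The paper merely goes one step further in writing out the resulting $\rho_{A}$, $m_{A}$, $s_{A}$ and $w$ explicitly, which your plan implicitly covers.
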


\begin{proof} By the proof of the previous theorem, we know that $(A\bowtie_{\tau} H,f=\eta_{A}\ot H,g=(\tau\ot \mu_{H})\co (A\ot R\ot H))$ is a strong projection over $H$ and (\ref{q-complex}) holds. Put 
$$p_{H}^{A\bowtie_{\tau} H}= A \ot\varepsilon_{H}, \;\; i_{H}^{A\bowtie_{\tau} H}= (A\ot \tau\ot \lambda_{H})\co (\delta_{A}\ot R).
$$
Then, $q_{H}^{A\bowtie_{\tau} H}=i_{H}^{A\bowtie_{\tau} H}\co p_{H}^{A\bowtie_{\tau} H}$ and 
$p_{H}^{A\bowtie_{\tau} H}\co i_{H}^{A\bowtie_{\tau} H}=id_{A}$ because 

\begin{itemize}
\item[ ]$\hspace{0.38cm} p_{H}^{A\bowtie_{\tau} H}\co i_{H}^{A\bowtie_{\tau} H}$

\item[ ]$= (A\ot \tau \ot \varepsilon_{H})\co (\delta_{A}\ot R)$ {\scriptsize ({\blue by (\ref{lambda-vareps})})}

\item[ ]$= (A\ot \tau)\co (\delta_{A}\ot \eta_{H})$ {\scriptsize ({\blue by (d4) of Definition \ref{quasitri}})}

\item[ ]$=id_{A} $ {\scriptsize ({\blue by (a3) of Definition \ref{skewpairing},  and counit properties}).}

\end{itemize}

Therefore, we can choose $A=(A\bowtie_{\tau} H)_{H}$, 

$$
\setlength{\unitlength}{3mm}
\begin{picture}(30,4)
\put(3,2){\vector(1,0){4}} 
\put(14,2.5){\vector(1,0){10}}
\put(14,1.5){\vector(1,0){10}}
 \put(1,2){\makebox(0,0){$A$}}
\put(10,2){\makebox(0,0){$A\bowtie_{\tau} H$}} 
\put(30,2){\makebox(0,0){$A\bowtie_{\tau} H\otimes H$}} 
\put(5.5,3){\makebox(0,0){$i_{H}^{A\bowtie_{\tau} H}$}}
\put(19,3.5){\makebox(0,0){$(A\bowtie_{\tau} H\ot g)\co\delta_{A\bowtie_{\tau} H}$}}
\put(19,0.5){\makebox(0,0){$A\bowtie_{\tau} H\ot\eta_H$}}
\end{picture}
$$
is an equalizer diagram and 

$$
\setlength{\unitlength}{1mm}
\begin{picture}(101.00,10.00)
\put(16.00,8.00){\vector(1,0){30.00}}
\put(16.00,4.00){\vector(1,0){30.00}}
\put(67.00,6.00){\vector(1,0){21.00}}
\put(31.00,11.00){\makebox(0,0)[cc]{$\mu_{A\bowtie_{\tau} H}\co(A\bowtie_{\tau} H\ot f)$ }}
\put(31.00,0.00){\makebox(0,0)[cc]{$A\bowtie_{\tau} H\otimes
\varepsilon_H  $ }}
\put(79.00,9.00){\makebox(0,0)[cc]{$p_{H}^{A\bowtie_{\tau} H} $ }}
\put(1.00,6.00){\makebox(0,0)[cc]{$ A\bowtie_{\tau} H\ot H$ }}
\put(58.00,6.00){\makebox(0,0)[cc]{$ A\bowtie_{\tau} H$ }}
\put(94.00,6.00){\makebox(0,0)[cc]{$A$}}
\end{picture}
$$

is a coequalizer diagram.  Moreover, by the general theory developed in \cite{our1} (see (\ref{act-coat-bh})), we know that $(A, \varphi_{A}, \varrho_{A})$ is a Hopf quasigroup in $^{H}_{H}{\mathcal Y}{\mathcal D}$, where $\varphi_{A}$  is the action defined in Proposition \ref{doublecrossskewpairing}, and 
$$\rho_{A}=(\tau \ot c_{A,H})\co (A\ot c_{A,H}\ot \mu_{H})\co (\delta_{A}\ot (R\co \tau)\ot \lambda_{H})\co (\delta_{A}\ot R).$$

By (\ref{product-bh-2}) and (\ref{coproduct-bh}), the new magma-comonoid structure of $A$ is 
$$u_{A}=\eta_{A}, \;\;\; m_{A}=\mu_{A}\co (A\ot \varphi_{A})\co ( i_{H}^{A\bowtie_{\tau} H}\ot A),$$
$$e_{A}=\varepsilon_{A}, \;\;\;\Delta_{A}=\delta_{A},$$
and the antipode $s_{A}$  (see (\ref{antipo-bh})) admits the following expression
$$s_{A}=(\tau\ot \varphi_{A})\co (A\ot R\ot \lambda_{A})\co \delta_{A}.$$

Finally, the isomorphism of Hopf quasigroups 
$$w=\mu_{A\bowtie_{\tau} H}\co (i_{H}^{A\bowtie_{\tau} H}\ot f):A\rtimes H\rightarrow A\bowtie_{\tau} H$$
is 
$$w=(A\ot \mu_{H})\co (i_{H}^{A\bowtie_{\tau} H}\ot H).$$

\end{proof}

\begin{example}
\label{thelastone}
Let $H_{4}$ be the $4$-dimensional Taft Hopf algebra and consider the Hopf quasigroup  $A\bowtie_{\tau} H_{4}$ constructed in Example \ref{prin-ej}. By \cite{Rad-93}, we know that  $H_{4}$ has a one-parameter family of quasitriangular structures $R_{\alpha}$ defined by 
$$R_{\alpha}=\frac{1}{2}(1\ot 1+1\ot x+x\ot 1-x\ot x)+\frac{\alpha}{2}(y\ot y-y\ot w+w\ot y+w\ot w).$$

Therefore, we are in the conditions of the previous corollary and, as a consequence, $A$ admits a structure of Hopf quasigroup in the category $^{H_{4}}_{H_{4}}{\mathcal Y}{\mathcal D}$. Moreover, $A\bowtie_{\tau} H_{4} \backsimeq A\rtimes H_{4}$.
\end{example}

\section*{Acknowledgements}
The  authors were supported by  Ministerio de Econom\'{\i}a y Competitividad of Spain. Grant MTM2016-79661-P: Homolog\'{\i}a, homotop\'{\i}a e invariantes categ\'oricos en grupos y \'algebras no asociativas.

\end{document}